\newtheorem{thm}{Theorem}[section]
\newtheorem{cor}[thm]{{Corollary}}
\newtheorem{cor-defi}[thm]{{Corollary-Definition}}
\newtheorem{lem}[thm]{{Lemma}}
\newtheorem{prop}[thm]{Proposition}
\newtheorem{conj}[thm]{{Conjecture}}
\newtheorem{defi}[thm]{Definition}
\theoremstyle{remark}
\newtheorem*{rmk}{Remark}
\def\C{\mathbb C}
\def\R{\mathbb R}
\def\Q{\mathbb Q}
\def\Z{\mathbb Z}
\def\N{\mathbb N}
\def\D{\mathcal D}  
\def\U{\mathcal U}
\def\X{\mathcal X}
\def\cech{\check{\mathrm{C}}\mathrm{ech}}
\def\P{\mathbb{P}}
\def\F{\mathbb{F}}
\newcommand{\bD}{{\mathbb D}}
\newcommand{\bE}{{\mathbb E}}
\newcommand{\bF}{{\mathbb F}}
\newcommand{\bH}{{\mathbb H}}
\newcommand{\bP}{{\mathbb P}}
\newcommand{\bQ}{{\mathbb Q}}
\newcommand{\bZ}{{\mathbb Z}}
\newcommand{\mB}{{\mathcal B}}
\newcommand{\mD}{{\mathcal D}}
\newcommand{\mE}{{\mathcal E}}
\newcommand{\mF}{{\mathcal F}}
\newcommand{\mL}{{\mathcal L}}
\newcommand{\mO}{{\mathcal O}}
\newcommand{\mT}{{\mathcal T}}
\newcommand{\mU}{{\mathcal U}}
\newcommand{\mV}{{\mathcal V}}
\newcommand{\mX}{{\mathcal X}}
\newcommand{\mY}{{\mathcal Y}}
\newcommand{\mZ}{{\mathcal Z}}
\newcommand{\sO}{{\mathscr O}}
\newcommand{\sU}{{\mathscr U}}
\newcommand{\sX}{{\mathscr X}}
\newcommand{\nc}{\newcommand}
\nc{\on}{\operatorname}
\nc{\Aut}  {{\on{\mathrm  {Aut}}}}
\nc{\End}  {{\on{\mathrm  {End}}}}
\nc{\Fil}  {{\on{\mathrm  {Fil}}}}
\nc{\Frac} {{\on{\mathrm  {Frac}}}}
\nc{\Gal}  {{\on{\mathrm  {Gal}}}}
\nc{\GL}   {{\on{\mathrm  {GL}}}}
\nc{\Gr}   {{\on{\mathrm  {Gr}}}}
\nc{\Hom}  {{\on{\mathrm  {Hom}}}}
\nc{\id}   {{\on{\mathrm  {id}}}}
\nc{\PGL}  {{\on{\mathrm  {PGL}}}}
\nc{\rank} {{\on{\mathrm  {rank}}}}
\nc{\rmd}  {{\on{\mathrm  {d}}}}
\nc{\Spec} {{\on{\mathrm  {Spec}}}}
\def\can {{\mathrm  {can}}}
\nc{\HDF}  {{\on{\mathcal  {HDF}}}}
\nc{\HIG}  {{\on{\mathcal  {HIG}}}}
\nc{\IC}   {{\on{\mathcal {IC}}}}
\nc{\MCF}  {{\on{\mathcal {MCF}}}}
\nc{\MCFa} {{\on{\mathcal {MCF}_{[0,a]}}}}
\nc{\MF}   {{\on{\mathcal {MF}}}}
\nc{\MFa}  {{\on{         \MF_{[0,a]}}}}
\nc{\MFaf} {{\on{         \MF_{[0,a],f}}}}
\nc{\MIC}  {{\on{\mathcal {MIC}}}} 
\nc{\MICa} {{\on{\mathcal {MIC}_{[0,a]}}}} 
\nc{\THDF} {{\on{\mathcal {THDF}}}}
\nc{\THDFa}{{\on{\mathcal {THDF}_{[0,a]}}}}
\nc{\TMF}  {{\on{\mathcal {TMF}}}}
\nc{\TMFa} {{\on{\TMF_{[0,a]}}}}
\nc{\TMFaf}{{\on{\TMF_{[0,a],f}}}}
\nc{\tMIC} {{\on{\widetilde{\mathcal{MIC}}}}} 
\def\wt{\check}
\def\wh{\hat}
\def\hR{{\widehat{R}}} 
\def\tmB{{\widetilde{\mB}}}
\def\mBR{{\mB_{R_\pi}}} 
\def\tmBR{{\widetilde{\mB}_{R_\pi}}}
\def\tnabla{{\widetilde{\nabla}}}
\def\tV{{\widetilde V}}
\def\tsX{{\widetilde{\sX}}}
\def\mEnd{\mE nd}
\def\R{R}
\numberwithin{equation}{section}
\begin{document}
	
	\title[Projective representations and Twisted Higgs-de Rham flows]{Projective Crystalline Representations of \'Etale Fundamental Groups and Twisted Periodic Higgs-de Rham Flow} 
	\author{Ruiran Sun}
	\author{Jinbang Yang}
	\author{Kang Zuo}
	
	\email{ruirasun@uni-mainz.de, yjb@mail.ustc.edu.cn, zuok@uni-mainz.de} 
	\address{Institut F\"ur Mathematic, Universit\"at Mainz, Mainz, 55099, Germany}
\thanks{This work was supported by SFB/Transregio 45 Periods, Moduli Spaces and Arithmetic of Algebraic
	Varieties of the DFG (Deutsche Forschungsgemeinschaft)  and also supported by National Key Basic Research Program of China (Grant No. 2013CB834202).}  
	
	\date{}

	\maketitle
	
\begin{abstract} This paper contains three new results. {\bf 1}.We introduce new notions of projective crystalline representations and twisted periodic Higgs-de Rham flows.
	These new notions generalize crystalline representations of \'etale fundamental groups introduced in~\cite{Fal89,FoLa82} and periodic Higgs-de Rham flows introduced in~\cite{LSZ13a}.  We establish an equivalence between the categories of projective crystalline representations and twisted periodic Higgs-de Rham flows via the category of twisted Fontaine-Faltings module which is also introduced in this paper. {\bf 2.}We study the base change of these objects over very ramified valuation rings
and show that a stable periodic Higgs bundle gives rise to a geometrically absolutely irreducible crystalline representation.
{\bf 3.} We investigate the dynamic of self-maps induced by the Higgs-de Rham flow on the moduli spaces of rank-2 stable Higgs bundles of degree 1 on $\mathbb{P}^1$ with logarithmic structure on marked points $D:=\{x_1,\,...,x_n\}$  for $n\geq 4$ and construct infinitely many geometrically absolutely irreducible $\mathrm{PGL_2}(\Z_p^{\mathrm{ur}})$-crystalline representations of   $\pi_1^\text{et}(\mathbb{P}^1_{{\mathbb{Q}}_p^\text{ur}}\setminus D)$. We find an explicit formula of the self-map for the case $\{0,\,1,\,\infty,\,\lambda\}$  and conjecture that 
 a Higgs bundle is periodic if and only if the zero of the Higgs field is the image of a torsion point in the associated elliptic curve $\mathcal{C}_\lambda$ defined by $ y^2=x(x-1)(x-\lambda)$ with the order coprime to $p$. 
\end{abstract}
	\tableofcontents

\section{Introduction}

	The nonabelian Hodge theory established by Hitchin and Simpson associates representations of the topological fundamental group of an algebraic variety $X$ over $\C$ to a holomorphic object on $X$ named Higgs bundle. Later, Ogus and Vologodsky established the nonabelian Hodge theory in positive characteristic in their fundamental work~\cite{OgVo07}. They constructed the Cartier functor and the inverse Cartier functor, which give an equivalence of categories between the category of nilpotent Higgs modules of exponent $\leq p-1$ and the category of nilpotent flat modules of exponent $\leq p-1$ over a smooth proper and $W_2(k)$-liftable variety. This equivalence generalizes the classical Cartier descent theorem. 
Fontaine-Laffaille~\cite{FoLa82} for $\X= \Spec\,W(k)$ and Faltings   in general case have introduced the category $\mathcal{MF}_{[a,b]}^{\nabla}(\X/W)$.   The objects   in  $\mathcal{MF}_{[a,b]}^{\nabla}(\X/W)$ are the so-called \emph{ Fontaine-Faltings modules} and consist of a quadruple $(V,\nabla,\Fil,\varphi)$, where $(V,\nabla,\Fil)$ is a filtered de Rham bundle over $\X$ and $\varphi$ is a relative Frobenius which is horizontal with respect to $\nabla$ and satisfies the strong $p$-divisibility condition. The latter condition is a $p$-adic analogue of the Riemann-Hodge bilinear relations. Then the Fontaine-Laffaille-Faltings correspondence gives a fully faithful functor from $\mathcal{MF}_{[0,w]}^{\nabla}(\X/W)$ $(w \leq p-2)$ to the category of \emph{crystalline representations} of $\pi^\text{\'et}_1(X_K)$, where $X_K$ is the generic fiber of $\X$. This can be regarded as a $p$-adic version of the Riemann-Hilbert correspondence.\\[.1cm]
	Faltings \cite{Fal05} has established an equivalence of categories between the category of generalized representations of the geometric fundamental group and the category of Higgs bundles over a p-adic curve, which has generalized the earlier work of Deninger-Werner \cite{DW} on a partial p-adic analogue of Narasimhan-Seshadri theory. \\[.1cm]
Lan, Sheng and Zuo have established a $p$-adic analogue of the Hitchin-Simpson correspondence between the category of  $\mathrm{GL}_r(W_n(\mathbb F_q))$-crystalline representations and the category of graded periodic Higgs bundles by introducing the notion of  \emph{Higgs-de Rham flow}. It is a sequence of graded Higgs bundles and filtered de Rham bundles, connected by the inverse Cartier transform defined by Ogus and Vologodsky \cite{OgVo07} and the grading functor by the attached Hodge filtrations on the de Rham bundles (for details see Section $3$ in \cite{LSZ13a} or Section \ref{section HDF} in this paper).   
	\\[.1cm]
	A periodic Higgs bundle must have trivial Chern classes. This fact limits the application of the $p$-adic Hitchin-Simpson correspondence. For instance, Simpson constructed a canonical Hodge bundle $\Omega^1_X \oplus \mathcal{O}_X$ on $X$ in his proof of the Miyaoka-Yau inequality (Proposition 9.8 and Proposition 9.9 in~\cite{Simpson}), which has nontrivial Chern classes in general. In fact, the classical nonabelian Hodge theorem tells us that the Yang-Mills-Higgs equation is still solvable for a polystable Higgs bundle with nontrivial Chern classes. Instead of getting a flat connection, one can get a \emph{projective flat connection} in this case, whose monodromy gives a $\mathrm{PGL}_r$-representation of the fundamental group. This motivates us to find a $p$-adic Hitchin-Simpson correspondence for graded Higgs bundles with nontrivial Chern classes.\\[.1cm]
 As the first main result of this paper we introduce the $1$-periodic \emph{twisted Higgs-de Rham flow} over $X_1$ as follows
	\[
	\xymatrix{ 
		&  (V,\nabla,\Fil)_0\ar[dr]^{\mathrm{Gr}(\cdot)\otimes (L,0)}  &     \\
		(E,\theta)_0 \ar[ur]^{C_1^{-1}}  &  & (E,\theta)_1\otimes(L,0) \ar@/^1pc/[ll]^{\phi_L}_\sim 
	}
	\]
	Here $L$ is called a twisting line bundle on $X_1$, and $\phi_L : (E_1,\theta_1) \otimes (L,0) \cong (E_0,\theta_0)$ is called the twisted $\phi$-structure.\\
	On the Fontaine module side, we also introduce the \emph{twisted Fontaine-Faltings module} over $X_1$. The latter consists of the following data: a filtered de Rham bundle $(V,\nabla,\Fil)$ together with an isomorphism between de Rham bundles:
	\[
	\varphi_L: (C^{-1}_{1} \circ \mathrm{Gr}_{\Fil}(V,\nabla)) \otimes (L^{\otimes p},\nabla_{\mathrm{can}}) \cong (V,\nabla).
	\] We will refer to the isomorphism
	$\varphi_L$ as the twisted $\varphi$-structure. The general construction of twisted Fontaine-Faltings modules and twisted periodic Higgs-de Rham flows are given in Section~\ref{section TFFMES} and Section~\ref{section TPHDF} (over $X_n/W_n(k)$, and multi-periodic case).

	\begin{thm}[Theorem~\ref{equiv:TFF&THDF}]\label{thm:second_thm}
		Let $\X$ be a smooth proper scheme over $W$. For each integer $0 \leq a \leq p-2$ and each $f \in \mathbb{N}$, there is an equivalence of categories between the category of all twisted $f$-periodic Higgs-de Rham flows over $X_n$ of level $\leq a$ and the category of strict $p^n$-torsion twisted Fontaine-Faltings modules over $X_n$ of Hodge-Tate weight $\leq a$ with an endomorphism structure of $W_n(\mathbb{F}_{p^f})$.
	\end{thm}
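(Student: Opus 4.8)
The plan is to construct explicit functors in both directions and verify that they are mutually quasi-inverse, following closely the untwisted equivalence between periodic Higgs--de Rham flows and Fontaine--Faltings modules of \cite{LSZ13a} and inserting the twisting line bundle $L$ at every stage of the iteration. The one genuinely new piece of bookkeeping is the interaction of $L$ with the inverse Cartier transform $C^{-1}$: since $C^{-1}$ carries a Higgs bundle of the form $(L,0)$ to the de Rham bundle $(L^{\otimes p},\nabla_{\mathrm{can}})$, a twist by $(L,0)$ on the Higgs side turns, after one turn of the flow, into a twist by $(L^{\otimes p},\nabla_{\mathrm{can}})$ on the de Rham side -- precisely the twist in the definition of a twisted Fontaine--Faltings module. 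For an $f$-periodic flow one composes $f$ such turns, and the resulting period-$f$ datum is packaged, exactly as in the untwisted correspondence of \cite{LSZ13a} between $f$-periodic flows and Fontaine--Faltings modules with $\F_{p^f}$-coefficients, as an endomorphism structure of $W_n(\F_{p^f})$ on the associated module, the structure isomorphism being $\sigma^f$-semilinear.

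\textbf{From flows to modules.} Given a twisted $f$-periodic Higgs--de Rham flow over $X_n$ of level $\le a$, I take $(V,\nabla,\Fil)$ to be its first filtered de Rham term, apply $C^{-1}$ followed by the grading functor $\mathrm{Gr}_{\Fil}$ a total of $f$ times to travel once around the flow, and feed the twisted $\phi$-structure $\phi_L$ -- composed with the accumulated twists of the intermediate steps -- into $C^{-1}$ to obtain an isomorphism of de Rham bundles $\varphi_L\colon (C^{-1}\circ\mathrm{Gr}_{\Fil}(V,\nabla))\otimes(L',\nabla_{\mathrm{can}})\xrightarrow{\sim}(V,\nabla)$, where $L'$ is the relevant Frobenius pullback of the twisting bundle(s) (equal to $L^{\otimes p}$ in the $1$-periodic case). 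That $\phi_L$ is an isomorphism of Higgs bundles becomes, through the strong $p$-divisibility properties of $C^{-1}$, the strong $p$-divisibility ($\varphi$-structure) of $\varphi_L$; the $f$-fold composition equips the module with the required $W_n(\F_{p^f})$-action; and the bound is preserved because $C^{-1}$ and $\mathrm{Gr}_{\Fil}$ respect the range $[0,a]$ once $a\le p-2$. Morphisms of flows are sent to morphisms of modules in the same way.

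\textbf{From modules to flows, and quasi-inverseness.} Conversely, starting from a strict $p^n$-torsion twisted Fontaine--Faltings module $(V,\nabla,\Fil,\varphi_L)$ with $W_n(\F_{p^f})$-endomorphism structure, I run the flow from $(E,\theta)_0:=\mathrm{Gr}_{\Fil}(V,\nabla)$: apply $C^{-1}$ to produce the next de Rham bundle, transport the Hodge filtration through $\varphi_L$, grade, twist by the relevant line bundle, and iterate; the $\F_{p^f}$-structure forces the $f$-th de Rham term to be canonically isomorphic, up to the accumulated twist, to the initial one, and this isomorphism is the twisted $\phi$-structure closing the flow into an $f$-periodic one. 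The two functors are quasi-inverse by a formal unwinding: both round trips are naturally isomorphic to the identity because ``grade, then apply $C^{-1}$'' and the $\varphi_L$/$\phi_L$-structures are set up to invert each other step by step, exactly as in the untwisted argument of \cite{LSZ13a}.

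\textbf{Main difficulty.} The serious point is not these formal manipulations but running everything over the truncated Witt ring $W_n(k)$ rather than over $k$ alone. For $n=1$ one invokes the Ogus--Vologodsky inverse Cartier transform and its compatibility with $\otimes(L,0)$; for $n>1$ one needs the version of $C^{-1}$ over $X_n/W_n(k)$ obtained by gluing local Frobenius lifts (as in \cite{LSZ13a}, after Faltings), together with a d\'evissage reducing the $p^n$-torsion statement to the mod-$p$ statement plus the deformation theory of filtered de Rham bundles. At each stage one must check that the gluing data defining $C^{-1}$ is compatible with the twisting bundle $(L',\nabla_{\mathrm{can}})$, that the Hodge filtration transported around the flow remains a filtration by subbundles of the correct length, and that strong $p$-divisibility is stable under the twist; these compatibility verifications, routine in spirit, are where the actual work lies.
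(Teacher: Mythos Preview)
Your proposal misreads the target category. In this paper (Section~1.5), an $L_n$-twisted Fontaine--Faltings module ``with endomorphism structure of $W_n(\F_{p^f})$'' is \emph{by definition} an $f$-tuple $(V_i,\nabla_i,\Fil_i,\varphi_i)_{0\le i<f}$ of filtered de Rham bundles with isomorphisms $\varphi_i\colon C_n^{-1}\circ\overline{\mathrm{Gr}}(V_i,\nabla_i,\Fil_i)\xrightarrow{\sim}(V_{i+1},\nabla_{i+1})$ for $i<f-1$ and a twisted $\varphi_{f-1}\colon C_n^{-1}\circ\overline{\mathrm{Gr}}(V_{f-1},\nabla_{f-1},\Fil_{f-1})\otimes(L_n^{p^n},\nabla_{\mathrm{can}})\xrightarrow{\sim}(V_0,\nabla_0)$; the phrase ``endomorphism structure'' is a label for this $f$-fold datum, not an action of a ring on a single module. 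With this definition the paper's functor $\mathcal{IC}$ simply extracts the $f$ consecutive de Rham terms $(V,\nabla,\Fil)_0^{(n)},\dots,(V,\nabla,\Fil)_{f-1}^{(n)}$ from the flow, sets $\varphi_i=\mathrm{id}$ for $i<f-1$ (these are identities by the very definition of a Higgs--de Rham flow), and puts $\varphi_{f-1}=C_n^{-1}(\phi_f^{(n)})$. The inverse $\mathcal{GR}$ runs the flow from the $f$-tuple and extends it periodically, by induction on $n$. Once the categories are set up this way the equivalence is essentially bookkeeping; the analytic input ($C_n^{-1}$ over $W_n$) is imported from \cite{LSZ13a}.

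You instead take only the \emph{first} de Rham term and try to produce a single $\varphi_L\colon (C^{-1}\circ\mathrm{Gr}_{\Fil}(V,\nabla))\otimes(L',\nabla_{\mathrm{can}})\to(V,\nabla)$. But $C^{-1}\circ\mathrm{Gr}(V_0)=(V_1,\nabla_1)$, so this formula is only correct when $f=1$; for $f>1$ your ``$f$-fold composition'' would yield a map from $(C^{-1}\circ\mathrm{Gr})^f(V_0)\otimes L'$ to $V_0$, which is not the $\varphi$-structure appearing in the definition. The assertion that ``the $f$-fold composition equips the module with the required $W_n(\F_{p^f})$-action'' is left unexplained: a single quadruple $(V_0,\nabla_0,\Fil_0,\varphi_L)$ carries no such action in any evident way. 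You may be implicitly thinking of the direct-sum repackaging $V=\bigoplus_i V_i$ of Lemma~1.2, but that lemma is proved only in the untwisted case and under the hypothesis $\F_{p^f}\subset k$ (which the theorem does not assume), and in any case your explicit choice of ``its first filtered de Rham term'' rules out that reading.
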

	
  Theorem \ref{thm:second_thm} can be   generalized to the logarithmic case.[Theorem~\ref{equiv:logTFF&THDF}] \\[.2cm]
  The next goal is to associate a $\mathrm{PGL}_n$-representation of $\pi^\text{\'et}_1$ to a twisted (logarithmic) Fontaine-Faltings module. To do so, we need to generalize Faltings' work.  Following Faltings \cite{Fal89}, we construct a functor $\mathbb{D}^P$ in section~\ref{section FDP}, which associates to a twisted (logarithmic) Fontaine-Faltings module a $\mathrm{PGL}_n$ representation of the \'etale fundamental group.
	\begin{thm}[Theorem~\ref{ConsFunc:D^P}]
		Let $\X$ be a smooth proper geometrically connected scheme over $W$ with a simple normal crossing divisor $\D\subset \X$ relative to $W$. Suppose $\mathbb F_{p^f}\subset k$. 
		Let $M$ be a twisted logarithmic Fontaine-Faltings module over $\X$ (with pole along $\D$) with endomorphism structure of $W(\F_{p^f})$. Applying $\mathbb D^P$-functor, one gets a projective representation
		\[\rho : \pi^\text{\'et}_1(X_K^o) \to \mathrm{PGL}(\mathbb{D}^P(M)),\]
		where $X_K^o$ is the generic fiber of $\X^o=\X\setminus \D$.
	\end{thm}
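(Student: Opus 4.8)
The plan is to follow Faltings' construction of the functor $\mathbb D$ from (logarithmic) Fontaine--Faltings modules to crystalline representations, carrying the twisting line bundle along and keeping track of the ambiguity it introduces. A preliminary step is to set up the logarithmic analogue of Faltings' period-sheaf formalism and of the functor $\mathbb D$ for \emph{honest} logarithmic Fontaine--Faltings modules over $\X$ with pole along $\D$; this is a by-now-standard, if technical, generalization of \cite{Fal89} fitting the logarithmic formalism already in use in this paper. Granting this, I would pass to the local situation: cover $\X^o=\X\setminus\D$ by small affine opens $U=\Spec R$ which are smooth over $W$, admit a lift of the absolute Frobenius, carry \'etale coordinates adapted to $\D\cap U$, and on which the twisting line bundle $L$ is trivial. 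Over such a $U$, a choice of trivialization $t$ of $L|_U$ also trivializes the de Rham bundle $(L^{\otimes p},\nabla_{\mathrm{can}})|_U$, so the twisted structure $\varphi_L$ becomes an ordinary Frobenius structure $\varphi^t$ and $M|_U$ becomes an honest logarithmic Fontaine--Faltings module $M_U^t=(V,\nabla,\Fil,\varphi^t)|_U$ with endomorphism structure of $W(\F_{p^f})$. Applying the logarithmic functor $\mathbb D$ then produces a finite free $W(\F_{p^f})$-module $\mathbb D(M_U^t)$ with a continuous action of $\pi_1^{\text{\'et}}((U\setminus\D)_K)$, i.e. a logarithmic crystalline representation.

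The heart of the argument is the dependence on $t$. If $t'=v\cdot t$ with $v\in\Gamma(U,\mO^\times)$, then $\varphi^{t'}$ differs from $\varphi^t$ by multiplication by the unit obtained from $v$ through the $p$-th power twist $L^{\otimes p}$; equivalently, $M_U^{t'}\cong M_U^t\otimes N_v$, where $N_v$ is the rank-one logarithmic Fontaine--Faltings module of Hodge--Tate weight $0$ attached to that unit. As $\mathbb D$ is compatible with tensor products, $\mathbb D(M_U^{t'})\cong\mathbb D(M_U^t)\otimes\mathbb D(N_v)$ and $\mathbb D(N_v)$ is a rank-one module, i.e. a character $\chi_v$ of $\pi_1^{\text{\'et}}((U\setminus\D)_K)$. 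Hence the projectivization of $\mathbb D(M_U^t)$, as a representation of $\pi_1^{\text{\'et}}((U\setminus\D)_K)$, is canonically independent of $t$. On an overlap $U_{ij}$ the honest modules $M_{U_i}^{t_i}|_{U_{ij}}$ and $M_{U_j}^{t_j}|_{U_{ij}}$ arise from $M|_{U_{ij}}$ via trivializations differing by the unit $u_{ij}=t_i/t_j$, whose cocycle represents $L$; so the glueing isomorphisms supplied by Faltings' theory differ by the characters $\chi_{u_{ij}}$, which become trivial in $\mathrm{PGL}$. Therefore the local projective representations glue to a single projective representation $\rho$ of $\pi_1^{\text{\'et}}(X_K^o)$ --- here one uses that $\X^o$ is geometrically connected so that a geometric base point is available --- and functoriality in $M$ is inherited from that of $\mathbb D$.

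To identify the target of $\rho$ with $\mathrm{PGL}(\mathbb D^P(M))$ for a genuine finite free module $\mathbb D^P(M)$, note that the local endomorphism algebras $\mathrm{End}\big(\mathbb D(M_U^t)\big)$ are independent of $t$ and glue --- the characters $\chi_{u_{ij}}$ acting trivially by conjugation --- to an Azumaya algebra $\mA$ over $W(\F_{p^f})$ carrying a $\pi_1^{\text{\'et}}(X_K^o)$-action by algebra automorphisms (morally $\mA=\mathbb D(\mEnd(M))$, the twist by $L^{\otimes p}$ cancelling in $V\otimes V^\vee$). Because the Brauer group of the (pro-)Artinian local ring $W(\F_{p^f})$, equivalently of the finite field $\F_{p^f}$, vanishes, one has $\mA\cong\mathrm{End}(\mathbb D^P(M))$ for a finite free module $\mathbb D^P(M)$, unique up to isomorphism; by Skolem--Noether the $\pi_1$-action is then by inner automorphisms and gives $\rho:\pi_1^{\text{\'et}}(X_K^o)\to\Aut_{\mathrm{alg}}(\mA)=\mathrm{PGL}(\mathbb D^P(M))$. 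In the $p^n$-torsion formulation one runs the same argument over $W_n(\F_{p^f})$ and passes to the limit.

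The main obstacle is making the local analysis of the second paragraph precise: one must show that, precisely because the twist is by the $p$-th power $L^{\otimes p}$, the auxiliary modules $N_v$ are genuine logarithmic Fontaine--Faltings modules of Hodge--Tate weight $0$ satisfying the strong $p$-divisibility condition --- so that $\mathbb D(N_v)$ is defined and is exactly a character --- and that the trivialization of $(L^{\otimes p},\nabla_{\mathrm{can}})$, the relevant comparison isomorphisms, and the resulting characters are all compatible with the logarithmic structure and with the ramification control along $\D$ that enters the logarithmic $\mathbb D$-functor. Once this local picture is nailed down, the glueing, the descent to a $\mathrm{PGL}$-local system, and the functoriality are formal.
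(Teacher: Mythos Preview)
Your outline follows the same architecture as the paper: trivialize the twisting line bundle on small affine opens, obtain honest (logarithmic) Fontaine--Faltings modules with $W(\F_{p^f})$-endomorphism structure, apply the local $\mathbb D$-functor, show that on overlaps the resulting representations differ only by a scalar, and glue projectively. The differences are in the implementation of the comparison step and of the gluing.

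Where you invoke tensor-compatibility of $\mathbb D$ with a rank-one auxiliary module $N_v$, the paper works by hand: it writes down a concrete rank-$f$ Fontaine--Faltings module of weight $0$ whose $\varphi$-matrix is a companion matrix with entry $r^{p^n}$ (Lemma~\ref{a_n,r}), and extracts from a generator of its $\mathbb D$ an explicit unit $a_{n,r}\in B^+(\widehat R)^\times$ satisfying $\Phi_B^f(a_{n,r})\equiv\kappa_\Phi(r)^{p^n}a_{n,r}\pmod{p^n}$. Multiplication by $a_{n,r}$ on $\Hom_{B^+}(V\otimes B^+,D)$ is then shown directly to carry $\mathbb D_\Phi(M)$ isomorphically onto $\mathbb D_\Phi(M')$, where $M'$ has the last Frobenius scaled by $r^{p^n}$; this map is $W_n(\F_{p^f})$-linear but not Galois-equivariant, hence only an isomorphism of \emph{projective} representations (Corollary~\ref{cor_compare_proj}). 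This sidesteps both the monoidality of $\mathbb D$ and the verification that your $N_v$ is a legitimate object --- exactly the obstacle you flag at the end. Note also that at level $n$ the twist is by $L^{p^n}$, not $L^{\otimes p}$; the unit comparing two trivializations enters to the $p^n$-th power, which is what makes $\mathrm d(r^{p^n})\equiv 0\pmod{p^n}$ so that the rescaled $\varphi$ stays horizontal.

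For the gluing, the paper does not patch characters directly but appeals to a general descent statement (Theorem~\ref{Mainthm:rep}): the quotient sets $\Sigma_j=\mathbb D_{\Phi_j}(M(\tau_j))/W_n(\F_{p^f})^\times$ are finite $\pi_1^{\text{\'et}}(U_j^o)$-sets, compatible on overlaps by Corollary~\ref{cor_compare_proj}, and surjectivity of $\pi_1^{\text{\'et}}(U_j^o)\twoheadrightarrow\pi_1^{\text{\'et}}(X_K^o)$ plus rigidity of finite \'etale covers forces a unique descent; one then checks that the descended action has image in $\mathrm{PGL}\big(\mathbb D_{\Phi_{j_0}}(M(\tau_{j_0}))\big)\subset\Aut(\Sigma_{j_0})$ for a fixed index $j_0$. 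Your Azumaya/Brauer-group identification of the target is correct and conceptually clean, but the paper simply sets $\mathbb D^P(M):=\mathbb D_{\Phi_{j_0}}(M(\tau_{j_0}))$ and observes independence of the choices up to canonical isomorphism.
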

	In Section~\ref{section SRSPHdRF},  we study several properties of this functor $\mathbb{D}^P$.  For instance, we prove that a projective subrepresentation of $\mathbb{D}^P(M)$   corresponds to a sub-object $N \subset M$ such that $\mathbb{D}^P(M/N)$ is isomorphic to this subrepresentation. Combining this with Theorem~\ref{equiv:TFF&THDF}, we infer that a projective representation coming from a stable twisted periodic Higgs bundle $(E,\theta)$ with $(\mathrm{rank}(E),\deg_H(E))=1$ must be irreducible.\\

The next theorem gives a $p$-adic analogue of the existence
of projective flat Yang-Mills-Higgs connection in terms of semistability of Higgs bundles and triviality of the discriminant.

\begin{thm}[Theorem~\ref{Main: preperiod}] A semistable Higgs bundle over $X_1$  initials a twisted preperiodic Higgs-de Rham flow if and only if it is semistable and has trivial discriminant.\end{thm}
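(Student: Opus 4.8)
The plan is to follow the twisted Higgs--de Rham flow as a dynamical system on a moduli space and to control two invariants of the Higgs bundles it produces: $H$-semistability (for a chosen polarization $H$) and the Bogomolov discriminant $\Delta(E)=2\,\mathrm{rk}(E)\,c_2(E)-(\mathrm{rk}(E)-1)c_1(E)^2$. The key device is passage to the adjoint Higgs bundle $(\mathcal{E}nd(E),\mathrm{ad}\,\theta)$. Since $C_1^{-1}$ is a tensor functor, $\mathrm{Gr}_{\Fil}$ commutes with $\mathcal{E}nd$, and $\mathcal{E}nd(E\otimes L)=\mathcal{E}nd(E)$, a twisted $f$-periodic (resp.\ preperiodic) Higgs--de Rham flow starting from $(E,\theta)$ induces an ordinary \emph{untwisted} $f$-periodic (resp.\ preperiodic) Higgs--de Rham flow starting from $(\mathcal{E}nd(E),\mathrm{ad}\,\theta)$; moreover $(E,\theta)$ is $H$-semistable if and only if $(\mathcal{E}nd(E),\mathrm{ad}\,\theta)$ is (the forward implication using that a semistable Higgs bundle with $\Delta=0$ is strongly semistable and that tensor products of strongly semistable bundles are semistable, the reverse because a $\theta$-invariant destabilizing $F\subset E$ gives the $\mathrm{ad}\,\theta$-invariant destabilizing $(E/F)^{\vee}\otimes F\subset\mathcal{E}nd(E)$); finally $c_1(\mathcal{E}nd(E))=0$ and $\mathrm{ch}_2(\mathcal{E}nd(E))=-\Delta(E)$, so ``$\Delta(E)=0$'' is the same as ``$\mathcal{E}nd(E)$ has vanishing $\mathrm{ch}_1$ and $\mathrm{ch}_2$''. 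The proof is then organized around reducing, as far as possible, to the untwisted preperiodicity theorem of Lan--Sheng--Yang--Zuo and to Langer's boundedness and semistability results.

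For the ``only if'' direction, suppose $(E,\theta)$ initiates a twisted preperiodic flow. Then, by the reductions above, $(\mathcal{E}nd(E),\mathrm{ad}\,\theta)$ is the starting term of an ordinary untwisted preperiodic Higgs--de Rham flow, whose periodic tail corresponds, via the untwisted case of Theorem~\ref{equiv:TFF&THDF} (i.e.\ the equivalence of \cite{LSZ13a}), to a Fontaine--Faltings module and hence to a crystalline representation. By the necessity part of the Lan--Sheng--Yang--Zuo characterization of preperiodic Higgs bundles, $\mathcal{E}nd(E)$ is therefore $H$-semistable with $\mathrm{ch}_1=\mathrm{ch}_2=0$. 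Translating back via the equivalences above, $(E,\theta)$ is $H$-semistable and $\Delta(E)=-\mathrm{ch}_2(\mathcal{E}nd(E))=0$.

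For the ``if'' direction, assume $(E,\theta)$ is $H$-semistable with $\Delta(E)=0$, and construct a twisted preperiodic flow by hand. First I fix the twisting line bundles: since $C_1^{-1}$ multiplies $c_1$ by $p$ up to a fixed shift and $\mathrm{NS}(X_1)/\mathrm{rk}(E)\,\mathrm{NS}(X_1)$ is finite, I may twist at each step so that the first Chern classes of the successive Higgs bundles run through a fixed finite subset of $\mathrm{NS}(X_1)$. Next I argue that semistability and triviality of the discriminant persist: $C_1^{-1}$ carries $H$-semistable Higgs bundles to $H$-semistable de Rham bundles by Langer's theorem on semistable modules over a Lie algebroid (applicable because $\Delta=0$ supplies the Bogomolov inequality), the Hodge filtration in the flow can be chosen so that the associated graded Higgs bundle remains $H$-semistable (the twisted counterpart of the technical heart of the Lan--Sheng--Yang--Zuo preperiodicity theorem), the twist $\otimes(L,0)$ changes nothing, and, since the induced flow on $\mathcal{E}nd$ is the untwisted flow on $(\mathcal{E}nd(E),\mathrm{ad}\,\theta)$ — which stays in the locus $\mathrm{ch}_1=\mathrm{ch}_2=0$ — one has $\Delta(E_i)=0$ for every term. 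Hence the flow remains inside the moduli space $M$ of $H$-semistable Higgs bundles on $X_1$ with $\Delta=0$ and $c_1$ in the chosen finite set, which by Langer's boundedness theorem is of finite type over the finite field $\mathbb{F}_q$ over which $(E,\theta)$, a $W_2$-lift of $X_1$, and a lift of Frobenius are all defined. Running the flow algebraically in families over $\mathbb{F}_q$ produces a self-morphism $\phi\colon M\to M$, and the $\phi$-orbit of $[E,\theta]$ is contained in the finite set $M(\mathbb{F}_q)$; by the pigeonhole principle this orbit is eventually periodic, so $(E,\theta)$ initiates a twisted preperiodic flow.

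The main obstacle is the step in the ``if'' direction asserting that the Hodge filtration can be chosen so that the associated graded stays $H$-semistable and that the entire construction is algebraic in families over $\mathbb{F}_q$ (so that the flow really defines a self-morphism of $M$): this is precisely the delicate portion of the Lan--Sheng--Yang--Zuo preperiodicity argument, which must now be carried out in the presence of the twisting line bundles and the $\mathrm{PGL}$-structure. The remaining ingredients — compatibility of $C_1^{-1}$ with $\mathcal{E}nd$, Langer's preservation of semistability, and Langer's boundedness — are, by contrast, either formal or available off the shelf.
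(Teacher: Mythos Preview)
Your proposal is essentially correct, and the ``if'' direction is the same pigeonhole argument the paper runs: build the self-map on Langer's bounded moduli scheme over the finite field and use finiteness of $k$-points (the paper's Corollary--Definition~\ref{def:selfmap} and Proposition~\ref{Self-map}). Your worry about the Simpson graded semistable filtration and local freeness is exactly what the paper addresses via Theorem~\ref{thm:existence_of_HiggsdeRham} (citing Langer), so that is not a gap; note only that the self-map is merely constructible (Remark after Corollary--Definition~\ref{def:selfmap}), not a morphism, but this is irrelevant at the level of $k$-points. Your handling of the twist is also looser than the paper's: rather than twisting step-by-step to keep $c_1$ in a finite set, the paper fixes once and for all the period $f_1$ with $r\mid p^{f_1}-1$ and the single line bundle $L_1=\det(E_0)^{(1-p^{f_1})/r}$ (Proposition~\ref{prop:find L1}), which keeps the twisted flow inside the paper's Definition of $L$-twisted periodicity.

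For the ``only if'' direction your route via $\mathcal{E}nd(E)$ is genuinely different from the paper's. The paper argues directly and elementarily: given a Higgs subsheaf $(F,\theta)\subset (E,\theta)$, run the flow to get $F_e\subset E_e$, then use the iterated periodicity isomorphisms $E_e\cong E_{e+mf}\otimes L^{1+p^f+\cdots+p^{(m-1)f}}$ to pull back $F_{e+mf}$ to subsheaves of $E_e$ of slope $p^{mf}(\mu(F_e)-\mu(E_e))+\mu(E_e)$; unboundedness forces $\mu(F_e)\le\mu(E_e)$, hence $\mu(F)\le\mu(E)$. The discriminant statement is immediate from $\Delta(C_1^{-1}(E,\theta))=p^2\Delta(E)$. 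Your reduction to the untwisted Lan--Sheng--Zuo theorem via the tensor compatibilities $C_1^{-1}(\mathcal{E}nd)=\mathcal{E}nd(C_1^{-1})$, $\mathrm{Gr}(\mathcal{E}nd)=\mathcal{E}nd(\mathrm{Gr})$, $\mathcal{E}nd(E\otimes L)=\mathcal{E}nd(E)$ is correct and conceptually pleasant (and the identity $\mathrm{ch}_2(\mathcal{E}nd E)=-\Delta(E)$ is the right bookkeeping), but it imports the untwisted preperiodicity theorem as a black box to prove a statement whose twisted proof is no harder than the untwisted one --- indeed the paper's slope argument is essentially the same computation one would do in the untwisted case. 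So your detour buys a clean reduction principle (twisted questions on $E$ $\leadsto$ untwisted questions on $\mathcal{E}nd E$), while the paper's direct argument is shorter and self-contained.
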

Consequently we obtain the existence of non-trivial representations of \'etale fundamental group in terms of the existence of semistable graded Higgs bundles.\\[.2cm]
\begin{defi} A representation $\pi^\text{\'et}_1(X^o_K)\to \PGL_r(\mathbb{F}_q)$ is called geometrically absolutely irreducible if
its pull-back to the geometric fundamental group 
$$\bar\rho: \pi^\text{\'et}_1( {X^o}_{\bar {\bQ}_p})\to  \PGL_r(\bF_{q})$$
 is absolutely   irreducible, i.e. it is irreducible as a $\PGL_r(\bar{\bF}_p)$-representation.  
\end{defi}
\begin{thm}[Theorem~\ref{Mainthm}]Let $k$ be a finite field of characteristic $p$. Let $\X$ be a smooth proper geometrically connected scheme over $W(k)$ together with a smooth log structure $\D/W(k)$ and let $\X^o=\X\setminus \D.$
 Assume that there exists a semistable graded logarithmic Higgs bundle 
		$(E,\theta)/(\X,\D)_1$  with $r:=\mathrm{rank}( E) \leq p-1,$  discriminant $\Delta_H(E)=0$, $r$ and $\deg_H(E)$ are coprime.  Then there exists a positive integer $f$ and a geometrical absolutely irreducible  $\PGL_r(\F_{p^f})$-representation $\rho$ of $\pi^\text{et}_1(X^o_{K'})$, where $\X^o=\X\setminus \D$ and $K'=W(k\cdot\F_{p^f})[1/p]$.
\end{thm}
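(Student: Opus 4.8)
The idea is to combine the three pillars established earlier in the paper — the equivalence of Theorem~\ref{equiv:logTFF&THDF}, the $\mathbb{D}^P$-functor of Theorem~\ref{ConsFunc:D^P}, and the existence criterion of Theorem~\ref{Main: preperiod} — and then promote a preperiodic flow to a genuinely periodic one over a finite extension, so that a projective crystalline representation actually appears.

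\textbf{Step 1: produce a twisted periodic Higgs-de Rham flow.} Given the semistable graded logarithmic Higgs bundle $(E,\theta)/(\X,\D)_1$ with $r = \mathrm{rank}(E) \leq p-1$ and $\Delta_H(E) = 0$, Theorem~\ref{Main: preperiod} (its logarithmic analogue) tells us that $(E,\theta)$ initiates a twisted \emph{preperiodic} Higgs-de Rham flow over $X_1$. So the sequence of inverse-Cartier transforms and grading operations eventually becomes periodic: there is a tail which is a twisted $f_1$-periodic flow over $X_1$ for some $f_1$. The passage from ``preperiodic'' to a genuine periodic flow is where one needs to be a little careful — one restricts attention to the periodic part of the orbit, which again is a semistable graded Higgs bundle with vanishing discriminant (semistability and $\Delta_H = 0$ are preserved under the flow operations, since the inverse Cartier transform and $\mathrm{Gr}$ do not change these numerical invariants), and whose rank and Higgs-degree remain coprime because the flow operations twist by line bundles and the coprimality of $(r,\deg_H)$ is unaffected.

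\textbf{Step 2: lift to $W_n$ and assemble a twisted Fontaine-Faltings module.} A twisted $f$-periodic Higgs-de Rham flow over $X_1$ of level $\leq a$ (here $a = r-1 \leq p-2$, which is exactly the range in which the inverse Cartier transform and the whole machinery are available) lifts, by the obstruction-theoretic arguments used throughout for Higgs-de Rham flows, to a twisted $f$-periodic flow over $X_n$ for every $n$; taking the inverse limit one obtains a flow over the formal scheme $\X$. Applying the equivalence of Theorem~\ref{equiv:logTFF&THDF} (the logarithmic form of Theorem~\ref{thm:second_thm}) converts this into a twisted logarithmic Fontaine-Faltings module $M$ over $\X$ (with pole along $\D$) carrying an endomorphism structure of $W(\F_{p^f})$; one may need to enlarge $f$ to a multiple so that $\F_{p^f} \subset k\cdot\F_{p^f}$ holds and the hypothesis $\mathbb{F}_{p^f} \subset k$ of Theorem~\ref{ConsFunc:D^P} is met — this is harmless and only forces the base change $K' = W(k\cdot\F_{p^f})[1/p]$ in the statement.

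\textbf{Step 3: extract the representation and prove irreducibility.} Apply the functor $\mathbb{D}^P$ of Theorem~\ref{ConsFunc:D^P} to $M$ to get a projective representation $\rho : \pi_1^{\text{\'et}}(X^o_{K'}) \to \mathrm{PGL}(\mathbb{D}^P(M)) \cong \mathrm{PGL}_r(\F_{p^f})$ (the underlying $\F_{p^f}$-space has dimension $r$ because of the rank-$r$ Higgs bundle and the $W(\F_{p^f})$-structure). For irreducibility: by the properties of $\mathbb{D}^P$ developed in Section~\ref{section SRSPHdRF}, any sub-representation of $\mathbb{D}^P(M)$ over the geometric fundamental group corresponds to a sub-object $N \subset M$ in the category of twisted (logarithmic) Fontaine-Faltings modules, hence — running Theorem~\ref{equiv:logTFF&THDF} backwards — to a sub-flow, i.e. to a sub-Higgs-sheaf of $(E,\theta)$ that is itself periodic, so in particular has vanishing discriminant and rank-degree proportional to a ``periodic'' (degree-zero-twist) slice. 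But the coprimality of $r$ and $\deg_H(E)$ together with semistability forces $(E,\theta)$ to be \emph{stable}, and a stable Higgs bundle admits no proper sub-Higgs-sheaf of the same slope; combined with the numerical rigidity of periodic sub-objects this rules out proper invariant subspaces, over $\bar{\F}_p$ as well, giving geometric absolute irreducibility of $\rho$.

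\textbf{Main obstacle.} The delicate point is Step~1 together with the irreducibility half of Step~3: one must show that the preperiodic flow guaranteed by Theorem~\ref{Main: preperiod} can be replaced by an honestly periodic one \emph{without destroying the coprimality of rank and Higgs-degree} (the twists by line bundles inherent in the twisted flow shift degrees, so one has to track the ``$\deg_H$'' invariant — the Higgs-degree relative to the twisting bundles — carefully and check it lands coprime to $r$ in the periodic tail), and then that the correspondence between projective sub-representations and sub-objects is compatible enough with these numerical invariants that stability of $(E,\theta)$ genuinely propagates to absolute irreducibility of $\rho$ after base change to $\bar{\bQ}_p$. Everything else is a matter of citing and chaining the equivalences already set up in the earlier sections.
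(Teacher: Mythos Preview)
Your Step~2 is unnecessary and misleading: the target is a $\PGL_r(\F_{p^f})$-representation, so everything stays at level $n=1$. The twisted $f$-periodic flow over $X_1$ is sent, via Theorem~\ref{equiv:logTFF&THDF}, to a strict $p$-torsion twisted Fontaine--Faltings module in $\mathcal{TMF}^\nabla_{[0,a],f}(X_2^o/W_2)$, and $\mathbb D^P$ applied to that already produces a projective $\F_{p^f}$-representation. No lifting to $X_n$ for all $n$ is needed, and the obstruction-theoretic lifting you cite (Proposition~\ref{Lifting_PHDF}) requires hypotheses not available here.

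The genuine gap is in Step~3. The subobject correspondence you invoke (Proposition~\ref{subrep-1}, Corollary~\ref{subHDF}) matches projective subrepresentations of $\pi_1^{\text{\'et}}(X^o_{K'})$ with sub twisted Fontaine--Faltings modules over $X_1$, hence with sub periodic flows. Combined with Corollary~\ref{slop sub THDF} and stability, this yields irreducibility of $\rho$ as a representation of the \emph{arithmetic} fundamental group $\pi_1^{\text{\'et}}(X^o_{K'})$ --- and that is precisely what the paper proves in Theorem~\ref{Mainthm} (Section~3). But the statement asks for \emph{geometric absolute} irreducibility: irreducibility after restricting to $\pi_1^{\text{\'et}}(X^o_{\bar{\bQ}_p})$. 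A subspace invariant only under the geometric subgroup need not arise from a subobject of the Fontaine--Faltings module over the original base, so your argument stops short of the conclusion.

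The paper closes this gap with the machinery of Section~5, which your proposal omits entirely. One fixes a $K_0$-rational point to obtain a section of $\pi_1^{\text{\'et}}(X^o_{K_0})\twoheadrightarrow\Gal(\overline{K}_0/K_0)$; since the image of $\rho$ is finite, there is a finite extension $K/K_0$ with $\rho(\pi_1^{\text{\'et}}(X^o_K))=\rho(\pi_1^{\text{\'et}}(X^o_{\overline{K}_0}))$, so it suffices to prove irreducibility over $\pi_1^{\text{\'et}}(X^o_K)$. The difficulty is that $K/K_0$ is in general \emph{ramified}, so one must rebuild the twisted Fontaine--Faltings and Higgs--de Rham formalism over the PD-hull $\sX_{\pi,1}$ of the very ramified valuation ring $W_\pi$ (Theorems~\ref{thm:functorD^P} and~\ref{thm:equivalent}). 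Only then does the subobject argument apply to $\pi_1^{\text{\'et}}(X^o_K)$, and the slope computation of Lemma~\ref{lem:deg&C^-1} together with the coprimality hypothesis rules out proper subflows (Theorem~\ref{ramified_Thm}). This base-change step is the substantive content of the ``geometric'' half of the theorem.
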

The proof of Theorem $0.5$ will be divided into two parts. We first show the existence of the irreducible projective representation of $\pi^\text{\'et}_1(X^o_{K'})$, in section $3$ (see Theorem~\ref{Mainthm}).
The proof for the geometric irreducibility of $\rho$  will be postponed to Section-5.  \\[.1cm]
The second main result of this paper, the so-called \emph{ base changing of the projective Fontaine-Faltings module and twisted Higgs-de Rham flow over a very ramified valuation ring  $V$}  is introduced in Section-5.  We show that there exists an equivalent functor from the category of twisted periodic Higgs-de Rham flow over $ \sX_{\pi,1}$  to the category of twisted Fontaine-Faltings modules over $ \sX_{\pi,1},$ where  $ \sX_{\pi,1}$ is the closed fiber of the formal completion of the base change of $\mathcal{X}$ to the PD-hull of $V$.
As a consequence,  we prove the second statement of Theorem 0.5 on the geometric absolute irreducibility of $\rho$ in Subsection 5.4 (see Theorem~\ref{ramified_Thm}).\\[.1cm]
We like to emphasize that the Fontaine-Faltings module and Higgs-de Rham flow over a very ramified valuation ring $V$  introduced here shall be a crucial step toward to constructing  $p$-adic Hitchin-Simpson correspondence between the category of de Rham representations of
 $\pi_1^\text{\'et}(X_{V[1/p]})$  and the category of periodic Higgs bundles over a potentially semistable reduction $\mathcal{X}_{V}$.\\[.2cm]
As the third ingredient of this paper, we investigate the dynamic of Higgs-de Rham flows on the projective line with marked points in Section~\ref{section CCREFGpCHB}.  
Taking the moduli space $M$ of graded stable Higgs bundles of rank-$2$  and degree $1$  over $\P^1$  with logarithmic structure on $m\geq 4$ marked points we show that the self-map induced by Higgs-de Rham flow stabilizes the component  $M(1,0)$ of $M$  of maximal dimension $m-3$ as a rational and dominant map.
Hence by Hrushovski's theorem \cite{Hru}  the subset of periodic Higgs bundles is Zariski dense in $M(1,0)$. In this way, we produce infinitely many geometrically absolutely irreducible $\mathrm{PGL}_2(\F_{p^f})$-crystalline representations.   By Theorem~\ref{Mainthm},   all these representations lift to $\mathrm{PGL}_2(\mathbb Z_p^{ur})$-crystalline representations. In Proposition~\ref{strong_irred} we show that all those lifted representations are strongly irreducible.\\
 For the case of four marked points $\{0,1,\infty,\lambda\} $ we state an explicit formula for the self-map
 and use it to study the dynamics of Higgs-de Rham flows for $p=3$ and several values of $\lambda$. \\[.1cm]
 Much more exciting, we claim that  (Conjecture \ref{conj-1}) the self-map on the moduli space $M(1,0)$ induced by the Higgs-de Rham flow for $\mathbb{P}^1\supset \{0,\,1,\infty,\lambda\}$ coincides with the multiplication
 by $p$ map on the associated elliptic curve  
  defined as the double cover $\pi: \mathcal{C}_\lambda\to\mathbb{P}^1$ 
and   ramified on
 $\{0,\,1,\infty,\lambda\}$.  We have checked this conjecture holds true for $p\leq 50.$   It really looks surprised that the self-map coming from nonabelian $p$-adic Hodge theory has really something to do with the addition law on an elliptic curve. \\[.1cm]
For   $\ell$-adic representations Kontsevich has observed a relation between the set of isomorphic classes of $\text{GL}_2(\bar{\mathbb Q}_l)$-local systems over $\mathbb{P}^1\setminus \{0,\,1,\infty,\lambda\}$ over $\mathbb{F}_q$ and the set of rational points on $C_\lambda$ over $\mathbb{F}_q$  via the work of Drinfeld on the Langlands program over function field. It looks quite mysterious. There should exist a relation between periodic Higgs bundles in the $p$-adic world and the Hecke-eigenforms in the $\ell$-adic world via Abe's solution of Deligne conjecture on $\ell$-to-$p$ companions. We plan to carry out this program in a further coming paper joint with J.Lu and X.Lu \cite{preparation}.\\[.2cm]
 In the last subsection \ref{proj F-units}, we consider a smooth projective curve $\X$  over $W(k)$  of genus $g\geq2$. In the Appendix of \cite{Osserman}, de Jong and Osserman have shown that the subset of twisted periodic vector bundles over $X_1$   in the moduli space of semistable vector bundles over $X_1$ of any rank and any degree is always Zariski dense. By applying our main theorem for twisted periodic Higgs de Rham flows with zero Higgs fields, which should be regarded as projective \'etale trivializable vector bundles in the projective version of Lange-Stuhler's theorem (see~\cite{LangeStuhe}), they all correspond to $\mathrm{PGL}_r(\F_{p^f})$- representations of $\pi^\text{\'et}_1(X_1)$.
 Once again we show that they all lift to $\mathrm{PGL}_r(\Z_p^\mathrm{ur})$ of  $\pi^\text{\'et}_1(X_1)$. 
 It should be very interesting to make a comparison between the lifting theorem obtained here lifting  $\mathrm{GL}_r(\F_{p^f})$-representations of $\pi^\text{\'et}_1(X_1)$  to $\mathrm{GL}_r(\Z_p^\mathrm{ur})$-representation of $\pi^\text{\'et}_1({X_1}_{\bar{\F}_p})$ and the lifting theorem developed by Deninger-Werner~\cite{DW}. In their paper, they have shown that any vector bundle over $\X/W$  which is \'etale trivializable over $X_1$  lifts to a $\mathrm{GL}_r(\mathbb{C}_p)$-representation of $\pi^\text{\'et}_1(X_{\overline K})$.\\ 


	\section{Twisted Fontaine-Faltings modules}\label{section TFFM}
	In this section, we will recall the definition of Fontaine-Faltings modules in~\cite{Fal89} and generalize it to the twisted version. 

   \subsection{Fontaine-Faltings modules}\label{section FFM}

	Let $X_n$ be a smooth and proper variety over $W_n(k)$.
	And $(V,\nabla)$ is a \emph{de Rham sheaf} (i.e. a sheaf with an integrable connection) over $X_n$. In this paper, a filtration $\Fil$ on $(V,\nabla)$ will be called a \emph{Hodge filtration of level in $[a,b]$} if the following conditions hold:
	\begin{itemize}
		\item[-] $\Fil^i V$'s are locally split sub-sheaves of $V$, with 
		\[V=\Fil^aV\supset \Fil^{a+1}V \supset\cdots \supset \Fil^bV\supset \Fil^{b+1}V=0,\]
		and locally on all open subsets $U\subset X_n$, the graded factor\\ $\Fil^i V(U)/\Fil^{i+1} V(U)$ are finite direct sums of $\mathcal O_{X_n}(U)$-modules of form $\mathcal O_{X_n}(U)/p^e$. 
		\item[-] $\Fil$ satisfies Griffiths transversality with respect to the connection $\nabla$.
	\end{itemize} 
	In this case, the triple $(V,\nabla,\Fil)$ is called a \emph{filtered de Rham sheaf}. One similarly gives the conceptions of \emph{ (filtered) de Rham modules over a $W$-algebra}. 
	
	\subsubsection{Fontaine-Faltings modules over a small affine base.}
	Let $\U=\mathrm{Spec}R$ be a small affine scheme ( which means there exist an \'etale map  $$W_n[T_1^{\pm1},T_2^{\pm1},\cdots, T_{d}^{\pm1}]\rightarrow \sO_{X_n}(U),$$ see \cite{Fal89}) over $W$ and   $\Phi:\widehat{R}\rightarrow\widehat{R}$ be a lifting of the absolute Frobenius on $R/pR$, where $\widehat{R}$ is the $p$-adic completion of $R$. A Fontaine-Faltings module over $\U$ of Hodge-Tate weight in $[a,b]$ is a quadruple $(V,\nabla,\Fil,\varphi)$, where 
	\begin{itemize}
		\item[-] $(V,\nabla)$ is a de Rham $R$-module;
		\item[-] $\Fil$ is a Hodge filtration on $(V,\nabla)$ of level in $[a,b]$; 
		\item[-] $\varphi$ is an $R$-linear isomorphism \[\varphi:F^*_{\widehat{\U},\Phi}\widetilde{V}=\widetilde{V}\otimes_{\Phi}\widehat{R} \longrightarrow V,\]
		where $F^*_{\widehat{\U},\Phi}=\mathrm{Spec}(\Phi)$, $\widetilde{V}$ is the quotient $\bigoplus\limits_{i=a}^b\Fil^i/\sim$ with $x\sim py$ for any $x\in\Fil^iV$ and $y$ is the image of $x$ under the natural inclusion $\Fil^iV\hookrightarrow\Fil^{i-1}V$. 
		\item[-] The relative Frobenius $\varphi$ is horizontal with respect to the connections $F^*_{\widehat{\U},\Phi}\widetilde{\nabla}$ on $F^*_{\widehat{\U},\Phi}\widetilde{V}$ and $\nabla$ on $V$, 
		i.e. the following diagram commutes:
		\begin{equation*}
		\xymatrix{F^*_{\widehat{\U},\Phi}\widetilde{V} \ar[r]^{\varphi} \ar[d]^{F^*_{\widehat{\U},\Phi}\widetilde{\nabla}} & V\ar[d]^{\nabla} \\
			F^*_{\widehat{\U},\Phi}\widetilde{V}\otimes \Omega_{\U/W}^1 \ar[r]^{\quad \varphi\otimes \mathrm{id}}  & V\otimes \Omega_{\U/W}^1}
		\end{equation*}  
	\end{itemize}
	
	Let $M_1=(V_1,\nabla_1,\Fil_1,\varphi_1)$ and $M_2=(V_2,\nabla_2,\Fil_2,\varphi_2)$ be two Fontaine-Faltings modules over $\U$ of Hodge-Tate weight in $[a,b]$. The homomorphism set between $M_1$ and $M_2$ constitutes by those morphism $f:V_1\rightarrow V_2$ of $R$-modules, satisfying:
	\begin{itemize}
		\item[-] $f$ is strict for the filtrations. i.e. $f^{-1}(\Fil^iV_2)=\Fil^iV_1$.
		\item[-] $f$ is a morphism of de Rham modules. i.e. $(f\otimes \mathrm{id})\circ\nabla_1=\nabla_2\circ f$.
		\item[-] $f$ commutes with the $\varphi$-structures. i.e. $(\widetilde{f}\otimes \mathrm{id})\circ\varphi_1=\varphi_2\circ f$, where $\widetilde{f}$ is the image of $f$ under Faltings' tilde functor.
	\end{itemize}
	Denote by  $\mathcal {MF}_{[a,b]}^{\nabla,\Phi}(\U/W)$ the category of all Fontaine-Faltings modules over $\U$ of Hodge-Tate weight in $[a,b]$.

	\paragraph{\emph{The gluing functor.}} In the following, we recall the gluing functor of Faltings. In other words, up to a canonical equivalence of categories, the category $\mathcal {MF}_{[a,b]}^{\nabla,\Phi}(\U/W)$ does not depend on the choice of $\Phi$. More explicitly, the equivalent functor is given as follows.
	
	Let $\Psi$ be another lifting of the absolute Frobenius. For any filtered de Rham module $(V,\nabla,\Fil)$, Faltings~\cite[Theorem~2.3]{Fal89} shows that there is a canonical isomorphism by Taylor formula
	\[\alpha_{\Phi,\Psi}:	F^*_{\widehat{\U},\Phi}\widetilde{V}\simeq F^*_{\widehat{\U},\Psi}\widetilde{V},\]
	which is parallel with respect to the connection, satisfies the cocycle conditions and induces an equivalent functor of categories 
	\begin{equation}
	\xymatrix@R=0mm{ \mathcal {MF}_{[a,b]}^{\nabla,\Psi}(\U/W)\ar[r]  & \mathcal {MF}_{[a,b]}^{\nabla,\Phi}(\U/W)\\
		(V,\nabla,\Fil,\varphi)\ar@{|->}[r] &  (V,\nabla,\Fil,\varphi\circ\alpha_{\Phi,\Psi})\\}
	\end{equation}
	%

	\subsubsection{Fontaine-Faltings modules over global base.} 
	Let $I$ be the index set of all pairs $(\U_i,\Phi_i)$. The $\U_i$ is a small affine open subset of $\X$, and $\Phi_i$ is a lift of the absolute Frobenius on $\mathcal O_\X(\U_i)\otimes_W k$. Recall that the category $\mathcal {MF}_{[a,b]}^{\nabla}(\X/W)$ is constructed by gluing those categories $\mathcal {MF}_{[a,b]}^{\nabla,\Phi_i}(\U_i/W)$. Actually $\mathcal {MF}_{[a,b]}^{\nabla,\Phi_i}(\U_i/W)$ can be described more precisely as below.
	
	A Fontaine-Faltings module over $\X$ of Hodge-Tate weight in $[a,b]$ is a tuple $(V,\nabla,\Fil,\{\varphi_i\}_{i\in I})$ over $\X$, i.e. a filtered de Rham sheaf $(V,\nabla,\Fil)$ together with  $\varphi_i: \widetilde{V}(\U_i)\otimes_{\Phi_i} \widehat{\mathcal O_\X(\U_i)}\rightarrow V(\U_i)$ such that
	\begin{itemize}
		\item[-] $M_i:=(V(\U_i),\nabla,\Fil,\varphi_i)\in \mathcal {MF}_{[a,b]}^{\nabla,\Phi_i}(\U_i/W)$.
		\item[-]  For all $i,j\in I$, on the overlap open set $\U_i\cap \U_j$, local Fontaine-Faltings modules $M_i\mid_{\U_{i}\cap \U_j}$ and  $M_j\mid_{\U_{i}\cap \U_j}$ are associated to each other by the equivalent functor respecting these two liftings $\Phi_i$ and $\Phi_j$. In other words, the following diagram commutes
		\begin{equation}
		\xymatrix@C=2cm{  
			\widetilde{V}(\U_{ij})\otimes_{\Phi_i} \widehat{\mathcal O_\X(\U_i)}
			\ar[r]^{\alpha_{\Phi_i,\Phi_j}} \ar[d]^{\varphi_i}  
			& \widetilde{V}(\U_{ij})\otimes_{\Phi_j} \widehat{\mathcal O_\X(\U_i)}
			\ar[d]^{\varphi_j} \\
			V(\U_{ij})\ar[r]^{\mathrm{id}} & V(\U_{ij})\\}
		\end{equation} 
	\end{itemize}
	Morphisms between Fontaine-Faltings modules are those between sheaves and locally they are morphisms between local Fontaine-Faltings modules. More precisely, for a morphism $f$ of the underlying sheaves of two Fontaine-Faltings modules over $\X$, the map $f$ is called a morphism of Fontaine-Faltings modules if and only if  $f(\U_i) \in \mathrm{Mor}\left(\mathcal {MF}_{[a,b]}^{\nabla,\Phi_i}(\U_i/W)\right)$, for all $i\in I$.

	Denote by $\mathcal {MF}_{[a,b]}^{\nabla}(\X/W)$ the category of all Fontaine-Faltings modules over $\X$ of Hodge-Tate weight in $[a,b]$. And denote by $\mathcal {MF}_{[a,b]}^{\nabla}(X_{n+1}/W_{n+1})$ the sub-category of $\mathcal {MF}_{[a,b]}^{\nabla}(\X/W)$ consisted of strict $p^n$-torsion Fontaine-Faltings modules over $\X$ of Hodge-Tate weight in $[a,b]$.

	\subsection{Inverse Cartier functor}\label{section ICF}
	
	For a Fontaine-Faltings module $(V,\nabla,\Fil,\{\varphi_i\}_{i\in I})$, we call $\{\varphi_i\}_i$ the $\varphi$-structure of the Fontaine-Faltings module. In this section, we first recall a global description of the $\varphi$-structure via the inverse Cartier functor over truncated Witt rings constructed by Lan, Sheng, and Zuo~\cite{LSZ13a}.
	
	Note that the inverse Cartier functor $C_1^{-1}$ (the characteristic $p$ case) is introduced in the seminal work of Ogus-Vologodsky \cite{OgVo07}. Here we sketch an explicit construction of $C_1^{-1}$ presented in \cite{LSZ13a}.
	Let $(E,\theta)$ be a nilpotent Higgs bundle over $X_1$ of exponent$\leq p-1$. Locally we have
	\begin{itemize}
		\item[-] $V_i=F_{\U_{i}}^*(E\mid_{\U_i})$,
		\item[-] $\nabla_i = \mathrm{d} + \frac{\mathrm{d} \tilde{F}_{\tilde{\U}_i}} {[p]} (F_{\U_{i}}^*\theta\mid_{\U_i}): V_i\rightarrow V_i\otimes \Omega_{\U_i}^1$, 
		\item[-] $G_{ij} = \mathrm{exp}(h_{ij}(F_{\U_{i}}^*\theta\mid_{\U_i})): V_i\mid_{\U_{ij}}\rightarrow V_j\mid_{\U_{ij}}$, 
	\end{itemize}
	where $F_{\U_i}$ is the absolute Frobenius on $\U_i$ and $h_{ij}:F_{\U_{i,1}}^*\Omega^1_{\U_{ij}}\rightarrow \sO_{\U_{ij}}$ is the homomorphism given by the Deligne-Illusie's Lemma~\cite{JJIC02}. Those local data $(V_i,\nabla_i)$'s can be glued into a global sheave $H$ with integrable connection $\nabla$ via the transition maps $\{ G_{ij} \}$ (Theorem 3 in \cite{LSZ12a}).  The inverse Cartier functor on $(E,\theta)$ is 
	\[C^{-1}_1(E,\theta):=(V,\nabla).\]
	
	\begin{rmk}
		Note that the inverse Cartier transform $C^{-1}_1$ also has the logarithmic version. When the log structure is given by a simple normal crossing divisor, an explicit construction of the log inverse Cartier functor is given in the Appendix of \cite{LSYZ14}. 
	\end{rmk}
	As mentioned in the introduction, we need to generalize  $C^{-1}_1$ to the inverse Cartier transform over the truncated Witt ring for Higgs bundles over $X_n/W_m(k)$. We briefly recall the construction in section $4$ of \cite{LSZ13a}.

	\subsubsection{Inverse Cartier functor over truncated Witt ring}
	Let $S=\mathrm{Spec(W(k))}$ and $F_S$ be the Frobenius map on $S$. Let $ X_{n+1}\supset X_n$ be a $W_{n+1}$-lifting of smooth proper varieties. Recall that the functor $C^{-1}_n$ is defined as the composition of $\mathcal C^{-1}_n$ and the base change $F_S:X_n'=X_n\times_{F_S} S \rightarrow X_n$ (by abusing notation, we still denote it by $F_S$). The functor $\mathcal C^{-1}_n$  is defined as the composition of two functors $\mathcal{T}_n$ and $\mathcal{F}_n$. 
	In general, we have the following diagram and its commutativity follows easily from the construction of those functors.
	\begin{equation}\label{diag:C^{-1}}
	\xymatrix{
		& \mathcal{H}(X_n)\ar[drr]_(0.3){C_n^{-1}}|!{[r];[ddr]}\hole  \ar[r]^{F_S^*}\ar[dd]^{\mathcal T_n}
		& \mathcal{H}(X'_n) \ar[dr]^{\mathcal C_n^{-1}} \ar[dd]_{\mathcal T_n} 
		&\\
		\mathrm{MCF}_{p-2}(X_n)\ar[dr]_{\widetilde{(\cdot)}}\ar[ur]^{\overline{\mathrm{Gr}}} 
		&&& \mathrm{MIC}(X_n)\\
		&\widetilde{\mathrm{MIC}}(X_n) \ar[r]_{F_S^*}  \ar@{..>}[urr]^(0.3){\{F_\U^*\}_\U}|!{[r];[uur]}\hole & \widetilde{\mathrm{MIC}}(X'_n)\ar[ur]_{\quad\mathcal F_n=\{F_{\U/S}^*\}_\U} &\\
	}
	\end{equation}
	These categories appeared in the diagram are explained as following:
	\begin{itemize}
		\item $\mathrm{MCF}_{a}(X_n)$ is the category of filtered de Rham sheaves over $X_n$ of level in $[0,a]$.
		\item  $\mathcal{H}(X_n)$ (resp. $\mathcal{H}(X'_n)$)  is the category of tuples $(E,\theta,\bar{V},\bar{\nabla},\overline{Fil},\overline{\psi})$, where 
		\begin{itemize}
			\item[-] $(E,\theta)$ is a graded Higgs module
                    \footnote{A Higgs bundle $(E,\theta)$ is called graded if $E$ can be written as direct sum of sub bundles $E^{i}$ with $\theta(E^i)\subset E^{i-1}\otimes\Omega^1$. Obviously, a graded Higgs bundle is also nilpotent.}
                   over $X_n$ (resp. $X_n'=X_n\otimes_\sigma W$) of exponent $\leq p-2$;
			\item[-] $(\bar{V},\bar{\nabla},\overline{Fil})$ is a filtered de Rham sheaf over $X_{n-1}$ (resp. over $X_{n-1}'$);
			\item[-] and  $\overline{\psi}: Gr_{\bar{Fil}}(\bar{V},\bar{\nabla}) \simeq (E, \theta)\otimes\Z/p^{n-1}\Z$ is an isomorphism of Higgs sheaves over $X_n$ (resp. $X_n'$).
		\end{itemize}    
		\item $\widetilde{\mathrm{MIC}}(X_n)$ (resp. $\widetilde{\mathrm{MIC}}(X'_n)$) is the category of sheaves over $X_n$ (resp. $X'_n$) with integrable $p$-connection . 
		\item $\mathrm{MIC}(X_n)$ (resp. $\mathrm{MIC}(X'_n)$) is the category of de Rham sheaves over $X_n$ (resp. $X'_n$). 
	\end{itemize}
	\paragraph{\emph{Functor $\overline{\mathrm{Gr}}$.}} For an object $(V,\nabla,\Fil)$ in $\mathrm{MCF}_{p-2}(X_n)$, the functor $\overline{\mathrm{Gr}}$ is given by 
	\[\overline{\mathrm{Gr}}(V,\nabla,\Fil)=(E,\theta,\overline{V},\overline{\nabla},\overline{Fil},\overline{\psi}),\]
	where $(E,\theta)=\mathrm{Gr}(V,\nabla,\Fil)$ is the graded sheaf with Higgs field,   $(\overline{V},\overline{\nabla},\overline{Fil})$ is the modulo $p^{n-1}$-reduction of $(V,\nabla,\Fil)$ and $\overline{\psi}$ is the identifying map  $\mathrm{Gr}(\overline{V})\cong E\otimes \Z/p^{n-1}\Z$.\\
	\paragraph{\emph{Faltings tilde functor $\widetilde{(\cdot)}$.}} For an object $(V,\nabla,\Fil)$ in $\mathrm{MCF}_{p-2}(X_n)$, the $\widetilde{(V,\nabla,\Fil)}$ will be denoted as the quotient $\bigoplus\limits_{i=0}^{p-2}\Fil^i/\sim$ with $x\sim py$ for any $x\in\Fil^iV$ and $y$ the image of $x$ under the natural inclusion $\Fil^iV\hookrightarrow\Fil^{i-1}V$.  
	
	\paragraph{\emph{The construction of functor $\mathcal{T}_n$.}} Let $(E,\theta,\bar{V},\bar{\nabla},\overline{Fil},\psi)$ be an object in $\mathcal{H}(X_n)$ (resp. $\mathcal{H}(X'_n)$). Locally on an affine open subset $\U\subset \X$ (resp. $\U\subset \X'$), there exists $(V_\U,\nabla_\U,\Fil_\U)$ (Lemma 4.6 in~\cite{LSZ13a}), a filtered de Rham sheaf, such that 
	\begin{itemize}
		\item[-] $(\bar{V},\bar{\nabla},\overline{Fil})\mid_\U \simeq (V_\U,\nabla_\U,\Fil_\U) \otimes \Z/p^{n-1}\Z$;
		\item[-] $(E,\theta)\mid_\U \simeq \mathrm{Gr}(V_\U,\nabla_\U,\Fil_\U)$. 
	\end{itemize}
	The tilde functor associates to $(V_\U,\nabla_\U,\Fil_\U)$ a sheaf with $p$-connection over $\U$. By gluing those sheaves with $p$-connections over all $\U$'s (Lemma 4.10 in~\cite{LSZ13a}), one gets a global sheaf with $p$-connection over $X_n$ (resp. $X'_n$). Denote it by 
	\[\mathcal T_n(E,\theta,\bar{V},\bar{\nabla},\overline{Fil},\psi).\]
	
	\paragraph{\emph{The construction of functor $\mathcal F_n$.}} For small affine open subset $\U$ of $\X$, there exists endomorphism $F_\U$ on $\U$ which lifts the absolute Frobenius on $\U_k$ and is compatible with the Frobenius map $F_S$ on $S=\mathrm{Spec}(W(k))$. Thus there is a map $F_{\U/S}:\U\rightarrow \U'=\U\times_{F_S} S$  satisfying  $F_\U= F_S\circ F_{\U/S}$.
	\begin{equation}
	\xymatrix{
		\U \ar@/^15pt/[drr]^{F_\U}  \ar[dr]^{F_{\U/S}} \ar@/_15pt/[ddr] &&\\
		& \U' \ar[r]^{F_S}\ar[d] & \U\ar[d]\\
		& S\ar[r]^{F_S} & S\\}
	\end{equation}
	
	Let $(\widetilde{V}',\widetilde{\nabla}')$ be an object in $\widetilde{MIC}(X'_n)$. Locally on $\U$, applying functor $F_{\U/S}^*$, we get a de Rham sheaf over $\U$
	\[F_{\U/S}^*(\widetilde{V}'\mid_{\U'},\widetilde{\nabla}'\mid_{\U'}).\]
	By Taylor formula, up to a canonical isomorphism, it does not depends on the choice of $F_\U$. In particular, on the overlap of two small affine open subsets, there is an canonical isomorphism of two de Rham sheaves. By gluing those isomorphisms, one gets a de Rham sheaf over $X_n$, we denote it by
	\[\mathcal F_n(\widetilde{V}',\widetilde{\nabla}').\]

	\subsection{Global description of the $\varphi$-structure in Fontaine-Faltings modules (via the inverse Cartier functor).}\label{section GDphiFFM}
	Let $(V,\nabla,\Fil)\in \mathrm{MFC}_{p-2}(X_n)$ be a filtered de Rham sheaf over $X_n$ of level in $[0,p-2]$. From the commutativity of diagram~(\ref{diag:C^{-1}}), for any $i\in I$, one has
	\begin{equation}
	\begin{split}
	C_n^{-1}\circ\overline{\mathrm{Gr}}(V,\nabla,\Fil)\mid_{\U_i} &= \mathcal F_n\circ\mathcal T_n\circ F_S^*\circ \overline{\mathrm{Gr}}(V,\nabla,\Fil)\mid_{\U_i}\\
	&=\mathcal F_n\circ F_S^* (\widetilde{V},\widetilde{\nabla})\mid_{\U_i}\\
	&\simeq F_{\U_i}^*(\widetilde{V}\mid_{\U_i},\widetilde{\nabla}\mid_{\U_i}).
	\end{split}
	\end{equation}
	Here $F_{\U_i}=\mathrm{Spec}(\Phi_i): \U_i\rightarrow \U_i$ is the lifting of the absolute Frobenius on $\U_{i,k}$. As the $\mathcal F_n$ is glued by using the Taylor formula, for any $i,j\in I$, one has the following commutative diagram
	\begin{equation}
	\xymatrix{
		F_{\U_i}^*(\widetilde{V}\mid_{\U_i\cap \U_j},\widetilde{\nabla}\mid_{\U_i\cap \U_j})
		\ar[r]^(0.47){\sim} \ar[d]^{\alpha_{\Phi_i,\Phi_j}}
		& C^{-1}_n\circ \overline{\mathrm{Gr}}(V,\nabla,\Fil) \mid_{\U_i\cap \U_j} \ar@{=}[d]\\
		F_{\U_j}^*(\widetilde{V}\mid_{\U_i\cap \U_j},\widetilde{\nabla}\mid_{\U_i\cap \U_j}) \ar[r]^(0.47){\sim}
		& C^{-1}_n\circ \overline{\mathrm{Gr}}(V,\nabla,\Fil)\mid_{\U_i\cap \U_j}\\ 
	}
	\end{equation}
	To give a system of compatible $\varphi$-structures (for all $i\in I$)
	\[\varphi_{i}: F_{\U_i}^*(\widetilde{V}\mid_{\U_i},\widetilde{\nabla}\mid_{\U_i}) \rightarrow (V\mid_{\U_i},\nabla\mid_{\U_i}),\]
	it is equivalent to give an isomorphism 
	\[\varphi:C^{-1}_n\circ \overline{\mathrm{Gr}}(V,\nabla,\Fil) \rightarrow (V,\nabla).\]
	In particular, we have the following results
	\begin{lem}[Lemma 5.6 in~\cite{LSZ13a}] \label{lem:anotherDiscribtionFM}
		To give a Fontaine-Faltings module in $\mathcal{MF}_{[0,p-2]}^\nabla(\X/W)$, it is equivalent to give a tuple $(V,\nabla,\Fil,\varphi)$, where
		\begin{itemize}
			\item[-] $(V,\nabla,\Fil)\in \mathrm{MCF}_{p-2}(X_n)$  is a filtered de Rham sheaf over $X_n$ of level in $[0,p-2]$, for some positive integer $n$;
			\item[-] $\varphi: C_n^{-1}\circ \overline{\mathrm{Gr}} (V,\nabla,\Fil)\rightarrow (V,\nabla)$ is an isomorphism of de Rham sheaves.
		\end{itemize} 
	\end{lem}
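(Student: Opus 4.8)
The plan is to package the local $\varphi$-structures $\{\varphi_i\}_{i\in I}$ of a global Fontaine-Faltings module into a single morphism of de Rham sheaves, exploiting the fact that the cocycle data built into the inverse Cartier functor $C_n^{-1}$ is exactly Faltings' Taylor-formula gluing data $\alpha_{\Phi_i,\Phi_j}$.

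First I would set up the local dictionary. Given $(V,\nabla,\Fil)\in\mathrm{MCF}_{p-2}(X_n)$, the sheaf $\overline{\mathrm{Gr}}(V,\nabla,\Fil)$ is a graded Higgs sheaf, hence nilpotent of exponent $\le p-2$, so $C_n^{-1}$ applies to it. The computation recalled just before the statement gives, on each small affine $\U_i$, a canonical isomorphism of de Rham sheaves $\beta_i\colon F_{\U_i}^*(\widetilde V|_{\U_i},\widetilde\nabla|_{\U_i})\xrightarrow{\ \sim\ }C_n^{-1}\circ\overline{\mathrm{Gr}}(V,\nabla,\Fil)|_{\U_i}$, and the subsequent diagram records that on overlaps one has $\beta_j^{-1}\circ\beta_i=\alpha_{\Phi_i,\Phi_j}$. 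On the other hand, the gluing axiom for a global Fontaine-Faltings module says precisely that $\varphi_i=\varphi_j\circ\alpha_{\Phi_i,\Phi_j}$ on $\U_i\cap\U_j$. Combining the two, the local maps $\varphi_i\circ\beta_i^{-1}\colon C_n^{-1}\circ\overline{\mathrm{Gr}}(V,\nabla,\Fil)|_{\U_i}\to V|_{\U_i}$ agree on overlaps, hence glue to a global morphism $\varphi$; since each $\varphi_i$ and each $\beta_i$ is an isomorphism of de Rham sheaves, so is $\varphi$.

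Conversely, from a tuple $(V,\nabla,\Fil,\varphi)$ I would define $\varphi_i:=\varphi|_{\U_i}\circ\beta_i$. The relation $\beta_j^{-1}\circ\beta_i=\alpha_{\Phi_i,\Phi_j}$ together with the fact that $\varphi$ is globally defined forces $\varphi_i=\varphi_j\circ\alpha_{\Phi_i,\Phi_j}$ on overlaps, so $(V,\nabla,\Fil,\{\varphi_i\})$ is a legitimate object of $\mathcal{MF}^\nabla_{[0,p-2]}(\X/W)$; the strong $p$-divisibility and horizontality required of a local $\varphi_i$ are inherited from $\varphi$ via $\beta_i$. It is then routine to check that the two assignments are mutually inverse, and that a morphism of the underlying sheaves is a morphism of Fontaine-Faltings modules (strict for $\Fil$, horizontal, compatible with all $\varphi_i$) if and only if it is strict for $\Fil$, horizontal, and commutes with the glued $\varphi$; this promotes the bijection on objects to an equivalence of categories.

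I expect the only nonformal point to be the identity $\beta_j^{-1}\circ\beta_i=\alpha_{\Phi_i,\Phi_j}$: one must verify that the canonical isomorphism gluing $\mathcal F_n$ — obtained from the Taylor expansion of $F_{\U/S}^*$ against two different Frobenius lifts — literally coincides with Faltings' Taylor-formula isomorphism of \cite[Theorem~2.3]{Fal89} applied to $F_{\U_i}^*\widetilde V$. This amounts to matching up which Frobenius lift and which divided-power coordinates enter each Taylor series, and is exactly what the commutative diagram displayed immediately before the statement encodes; once that is granted, the remainder is formal descent along the cover $\{\U_i\}_{i\in I}$, which is already implicit in the construction of $\mathcal{MF}^\nabla_{[0,p-2]}(\X/W)$ by gluing.
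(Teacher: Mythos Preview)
Your proposal is correct and follows essentially the same approach as the paper. The discussion immediately preceding the lemma already supplies the argument: it exhibits the local isomorphisms $\beta_i$ (your notation), records the key commutative diagram expressing $\beta_j^{-1}\circ\beta_i=\alpha_{\Phi_i,\Phi_j}$, and concludes that a compatible system $\{\varphi_i\}$ is equivalent to a single global $\varphi$; your write-up spells out the converse direction and the compatibility with morphisms a bit more explicitly, but the strategy is identical.
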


	\subsection{Fontaine-Faltings modules with endomorphism structure.}\label{section FFMES}
	Let $f$ be a positive integer. We call $(V,\nabla,\Fil,\varphi,\iota)$ a Fontaine-Faltings module over $\X$ with endomorphism structure of $W(\F_{p^f})$ whose Hodge-Tate weights lie in $[a,b]$, if $(V,\nabla,\Fil,\varphi)$ is an object in $\mathcal {MF}_{[a,b]}^{\nabla}(\X/W)$ and  
	\[\iota: W(\F_{p^f})\rightarrow \mathrm{End}_{\mathcal{MF}}(V,\nabla,\Fil,\varphi)\]
	is a continuous ring homomorphism. We call $\iota$ an endomorphism structure of $W(\F_{p^f})$ on $(V,\nabla,\Fil,\varphi)$. Let's denote by $\mathcal {MF}_{[a,b],f}^{\nabla}(X/W)$ the category of Fontaine-Faltings module with endomorphism structure of $W(\F_{p^f})$ whose Hodge-Tate weights lie in $[a,b]$. And denote by $\mathcal {MF}_{[0,p-2],f}^{\nabla}(X_{n+1}/W_{n+1})$ the subcategory of $\mathcal {MF}_{[0,p-2],f}^{\nabla}(\X/W)$ consisted by strict $p^n$-torsion objects.  
	\begin{lem}\label{lem:anotherDiscribtionFMwithEnd} Assume $f$ is a positive integer with $\F_{p^f}\subset k$. Then giving an object in  
		$\mathcal {MF}_{[0,p-2],f}^{\nabla}(\X/W)$ is equivalent to give $f$-ordered objects \[ (V_i,\nabla_i,\Fil_i)\in \mathrm{MCF}_{p-2}(X_n), \qquad i=0,1,\cdots,f-1\]
		(for some $n\in \N$) together with isomorphisms of de Rham sheaves
		\[\varphi_i:C_n^{-1}\circ\overline{\mathrm{Gr}}(V_i,\nabla_i,\Fil_i) \rightarrow (V_{i+1},\nabla_{i+1}), \quad \text{ for } 0\leq i\leq f-2,\]
		and 
		\[\varphi_{f-1}:C_n^{-1}\circ\overline{\mathrm{Gr}}(V_{f-1},\nabla_{f-1},\Fil_{f-1}) \rightarrow (V_{0},\nabla_{0}).\]
	\end{lem}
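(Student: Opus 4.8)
The plan is to unwind the endomorphism structure $\iota:W(\F_{p^f})\to\End_{\mathcal{MF}}(V,\nabla,\Fil,\varphi)$ using the idempotent decomposition of $W(\F_{p^f})\otimes_{\Z_p}W(k)$, exactly as one does for linear algebra over an unramified extension. Since $\F_{p^f}\subset k$, the ring $W(\F_{p^f})$ embeds into $W(k)$ in $f$ distinct ways, namely $\sigma^0\circ\tau,\dots,\sigma^{f-1}\circ\tau$ where $\tau$ is a fixed embedding and $\sigma$ is the Frobenius; correspondingly $W(\F_{p^f})\otimes_{\Z_p}W(k)\cong\prod_{i=0}^{f-1}W(k)$ via a complete set of orthogonal idempotents $e_0,\dots,e_{f-1}$, cyclically permuted by $\id\otimes\sigma$. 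First I would use $\iota$ together with the $W(k)$-module structure to make $(V,\nabla,\Fil)$ a module over $W(\F_{p^f})\otimes_{\Z_p}W(k)$ in the category $\mathrm{MCF}_{p-2}(X_n)$, hence decompose it as $\bigoplus_{i=0}^{f-1}(V_i,\nabla_i,\Fil_i)$ with $(V_i,\nabla_i,\Fil_i)=e_i\cdot(V,\nabla,\Fil)$; one checks this is a decomposition \emph{in} $\mathrm{MCF}_{p-2}(X_n)$ because the idempotents are horizontal and strict for the filtration (they are flat $\mathcal{O}_{X_n}$-linear endomorphisms commuting with $\nabla$ and preserving $\Fil$).

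The second step is to track what $\varphi$ does under this decomposition. Using Lemma~\ref{lem:anotherDiscribtionFM}, $\varphi$ is an isomorphism $C_n^{-1}\circ\overline{\mathrm{Gr}}(V,\nabla,\Fil)\xrightarrow{\sim}(V,\nabla)$ of de Rham sheaves, and it commutes with $\iota$ by the definition of endomorphism structure. The key point is that the inverse Cartier functor $C_n^{-1}$ is built from Frobenius pullbacks (the functor $\mathcal{F}_n=\{F_{\U/S}^*\}$ in diagram~(\ref{diag:C^{-1}})), so it is $\sigma$-semilinear over $W(k)$: concretely $C_n^{-1}\circ\overline{\mathrm{Gr}}(e_i\cdot(V,\nabla,\Fil))=e_{i+1}\cdot\bigl(C_n^{-1}\circ\overline{\mathrm{Gr}}(V,\nabla,\Fil)\bigr)$, with indices mod $f$, because applying Frobenius permutes the idempotents $e_i\mapsto e_{i+1}$. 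Hence $\varphi$, being $W(\F_{p^f})\otimes W(k)$-linear, must send the $e_{i+1}$-component of its source to the $e_{i+1}$-component of its target, i.e.\ $\varphi$ decomposes as a direct sum of isomorphisms
\[
\varphi_i:C_n^{-1}\circ\overline{\mathrm{Gr}}(V_i,\nabla_i,\Fil_i)\xrightarrow{\ \sim\ }(V_{i+1},\nabla_{i+1}),\qquad i=0,\dots,f-2,
\]
together with $\varphi_{f-1}:C_n^{-1}\circ\overline{\mathrm{Gr}}(V_{f-1},\nabla_{f-1},\Fil_{f-1})\xrightarrow{\sim}(V_0,\nabla_0)$, which is precisely the asserted data.

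For the converse, given such $f$-ordered data I would set $(V,\nabla,\Fil):=\bigoplus_{i=0}^{f-1}(V_i,\nabla_i,\Fil_i)$, let $\varphi:=\bigoplus\varphi_i$ (using the semilinearity identity above to see that the source of $\bigoplus\varphi_i$ is indeed $C_n^{-1}\circ\overline{\mathrm{Gr}}(V,\nabla,\Fil)$), and define $\iota$ by letting $W(\F_{p^f})$ act through the idempotents $e_i$, i.e.\ $a\in W(\F_{p^f})$ acts on $V_i$ as multiplication by $\sigma^i(\tau(a))\in W(k)$; horizontality and strictness are automatic, and compatibility with $\varphi$ holds by construction since $\varphi$ matches the $i$-th piece to the $(i+1)$-st. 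One then checks the two constructions are mutually inverse and functorial, giving the claimed equivalence of categories. The main obstacle I expect is the bookkeeping in the second step: one must verify carefully that $C_n^{-1}$ (equivalently the gluing functor $\mathcal{F}_n$ with its Taylor-formula transition isomorphisms $\alpha_{\Phi_i,\Phi_j}$) really does intertwine the $W(k)$-linear idempotent $e_i$ with $e_{i+1}$ rather than merely $e_i$ with $e_i$ — this is the place where the Frobenius twist enters and where the cyclic shift in the indexing of the $\varphi_i$ comes from — and that this is compatible across the affine cover so that the decomposition is genuinely global. Everything else (horizontality, strictness, the idempotent decomposition of $W(\F_{p^f})\otimes W(k)$) is routine.
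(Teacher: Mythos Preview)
Your proposal is correct and follows essentially the same approach as the paper: the paper decomposes $(V,\nabla,\Fil)$ into eigenspaces of $\iota(\xi)$ for a generator $\xi$ of $W(\F_{p^f})$ (which is exactly your idempotent decomposition in disguise), and then obtains the $\varphi_i$ by comparing $\sigma^{i+1}(\xi)$-eigenspaces on both sides of $\varphi$, which is precisely your observation that the Frobenius-semilinearity of $C_n^{-1}\circ\overline{\mathrm{Gr}}$ shifts $e_i$ to $e_{i+1}$. Your write-up is in fact more explicit than the paper's about the converse direction and about why the cyclic shift occurs.
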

	\begin{proof} Let $(V,\nabla,\Fil,\varphi,\iota)$ be an object in $\mathcal {MF}_{[0,p-2],f}^{\nabla}(\X/W)$.
		Let $\sigma$ be the Frobenius map on $W(\F_{p^f})$ and let $\xi$ be a generator of $W(\F_{p^f})$ as a $\Z_p$-algebra. Then $\iota(\xi)$ is an endomorphism of the Fontaine-Faltings module $(V,\nabla,\Fil,\varphi)$. Since $\F_{p^f}\subset k$, all conjugate elements of $\xi$ are of form $\sigma^i(\xi)$, which are contained in $w=w(K)$. The filtered de Rham sheaf $(V,\nabla,\Fil)$ can be decomposed into eigenspaces
		\[(V,\nabla,\Fil)=\bigoplus_{i=0}^{f-1}(V_i,\nabla_i,\Fil_i),\]
		where $(V_i,\nabla_i,\Fil_i)=(V,\nabla,\Fil)^{\iota(\xi)=\sigma^i(\xi)}$ is the $\sigma^i(\xi)$-eigenspace of $\iota(\xi)$.
		Applying  $C_n^{-1}\circ\overline{\mathrm{Gr}}$ on both side, we get 
		\[C_n^{-1}\circ\overline{\mathrm{Gr}}(V,\nabla,\Fil)=\bigoplus_{i=0}^{f-1}C_n^{-1}\circ\overline{\mathrm{Gr}}(V_i,\nabla_i,\Fil_i).\]
		Comparing $\sigma^{i+1}(\xi)$-eigenspaces of $\iota(\xi)$ on both side of \[\varphi:C_n^{-1}\circ\overline{\mathrm{Gr}}(V,\nabla,\Fil)\simeq (V,\nabla),\]
		one gets the restrictive isomorphisms
		\[\varphi_i:C_n^{-1}\circ\overline{\mathrm{Gr}}(V_i,\nabla_i,\Fil_i) \rightarrow (V_{i+1},\nabla_{i+1}), \quad \text{for all } 0\leq i\leq f-2,\]
		and 
		\[\varphi_{f-1}:C_n^{-1}\circ\overline{\mathrm{Gr}}(V_{f-1},\nabla_{f-1},\Fil_{f-1}) \rightarrow (V_{0},\nabla_{0}).\]
		Conversely, we can construct the Fontaine-Faltings module with endomorphism structure in an obvious way.
	\end{proof}

	\subsection{Twisted Fontaine-Faltings modules with endomorphism structure.}\label{section TFFMES}
	Let $L_n$ be a line bundle over $X_n$. Then there is a natural connection $\nabla_{\mathrm{can}}$ on $L_n^{p^{n}}$ by 5.1.1 in~\cite{KaNi70}. Tensoring with $(L_n^{p^{n}}, \nabla_{\mathrm{can}})$ induces a self equivalence functor
	on the category of de Rham bundles over $X_n$.
	
	\begin{defi}
		An $L_n$-twisted Fontaine-Faltings module over $X_n$  with endomorphism structure of $W_n(\mathbb F_{p^f})$ whose Hodge-Tate weights lie in $[a,b]$ is a tuple consisting the following data:
		\begin{itemize}
			\item[-] for $0\leq i\leq f-1$, a filtered de Rham bundle $(V_i,\nabla_i,\Fil_i)$ over $X_n$ of level in $[a,b]$;
			\item[-] for $0\leq i\leq f-2$, an isomorphism of de Rham sheaves
			\[\varphi_i:  \mathcal C^{-1}_n\circ \overline{\mathrm{Gr}} (V_i,\nabla_i,\Fil_i)\rightarrow  (V_{i+1},\nabla_{i+1});\]
			\item[-] an isomorphism of de Rham sheaves
			\[\varphi_{f-1}: \left(\mathcal C^{-1}_n\circ \overline{\mathrm{Gr}}(V_{f-1},\nabla_{f-1},\Fil_{f-1})\right) \otimes(L_n^{p^{n}},\nabla_{\mathrm{can}}) \rightarrow  (V_0,\nabla_0).\]
		\end{itemize} 
		We use $(V_i,\nabla_i,\Fil_i,\varphi_i)_{0\leq i<f}$ to denote the $L_n$-twisted Fontaine-Faltings module and use $\mathcal {TMF}_{[a,b],f}^{\nabla}(X_{n+1}/W_{n+1})$ to denote the category of all twisted Fontaine-Faltings modules over $X_n$ with endomorphism structure of $W_n(\mathbb F_{p^f})$ whose Hodge-Tate weights lie in $[a,b]$.
	\end{defi}
	A morphism between two objects $(V_i,\nabla_i,\Fil_i,\varphi_i)_{0\leq i<f}$ and $(V'_i,\nabla'_i,\Fil'_i,\varphi'_i)_{0\leq i<f}$ is an $f$-tuple $(g_0,g_1,\cdots,g_{f-1})$ of morphisms of filtered de Rham sheaves
	\[g_i:(V_i,\nabla_i,\Fil_i)\rightarrow (V'_i,\nabla'_i,\Fil'_i), \quad i=0,1,\cdots,f-1\]
	satisfying 
	\[g_{i+1}\circ \varphi_i=\varphi_i'\circ \left(\mathcal C^{-1}_n\circ\mathrm{Gr}(g_i)\right), \qquad \text{for } 0\leq i\leq f-2,\]
	and 
	\[\left(g_0\otimes \mathrm{id}_{L_n^{p^n}}\right)\circ \varphi_{f-1}=\varphi_{f-1}'\circ \left(\mathcal C^{-1}_n\circ\mathrm{Gr}(g_{f-1})\right).\]
	
	\begin{rmk} $i)$. By Lemma~\ref{lem:anotherDiscribtionFM}, to give an object in $\mathcal{TMF}_{[a,b],1}^{\nabla}(X_{n}/W_{n})$ with $L_n=\mathcal O_{X_n}$ is equivalent to give a strict $p^n$-torsion Fontaine-Faltings module over $X_n$ whose Hodge-Tate weights lie in $[a,b]$.\\
		$ii)$. Suppose $\mathbb F_{p^f}\subset k$. By Lemma~\ref{lem:anotherDiscribtionFMwithEnd}, to give an object in $\mathcal{TMF}_{[a,b],f}^{\nabla}(X_{n}/W_{n})$ with $L_n=\mathcal O_{X_n}$ is equivalent to give a strict $p^n$-torsion Fontaine-Faltings module over $X_n$ with endomorphism structure of $W_n(\mathbb F_{p^f})$ and whose Hodge-Tate weight in $[a,b]$ .\\	
	\end{rmk}
	
	\paragraph{\emph{Local trivialization.}}
	Let $j\in I$. Locally on the small open affine set $\U_j$ ($R_j=\mathcal O_\X(\U_j)$), we choose and fix a lifting $\Phi_j:\widehat{R}_j\rightarrow \widehat{R}_j$ and a trivialization of the line bundle $L_n$ 
	\[ \tau_j:\mathcal O_{X_n}(\U_j) \simeq L_n(\U_j).\]
	It induces a trivialization of de Rham bundle $\tau_j^{p^n}: (\mathcal O_{X_n}(\U),\mathrm{d})  \simeq  (L_n^{p^{n}}(\U),\nabla_{\mathrm{can}})$.
	Let $M=(V_i,\nabla_i,\Fil_i,\varphi_i)_{0\leq i<f}\in \mathcal {TMF}_{[a,b],f}^{\nabla}(X_{n+1}/W_{n+1})$ be an $L_n$-twisted Fontaine-Faltings module over $X_n$  with endomorphism structure of $W_n(\mathbb F_{p^f})$ whose Hodge-Tate weights lie in $[a,b]$. Then one gets a local Fontaine-Faltings module over $R_j$ with endomorphism structure of $W_n(\mathbb F_{p^f})$ whose Hodge-Tate weights lie in $[a,b]$
	\[M(\tau_j)=\Big(\oplus V_i(\U_j), \oplus \nabla_i,\oplus \Fil_i, \sum_{i=0}^{f-2}\varphi_i+\varphi_{f-1}\circ(\mathrm{id}\otimes\tau_j^{p^n})\Big).\]
	We call $M(\tau_j)$ the \emph{trivialization of of $M$ on $\U_j$ via $\tau_j$}.  
 
		\paragraph{\emph{Logarithmic version.}}
		Finally, let us mention that everything in this section extends to the logarithmic context. 
		Let $\X$ be a smooth and proper scheme over $W$ and $\X^o$ is the complement of a simple normal crossing divisor $\D\subset \X$ relative to $W$. Similarly, one constructs the category $\mathcal {TMF}_{[a,b],f}^{\nabla}(X^o_{n+1}/W_{n+1})$ of strict $p^n$-torsion twisted logarithmic Fontaine modules (with pole along $\D\times W_n\subset \X\times W_n$) with endomorphism structure of $W_n(\F_{p^f})$ whose Hodge-Tate weights lie in $[a,b]$.

	\section{Projective Fontaine-Laffaille-Faltings functor}\label{section PFLFF}
	
	\subsection{The Fontaine-Laffaille-Faltings' $\mathbb D$-functor}\label{section FDF}
	
	\paragraph{\emph{The functor $\mathbb{D}_\Phi$.}} Let $R$ be a small affine algebra over $W=W(k)$ with a $\sigma$-linear map $\Phi:\widehat{R}\rightarrow\widehat{R}$ which lifts the absolute Frobenius of $R/pR$.
	If $\Phi$ happens to be \'etale in characteristic $0$, Faltings (page 36 of \cite{Fal89}) constructed a map $\kappa_\Phi:\widehat{R}\rightarrow B^+(\widehat{R})$ which respects Frobenius-lifts. Thus the following diagram commutes
	\begin{equation}
	\xymatrix@C=2cm{\widehat{R} \ar[r]^{\kappa_\Phi} \ar[d]^{\Phi}
		& B^+(\widehat{R})  \ar[d]^{\Phi_B}\\
		\widehat{R} \ar[r]^{\kappa_{\Phi}} & B^+(\widehat{R}).\\
	}
	\end{equation} 
	Here $\Phi_B$ is the Frobenius on $B^+(\widehat{R})$. Denote $D=B^+(\widehat R)[1/p]/B^+(\widehat R)$, which is equipped with the natural $\varphi$-structure and filtration.
	
	Let $M=(V,\nabla,\Fil,\varphi)$ be an object in $\mathcal {MF}_{[a,b]}^{\nabla,\Phi}(\U/W)$. 
	Faltings constructed a functor  $\mathbb{D}_{\Phi}$ by 
	\[\mathbb{D}_{\Phi}(M)=\mathrm{Hom}(V\otimes_{\kappa_{\Phi}} B^+(\widehat R),D),\]
	where the homomorphisms are $B^+(\widehat R)$-linear and respect filtrations and the $\varphi$-structure.
	The action of $\mathrm{Gal}(\widehat{\overline{R}}/\widehat{R})$ on the tensor product $V\otimes_{\kappa_\Phi} B^+(\widehat{R})$ is defined via the connection on $V$, which commutes with the $\varphi$'s and hence induces an action of $\mathrm{Gal}(\widehat{\overline{R}}/\widehat{R})$ on $\mathbb{D}_{\Phi}(M)$. 
	Since $V$ is a $p$-power torsion finitely generated $R$-module, $\mathbb{D}_{\Phi}(M)$ is a finite $\Z_p$-module. Faltings shows that the functor $\mathbb{D}_\Phi$ from $\mathcal {MF}_{[a,b]}^{\nabla,\Phi}(\U/W)$ to $\mathrm{Rep}_{\Z_p}^{\mathrm{finite}}\left(\mathrm{Gal}\left(\widehat{\overline{R}}/\widehat{R}\right)\right)$, the category of finite $\Z_p$-representation of $\mathrm{Gal}\left(\widehat{\overline{R}}/\widehat{R}\right)$, 
	is fully faithful and its image is closed under subobjects and quotients.

	\paragraph{\emph{The functor $\mathbb{D}$.}}  Recall that $I$ is the index set of all pair $(\U_i,\Phi_i)$ of small affine open subset $\U_i$ of $\X$ and lifting $\Phi_i$ of the absolute Frobenius on $\mathcal O_\X(\U_i)\otimes_W k$. For each $i\in I$, the functor $\mathbb D_{\Phi_i}$ associates to any Fontaine-Faltings module over $\X$ a compatible system of \'etale sheaves on $\widehat{\U}_{i,K}$ (the generic fiber of $\widehat{\U}_i$). By gluing and using the results in EGA3, one obtains a locally constant sheaf on $X_K$ and a globally defined functor $\mathbb D$. 
	
	In the following, we give a slightly different way to construct the functor $\mathbb D$. Let $J$ be a finite subset of the index set $I$, such that $\{\U_j\}_{j\in J}$ forms a covering of $\X$. Denote $U_j=(\U_j)_K$ and choose $\overline{x}$ a geometric point of $X_K$ contained in $\bigcap\limits_{j\in J} U_j$. 
	
	Let $(V,\nabla,\Fil,\{\varphi_i\}_{i\in I})$ be a Fontaine-Faltings module over $\X$. For each $j\in J$, the functor $\mathbb D_{\Phi_j}$  gives us a finite $\Z_p$-representation of $\pi^\text{\'et}_1(\widehat{U}_j,\overline{x})$. Recall that the functor $\mathbb D_\Phi$does not depends on the choice of $\Phi$, up to a canonical isomorphism. In particular, for all $j_1, j_2\in J$, there is a natural isomorphism of $\pi^\text{\'et}_1(\widehat{U}_{j_1}\cap \widehat{U}_{j_2},s)$-representations 
	\[\mathbb D(V(\U_{j_1}\cap \U_{j_2}),\nabla,\Fil,\varphi_{j_1})\simeq \mathbb D(V(\U_{j_1}\cap \U_{j_2}),\nabla,\Fil,\varphi_{j_2}).\]
	By Theorem~\ref{Mainthm:rep}, all representations $\mathbb D(V(\U_{j}),\nabla,\Fil,\varphi_{j})$'s descend to a $\Z_p$-representations of $\pi^\text{\'et}_1(X_K,\overline{x})$. Up to a canonical isomorphism, this representation does not depend on the choice of $J$ and $s$. This representation is just $\mathbb D(V,\nabla,\Fil,\{\varphi_i\}_{i\in I})$ and we construct the Fontaine-Laffaille-Faltings' $\mathbb D$-functor in this way.
	
	\begin{thm}[Faltings]\label{faltingslocal} 
		The functor $\mathbb D$ induces an equivalence of the category $\mathscr {MF}_{[0,p-2]}^\nabla(\X/W)$ with the full subcategory of finite $\Z_p[[\pi^\text{\'et}_1(X_K)]]$-modules whose objects are dual-crystalline representations. This subcategory is closed under sub-objects and quotients.
	\end{thm}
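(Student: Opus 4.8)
The plan is to assemble this global statement --- which is Faltings' theorem \cite{Fal89} --- out of the local theory of the functor $\mathbb D_\Phi$ recalled above, together with the descent statement for \'etale fundamental groups. First I would isolate the purely local input: on a small affine $\U=\Spec R$ with a fixed Frobenius lift $\Phi$, Faltings has already shown that $\mathbb D_\Phi$ is fully faithful from $\mathcal{MF}_{[0,p-2]}^{\nabla,\Phi}(\U/W)$ into finite $\Z_p$-representations of $\mathrm{Gal}(\widehat{\overline{R}}/\widehat{R})$, with essential image closed under subobjects and quotients; and by the Taylor-formula comparison isomorphisms $\alpha_{\Phi,\Psi}$ this functor is canonically independent of $\Phi$ and compatible with restriction to smaller affines, so it descends to a well-defined functor on $\mathcal{MF}_{[0,p-2]}^{\nabla}(\U/W)$. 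With this in hand, the theorem becomes the assertion that these local functors glue to a global equivalence and that the essential image is exactly the dual-crystalline locus.

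For the gluing of objects I would fix a finite cover $\{\U_j\}_{j\in J}$ of $\X$ by small affines and a geometric point $\overline{x}$ of $X_K$ lying in $\bigcap_{j\in J}(\U_j)_K$. Given $M=(V,\nabla,\Fil,\{\varphi_i\}_{i\in I})$, each $\mathbb D_{\Phi_j}$ produces a finite $\Z_p$-representation of $\pi^\text{\'et}_1(\widehat{U}_j,\overline{x})$, and the canonical independence of $\Phi$ makes these agree, as representations of $\pi^\text{\'et}_1(\widehat{U}_{j_1}\cap\widehat{U}_{j_2},\overline{x})$, on all overlaps. I would then invoke the descent statement (Theorem~\ref{Mainthm:rep}) to conclude that the family is simultaneously the restriction of a single representation $\mathbb D(M)$ of $\pi^\text{\'et}_1(X_K,\overline{x})$, canonically independent of the cover and the base point; functoriality in $M$ is immediate.

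Next I would prove full faithfulness, which is local: a morphism of Fontaine--Faltings modules over $\X$ is by definition a sheaf morphism that is a local morphism over each $\U_j$, while conversely a $\pi^\text{\'et}_1(X_K)$-equivariant map $\mathbb D(M)\to\mathbb D(M')$ restricts over each $\widehat{U}_j$ to a $\pi^\text{\'et}_1(\widehat{U}_j)$-equivariant map, hence by local full faithfulness to a unique local morphism; these local morphisms agree on overlaps (again by local full faithfulness) and so glue to a global one, the requisite coherence and finiteness being supplied by properness of $\X$ and EGA III. The essential image I would identify the same way: by definition a finite $\Z_p[[\pi^\text{\'et}_1(X_K)]]$-module is dual-crystalline iff each of its restrictions to $\pi^\text{\'et}_1(\widehat{U}_j)$ is crystalline for the period module $D=B^+(\widehat{R}_j)[1/p]/B^+(\widehat{R}_j)$, i.e. lies in the image of $\mathbb D_{\Phi_j}$; the local theory then writes each restriction as $\mathbb D_{\Phi_j}(N_j)$, and full faithfulness on overlaps glues the $N_j$ to a global $N\in\mathcal{MF}_{[0,p-2]}^{\nabla}(\X/W)$ with $\mathbb D(N)$ the given representation. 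Closedness of the image under subobjects and quotients follows in the same spirit: a sub- or quotient representation of $\mathbb D(M)$ restricts over each $\widehat{U}_j$ to a sub/quotient of $\mathbb D_{\Phi_j}(M\mid_{\U_j})$, which by the local theory is $\mathbb D_{\Phi_j}$ of a sub/quotient local object, and these glue to a global sub/quotient of $M$ realizing it.

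The hard part is not this patching but the local input on which it rests: Faltings' construction of $\mathbb D_\Phi$ through the period ring $B^+(\widehat{R})$ and his proof that it is fully faithful with image closed under subobjects and quotients, and the descent Theorem~\ref{Mainthm:rep} that makes gluing the resulting Galois representations legitimate. Both of these I would take as given, so that what actually remains to check is the compatibility bookkeeping --- that the $\alpha_{\Phi,\Psi}$ are cocycle-compatible with the functors $\mathbb D_{\Phi}$, and that the gluings of objects, morphisms, and sub/quotients are mutually consistent and independent of the chosen cover and base point.
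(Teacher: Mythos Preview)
Your proposal is correct and matches the paper's treatment: the theorem is stated as a result of Faltings \cite{Fal89} and is not proved in the paper itself, which only describes the construction of $\mathbb D$ by gluing the local functors $\mathbb D_{\Phi_j}$ via the overlap isomorphisms and Theorem~\ref{Mainthm:rep}, exactly as you outline. Your sketch of full faithfulness, the identification of the essential image with dual-crystalline representations, and closedness under subobjects and quotients is the standard reduction to the local theory, and you correctly identify that the substantive content lies in Faltings' local results, which both you and the paper take as input.
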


	\paragraph{\emph{The extra  $W(\mathbb F_{p^f})$-structure}} Suppose $\F_{p^f}\subset k$. Let $(V,\nabla,\Fil,\varphi,\iota)$ be an object in $\mathcal {MF}_{[0,p-2],f}^{\nabla}(X_{n+1}/W_{n+1})$. Since the functor $\mathbb D$ is fully faithful, we get an
	extra  $W(\mathbb F_{p^f})$-structure on $\mathbb D(V,\nabla,\Fil,\varphi)$, via the composition
	\[W(\F_{p^f})\overset{\iota}{\longrightarrow} \mathrm{End}_{\mathcal{MF}} (V,\nabla,\Fil,\varphi) \overset{\sim}{\longrightarrow} \mathrm{End}(\mathbb{D}(V,\nabla,\Fil,\varphi)).\]
	Since $V$ is strictly $p^n$-torsion, the $W_n(\mathbb F_{p^f})$-module $\mathbb{D}(V,\nabla,\Fil,\varphi)$ is free with a linear action of $\pi^\text{\'et}_1(X_K)$. We write this $W_n(\F_{p^f})$-representation as 
	\[\mathbb D(V,\nabla,\Fil,\varphi,\iota).\]

	\subsection{The category of projective representations}\label{section CPR}
	
	\paragraph{\emph{The categories $\mathrm{Rep}_{\mathcal O}(G)$ and $\mathrm{Rep}^{\mathrm{free}}_{\mathcal O}(G)$.}} Let $\mathcal O$ be a commutative topological ring with identity and let $G$ be a topological group. Note that all morphisms of topological groups and all actions of groups are continuous in this section.
	Denote by $\mathrm{Rep}_{\mathcal O}(G)$ the category of all finitely generated $\mathcal O$-modules with an action of $G$ and denote by $\mathrm{Rep}^{\mathrm{free}}_{\mathcal O}(G)$ the subcategory of all free $\mathcal O$-modules of finite rank with an action of $G$. 
	
	\paragraph{\emph{The categories $\mathrm{PRep}_{\mathcal O}(G)$ and $\mathrm{PRep}^{\mathrm{free}}_{\mathcal O}(G)$.}} 
	For a finitely generated $\mathcal O$-module ${\mathbb V}$, we denote by $\mathrm{PGL}_{\mathcal O}({\mathbb V})$ the quotient group $\mathrm{GL({\mathbb V})}/\mathcal O^\times$. If $\rho: G\rightarrow \mathrm{PGL}_{\mathcal O}({\mathbb V})$ is a group morphism, then there exists a group action of $G$ on the quotient set ${\mathbb V}/\mathcal O^\times$ defined by $g([v]):=[\rho(g)v]$ for any $g\in G$ and $v\in \mathbb V$. In this case, we call the pair $({\mathbb V},\rho)$ a \emph{projective $\mathcal O$-representation of $G$}.  A morphism of projective $\mathcal O$-representations from $({\mathbb V}_1,\rho_1)$ to $({\mathbb V}_2,\rho_2)$ is an $\mathcal O$-linear morphism $f:{\mathbb V}_1\rightarrow {\mathbb V}_2$ such that the quotient map from ${\mathbb V_1}/{\mathcal O}^\times$ to ${\mathbb V_2}/{\mathcal O}^\times$ induced by $f$ is a morphism of $G$-sets. Denote $\mathrm{PRep}_{\mathcal O}(G)$ the category of finite projective $\mathcal O$-representations of $G$. Denote by $\mathrm{PRep}^{\mathrm{free}}_{\mathcal O}(G)$ the subcategory with $\mathbb V$ being a free $\mathcal O$-module.

	\subsection{Gluing representations and projective representations}\label{section GRPR}
	
	Let $S$ be an irreducible scheme. We fix a geometric point $s$ of $S$. In this section, ${U}$ is an open subset of $S$ containing $s$.  
	\begin{prop}[SGA$1$ \cite{SGA1}, see also Proposition 5.5.4 in~\cite{Sza09}] \label{surjectivity} The open immersion $U\to S$ induces a surjective morphism of fundamental groups 
		\[\rho^S_{U}:\pi^\text{\'et}_1({U}, {s}) \twoheadrightarrow \pi^\text{\'et}_1(S, {s}).\]
	\end{prop}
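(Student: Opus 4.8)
The statement to prove is Proposition~\ref{surjectivity}: the open immersion $U \to S$ of an irreducible scheme induces a surjection $\pi^\text{\'et}_1(U,s) \twoheadrightarrow \pi^\text{\'et}_1(S,s)$.

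**My proof plan:**

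The plan is to use the fundamental equivalence between finite étale covers and the category of finite sets with continuous $\pi_1$-action (Galois theory for the étale fundamental group), together with the translation that a morphism of profinite groups is surjective if and only if the induced pullback functor on finite sets with group action is fully faithful and "reflects" the structure appropriately — concretely, surjectivity of $\rho^S_U$ is equivalent to the statement that for every connected finite étale cover $Y \to S$, the pullback $Y \times_S U \to U$ remains connected. So the whole proof reduces to this geometric claim about connectedness of restrictions of connected covers.

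First I would recall that $\pi^\text{\'et}_1(U,s) \to \pi^\text{\'et}_1(S,s)$ being surjective is equivalent, by the categorical dictionary, to: the pullback functor $f^* : \mathbf{F\acute{E}t}_S \to \mathbf{F\acute{E}t}_U$ from finite étale covers of $S$ to those of $U$ is fully faithful \emph{and} sends connected objects to connected objects. (The relevant statement: a continuous homomorphism $\varphi: H \to G$ of profinite groups is surjective iff the restriction functor on finite continuous sets is fully faithful, iff it preserves the property of having a transitive action.) So the task is: given a connected finite étale $Y \to S$, show $Y_U := Y \times_S U$ is connected. Second, I would prove this connectedness. Since $S$ is irreducible and $Y \to S$ is finite étale with $Y$ connected, $Y$ is in fact irreducible: a connected finite étale cover of an irreducible (hence, after passing to reduced structure, integral) scheme is irreducible — étale morphisms are open, so the generic fiber is dense, and connectedness plus the fact that $Y$ is a disjoint union of its irreducible components (finitely many, since $Y \to S$ is finite) forces there to be only one. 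Then $Y_U = f^{-1}(U)$ is a nonempty open subscheme of the irreducible scheme $Y$ (nonempty because $U$ is nonempty and $Y \to S$ is surjective, being finite étale and hence open and closed onto the irreducible $S$), and a nonempty open subscheme of an irreducible scheme is irreducible, in particular connected.

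The main obstacle — really the only subtle point — is making the categorical reduction precise: one must be careful that full faithfulness of the pullback functor also holds, not just preservation of connectedness, since surjectivity of profinite groups corresponds to both. Full faithfulness of $f^*$ on finite étale covers is itself a consequence of $S$ (and $U$) being normal or at least of $U$ being dense in $S$ with $S$ irreducible: a morphism of covers over $U$ extends uniquely over $S$ because finite étale covers of irreducible normal schemes are determined by their restriction to any dense open. In the generality stated (just irreducible), one reduces to $S_{\mathrm{red}}$, which does not change $\pi_1$, and then invokes the standard SGA1 Exposé V argument. Since the paper explicitly cites SGA1 and Szamuely's book Proposition 5.5.4, the cleanest write-up simply defers to that reference after isolating the geometric heart — the connectedness of $Y_U$ — as above. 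Thus I would present the proof as: (1) reduce to showing connected covers pull back to connected covers plus cite the standard equivalence; (2) prove the connectedness via irreducibility of $Y$ and density of $Y_U$; (3) remark that full faithfulness is the corresponding statement in SGA1/Szamuely and conclude.
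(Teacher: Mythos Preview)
The paper does not supply a proof of this proposition; it is stated with a citation to SGA1 and to Proposition~5.5.4 of Szamuely's book, and then used as a black box. Your sketch is essentially the standard argument one finds in those references, so there is nothing to compare on the level of strategy.

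One small correction to your write-up: full faithfulness of the pullback functor $f^*:\mathbf{F\acute{E}t}_S\to\mathbf{F\acute{E}t}_U$ and the condition that $f^*$ sends connected covers to connected covers are not two separate requirements that together characterise surjectivity of $\rho^S_U$ --- each one \emph{alone} is equivalent to surjectivity of a continuous homomorphism of profinite groups. So your step~(3) is redundant: once you have shown (step~2) that every connected finite \'etale $Y\to S$ restricts to a connected cover of $U$, you are done. Your irreducibility argument for step~(2) is correct after passing to $S_{\mathrm{red}}$: over an integral base, a finite \'etale cover has local rings that are domains (flat over a domain with reduced generic fibre forces a unique minimal prime, and \'etale over reduced is reduced), so its irreducible components are disjoint; hence connected implies irreducible, and the restriction to the dense open $U$ stays irreducible, in particular connected.
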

	Thus, there is a natural restriction functor $\mathrm{res}$ from the category of $\pi^\text{\'et}_1(S, {s})$-sets to the category of $\pi^\text{\'et}_1({U}, {s})$-sets, which is given by \[\mathrm{res}(\rho)=\rho\circ \rho^S_{U}.\]
    	
	\begin{cor}\label{Cor:compHom} The restriction functor $\mathrm{res}$ is fully faithful.
	\end{cor}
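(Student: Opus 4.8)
\textbf{Proof proposal for Corollary~\ref{Cor:compHom}.}

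The plan is to deduce full faithfulness of $\mathrm{res}$ directly from the surjectivity statement in Proposition~\ref{surjectivity}. Recall that a $\pi^\text{\'et}_1(S,s)$-set is a (finite) set equipped with a continuous action of the profinite group $\pi^\text{\'et}_1(S,s)$, and $\mathrm{res}$ sends such an action to its pullback along the surjection $\rho^S_U:\pi^\text{\'et}_1(U,s)\twoheadrightarrow\pi^\text{\'et}_1(S,s)$. So let $\rho_1,\rho_2$ be two $\pi^\text{\'et}_1(S,s)$-sets with underlying sets $X_1,X_2$. A morphism $\mathrm{res}(\rho_1)\to\mathrm{res}(\rho_2)$ is a map of sets $f:X_1\to X_2$ that is equivariant for the $\pi^\text{\'et}_1(U,s)$-actions, i.e. $f(\rho_1(\rho^S_U(\gamma))x)=\rho_2(\rho^S_U(\gamma))f(x)$ for all $\gamma\in\pi^\text{\'et}_1(U,s)$ and $x\in X_1$. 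Faithfulness is immediate since $\mathrm{res}$ does not change the underlying map of sets. For fullness, given such an $f$, I must show it is already equivariant for the $\pi^\text{\'et}_1(S,s)$-actions: for any $g\in\pi^\text{\'et}_1(S,s)$ and $x\in X_1$, I need $f(\rho_1(g)x)=\rho_2(g)f(x)$.

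The key step is exactly surjectivity: since $\rho^S_U$ is surjective, every $g\in\pi^\text{\'et}_1(S,s)$ can be written as $g=\rho^S_U(\gamma)$ for some $\gamma\in\pi^\text{\'et}_1(U,s)$. Then
\[
f(\rho_1(g)x)=f(\rho_1(\rho^S_U(\gamma))x)=\rho_2(\rho^S_U(\gamma))f(x)=\rho_2(g)f(x),
\]
using the assumed $\pi^\text{\'et}_1(U,s)$-equivariance of $f$ in the middle equality. Hence $f$ is a morphism of $\pi^\text{\'et}_1(S,s)$-sets, which is precisely a preimage of the given morphism under $\mathrm{res}$ (the underlying map being unchanged). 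This proves fullness.

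I expect no real obstacle here: this is a general fact that a restriction functor along a surjection of (profinite) groups, from $G$-sets to $H$-sets, is fully faithful — the only input is the surjectivity provided by Proposition~\ref{surjectivity}, plus the trivial observation that equivariance for a generating (indeed all of) image is equivalence for the quotient group. The same argument applies verbatim to representations in $\mathrm{Rep}_{\mathcal O}(G)$ and to projective representations in $\mathrm{PRep}_{\mathcal O}(G)$, since in each case a morphism is an $\mathcal O$-linear (resp. projective-linear) map whose equivariance need only be checked against elements of the image of $\rho^S_U$; this remark will be used later when gluing (projective) representations over an open cover. One should only take minor care that all maps in sight are continuous, which holds automatically since $\rho^S_U$ is a continuous morphism of profinite groups and the actions are continuous by hypothesis.
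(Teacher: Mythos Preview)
Your proof is correct and is essentially the same as the paper's approach: the paper simply cites Proposition~\ref{surjectivity} together with a standard lemma from the Stacks Project (Tag 0BN6), and you have spelled out that elementary argument explicitly. There is nothing to add.
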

	The proof of this corollary directly follows from the surjectivity proved in Proposition \ref{surjectivity} and Lemma $52.4.1$ in \cite[Tag 0BN6]{stacks-project}.

	Let $\widetilde{S}$ be a finite \'etale covering of $S$. Then there is a natural action of $\pi^\text{\'et}_1(S,s)$ on the fiber $F_s(\widetilde{S})$. 
	\begin{prop}\label{main:prop} $i).$ The fiber functor $F_s$
		induces an equivalence from the category of finite \'etale covering of $S$ to the category of finite $\pi^\text{\'et}_1(S,{s})$-sets. 
		
		$ii).$ The functor $F_{s}$ is compatible with the restrictions of covering to open set ${U}\subset S$ and restrictions of $\pi^\text{\'et}_1(S, {s})$-sets to $\pi^\text{\'et}_1({U},s)$-sets by $\rho_{U}^S$.
	\end{prop}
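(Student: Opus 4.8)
The plan is to deduce part $i)$ from Grothendieck's theory of Galois categories and then obtain part $ii)$ essentially by unwinding the definition of the transition map $\rho^S_U$ supplied by Proposition~\ref{surjectivity}.

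\textbf{Part $i)$.} Since $S$ is irreducible it is in particular connected, so the category $\mathcal{C}_S$ of finite \'etale coverings of $S$, equipped with the fiber functor $F_s$, is a Galois category in the sense of \cite[Exp.~V]{SGA1} (see also \cite[\S5.2]{Sza09}): $\mathcal{C}_S$ has finite limits and finite colimits, every morphism in $\mathcal{C}_S$ factors as a strict epimorphism followed by a monomorphism which identifies the source with a direct summand of the target, the functor $F_s$ commutes with finite limits and with finite colimits and sends strict epimorphisms to strict epimorphisms, and $F_s$ reflects isomorphisms. The main theorem on Galois categories then asserts that $\pi^\text{\'et}_1(S,s):=\mathrm{Aut}(F_s)$ is profinite and that $F_s$ factors through an equivalence of $\mathcal{C}_S$ with the category of finite continuous $\pi^\text{\'et}_1(S,s)$-sets. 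This is exactly assertion $i)$; nothing beyond verifying the Galois-category axioms and citing \cite{SGA1,Sza09} is required.

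\textbf{Part $ii)$.} Write $j:U\hookrightarrow S$ for the open immersion and let $j^{*}:\mathcal{C}_S\to\mathcal{C}_U$ be the base-change functor $\widetilde S\mapsto \widetilde S\times_S U$, which does send finite \'etale coverings of $S$ to finite \'etale coverings of $U$. Because the chosen geometric point $s$ factors through $U$, for every finite \'etale covering $\widetilde S\to S$ there is a canonical identification of fibers
\begin{equation*}
F^U_s(j^{*}\widetilde S)=(\widetilde S\times_S U)\times_U s=\widetilde S\times_S s=F^S_s(\widetilde S),
\end{equation*}
natural in $\widetilde S$; in other words $F^S_s\cong F^U_s\circ j^{*}$ as functors $\mathcal{C}_S\to(\text{finite sets})$. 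By construction the homomorphism $\rho^S_U:\pi^\text{\'et}_1(U,s)\to\pi^\text{\'et}_1(S,s)$ of Proposition~\ref{surjectivity} is precisely the map on automorphism groups of fiber functors induced by $j^{*}$ through this natural isomorphism: an element $g\in\pi^\text{\'et}_1(U,s)=\mathrm{Aut}(F^U_s)$ induces an automorphism of the composite $F^U_s\circ j^{*}$, hence — via $F^U_s\circ j^{*}\cong F^S_s$ — an element $\rho^S_U(g)\in\mathrm{Aut}(F^S_s)=\pi^\text{\'et}_1(S,s)$. Consequently, for every $\widetilde S\in\mathcal{C}_S$ the canonical bijection displayed above intertwines the tautological action of $\pi^\text{\'et}_1(U,s)$ on $F^U_s(j^{*}\widetilde S)$ with the action of $\pi^\text{\'et}_1(S,s)$ on $F^S_s(\widetilde S)$ pulled back along $\rho^S_U$, which is exactly the compatibility asserted in $ii)$.

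\textbf{Main obstacle.} There is no serious obstacle: part $i)$ is a black-box citation to Grothendieck's Galois theory, and the content of part $ii)$ is the bookkeeping identity $F^S_s\cong F^U_s\circ j^{*}$ together with the observation that $\rho^S_U$ was \emph{defined} so as to make this identity equivariant. The one point deserving care is to confirm that the description of $\rho^S_U$ used here (the map on $\mathrm{Aut}$ of fiber functors induced by the pullback functor $j^{*}$) coincides with the one underlying Proposition~\ref{surjectivity}; this is the standard functoriality of the \'etale fundamental group under morphisms of pointed connected schemes \cite[Exp.~V]{SGA1}, and is where, if pressed, one would insert the short diagram chase relating $j^{*}$, the two fiber functors, and their automorphism groups.
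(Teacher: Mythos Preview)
Your proposal is correct and matches the paper's approach: the paper itself does not give an independent proof but simply refers to the Stacks Project (Tag 0BN6, Proposition~52.3.10) for $i)$ and to \S5.1 of Murre's notes for $ii)$, remarking that the latter ``follows the very definition.'' Your write-up supplies more detail than the paper for part $ii)$---spelling out the natural isomorphism $F^S_s \cong F^U_s\circ j^*$ and how $\rho^S_U$ is defined via automorphisms of fiber functors---but this is exactly the content behind the paper's one-line citation.
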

	See Proposition~$52.3.10$ in \cite[Tag 0BN6]{stacks-project} for a proof of the first statement. The second one follows the very definition, one can find the proof in 5.1 of~\cite{Mur67}
	
	As a consequence, one has the following result, which should be well-known for experts. 
	\begin{cor}[Rigid]\label{lem:rigid} The restriction functor $(\cdot)\mid_{U}$ from 
		the category of finite \'etale coverings of $S$ to the category of finite \'etale coverings of $U$
		is fully faithful. Suppose that there is an isomorphism $f_{U}: \widetilde{S'}\mid_{U} \rightarrow \widetilde{S}\mid_{U}$ of finite \'etale coverings of ${U}$, for some finite \'etale coverings $\widetilde{S}$ and $\widetilde{S'}$ of $S$. Then there is a unique isomorphism $f_S: \widetilde{S'} \rightarrow \widetilde{S}$ of finite \'etale coverings of $S$,  such that $f_{U}=f_S\mid_{U}$.
	\end{cor}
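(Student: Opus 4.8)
The plan is to deduce Corollary~\ref{lem:rigid} formally from the preceding results, treating it as a standard ``rigidity'' consequence of the surjectivity of $\rho_U^S$ and the Galois correspondence for finite \'etale coverings. First I would set up the translation: by Proposition~\ref{main:prop}$(i)$, the fiber functor $F_s$ gives an equivalence between finite \'etale coverings of $S$ and finite $\pi_1^{\text{\'et}}(S,s)$-sets, and likewise over $U$; by Proposition~\ref{main:prop}$(ii)$, $F_s$ intertwines the restriction functor $(\cdot)|_U$ on coverings with the restriction functor $\mathrm{res}$ on $\pi_1$-sets along $\rho_U^S$. Hence the statement that $(\cdot)|_U$ is fully faithful is \emph{equivalent} to the statement that $\mathrm{res}$ is fully faithful, which is exactly Corollary~\ref{Cor:compHom}.

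The existence-and-uniqueness part is then a diagram chase. Given $\widetilde{S},\widetilde{S'}$ finite \'etale over $S$ and an isomorphism $f_U\colon \widetilde{S'}|_U\xrightarrow{\sim}\widetilde{S}|_U$, apply $F_s$ to obtain an isomorphism of $\pi_1^{\text{\'et}}(U,s)$-sets $F_s(f_U)\colon \mathrm{res}\,F_s(\widetilde{S'})\xrightarrow{\sim}\mathrm{res}\,F_s(\widetilde{S})$ (using Proposition~\ref{main:prop}$(ii)$ to identify $F_s(\widetilde{S'}|_U)$ with $\mathrm{res}\,F_s(\widetilde{S'})$, and similarly for $\widetilde S$). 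By the full faithfulness of $\mathrm{res}$ (Corollary~\ref{Cor:compHom}), there is a unique morphism of $\pi_1^{\text{\'et}}(S,s)$-sets $g\colon F_s(\widetilde{S'})\to F_s(\widetilde{S})$ with $\mathrm{res}(g)=F_s(f_U)$. Because $\rho_U^S$ is surjective (Proposition~\ref{surjectivity}), a $\pi_1^{\text{\'et}}(S,s)$-equivariant map is bijective iff its underlying set map is bijective iff its restriction along $\rho_U^S$ is bijective; since $\mathrm{res}(g)=F_s(f_U)$ is an isomorphism, $g$ is an isomorphism of $\pi_1^{\text{\'et}}(S,s)$-sets. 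Finally, pull $g$ back through the equivalence $F_s$ over $S$ (Proposition~\ref{main:prop}$(i)$) to get a unique isomorphism $f_S\colon \widetilde{S'}\to\widetilde{S}$ of finite \'etale coverings of $S$ with $F_s(f_S)=g$; applying $F_s$ to the identity $f_S|_U = f_U$ (which we must check) reduces it to $\mathrm{res}(g)=F_s(f_U)$, and full faithfulness of $F_s$ over $U$ upgrades this equality of fiber-set maps to the desired equality of covering morphisms. Uniqueness of $f_S$ follows from uniqueness of $g$ together with the faithfulness of $F_s$.

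The only genuine subtlety — the step I expect to be the main (though minor) obstacle — is the bookkeeping of basepoints and the compatibility isomorphisms: one must pin down a geometric point $s$ lying over a point of $U$ (so that $F_s$ simultaneously computes $\pi_1$ of $S$ and of $U$), and one must invoke Proposition~\ref{main:prop}$(ii)$ carefully to ensure that ``restrict the covering, then take the fiber'' and ``take the fiber, then restrict the $\pi_1$-action along $\rho_U^S$'' really are canonically identified, naturally in the covering. Once those canonical identifications are fixed, everything else is a formal consequence of Proposition~\ref{surjectivity}, Corollary~\ref{Cor:compHom}, and Proposition~\ref{main:prop}, with no geometry and no computation; the case of interest (which is implicitly the $\mathrm{PGL}$-rigidity used later) needs no extra argument since it is subsumed by the general covering statement.
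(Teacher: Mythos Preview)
Your proposal is correct and matches the paper's intended approach: the paper does not write out a proof for this corollary but presents it explicitly as ``a consequence'' of the preceding Proposition~\ref{surjectivity}, Corollary~\ref{Cor:compHom}, and Proposition~\ref{main:prop}, which is precisely the chain of reductions you carry out. Your extra care with the basepoint bookkeeping and the compatibility identifications is appropriate and fills in exactly what the paper leaves to the reader.
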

	
	In the following, we fix a finite index set $J$ and an   open covering $\{{U}_j\}_{j\in J}$ of $S$ with $s\in \bigcap\limits_j {U}_j$. Then for any $j\in J$, the inclusion map ${U}_j\rightarrow S$ induces a surjective group morphism of fundamental groups
	\[\tau_j:\pi^\text{\'et}_1({U}_j,s)\twoheadrightarrow \pi^\text{\'et}_1(S,s).\]
	Denote  ${U}_{J_1}:={U}_{j_1 j_2\cdots j_r}:={U}_{j_1}\cap {U}_{j_2}\cap\cdots \cap {U}_{j_r}$ for any $J_1=\{j_1,j_2,\cdots,j_r\}\subset J$. Similarly, for any $J_1 \subset J_2 \subset J$, we have a surjective group morphism of fundamental groups
	\[\tau_{J_2}^{J_1}:\pi^\text{\'et}_1({U}_{J_2},s)\twoheadrightarrow \pi^\text{\'et}_1({U}_{J_1},s).\] 
	Now we can view every $\pi^\text{\'et}_1({U}_{J_1},s)$-set as a $\pi^\text{\'et}_1({U}_{J_2},s)$-set through this group morphism. Since we already have the rigidity of finite \'etale coverings, one can use it to glue these local $\pi_1$-sets together.
	
	\begin{thm}\label{Mainthm:rep} Let $(\Sigma_j,\rho_j)$ be a finite $\pi^\text{\'et}_1({U}_j,s)$-set for each $j\in J$. Suppose for each pair $i,j\in J$, there exists an isomorphism of $\pi^\text{\'et}_1({U}_{ij},s)$-sets $\eta_{ij}:\Sigma_i\simeq \Sigma_j$. Then every $\Sigma_j$ descends to a $\pi^\text{\'et}_1(S,s)$-set $(\Sigma_j,\widehat{\rho_j})$ uniquely.
		Moreover, the image of $\rho_j$ equals that of $\widehat{\rho_j}$.
	\end{thm}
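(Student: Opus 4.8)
The plan is to glue the local covering spaces (not the sets directly), using the rigidity of finite étale coverings established in Corollary~\ref{lem:rigid}, and then apply the fibre functor equivalence of Proposition~\ref{main:prop}. First I would translate the data $(\Sigma_j,\rho_j)$ into geometry: by Proposition~\ref{main:prop}$(i)$, each finite $\pi^\text{\'et}_1(U_j,s)$-set $\Sigma_j$ corresponds to a finite étale covering $\widetilde{U}_j\to U_j$ with $F_s(\widetilde{U}_j)=\Sigma_j$, and each $\pi^\text{\'et}_1(U_{ij},s)$-isomorphism $\eta_{ij}$ corresponds, via the compatibility in Proposition~\ref{main:prop}$(ii)$ (restriction of coverings matches restriction of $\pi_1$-sets), to an isomorphism $f_{ij}:\widetilde{U}_i\mid_{U_{ij}}\xrightarrow{\sim}\widetilde{U}_j\mid_{U_{ij}}$ of finite étale coverings of $U_{ij}$.

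Next I would check the cocycle condition on triple overlaps and then glue. On $U_{ijk}$ the two isomorphisms $f_{jk}\mid_{U_{ijk}}\circ f_{ij}\mid_{U_{ijk}}$ and $f_{ik}\mid_{U_{ijk}}$ are both isomorphisms $\widetilde{U}_i\mid_{U_{ijk}}\to\widetilde{U}_k\mid_{U_{ijk}}$; here I should be slightly careful, since the hypothesis only gives isomorphisms $\eta_{ij}$ of $\pi_1$-sets and not a priori their compatibility — but in fact for the statement as phrased (descent \emph{exists} and is unique) one only needs the $f_{ij}$ to glue, and by the uniqueness part of Corollary~\ref{lem:rigid} any two isomorphisms of coverings of $S$ restricting to the same thing on a dense open agree, so after possibly adjusting the $f_{ij}$ by the canonical identifications one gets a genuine cocycle. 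By faithfully flat descent for finite étale morphisms along the Zariski cover $\{U_j\}$ of $S$ (coverings are affine over the base, so descend), the $(\widetilde{U}_j, f_{ij})$ glue to a finite étale covering $\widetilde{S}\to S$ with $\widetilde{S}\mid_{U_j}\cong\widetilde{U}_j$.

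Then I would read off the conclusion: set $\widehat{\rho_j}$ to be the action of $\pi^\text{\'et}_1(S,s)$ on $F_s(\widetilde{S})$; since $F_s(\widetilde{S})=F_s(\widetilde{S}\mid_{U_j})=F_s(\widetilde{U}_j)=\Sigma_j$ by Proposition~\ref{main:prop}$(ii)$, this is an action on $\Sigma_j$, and by the same compatibility its restriction along $\tau_j$ is $\rho_j$, i.e.\ $\widehat{\rho_j}$ descends $\rho_j$. Uniqueness follows from the fully faithfulness of $\mathrm{res}$ (Corollary~\ref{Cor:compHom}): any two $\pi^\text{\'et}_1(S,s)$-set structures on $\Sigma_j$ restricting to $\rho_j$ are identified by the identity map, which is then automatically $\pi^\text{\'et}_1(S,s)$-equivariant. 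The statement about images is immediate since $\tau_j$ is surjective, so $\mathrm{im}(\rho_j)=\mathrm{im}(\widehat{\rho_j}\circ\tau_j)=\mathrm{im}(\widehat{\rho_j})$.

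The main obstacle is the bookkeeping around the cocycle condition: the hypotheses give isomorphisms $\eta_{ij}$ pairwise but say nothing about their behaviour on triple overlaps, so the honest argument must either invoke the rigidity Corollary~\ref{lem:rigid} to \emph{rigidify} the gluing data (replacing each $\eta_{ij}$ by the unique isomorphism of $S$-coverings it forces over the connected components meeting $s$) or, more simply, exploit that all $U_j$ contain the common point $s$ so that connectedness of $S$ lets one transport everything through the single fibre $\Sigma_j$ at $s$ — the coherence on triple overlaps is then automatic because all the $\eta$'s are compatible with the fixed identification of fibres at $s$. I expect this second route, combined with Corollary~\ref{lem:rigid} for uniqueness, to be the cleanest; the rest is a formal consequence of Propositions~\ref{surjectivity} and~\ref{main:prop}.
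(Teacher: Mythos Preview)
The paper states this theorem without proof, so there is nothing to compare against directly; your outline uses precisely the tools the paper assembles for this purpose (Proposition~\ref{surjectivity} through Corollary~\ref{lem:rigid}) and is the intended argument. Uniqueness via Corollary~\ref{Cor:compHom} and the image statement via the surjectivity of $\tau_j$ are correct as written.

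The only point that needs sharpening is the cocycle step, and your route (b) by itself does not work. After transporting all actions to $\Sigma_{j_0}$ via the $\eta_{j_0 j}$, the resulting actions of $\pi_1(U_i)$ and $\pi_1(U_j)$ on $\Sigma_{j_0}$ agree on $\pi_1(U_{j_0 ij})$ but in general differ on $\pi_1(U_{ij})$ by conjugation by $\eta_{j_0 j}^{-1}\eta_{ij}\eta_{j_0 i}$; so ``identity on the fibre at $s$'' is not a morphism of coverings over $U_{ij}$, and coherence on triple overlaps is \emph{not} automatic from fibre identifications alone. What does work is your route (a), made precise through a fixed index $j_0$. Choose isomorphisms $\alpha_j:\widetilde U_j|_{U_{j_0 j}}\xrightarrow{\sim}\widetilde U_{j_0}|_{U_{j_0 j}}$ coming from $\eta_{j_0 j}$, and for arbitrary $i,j$ let $g_{ij}:\widetilde U_i|_{U_{ij}}\to\widetilde U_j|_{U_{ij}}$ be the \emph{unique} extension of $\alpha_j^{-1}\alpha_i$ from $U_{j_0 ij}$ to $U_{ij}$ furnished by Corollary~\ref{lem:rigid} (the intersection is nonempty since $s$ lies in every $U_\ell$, and open subsets of the irreducible $S$ are again irreducible). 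On $U_{j_0 ijk}$ one then has $g_{jk}g_{ij}=\alpha_k^{-1}\alpha_i=g_{ik}$, and a second application of the full faithfulness in Corollary~\ref{lem:rigid} (now with ambient $U_{ijk}$ and open $U_{j_0 ijk}$) forces the cocycle identity on all of $U_{ijk}$. Now glue and finish exactly as in your last two paragraphs. Note incidentally that this argument never uses the $\eta_{ij}$ with $i,j\neq j_0$; only the $\eta_{j_0 j}$ are needed.
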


\subsection{Comparing representations associated to local Fontaine-Faltings modules underlying isomorphic filtered de Rham sheaves}\label{section LFFMHDRF}
In this section, we compare several representations associated to local Fontaine-Faltings modules underlying isomorphic filtered de Rham sheaves. To do so, we first introduce a local Fontaine-Faltings module, which corresponds to a $W_n(\F_{p^f})$-character of the local fundamental group.
We will then use this character to measure the difference between the associated representations.

Let $R$ be a small affine algebra over $W(k)$ and denote $R_n=R/p^nR$ for all $n\geq 1$. Fix a lifting $\Phi:\widehat{R}\rightarrow \widehat{R}$ of the absolute Frobenius on $R/pR$. Recall that $\kappa_\Phi:\widehat{R}\rightarrow B^+(\widehat{R})$ is the lifting of $B^+(\widehat{R})/F^1B^+(\widehat{R})\simeq \widehat{R}$ with respect to the $\Phi$. Under such a lifting, the Frobenius $\Phi_B$ on $B^+(\widehat{R})$ extends to $\Phi$ on $\widehat{R}$.
	
	\paragraph{\emph{Element $a_{n,r}$.}}
	Let $f$ be an positive integer. For any $r\in \widehat{R}^\times$, we construct a Fontaine-Faltings module of rank $f$ as following. Let 
	\[V=R_n e_0\oplus R_n e_1\oplus \cdots \oplus R_n e_{f-1}\]
	be a free $R_n$-module of rank $f$. The integrable connection $\nabla$ on $V$  is defined by formula
	\[\nabla(e_i)=0,\]
	and the filtration $\Fil$ on $V$ is the trivial one. Applying the tilde functor and twisting by the map $\Phi$, one gets 
	\[\widetilde{V}\otimes_\Phi \widehat{R}=
	R_n \cdot (\widetilde{e}_0\otimes_\Phi 1) 
	\oplus R_n \cdot (\widetilde{e}_1\otimes_\Phi 1) 
	\oplus \cdots 
	\oplus R_n \cdot (\widetilde{e}_{f-1}\otimes_\Phi 1),\]
	where the connection on $\widetilde{V}\otimes_\Phi \widehat{R}$ is determined by
	\[\nabla(\widetilde{e}_i\otimes_\Phi 1)=0.\]
	Denote by $\varphi$ the $R_n$-linear map from $(\widetilde{V}\otimes_\Phi \widehat{R},\nabla)$ to $(V,\nabla)$ 
	\[\varphi(\widetilde{e}_0\otimes_\Phi 1,\widetilde{e}_1\otimes_\Phi 1,\cdots,\widetilde{e}_{f-1}\otimes_\Phi 1)=(e_0,e_1,\cdots,e_{f-1})
	\left(\begin{array}{ccccc}
	0 &  & &  & r^{p^n}\\
	1 & 0 & & & \\
	& 1 & 0 & & \\
	&   & \ddots &  \ddots  & \\
	&   &        & 1 & 0 \\ 
	\end{array}\right).\]
	The $\varphi$ is parallel due to $\mathrm{d}(r^{p^n})\equiv 0\pmod{p^n}$. By lemma~\ref{lem:anotherDiscribtionFM}, the tuple $(V,\nabla,\Fil,\varphi)$ forms a Fontaine-Faltings module. Applying Fontaine-Laffaille-Faltings' functor $\mathbb D_\Phi$, one gets a finite $\Z_p$-representation of $\mathrm{Gal}(\widehat{\overline{R}}/\widehat{R})$, which is a free $\Z/p^n\Z$-module of rank $f$. 
	
	\begin{lem}\label{a_n,r}  Let $n$ and $f$ be two positive integers and let $r$ be an invertible element in $R$. 
		Then there exists an $a_{n,r} \in B^+(\widehat{R})^\times$ such that
		\begin{equation}\label{a_nr}
		\Phi_B^f(a_{n,r})\equiv \kappa_{\Phi}(r)^{p^n}\cdot a_{n,r} \pmod{p^n}.
		\end{equation}
	\end{lem}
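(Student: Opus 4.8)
The statement of Lemma~\ref{a_n,r} is really a statement about the Frobenius structure of the rank-$f$ Fontaine-Faltings module $(V,\nabla,\Fil,\varphi)$ constructed just above it: the cyclic matrix with a single $r^{p^n}$ in the corner is exactly the $\varphi^f$-structure on the underlying $B^+$-lattice after iterating $\varphi$ and $\Phi_B$ around the cycle $f$ times, so the existence of $a_{n,r}$ is exactly the solvability of the ``$f$-fold Frobenius eigenvector'' equation~\eqref{a_nr}. So the plan is to construct $a_{n,r}$ directly by successive approximation modulo increasing powers of $p$, using the classical fact that $B^+(\widehat R)/pB^+(\widehat R)$ (equivalently, $A_{\mathrm{cris}}$ modulo $p$) is a perfect ring in characteristic $p$ — in particular that $\Phi_B$ is surjective modulo $p$ — together with the fact that $\ker(\Phi_B - 1)$ type equations can be solved level by level.

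\medskip

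\textbf{Key steps.} First I would reduce to the case $n=1$: if I can solve $\Phi_B^f(a)\equiv \kappa_\Phi(r)^{p}\, a \pmod p$ with $a$ a unit, then I lift to level $n$ by the standard dévissage. Concretely, suppose $a^{(m)}\in B^+(\widehat R)^\times$ satisfies $\Phi_B^f(a^{(m)})\equiv \kappa_\Phi(r)^{p^m} a^{(m)} \pmod{p^m}$; write the error as $\Phi_B^f(a^{(m)}) - \kappa_\Phi(r)^{p^m} a^{(m)} = p^m c$ and look for a correction $a^{(m+1)} = a^{(m)}(1 + p^m x)$. Plugging in and using that $\Phi_B$ is $\sigma$-semilinear and raises things to $p$-th powers modulo $p$, the equation for $x$ modulo $p$ becomes an affine-linear equation of the shape $u\,\Phi_B^f(x) - x \equiv (\text{known}) \pmod p$ for a suitable unit $u$; this is solvable because on the perfect ring $B^+/pB^+$ the operator $x\mapsto u\,\Phi_B^f(x)$ is (after the usual Artin–Schreier/perfectness argument) surjective enough to hit any class — here one uses that $r$ is a unit in $\widehat R$, hence $\kappa_\Phi(r)$ is a unit in $B^+(\widehat R)$, to keep $a^{(m+1)}$ a unit. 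Second, for the base case $n=1$ I would invoke exactly the same perfectness of $B^+(\widehat R)/p$: the equation $\Phi_B^f(a) = \kappa_\Phi(r)^p a$ in $B^+/p$ is, after dividing, of Artin–Schreier type twisted by a unit, and solvability of such equations in a perfect $\mathbb F_p$-algebra is classical (it is the statement that every $\varphi$-module of this rank-one shape over a perfect ring has a basis vector, which is what underlies the computation of $\mathbb D_\Phi$ of this module). Third, I would assemble the $a^{(m)}$ into a compatible system and take the limit — since everything is already $p^n$-torsion once we fix $n$, this is just finitely many steps and no genuine completion argument is needed; the resulting $a_{n,r} = a^{(n)}$ is the desired unit.

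\medskip

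\textbf{Where the difficulty lies.} The routine part is the dévissage bookkeeping. The real content — and the step I expect to be the main obstacle to state cleanly — is the surjectivity input at each stage: one needs that for a unit $u \in B^+(\widehat R)/p$ the additive operator $x \mapsto u\,\Phi_B^f(x) - x$ hits every element, or at least every element arising as an error term, and that a unit preimage can be chosen. This rests on $B^+(\widehat R)/pB^+(\widehat R)$ being a perfect $\mathbb F_p$-algebra (so $\Phi_B$ is bijective mod $p$) and on an Artin–Schreier-style solvability statement over such rings; in Faltings' setup this is exactly the input that makes $\mathbb D_\Phi$ of the module built from the cyclic matrix free of rank $f$, so morally one could also deduce Lemma~\ref{a_n,r} as a corollary of the computation of $\mathbb D_\Phi$ on $(V,\nabla,\Fil,\varphi)$ rather than proving it by hand — I would mention this as the conceptual reason the lemma is true, then give the hands-on successive-approximation proof as above because it makes the unit $a_{n,r}$ explicit, which is what the later applications need.
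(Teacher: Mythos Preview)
Your proposal inverts the logical order the paper actually uses: the paper proves the lemma precisely via the route you relegate to an aside at the end. It applies Faltings' theorem that $\mathbb D_\Phi$ of the rank-$f$ Fontaine--Faltings module $(V,\nabla,\Fil,\varphi)$ built from the cyclic matrix is free of rank $f$ over $\Z/p^n\Z$, picks any element $g$ of order $p^n$, and reads the relation~\eqref{a_nr} off the compatibility diagram between $g$ and $\varphi$; the element $a_{n,r}$ is then any lift of $g(e_0\otimes_{\kappa_\Phi}1)$ under $B^+(\widehat R)\xrightarrow{1/p^n}D[p^n]$. Invertibility is obtained by the neat trick of producing $a_{n,r^{-1}}$ the same way and observing that $a_{n,r}\cdot a_{n,r^{-1}}$ is $\Phi_B^f$-fixed, hence lies in $W(\F_{p^f})^\times$.

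Your direct d\'evissage, by contrast, has real gaps. First, $B^+(\widehat R)/pB^+(\widehat R)$ is \emph{not} perfect: the divided powers of a generator $\xi$ of $\ker\theta$ satisfy $(\xi^{[p]})^p\equiv 0\pmod p$ while $\xi^{[p]}\not\equiv 0$, so Frobenius has a nontrivial kernel. Second, your induction changes the coefficient from $\kappa_\Phi(r)^{p^m}$ to $\kappa_\Phi(r)^{p^{m+1}}$ at each step, but in a general ring one does \emph{not} have $s^{p^{m+1}}\equiv s^{p^m}\pmod{p^m}$ for a unit $s$ (take $s=\kappa_\Phi(t)$ for a coordinate $t$: already mod $p$ the difference $t^{p^{m+1}}-t^{p^m}$ is a unit times a nonzero element), so the error terms you want to absorb into $p^m x$ are not of the right size. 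Third, even if you fix the coefficient at $\kappa_\Phi(r)^{p^n}$ throughout, the base case is the \emph{multiplicative} equation $a^{p^f-1}=\overline{\kappa_\Phi(r)^{p^n}}$ in $B^+/p$, which is Kummer-type rather than Artin--Schreier; its solvability is not a formal consequence of perfectness and is in fact equivalent to the rank statement for $\mathbb D_\Phi$ you were trying to avoid. So the ``hands-on'' route, as written, is circular: the only input that makes it go through is Faltings' theorem, which is exactly what the paper invokes directly.
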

	\begin{proof}
		Since $\mathbb{D}_{\Phi}(V,\nabla,\Fil,\varphi)$ is free over $\Z/p^n\Z$ of rank $f$. one can find an element $g$ with order $p^n$. Recall that $\mathbb{D}_{\Phi}(V,\nabla,\Fil,\varphi)$ is the sub-$\Z_p$-module of $\mathrm{Hom}_{B^+(\widehat{R})}(V\otimes_{\kappa_{\Phi}} B^+(\widehat{R}),D)$ consisted by elements respecting the filtration and $\varphi$. In particular, the following diagram commutes
		\begin{equation}
		\xymatrix@C=1.5cm{
			\left(\widetilde{V}\otimes_{\kappa_\Phi} B^+(\widehat{R})\right)\otimes_{\Phi} B^+(\widehat{R}) 
			\ar[r]^(0.6){g\otimes_{\Phi} \mathrm{id}}\ar@{=}[d]
			&D\otimes_{\Phi} B^+(\widehat{R})\ar[dd]^{\simeq }\\ 
			\left(\widetilde{V}\otimes_{\Phi} \widehat{R}\right)\otimes_{\kappa_\Phi} B^+(\widehat{R}) 
			\ar[d]^{ \varphi\otimes \mathrm{id}}
			&\\
			V\otimes_{\kappa_\Phi}B^+(\widehat{R})\ar[r]^g
			&D\\ 
		}
		\end{equation}
		Comparing images of $(\widetilde{e}_i\otimes_{\kappa_\Phi} 1)\otimes_\Phi 1$ under the diagram, we have 
		\[\Phi(g(v_{i}\otimes_{\kappa_\Phi} 1))=g(v_{i+1}\otimes_{\kappa_\Phi} 1), \quad \text{ for all } 0\leq i\leq f-2;\]
		and 
		\[\Phi(g(v_{f-1}\otimes_{\kappa_\Phi} 1))=\kappa_\Phi(r)^{p^n}\cdot g(v_{0}\otimes_{\kappa_\Phi} 1).\]
		So we have 
		\[\Phi^f(g(v_{0}\otimes_{\kappa_\Phi} 1))=\kappa_\Phi(r)^{p^n}\cdot g(v_{0}\otimes_{\kappa_\Phi} 1).\]
		Since the image of $g$ is $p^n$-torsion, $\mathrm{Im}(g)$ is contained in $D[p^n]=\frac{1}{p^n}B^+(\widehat{R})/B^+(\widehat{R})$, the $p^n$-torsion part of $D$. Choose a lifting $a_{n,r}$ of $g(e_0\otimes_{\kappa_\Phi} 1)$ under the surjective map $B^+(\widehat{R}) \overset{\frac{1}{p^n}}{\longrightarrow}  D[p^n]$.
		Then the equation~(\ref{a_nr}) follows. Similarly, one can define $a_{n,r^{-1}}$ for $r^{-1}$. By equation~(\ref{a_nr}), we have
		\[\Phi^f(a_{n,r}\cdot a_{n,r^{-1}})=a_{n,r}\cdot a_{n,r^{-1}}.\]
		Thus $a_{n,r}\cdot a_{n,r^{-1}}\in W(\F_{p^f})$. Since both $a_{n,r}$ and $a_{n,r^{-1}}$ are not divided by $p$ (by the choice of $g$), we know that $a_{n,r}\cdot a_{n,r^{-1}}\in W(\F_{p^f})^\times$. The invertibility of $a_{n,r}$ follows.
	\end{proof}
	
	\paragraph{\emph{Comparing representations.}}
	Let $n$ and $f$ be two positive integers.  For all $0\leq i\leq f$, let $(V_i,\nabla_i,\Fil_i)$ be filtered de Rham $R_n=R/p^nR$-modules of level $a$ ($a\leq p-1$). We write $V=\bigoplus\limits_{i=0}^{f-1} V_i$, $\nabla=\bigoplus\limits_{i=0}^{f-1} \nabla_i$ and $\Fil=\bigoplus\limits_{i=0}^{f-1} \Fil_i$ for short. Let
	\begin{equation}
	\begin{split}
	\varphi_i: &\  C^{-1}\circ \overline{\mathrm{Gr}}(V_i,\nabla_i,\Fil_i)\simeq (V_{i+1},\nabla_{i+1}), \quad 0\leq i\leq f-2\\
	\varphi_{f-1}: &\  C^{-1}\circ \overline{\mathrm{Gr}}(V_{f-1},\nabla_{f-1},\Fil_{f-1})\simeq (V_{0},\nabla_{0})\\
	\end{split}
	\end{equation} 
	be isomorphisms of de Rham $R$-modules. Let $r$ be an element in $R^\times$. Since $\mathrm{d}(r^{p^n})=0\pmod{p^n}$, the map $r^{p^n}\varphi_{f-1}$ is also an isomorphism of de Rham $R_n$-modules. Thus 
	\[M=(V,\nabla,\Fil,\varphi) \text{ and } M'=(V,\nabla,\Fil,\varphi')\]
	are Fontaine-Faltings modules over $R_n$, where  $\varphi=\sum\limits_{i=0}^{f-1}\varphi_i$ and $\varphi'=\sum\limits_{i=0}^{f-2}\varphi_i+r^{p^n}\varphi_{f-1}$. 
	
	\begin{prop} $i)$. There are $W_n(\F_{p^f})$-module structures on $\mathbb D_{\Phi}(M)$ and $\mathbb D_{\Phi}(M')$. And the actions of $\mathrm{Gal}(\widehat{\overline{R}}/\widehat{R})$ are semi-linear.\\	
		$ii)$. The multiplication of $a_{n,r}$ on  $\Hom_{B^+(\widehat{R})}\left(V \otimes_{\kappa_\Phi} B^+(\widehat{R}),D\right)$
		induces a $W_n(\F_{p^f})$-linear map between these two submodules 
		\[\mathbb D_{\Phi}(M)\overset{\sim}{\longrightarrow} \mathbb D_{\Phi}(M').\]
	\end{prop}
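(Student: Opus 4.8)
The plan is to unwind the definition of the functor $\mathbb{D}_\Phi$ in terms of $\varphi$-compatible homomorphisms into $D$, and to observe that the only difference between $M$ and $M'$ is in the $\varphi$-structure, localized at the single transition map $\varphi_{f-1}$ vs. $r^{p^n}\varphi_{f-1}$. The underlying filtered de Rham module $(V,\nabla,\Fil)$ is literally the same for both, so the ambient $B^+(\widehat R)$-module $\Hom_{B^+(\widehat{R})}\big(V\otimes_{\kappa_\Phi} B^+(\widehat R),D\big)$ on which both $\mathbb{D}_\Phi(M)$ and $\mathbb{D}_\Phi(M')$ sit as submodules is one and the same. So the statement amounts to: multiplication by $a_{n,r}$ is a bijection of this ambient module (clear, since $a_{n,r}\in B^+(\widehat R)^\times$ by Lemma~\ref{a_n,r}) that carries the sub-$\Z_p$-module cut out by the $\varphi$-compatibility condition of $M$ onto the one cut out by the $\varphi$-compatibility condition of $M'$, and is $W_n(\F_{p^f})$-linear for the relevant structures.

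For part $i)$, I would invoke Lemma~\ref{lem:anotherDiscribtionFMwithEnd} (or rather the construction preceding it): the cyclic data $(V_i,\nabla_i,\Fil_i,\varphi_i)_{0\le i<f}$ is exactly the decomposed form of a Fontaine-Faltings module carrying an endomorphism structure of $W_n(\F_{p^f})$, the generator $\xi$ acting on the $i$-th summand $V_i$ by the scalar $\sigma^i(\xi)$. This endomorphism structure is preserved when we replace $\varphi_{f-1}$ by $r^{p^n}\varphi_{f-1}$, since $r^{p^n}$ is a scalar commuting with everything; hence both $M$ and $M'$ inherit a $W_n(\F_{p^f})$-action. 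Pushing through the fully faithful functor $\mathbb{D}_\Phi$ (which lands in finite $\Z_p$-representations of $\Gal(\widehat{\overline R}/\widehat R)$, per Faltings' theorem recalled in \S\ref{section FDF}) yields $W_n(\F_{p^f})$-module structures on $\mathbb{D}_\Phi(M)$ and $\mathbb{D}_\Phi(M')$; and since the $\Gal$-action on $V\otimes_{\kappa_\Phi}B^+(\widehat R)$ comes from the connection $\nabla$ and commutes with the $\varphi$'s, the Galois action on the $\mathbb{D}$'s is visibly semilinear over $W_n(\F_{p^f})$.

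For part $ii)$, fix a homomorphism $g\in \mathbb{D}_\Phi(M)\subset \Hom_{B^+(\widehat R)}(V\otimes_{\kappa_\Phi}B^+(\widehat R),D)$. As in the proof of Lemma~\ref{a_n,r}, the $\varphi$-compatibility of $g$ for $M$ translates, after restricting to the summands, into the relations $\Phi\big(g(v_i\otimes 1)\big)=g(v_{i+1}\otimes 1)$ for $0\le i\le f-2$ and $\Phi\big(g(v_{f-1}\otimes 1)\big)=\kappa_\Phi(r)^{p^n}\,g(v_0\otimes 1)$ — wait, I should be careful: for $M$ itself (with $\varphi_{f-1}$, no twist) the last relation is $\Phi(g(v_{f-1}\otimes 1))=g(v_0\otimes 1)$, and it is for $M'$ that the factor $\kappa_\Phi(r)^{p^n}$ appears, up to the usual identification of $r^{p^n}$ with $\kappa_\Phi(r)^{p^n}$ modulo $p^n$. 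Then I claim $a_{n,r}\cdot g$ satisfies the $\varphi$-compatibility for $M'$: using $\Phi_B^f(a_{n,r})\equiv \kappa_\Phi(r)^{p^n} a_{n,r}\pmod{p^n}$ from \eqref{a_nr} together with the $\Phi$-semilinearity, one checks the cyclic relations are permuted correctly and the twist by $\kappa_\Phi(r)^{p^n}$ is absorbed. The inverse map is multiplication by $a_{n,r^{-1}}$, legitimate because $a_{n,r}a_{n,r^{-1}}\in W(\F_{p^f})^\times$ as established in Lemma~\ref{a_n,r}; and multiplication by an element of $B^+(\widehat R)^\times$ is $B^+(\widehat R)$-linear hence a fortiori $W_n(\F_{p^f})$-linear and $\Gal$-equivariant. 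The one genuine subtlety — and I expect this to be the only real point requiring care — is bookkeeping the interaction of the $\Phi$-semilinearity with the cyclic index shift: one must verify that multiplication by the \emph{single} element $a_{n,r}$ (not a tuple) simultaneously fixes up all $f$ of the compatibility relations, which is exactly the content of the defining equation $\Phi_B^f(a_{n,r})\equiv\kappa_\Phi(r)^{p^n}a_{n,r}$ and is why $a_{n,r}$ was constructed as a period of the rank-$f$ cyclic module in Lemma~\ref{a_n,r} in the first place.
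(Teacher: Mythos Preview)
For part~i) you argue abstractly via Lemma~\ref{lem:anotherDiscribtionFMwithEnd} and the full faithfulness of $\mathbb{D}_\Phi$, whereas the paper is entirely explicit: it defines the $W_n(\F_{p^f})$-action by the formula $a\star g:=\sum_{i=0}^{f-1}\sigma^i(a)\,g_i$ (with $g_i$ the restriction of $g$ to the summand $V_i\otimes_{\kappa_\Phi}B^+(\widehat R)$), checks that $a\star g$ again lies in $\mathbb D_\Phi(M)$, and verifies semi-linearity of the Galois action by a direct computation. Both routes are fine; the paper's concrete formula is what makes the $W_n(\F_{p^f})$-linearity claim in part~ii) immediate.

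For part~ii) there is a genuine gap. First, you never check the filtration condition: membership in $\mathbb D_\Phi$ requires that the homomorphism be \emph{strict} for the filtrations, and the paper verifies this separately using $a_{n,r}\in B^+(\widehat R)^\times\subset\Fil^0\setminus\Fil^1$. Second, and more seriously, your component-by-component strategy does not close up. You correctly worry that multiplying by the single element $a_{n,r}$ must be compatible with all $f$ cyclic relations at once, but the hand-wave ``this is exactly the content of $\Phi_B^f(a_{n,r})\equiv\kappa_\Phi(r)^{p^n}a_{n,r}$'' is not a proof: that equation controls only the $f$-fold iterate. If you attempt to match the $i$-th cyclic relation for $a_{n,r}\,g$ against the $i$-th relation for $g$ when $0\le i\le f-2$ (where $\varphi'_i=\varphi_i$), you are forced to demand $\Phi_B(a_{n,r})=a_{n,r}$, which is false in general. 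So the componentwise verification does not go through as you sketch it; the ``genuine subtlety'' you flag is not dispatched by the sentence you wrote. The paper's route is different: it treats the $\varphi$-compatibility as a single global equation with the undecomposed $\varphi$ and $\varphi'$ and performs one direct manipulation, rather than attempting $f$ separate checks.

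A small slip: you claim multiplication by $a_{n,r}$ is ``$\Gal$-equivariant'', but $a_{n,r}\in B^+(\widehat R)$ is not Galois-fixed, so this fails on the nose; the proposition only asserts $W_n(\F_{p^f})$-linearity, and Galois-compatibility holds only after projectivizing (Corollary~\ref{cor_compare_proj}).
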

	\begin{proof} $i)$. Let $g : \bigoplus\limits_{i=0}^{f-1} V_i \otimes_{\kappa_\Phi} B^+(\widehat{R})\rightarrow D$ be an element in $\mathbb D_{\Phi}(M)$. For any $a\in W_n(\F_{p^f})$, denote
    \begin{equation}\label{action of W_n(F_p^f)}
            a\star g:=\sum_{i=0}^{f-1} \sigma^{i}(a) g_i
    \end{equation} 
		where $g_i$ is the restriction of $g$ on the $i$-th component $V_i \otimes_{\kappa_{\Phi}} B^+(\widehat{R})$. One checks that $a\star g$ is also contained in $\mathbb D_{\Phi}(M)$. Thus $\star$ defines a $W_n(\F_{p^f})$-module structure on $\mathbb D_{\Phi}(M)$.  Let $\delta$ be an element in $\mathrm{Gal}(\widehat{\overline{R}}/\widehat{R})$. Then 
		\begin{equation}
		\begin{split}
		\delta(a\star g) & = \delta\circ\left(\sum_{i=0}^{f-1} \sigma^{i}(a) g_i\right)\circ \delta^{-1}\\
		&=\sum_{i=0}^{f-1} \sigma^{i}(\delta(a)) \delta\circ g_i \circ \delta^{-1}\\ 
		&=\delta(a)\star\delta(g)
		\end{split}
		\end{equation}
		In this way, $\mathbb D_{\Phi}(M)$ forms a $W_n(\F_{p^f})$-module with a continuous semi-linear action of $\pi^\text{\'et}_1(U_K)$. For the $W_n(\F_{p^f})$-module structure on $\mathbb D_{\Phi}(M')$, the action of $W_n(\F_{p^f})$ on $\mathbb D_{\Phi}(M')$  is defined in the same manner as in (\ref{action of W_n(F_p^f)}).	\\	
		$ii)$. Recall that $\mathbb{D}_\Phi(M)$ (resp. $\mathbb{D}_\Phi(M')$) is defined to be the set of all morphisms in $\Hom_{B^+(\widehat{R})}\left(V \otimes_{\kappa_\Phi} B^+(\widehat{R}),D\right)$ compatible with the filtration and $\varphi$ (resp. $\varphi'$). Since $\mathbb{D}_\Phi(M)$ and $\mathbb{D}_\Phi(M')$ have the same rank and multiplication by $a_{n,r}$ map on $\Hom_{B^+(\widehat{R})}\left(V \otimes_{\kappa_\Phi} B^+(\widehat{R}),D\right)$ is injective, we only need to show that $a_{n,r}\cdot f\in \mathbb D_{\Phi}(M')$ for all $f\in \mathbb D_{\Phi}(M)$. Suppose $f:V\otimes_{\kappa_\Phi} B^+(\widehat{R})\rightarrow D$ is an element in $\mathbb D_{\Phi}(M)$, which means that $f$ satisfies the following two conditions:
		
		1). $f$ is strict for the filtrations. i.e. 
		\[\sum_{\ell_1+\ell_2=\ell}\Fil^{\ell_1} V\otimes_{\kappa_\Phi}   \Fil^{\ell_2} B^+(\widehat{R})=f^{-1}(\Fil^{\ell} D).\]
		
		2).$f \otimes _\Phi \mathrm{id} =f\circ (\varphi\otimes_{\kappa_\Phi} \mathrm{id})$. i.e. the following diagram commutes
		\begin{equation}
		\xymatrix@C=1.5cm{
			\left(\widetilde{V}\otimes_{\kappa_\Phi} B^+(\widehat{R})\right)\otimes_{\Phi} B^+(\widehat{R}) 
			\ar[r]^(0.6){ f\otimes_{\Phi} \mathrm{id}}\ar@{=}[d]
			& D\otimes_{\Phi} B^+(\widehat{R})\ar[dd]^{\simeq }\\ 
			\left(\widetilde{V}\otimes_{\Phi} \widehat{R}\right)\otimes_{\kappa_\Phi} B^+(\widehat{R}) 
			\ar[d]^{ \varphi_{n,r}\otimes \mathrm{id}}
			&\\
			V\otimes_{\kappa_\Phi}B^+(\widehat{R})\ar[r]^f
			&D\\ 
		}
		\end{equation}
		Since $a_{n,r}\in B^+(\widehat{R})^\times\subset\Fil^0 B^+(\widehat{R})\setminus \Fil^1 B^+(\widehat{R})$,  we have $a_{n,r}\cdot \Fil^\ell D=\Fil^\ell D$, and thus 
		\[\sum_{\ell_1+\ell_2=\ell}\Fil^{\ell_1} V\otimes_{\kappa_\Phi}   \Fil^{\ell_2} B^+(\widehat{R})=f^{-1}(\Fil^{\ell} D)=(a_{n,r}\cdot f)^{-1}(\Fil^{\ell} D).\]
		Simultaneously, we have
		\begin{equation*}
		\begin{split}
		(a_{n,r}\cdot f)\otimes _\Phi \mathrm{id}
		=&
		f\otimes_\Phi \Phi(a_{n,r})\cdot \mathrm{id}\\
		=& f\otimes_\Phi a_{n,r}\cdot \kappa_\Phi(r)^{p^n}\cdot \mathrm{id}=( a_{n,r}\cdot \kappa_\Phi(r)^{p^n})\cdot \Big(f\otimes_\Phi\mathrm{id}\Big) \\
		=& a_{n,r}\cdot \kappa_\Phi(r)^{p^n}\cdot\Big(f\circ (\varphi\otimes_{\kappa_\Phi} \mathrm{id})\Big) \\
		=& (a_{n,r}\cdot f)\circ (r^{p^n}\varphi\otimes_{\kappa_\Phi} \mathrm{id})\\
		\end{split}
		\end{equation*}
		So by definition $a_{n,r}\cdot f\in \mathbb D_{\Phi}(M')$.
	\end{proof}
	\begin{cor}\label{cor_compare_proj} Suppose that $\F_{p^f}\subset k$. 
		The map from $\mathbb D_{\Phi}(M)$ to $\mathbb D_{\Phi}(M')$ is an isomorphism of projective $W_n(\F_{p^f})$-representations of $\mathrm{Gal}(\overline{\widehat{R}}/\widehat{R})$. In particular, we have an bijection of  $\mathrm{Gal}(\overline{\widehat{R}}/\widehat{R})$-sets
		\[ \mathbb D_{\Phi}(M)/W_n(\F_{p^f})^\times\rightarrow  \mathbb D_{\Phi}(M')/W_n(\F_{p^f})^\times.\]
	\end{cor}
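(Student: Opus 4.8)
The plan is to deduce the corollary from the preceding proposition. That proposition already provides a $W_n(\F_{p^f})$-linear bijection $a_{n,r}\cdot(-)\colon\mathbb D_\Phi(M)\xrightarrow{\ \sim\ }\mathbb D_\Phi(M')$; by the definition of morphisms of projective $W_n(\F_{p^f})$-representations given in Section~\ref{section CPR}, it therefore remains only to check that this map is compatible with the $\mathrm{Gal}(\widehat{\overline R}/\widehat R)$-actions \emph{up to the scalar ambiguity}, i.e.\ that it induces a morphism --- necessarily a bijection --- of $\mathrm{Gal}(\widehat{\overline R}/\widehat R)$-sets between the quotients $\mathbb D_\Phi(M)/W_n(\F_{p^f})^\times$ and $\mathbb D_\Phi(M')/W_n(\F_{p^f})^\times$.

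The first step is to isolate the obstruction to genuine equivariance. For $\delta\in\mathrm{Gal}(\widehat{\overline R}/\widehat R)$ and $f\in\mathbb D_\Phi(M)$ we have $\delta(a_{n,r}\cdot f)=\delta(a_{n,r})\cdot\delta(f)=c_\delta\cdot\bigl(a_{n,r}\cdot\delta(f)\bigr)$, where $c_\delta:=\delta(a_{n,r})\,a_{n,r}^{-1}\in B^+(\widehat R)^\times$ is well defined since $a_{n,r}$ is invertible by Lemma~\ref{a_n,r}. Hence it suffices to prove $c_\delta\in W_n(\F_{p^f})^\times$ for every $\delta$: then $[\delta(a_{n,r}\cdot f)]=[a_{n,r}\cdot\delta(f)]$ in $\mathbb D_\Phi(M')/W_n(\F_{p^f})^\times$, which is precisely the required equivariance, and passing to quotients also yields the final bijection of $\mathrm{Gal}$-sets.

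The second step computes $c_\delta$ from the defining congruence $\Phi_B^f(a_{n,r})\equiv\kappa_\Phi(r)^{p^n}\cdot a_{n,r}\pmod{p^n}$ of Lemma~\ref{a_n,r}. Applying $\delta$, and using that $\delta$ commutes with the Frobenius $\Phi_B$ on $B^+(\widehat R)$ and fixes $\kappa_\Phi(r)$ --- the image of $\kappa_\Phi$ consists of $\mathrm{Gal}$-invariant elements, being the canonical Frobenius-compatible section of the projection $B^+(\widehat R)\twoheadrightarrow B^+(\widehat R)/F^1B^+(\widehat R)\simeq\widehat R$ onto a ring on which $\mathrm{Gal}$ acts trivially --- one gets $\Phi_B^f(\delta(a_{n,r}))\equiv\kappa_\Phi(r)^{p^n}\cdot\delta(a_{n,r})\pmod{p^n}$. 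Dividing this congruence by the original one (both sides of which are units modulo $p^n$) gives $\Phi_B^f(c_\delta)\equiv c_\delta\pmod{p^n}$, so $c_\delta$ lies in the subring of $\Phi_B^f$-invariants of $B^+(\widehat R)/p^n$, which equals $W_n(\F_{p^f})$ because $\F_{p^f}\subset k$; since $c_\delta$ is a ratio of units it is not divisible by $p$, hence $c_\delta\in W_n(\F_{p^f})^\times$. (That $\delta\mapsto c_\delta$ is moreover a continuous cocycle, needed to make the quotient set a bona fide $\mathrm{Gal}$-set, is formal.)

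I expect the main obstacle to be the period-ring bookkeeping in the second step: rigorously justifying, within Faltings' framework, that $\kappa_\Phi(r)$ is Galois-invariant and that the $\Phi_B^f$-fixed elements of $B^+(\widehat R)/p^n$ are exactly $W_n(\F_{p^f})$, together with checking that the $W_n(\F_{p^f})$-structures appearing in the corollary are indeed those defined by the $\star$-action in the proposition (so that ``$W_n(\F_{p^f})$-linear'' meshes with the projective-representation formalism of Section~\ref{section CPR}). A variant that sidesteps the Galois-invariance of $\kappa_\Phi(r)$ is to recognize $a_{n,r}$ as a lift of the image of a $W_n(\F_{p^f})$-generator of $\mathbb D_\Phi$ of the rank-$f$ auxiliary module attached to $r$ --- whose Galois action is a $W_n(\F_{p^f})^\times$-valued character $\delta\mapsto\lambda_\delta$ --- so that $\delta(a_{n,r})\equiv\lambda_\delta\,a_{n,r}\pmod{p^n}$ and hence $c_\delta=\lambda_\delta$ directly.
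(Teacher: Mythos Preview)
The paper states this corollary without proof, treating it as an immediate consequence of the preceding proposition; your argument correctly supplies the omitted reasoning.

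One comment on your two approaches. Your variant---recognizing $a_{n,r}$ as a lift of $g(e_0\otimes_{\kappa_\Phi}1)$ for $g\in\mathbb D_\Phi$ of the auxiliary rank-$f$ module of Lemma~\ref{a_n,r}---is the robust route and is implicitly what the paper has in mind. Since $\nabla(e_0)=0$ in that module, the vector $e_0\otimes 1$ is fixed by the connection-twisted Galois action on $V\otimes_{\kappa_\Phi}B^+(\widehat R)$, so $\delta(g(e_0\otimes 1))=(\delta g)(e_0\otimes 1)$; as $\mathbb D_\Phi$ of the auxiliary module is free of rank one over $W_n(\F_{p^f})$ (under the hypothesis $\F_{p^f}\subset k$, which also makes the action $W_n(\F_{p^f})$-linear rather than merely semi-linear), this yields $c_\delta\equiv\chi(\delta)\in W_n(\F_{p^f})^\times$ directly. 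By contrast, your main approach hinges on the Galois-invariance of $\kappa_\Phi(r)$ inside $B^+(\widehat R)$, which is not obvious from Faltings' construction: $\kappa_\Phi$ is a Frobenius-compatible section, not a priori a Galois-equivariant one, and it is precisely the failure of equivariance that forces the Galois action on $V\otimes_{\kappa_\Phi}B^+(\widehat R)$ to be defined via the connection rather than naively. You are right to flag this as the main obstacle; I would recommend leading with the variant rather than relegating it to a remark.
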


	\subsection{The functor $\mathbb D^P$}\label{section FDP}
	In this section, we   assume $f$ to be a positive integer with $\mathbb F_{p^f}\subset k$. Let $\{\U_j\}_{j\in J}$ be a finite small affine open covering of $\X$. Let ${U}_j=(\U_j)_K$. For every $j\in J$, fix $\Phi_j$ as a lifting of the absolute Frobenius on $\U_j\otimes_W k$. Fix $\overline{x}$ as a geometric point in ${U}_J=\bigcap\limits_{j\in J} U_j$ and fix $j_0$ an element in $J$. 
	
	Let $(V,\nabla,\Fil,\varphi,\iota)$ be a Fontaine-Faltings module over $X_n$ with an endomorphism structure of $W(\F_{p^f})$ whose Hodge-Tate weights lie in $[0,p-2]$. Locally, Applying Fontaine-Laffaille-Faltings' functor $\mathbb D_{\Phi_j}$, one gets a finite $W_n(\mathbb F_{p^f})$-representation $\varrho_j$ of $\pi^\text{\'et}_1(U_j,\overline{x})$. Faltings shows that there is an isomorphism $\varrho_{j_1}\simeq \varrho_{j_2}$ of $\Z/p^n\Z$-representations of $\pi^\text{\'et}_1(U_{j_1j_2},\overline{x})$. By Lan-Sheng-Zuo~\cite{LSZ13a}, this isomorphism is $W_n(\mathbb F_{p^f})$-linear. By Theorem~\ref{Mainthm:rep}, these $\varrho_j$'s uniquely descend to a  $W_n(\mathbb F_{p^f})$-representation of $\pi^\text{\'et}_1(X_K,\overline{x})$. Thus one reconstructs the $W_n(\mathbb F_{p^f})$-representation $\mathbb D(V,\nabla,\Fil,\varphi,\iota)$ in this way. 
	
	Now we construct functor $\mathbb D^P$ for twisted Fontaine-Faltings modules, in a similar way. 
	Let $(V_i,\nabla_i,\Fil_i,\varphi_i)_{0\leq i<f}\in\mathcal {TMF}_{[0,p-2],f}^{\nabla}(X_{n+1}/W_{n+1})$ be an $L_n$-twisted Fontaine-Faltings module over $X_n$  with endomorphism structure of $W_n(\mathbb F_{p^f})$ whose Hodge-Tate weights lie in $[0,p-2]$. For each $j\in J$, choosing a trivialization $M(\tau_j)$ and applying Fontaine-Laffaille-Faltings' functor $\mathbb D_{\Phi_j}$, we get a $W_n(\mathbb F_{p^f})$-module together with a linear action of $\pi^\text{\'et}_1(U_j,\overline{x})$. Denote its projectification by $\varrho_j$. By Corollary~\ref{cor_compare_proj}, there is an isomorphism  $\varrho_{j_1}\simeq \varrho_{j_2}$ as projective $W_n(\mathbb F_{p^f})$-representations of $\pi^\text{\'et}_1(U_{j_1j_2},\overline{x})$. In what follows, we will show that these $\varrho_j$'s uniquely descend to a projective $W_n(\mathbb F_{p^f})$-representation of $\pi^\text{\'et}_1(X_K,\overline{x})$ by using Theorem~\ref{Mainthm:rep}. 
	
	In order to use Theorem~\ref{Mainthm:rep}, set $\Sigma_j$ to be the quotient  $\pi^\text{\'et}_1(U_j,\overline{x})$-set $$\mathbb{D}_{\Phi_j}(M(\tau_j))/W_n(\mathbb F_{p^f})^\times.$$ Obviously 
	the kernel of the canonical group morphism 
	\[\mathrm{GL}\big(\mathbb{ D}_{\Phi_{j}}(M(\tau_j))\big)\rightarrow \mathrm{Aut}(\Sigma_j)\]
	is just $W_n(\mathbb F_{p^f})^\times$, we identify the image of this morphism with 
	\[\mathrm{PGL}\big(\mathbb{D}_{\Phi_{j}}(M(\tau_j))\big)=\mathrm{GL}\big(\mathbb{D}_{\Phi_{j}}(M(\tau_j))\big)/W_n(\mathbb F_{p^f})^\times.\]
	Let's denote by $\rho_j$ the composition of $\varrho_j$ and $\mathrm{GL}\big(\mathbb{ D}_{\Phi_{j}}(M(\tau_j))\big)\rightarrow \mathrm{Aut}(\Sigma_j)$ for all $j\in J$.
	\begin{equation*}
	\xymatrix{
		\pi^\text{\'et}_1(U_{j_0},\overline{x})
		\ar@{->>}[d] \ar[r]^(0.3){\varrho_{j_0}} \ar[rrd]_(0.3){\rho_{j_0}}|!{[r];[dr]}\hole 
		& \mathrm{GL}\big(\mathbb{ D}_{\Phi_{j_0}}(M(\tau_{j_0}))\big) 
		\ar@{->>}[d] \ar[dr]
		& \\
		\pi^\text{\'et}_1(X_K,\overline{x})\ar@{.>}[r]^(0.3){\widehat{\rho}_{j_0}} \ar@/_15pt/@{.>}[rr]_{\widehat{\rho}_{j_0}}& \mathrm{PGL}\big(\mathbb{D}_{\Phi_{j_0}}(M(\tau_{j_0}))\big)
		\ar@{>->}[r]
		& \mathrm{Aut}(\Sigma_{j_0})\\ }
	\end{equation*} 
	By Corollary~\ref{cor_compare_proj}, the restrictions of $(\Sigma_{j_1},\rho_{j_1})$ and $(\Sigma_{j_2},\rho_{j_2})$ on $\pi^\text{\'et}_1(U_{j_1j_2},\overline{x})$ are isomorphic for all $j_1,j_2\in J$. Hence by Theorem~\ref{Mainthm:rep}, the map $\rho_{j_0}$ descends to some $\widehat{\rho}_{j_0}$ and the image of $\widehat{\rho}_{j_0}$ is contained in $\mathrm{PGL}\big(\mathbb{D}_{\Phi_{j_0}}(M(\tau_{j_0}))\big)$. 
	So the projective $W_n(\F_{p^f})$-representation $(\mathbb{D}_{\Phi_{j_0}}(M(\tau_{j_0})),\rho_{j_0})$ of $\pi^\text{\'et}_1(U_{j_0},\overline{x})$ descends to projective representation $(\mathbb{D}_{\Phi_{j_0}}(M(\tau_{j_0})),\widehat\rho_{j_0})$ of $\pi^\text{\'et}_1(X_K,\overline{x})$. Up to a canonical isomorphism, this projective representation does not depends on the choices of the covering $\{\U_{j}\}_{{j}\in J}$, the liftings $\Phi_j$'s and $j_0$. And we denote this projective $W_n(\mathbb F_{p^f})$-representation of $\pi^\text{\'et}_1(X_K,\overline{x})$ by 
	\[\mathbb D^P\Big((V_i,\nabla_i,\Fil_i,\varphi_i)_{0\leq i<f}\Big).\]

	Similarly as Faltings' functor $\mathbb{D}$  in \cite{Fal89}, our construction of the $\mathbb D^P$ functor can also be extended to the logarithmic version.  More precisely, let $\X$ be a smooth and proper scheme over $W$ and let $\X^o$ be the complement of a simple normal crossing divisor $\D\subset \X$ relative to $W$. Similarly, by replacing $X_K$ and $U_{j}$ with $X_K^o=X_K$ and $U_{j}^o$, we construct the functor 
	\begin{equation}
	\mathbb D^P: \mathcal {TMF}_{[0,p-2],f}^{\nabla}(X^o_{n+1}/W_{n+1}) \rightarrow  \mathrm{PRep}_{W_n(\mathbb F_{p^f})}^{\mathrm{free}}(\pi^\text{\'et}_1(X_K^o))
	\end{equation}
	from the category of strict $p^n$-torsion twisted logarithmic Fontaine modules (with pole along $\D\times W_n\subset \X\times W_n$) with endomorphism structure of $W_n(\F_{p^f})$ whose Hodge-Tate weights lie in $[0,p-2]$ to the category of free $W_n(\F_{p^f})$-modules with projective actions of $\pi^\text{\'et}_1(X_K^o)$.
 
	Summarizing this section, we get the following result.
	\begin{thm}\label{ConsFunc:D^P} Let $M$ be a twisted logarithmic Fontaine-Faltings module over $\X$ (with pole along $\D$) with endomorphism structure of $W(\F_{p^f})$. Te $\mathbb D^P$-functor associates to $M$ and its endomorphism structure n a projective representation
		\[\rho : \pi^\text{\'et}_1(X_K^o) \to \mathrm{PGL}(\mathbb{D}^P(M)),\]
		where $X_K^o$ is the generic fiber of $\X^o=\X\setminus \D$.
	\end{thm}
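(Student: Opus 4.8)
The plan is to assemble the local pieces built in this section and glue them by the rigidity of finite \'etale coverings, treating one torsion level at a time and then passing to the inverse limit. Regard $M$ as (equivalently, an inverse system of) strict $p^n$-torsion twisted logarithmic Fontaine-Faltings modules $M_n\in\mathcal{TMF}_{[0,p-2],f}^{\nabla}(X^o_{n+1}/W_{n+1})$, each carrying an endomorphism structure of $W_n(\mathbb F_{p^f})$ and a twisting line bundle $L_n$. It then suffices to produce, functorially and compatibly in $n$, a free $W_n(\mathbb F_{p^f})$-module $\mathbb D^P(M_n)$ with a continuous projective action of $\pi^\text{\'et}_1(X^o_K)$, and to set $\mathbb D^P(M)=\varprojlim_n\mathbb D^P(M_n)$.

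Fix $n$. First I would choose a finite small affine open covering $\{\U_j\}_{j\in J}$ of $\X$, a lift $\Phi_j$ of the absolute Frobenius on $\widehat{\mathcal O_\X(\U_j)}$ for each $j$, a geometric point $\overline x\in U^o_J=\bigcap_j U^o_j$, and a trivialization $\tau_j$ of $L_n$ over each $\U_j$. Trivializing $M_n$ along $\tau_j$ turns it, exactly as in the \emph{local trivialization} paragraph of Section~\ref{section TFFM}, into an honest (untwisted) logarithmic Fontaine-Faltings module $M_n(\tau_j)$ over $\U^o_j$ with an endomorphism structure of $W_n(\mathbb F_{p^f})$; the logarithmic version of Faltings' local functor $\mathbb D_{\Phi_j}$ then yields a free $W_n(\mathbb F_{p^f})$-module $\mathbb D_{\Phi_j}(M_n(\tau_j))$ with a continuous $W_n(\mathbb F_{p^f})$-linear action $\varrho_j$ of $\pi^\text{\'et}_1(U^o_j,\overline x)$. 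Setting $\Sigma_j=\mathbb D_{\Phi_j}(M_n(\tau_j))/W_n(\mathbb F_{p^f})^\times$, the kernel of $\mathrm{GL}\big(\mathbb D_{\Phi_j}(M_n(\tau_j))\big)\to\mathrm{Aut}(\Sigma_j)$ is precisely $W_n(\mathbb F_{p^f})^\times$, so composing $\varrho_j$ with this projection gives an action $\rho_j$ of $\pi^\text{\'et}_1(U^o_j,\overline x)$ on $\Sigma_j$ that factors through $\mathrm{PGL}\big(\mathbb D_{\Phi_j}(M_n(\tau_j))\big)$.

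Next I would compare these local data on double overlaps. On $\U_{j_1j_2}$ the two trivializations $\tau_{j_1},\tau_{j_2}$ differ by a unit $r\in\Gamma(\U_{j_1j_2},\mathcal O^\times)$, so $M_n(\tau_{j_1})$ and $M_n(\tau_{j_2})$ differ precisely by replacing the last Frobenius component $\varphi_{f-1}$ with $r^{p^n}\varphi_{f-1}$, which is the situation of Corollary~\ref{cor_compare_proj}. Combining the isomorphism produced there, namely multiplication by the invertible element $a_{n,r}\in B^+(\widehat R)^\times$ of Lemma~\ref{a_n,r}, with Faltings' canonical comparison $\alpha_{\Phi_{j_1},\Phi_{j_2}}$ that absorbs the change of Frobenius lift, one obtains an isomorphism $\eta_{j_1j_2}\colon\Sigma_{j_1}\simeq\Sigma_{j_2}$ of $\pi^\text{\'et}_1(U^o_{j_1j_2},\overline x)$-sets. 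Theorem~\ref{Mainthm:rep} then descends the family $\{(\Sigma_j,\rho_j)\}_{j\in J}$ to a single $\pi^\text{\'et}_1(X^o_K,\overline x)$-set uniquely, and with the same image; since this image lies in $\mathrm{PGL}\big(\mathbb D_{\Phi_{j_0}}(M_n(\tau_{j_0}))\big)$ for a chosen base index $j_0$, the descended action is the desired projective $W_n(\mathbb F_{p^f})$-representation, which I call $\mathbb D^P(M_n)$. Independence of the choices of covering, of the $\Phi_j$, of $j_0$ and of the trivializations $\tau_j$ follows from the uniqueness clause of Theorem~\ref{Mainthm:rep} together with the canonicity of $\alpha_{\Phi,\Psi}$ and the fact that $a_{n,r}$ is well defined up to $W_n(\mathbb F_{p^f})^\times$; functoriality in $M_n$ and compatibility with reduction modulo $p^{n-1}$ are checked the same way, and passing to $\varprojlim_n$ gives $\rho$. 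The logarithmic version is obtained by the same argument with $X_K$ and $U_j$ replaced by $X^o_K$ and $U^o_j$.

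The hard part, granting Corollary~\ref{cor_compare_proj} and Theorem~\ref{Mainthm:rep}, is the overlap bookkeeping: one must check that the discrepancy between the representations attached to two different trivializations of $L$ is \emph{exactly} neutralized by passing to the $W_n(\mathbb F_{p^f})^\times$-quotient --- neither too little, so that each $\eta_{j_1j_2}$ is a genuine isomorphism of $\pi_1$-sets, nor too much, so that the glued image stays inside $\mathrm{PGL}$ and the descended object is still a projective representation on a \emph{free} $W_n(\mathbb F_{p^f})$-module. That this works is precisely the content of the invertibility of $a_{n,r}$ in Lemma~\ref{a_n,r} and of Corollary~\ref{cor_compare_proj}. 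A secondary point is to confirm that the logarithmic analogues of the local inputs --- Faltings' functor $\mathbb D_\Phi$, its full faithfulness as in Theorem~\ref{faltingslocal}, and the $\alpha$-gluing --- hold with pole along $\D$, which one gets from the logarithmic inverse Cartier transform of~\cite{LSYZ14} and the logarithmic Fontaine-Laffaille formalism already used earlier in this section.
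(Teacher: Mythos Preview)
Your proposal is correct and follows essentially the same route as the paper: trivialize $M$ locally along $\tau_j$ to obtain untwisted Fontaine--Faltings modules $M(\tau_j)$, apply the local $\mathbb D_{\Phi_j}$ and pass to the $W_n(\mathbb F_{p^f})^\times$-quotient $\Sigma_j$, invoke Corollary~\ref{cor_compare_proj} on overlaps, and then use Theorem~\ref{Mainthm:rep} to descend to a projective representation of $\pi_1^{\text{\'et}}(X_K^o)$, with the logarithmic case handled by the obvious replacements. The only cosmetic difference is that the paper states and constructs $\mathbb D^P$ at a fixed torsion level $n$ (the object lies in $\mathcal{TMF}_{[0,p-2],f}^{\nabla}(X^o_{n+1}/W_{n+1})$), whereas you additionally package the levels into an inverse system and pass to the limit; this is harmless but not needed for the statement as formulated.
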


\section{Twisted periodic Higgs-de Rham flows}\label{section (T)PHDF}
In this section, we will recall the definition of periodic Higgs-de Rham flows and generalize it to the twisted version.

\subsection{Higgs-de Rham flow over $X_n\subset X_{n+1}$}\label{section HDF}
Recall~\cite{LSZ13a} that a \emph{Higgs-de Rham flow} over $X_n\subset X_{n+1}$ is a sequence consisting of infinitely many alternating terms of filtered de Rham bundles and Higgs bundles
\[\left\{
(V,\nabla,\Fil)^{(n-1)}_{-1},
(E,\theta)_{0}^{(n)},  
(V,\nabla,\Fil)_{0}^{(n)},
(E,\theta)_{1}^{(n)},  
(V,\nabla,\Fil)_{1}^{(n)},
 \cdots\right\},\]
which are related to each other by the following diagram inductively
\begin{equation*}\tiny
\xymatrix@W=10mm@C=-3mm@R=5mm{
	&&&& (V,\nabla,\Fil)_{0}^{(n)} \ar[dr]^{\mathrm{Gr}} 
	&& (V,\nabla,\Fil)_{1}^{(n)} \ar[dr]^{\mathrm{Gr}} 
	&\\
	&&&  (E,\theta)_{0}^{(n)} \ar[ur]^{\mathcal C^{-1}_n} \ar@{..>}[dd] 
	&& (E,\theta)_{1}^{(n)} \ar[ur]^{\mathcal C^{-1}_n} 
	&& \cdots\\
	(V,\nabla,\Fil)^{(n-1)}_{-1} \ar[dr]^{\mathrm{Gr}}	
	&&&&&&&\\
	&\mathrm{Gr}\left((V,\nabla,\Fil)^{(n-1)}_{-1}\right)\ar[rr]^{\sim}_{\psi}
	&&(E,\theta)_{0}^{(n)}\pmod{p^{n-1}}
	&&&&\\
}
\end{equation*}
where
\begin{itemize}
\item[-]  $(V,\nabla,\Fil)^{(n-1)}_{-1}$ is a filtered de Rham bundle over $X_{n-1}$ of level in $[0,p-2]$;
\item[-] $(E,\theta)_0^{(n)}$ is a lifting of the graded Higgs bundle $\mathrm{Gr}\left((V,\nabla,\Fil)^{(n-1)}_{-1}\right)$ over $X_n$, $(V,\nabla)_0^{(n)}:=C^{-1}_n ((E,\theta)_0^{(n)},(V,\nabla,\Fil)^{(n-1)}_{-1} ,\psi)$ and $\Fil^{(n)}_0$ is a Hodge filtration on $(V,\nabla)_0^{(n)}$ of level in $[0,p-2]$;
\item[-] Inductively, for $m\geq1$, $(E,\theta)_m^{(n)}:=\mathrm{Gr}\left((V,\nabla,\Fil)_{m-1}^{(n)}\right)$ and     
$(V,\nabla)_m^{(n)}:=C^{-1}_n \left(
(E,\theta)_{m}^{(n)},
(V,\nabla,\Fil)^{(n-1)}_{m-1},
\mathrm{id}
\right)$. Here $(V,\nabla,\Fil)^{(n-1)}_{m-1}$ is the reduction of $(V,\nabla,\Fil)^{(n)}_{m-1}$ on $X_{n-1}$. And $\Fil^{(n)}_m$ is a Hodge filtration on $(V,\nabla)_m^{(n)}$.
\end{itemize}
\begin{rmk}
In case $n=1$, the data of $(\overline{V},\overline{\nabla},\overline{\Fil})_{-1}^{(n-1)}$ is empty. The Higgs-de Rham flow can be rewritten in the following form
\[\left\{ (E,\theta)_{0}^{(1)},  
(V,\nabla,\Fil)_{0}^{(1)},
(E,\theta)_{1}^{(1)},  
(V,\nabla,\Fil)_{1}^{(1)},
\cdots\right\}.\]
In this way, the diagram becomes
\begin{equation*}\tiny
\xymatrix@W=5mm@C=2mm{
&(V,\nabla,\Fil)_{0}^{(1)} \ar[dr]^{\mathrm{Gr}} 
	&& (V,\nabla,\Fil)_{1}^{(1)} \ar[dr]^{\mathrm{Gr}} 
	&\\
(E,\theta)_{0}^{(1)} \ar[ur]^{\mathcal C^{-1}_1}  
	&& (E,\theta)_{1}^{(1)} \ar[ur]^{\mathcal C^{-1}_1} 
	&& \cdots\\
}
\end{equation*}
\end{rmk}

 In the rest of this section, we will give the definition of twisted periodic Higgs-de Rham flow (section~\ref{section TPHDF}), which generalizes the periodic Higgs-de Rham flow in~\cite{LSZ13a}.

\subsection{Twisted periodic Higgs-de Rham flow and equivalent categories}\label{section TPHDF}
	Let $L_n$ be a line bundle over $X_n$. For all $1\leq \ell <n$, denote $L_\ell=L_n\otimes_{\mathcal O_{X_n}}\mathcal O_{X_\ell}$ the reduction of $L_n$ on $X_\ell$. In this subsection, let $a\leq p-2$ be a positive integer. We will give the definition of $L_n$-twisted Higgs-de Rham flow of level in $[0,a]$.
\subsubsection{Twisted periodic Higgs-de Rham flow over $X_1$.} 
\begin{defi}
	Let $f$ be a positive integer. An \emph{$f$-periodic  $L_1$-twisted Higgs-de Rham flow over $X_1\subset X_2$} of level in $[0,a]$, is a Higgs-de Rham flow 	over $X_1$ 
	\[\left\{ (E,\theta)_{0}^{(1)},  
	(V,\nabla,\Fil)_{0}^{(1)},
	(E,\theta)_{1}^{(1)},  
	(V,\nabla,\Fil)_{1}^{(1)},
	\cdots\right\}\]
together with isomorphisms $\phi^{(1)}_{f+i}:(E,\theta)_{f+i}^{(1)}\otimes (L_1^{p^i},0)\rightarrow (E,\theta)_i^{(1)}$ of Higgs bundles for all $i\geq0$ 
\begin{equation*}
\tiny\xymatrix@W=2cm@C=-13mm{
	&\left(V,\nabla,\Fil\right)_{0}^{(1)} \ar[dr]^{\mathrm{Gr}}
	&&\left(V,\nabla,\Fil\right)_{1}^{(1)}\ar[dr]^{\mathrm{Gr}}
	&&\cdots \ar[dr]^{\mathrm{Gr}}
	&&\left(V,\nabla,\Fil\right)_{f}^{(1)}\ar[dr]^{\mathrm{Gr}}  
	&&\left(V,\nabla,\Fil\right)_{f+1}^{(1)}\ar[dr]^{\mathrm{Gr}}
	\ar[dr]^{\mathrm{Gr}}
	&&\cdots\\
	\left(E,\theta\right)_{0}^{(1)}\ar[ur]_{\mathcal C^{-1}_{1}}
	&&\left(E,\theta\right)_{1}^{(1)}\ar[ur]_{\mathcal C^{-1}_{1}}
	&& \cdots \ar[ur]_{\mathcal C^{-1}_{1}}
	&&\left(E,\theta\right)_{f}^{(1)}\ar[ur]_{\mathcal C^{-1}_{1}} \ar@/^20pt/[llllll]^{\phi_f^{(1)}}|(0.33)\hole
	&&\left(E,\theta\right)_{f+1}^{(1)}\ar[ur]_{\mathcal C^{-1}_{1}} \ar@/^20pt/[llllll]^{\phi_{f+1}^{(1)}}|(0.33)\hole
	&& \cdots \ar@/^20pt/[llllll]^{\cdots}\ar[ur]_{\mathcal C^{-1}_{1}}\\} 
\end{equation*} 
And for any $i\geq 0$ the isomorphism
\begin{equation*}
 C^{-1}_1(\phi^{(1)}_{f+i}): (V,\nabla)_{f+i}^{(1)}\otimes (L_1^{p^{i+1}},\nabla_{\mathrm{can}})\rightarrow (V,\nabla)_{i}^{(1)},
\end{equation*} 
		strictly respects filtrations $\Fil_{f+i}^{(1)}$ and $\Fil_{i}^{(1)}$. Those $\phi^{(1)}_{f+i}$'s are relative to each other by formula
\[\phi^{(1)}_{f+i+1}=\mathrm{Gr}\circ C^{-1}_1(\phi^{(1)}_{f+i}).\]
	\end{defi}
Denote the category of all twisted $f$-periodic Higgs-de Rham flow over $X_1$ of level in $[0,a]$ by $\mathcal{HDF}_{a,f}(X_{2}/W_2)$.
	
	\subsubsection{Twisted periodic Higgs-de Rham flow $X_n\subset X_{n+1}$.}
	Let $n\geq2$  be an integer and $f$ be a positive integer. And $L_n$ is a line bundle over $X_n$. Denote by $L_{\ell}$ the reduction of $L_n$ modulo $p^\ell$.
We define the category $\mathcal{THDF}_{a,f}(X_{n+1}/W_{n+1})$ of all $f$-periodic twisted Higgs-de Rham flow over $X_n\subset X_{n+1}$ of level in $[0,a]$ in the following inductive way.

\begin{defi}
An $L_n$-twisted $f$-periodic Higgs-de Rham flow over $X_n\subset X_{n+1}$ is a Higgs-de Rham flow
\[\Big\{(V,\nabla,\Fil)_{n-2}^{(n-1)},(E,\theta)_{n-1}^{(n)},(V,\nabla,\Fil)_{n-1}^{(n)},(E,\theta)_{n}^{(n)},\cdots\Big\}_{/X_n\subset X_{n+1}}\]
which is a lifting of an $L_{1}$-twisted $f$-periodic Higgs-de Rham flow
\[\Big\{(E,\theta)_{0}^{(1)},(V,\nabla,\Fil)_{0}^{(1)},(E,\theta)_{1}^{(1)},(V,\nabla,\Fil)_{1}^{(1)},\cdots;\phi^{(1)}_{\bullet}\Big\}_{/X_{1}\subset X_{2}}\]
It is constructed by the following diagram for $2\leq \ell \leq n$, inductively
\begin{equation*} \tiny
\xymatrix@W=2cm@C=-8mm{
	&&\Big(V,\nabla,\Fil\Big)_{\ell-1}^{(\ell)}
	\ar[dr]^{\mathrm{Gr}}\ar@{.>}[ddd]
	&&\cdots
	\ar[dr]^{\mathrm{Gr}}\ar@{.>}[ddd]
	&&\Big(V,\nabla,\Fil\Big)_{\ell+f-2}^{(\ell)}
	\ar[dr]^{\mathrm{Gr}}\ar@{.>}[ddd]
	\\
	&\left(E,\theta\right)_{\ell-1}^{(\ell)}
	\ar[ur]_{\mathcal C^{-1}_n}\ar@{.>}[ddd]_(0.3){\mod p^{\ell-1}}
	&&\left(E,\theta\right)_{\ell}^{(\ell)}
	\ar[ur]_{\mathcal C^{-1}_n}\ar@{.>}[ddd]
	&&\cdots
	\ar[ur]_{\mathcal C^{-1}_n}\ar@{.>}[ddd]
	&&\left(E,\theta\right)_{\ell+f-1}^{(\ell)}  
	\ar@{.>}[ddd] \ar@/^20pt/[llllll]^(0.44){\phi_{\ell+f-1}^{(\ell)}}
	\\
	&&&&&&&\\
	\Big(V,\nabla,\Fil\Big)_{\ell-2}^{(\ell-1)}
	\ar[dr]^{\mathrm{Gr}}
	&&\Big(V,\nabla,\Fil\Big)_{\ell-1}^{(\ell-1)}
	\ar[dr]^{\mathrm{Gr}}
	&&\cdots
	\ar[dr]^{\mathrm{Gr}} 
	&&\Big(V,\nabla,\Fil\Big)_{\ell+f-2}^{(\ell-1)}
	\ar[dr]^{\mathrm{Gr}} 
	\\
	&\quad\left(E,\theta\right)_{\ell-1}^{(\ell-1)}\quad
	\ar[ur]_{\mathcal C^{-1}_{\ell-1}}
	&&\left(E,\theta\right)_{\ell}^{(\ell-1)} 
	\ar[ur]_{\mathcal C^{-1}_{\ell-1}}
	&&\cdots
	\ar[ur]_{\mathcal C^{-1}_{\ell-1}}
	&&\left(E,\theta\right)_{\ell+f-1}^{(\ell-1)} 
	\ar@/^20pt/[llllll]^(0.44){ \phi_{\ell+f-1}^{(\ell-1)}}
	\\}
\end{equation*} 
Here 
$\bullet$ $(E,\theta)_{\ell-1}^{(\ell)}/X_\ell$ is a lifting of $(E,\theta)_{\ell-1}^{(\ell-1)}/X_{\ell-1}$, which implies automatically 
	$(V,\nabla)_{\ell-1}^{(\ell)}:=C^{-1}_\ell \left( (E,\theta)_{\ell-1}^{(\ell)}, (V,\nabla,\Fil)^{(\ell-1)}_{\ell-2},
	\mathrm{id}\right)$ is a lifting of $(V,\nabla)_{\ell-1}^{(\ell-1)}$ since $C_\ell^{-1}$ is a lifting of $C_{\ell-1}^{-1}$.\\
$\bullet$ $\Fil_{\ell-1}^{(\ell)}\subset (V,\nabla)_{\ell-1}^{(\ell)}$ is a lifting of the Hodge filtration $\Fil_{\ell-1}^{(\ell-1)}\subset (V,\nabla)_{\ell-1}^{(\ell-1)}$, which implies that $(E,\theta)_{\ell}^{(\ell)}=\mathrm{Gr}\left((V,\nabla,\Fil)_{\ell-1}^{(\ell)}\right)/X_\ell$ is a lifting of $(E,\theta)_{\ell}^{(\ell-1)}/X_{\ell-1}$ and
	$(V,\nabla)_{\ell}^{(\ell)}:=C^{-1}_\ell \left( (E,\theta)_\ell^{(\ell)}, (V,\nabla,\Fil)^{(\ell-1)}_{\ell-1}, \mathrm{\mathrm{id}} \right)$.\\
$\bullet$  Repeating the process above, one gets the data $\Fil_i^{(\ell)}$, $(E,\theta)_{i+1}^{(\ell)}$ and $(V,\nabla)_{i+1}^{(\ell)}$ for all $i\geq \ell$.\\
$\bullet$  Finally, for all $i\geq \ell-1$,  $\phi^{(\ell)}_{i+f}:(E,\theta)_{i+f}^{(\ell)}\otimes (L_\ell^{p^i},0) \rightarrow (E,\theta)_{i}^{(\ell)} $ is a lifting of $\phi^{(\ell-1)}_{i+f}$. And these morphisms are related to each other by formula $\phi_{i+f+1}^{(\ell)}=\mathrm{Gr}\circ C^{-1}_\ell(\phi_{i+f}^{(\ell)})$.
Denote the twisted periodic Higgs-de Rham flow by 
\[\Big\{(V,\nabla,\Fil)_{n-2}^{(n-1)},(E,\theta)_{n-1}^{(n)},(V,\nabla,\Fil)_{n-1}^{(n)},(E,\theta)_{n}^{(n)},\cdots;\phi^{(n)}_{\bullet}\Big\}_{/X_n\subset X_{n+1}}\]

The category of all periodic twisted Higgs-de Rham flow over $X_n\subset X_{n+1}$  of level in $[0,a]$ is denoted by $\mathcal{THDF}_{a,f}(X_{n+1}/W_{n+1})$.	
\end{defi}
\begin{rmk} For the trivial line bundle $L_n$, the definition above is equivalent to the original definition of periodic Higgs-de Rham flow in~\cite{LSZ13a} by using the identification $\phi: (E,\theta)_0=(E,\theta)_f$. 
\end{rmk}
 
Note that we can also define the logarithmic version of the twisted periodic Higgs-de Rham flow since we already have the log version of inverse Cartier transform. $\X$ is a smooth proper scheme over $W$ and $\X^o$ is the complement of a simple normal crossing divisor $\D\subset \X$ relative to $W$. Similarly, one constructs the category $\mathcal{THDF}_{a,f}(X^o_{n+1}/W_{n+1})$ of twisted $f$-periodic logarithmic Higgs-de Rham flows (with pole along $\D\times W_n\subset \X\times W_n$) over $\X\times W_n$ whose nilpotent exponents are $\leq p-2$ .

\subsubsection{Equivalence of categories.} We establish an equivalence of categories between $\mathcal{THDF}_{a,f}(X_{n+1}/W_{n+1})$ and $\mathcal {TMF}_{[0,a],f}^{\nabla}(X_{n+1}/W_{n+1})$.

\begin{thm}\label{equiv:TFF&THDF}
	Let $a \leq p-1$ be a natural number and $f$ be an positive integer. Then there is an equivalence of categories between $\mathcal{THDF}_{a,f}(X_{n+1}/W_{n+1})$ and $\mathcal {TMF}_{[0,a],f}^{\nabla}(X_{n+1}/W_{n+1})$.
\end{thm}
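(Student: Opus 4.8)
The plan is to follow the proof of the corresponding untwisted equivalence in \cite{LSZ13a}, the only genuinely new ingredient being the bookkeeping of the twisting line bundle $L_n$ as it travels around the flow. The starting point is the observation that, by the definition given in Section~\ref{section TFFMES}, an object of $\mathcal {TMF}_{[0,a],f}^{\nabla}(X_{n+1}/W_{n+1})$ is \emph{already} presented as one period's worth of data: an $f$-tuple of filtered de Rham bundles $(V_i,\nabla_i,\Fil_i)_{0\le i<f}$ over $X_n$ of level in $[0,a]$, together with de Rham isomorphisms $\varphi_i\colon \mathcal C_n^{-1}\circ\overline{\mathrm{Gr}}(V_i,\nabla_i,\Fil_i)\to(V_{i+1},\nabla_{i+1})$ for $0\le i\le f-2$ and a last one in the $(f-1)$-th slot twisted by $(L_n^{p^{\,n}},\nabla_{\mathrm{can}})$. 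So the theorem amounts to saying that \emph{rolling up} a twisted $f$-periodic flow around one period and \emph{unrolling} such a period into a flow are mutually quasi-inverse functors, exactly as in the untwisted case, provided the twist is carried along correctly.

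\emph{From flows to modules.} Given a flow
\[\Big\{(V,\nabla,\Fil)_{n-2}^{(n-1)},\,(E,\theta)_{n-1}^{(n)},\,(V,\nabla,\Fil)_{n-1}^{(n)},\,(E,\theta)_{n}^{(n)},\,\dots\,;\,\phi^{(n)}_\bullet\Big\}\in\mathcal{THDF}_{a,f}(X_{n+1}/W_{n+1}),\]
I would set $(V_i,\nabla_i,\Fil_i):=(V,\nabla,\Fil)_{n-1+i}^{(n)}$ for $0\le i\le f-1$. For $0\le i\le f-2$ the flow \emph{defines} $(V,\nabla)_{n+i}^{(n)}$ to be $C_n^{-1}$ applied to $(E,\theta)_{n+i}^{(n)}=\mathrm{Gr}\big((V,\nabla,\Fil)_{n-1+i}^{(n)}\big)$, so the tautological identity supplies $\varphi_i$. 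For $i=f-1$ I apply $C_n^{-1}$ to the closing Higgs isomorphism $\phi^{(n)}_{n-1+f}\colon (E,\theta)^{(n)}_{n-1+f}\otimes (L_n^{p^{\,n-1}},0)\to (E,\theta)^{(n)}_{n-1}$ and invoke the identity $C_n^{-1}\big((-)\otimes(L,0)\big)\cong C_n^{-1}(-)\otimes(L^{p},\nabla_{\mathrm{can}})$ — visible from the local formula for $C_n^{-1}$, the extra factor $p$ arising from the Frobenius pullback — so that the exponent $p^{\,n-1}$ becomes the exponent $p^{\,n}$ required of a twisted Fontaine-Faltings module, yielding $\varphi_{f-1}$. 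The resulting tuple satisfies all the conditions in the definition of $\mathcal {TMF}_{[0,a],f}^{\nabla}$ directly from those in the definition of a twisted periodic flow (Hodge filtrations of level $[0,a]$, horizontal de Rham isomorphisms, strict $p^n$-torsion). A morphism of flows, being a compatible system of morphisms of all the terms, is determined by its restriction to one period, and the relations $\phi^{(n)}_{i+f+1}=\mathrm{Gr}\circ C_n^{-1}(\phi^{(n)}_{i+f})$ together with compatibility with $\phi^{(n)}_{n-1+f}$ translate it into a morphism of twisted Fontaine-Faltings modules in the sense of Section~\ref{section TFFMES}; this defines the functor on morphisms.

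\emph{From modules to flows.} In the other direction, starting from $((V_i,\nabla_i,\Fil_i),\varphi_i)_{0\le i<f}$, I would build the flow by iterating $C_n^{-1}\circ\mathrm{Gr}$: put $(V,\nabla,\Fil)_{n-1}^{(n)}:=(V_0,\nabla_0,\Fil_0)$, set $(E,\theta)_{n}^{(n)}:=\mathrm{Gr}(V_0,\nabla_0,\Fil_0)$, use $\varphi_0$ to transport $\Fil_1$ to a Hodge filtration on $C_n^{-1}\circ\overline{\mathrm{Gr}}(V_0,\nabla_0,\Fil_0)$, defining $(V,\nabla,\Fil)_{n}^{(n)}$, and repeat $f-1$ times; then read $\varphi_{f-1}$ through the twist identity above to obtain $\phi^{(n)}_{n-1+f}$, pin down $\Fil^{(n)}_{n-1+i}$ for $i\ge f$ by the strictness requirement on $C_n^{-1}(\phi^{(n)}_{n-1+i})$, and generate the remaining $\phi^{(n)}_\bullet$ by $\phi^{(n)}_{i+f+1}=\mathrm{Gr}\circ C_n^{-1}(\phi^{(n)}_{i+f})$. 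One then checks that the filtrations so produced are genuine Hodge filtrations (local splitness is preserved under pullback along an isomorphism, Griffiths transversality because $C_n^{-1}(\phi)$ is horizontal and strict) and that reducing the flow modulo $p^\ell$ furnishes the liftings over $X_\ell\subset X_{\ell+1}$ demanded in the definition, by induction on $n$ with the base case $n=1$ (where the datum $(V,\nabla,\Fil)^{(0)}_{-1}$ is empty). The two constructions are mutually inverse up to the natural isomorphisms provided by the $\varphi_i$ and the twist identity, and compatibility on morphisms has already been recorded; hence they give the asserted equivalence of categories.

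\emph{Main obstacle.} The only point that is not a routine transcription of \cite{LSZ13a} is the accounting of the twisting line bundle: one must prove the compatibility $C_n^{-1}\big((-)\otimes(L_n,0)\big)\cong C_n^{-1}(-)\otimes(L_n^{p},\nabla_{\mathrm{can}})$ over the truncated Witt ring $W_{n+1}$ (and its compatibility with the gluing data that define $C_n^{-1}$), and then track how the exponents $L_n^{p^{\,i}}$ appearing at the $i$-th stage of a flow combine, after $f$ steps and one application of $C_n^{-1}$, into exactly the single exponent $L_n^{p^{\,n}}$ built into the definition of a twisted Fontaine-Faltings module. Once this is settled, verifying Griffiths transversality of the unrolled filtrations and the inductive lifting compatibility in $n$ presents no difficulty beyond what is already contained in the untwisted argument.
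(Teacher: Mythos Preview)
Your proposal is correct and follows essentially the same approach as the paper: define a functor $\mathcal{IC}$ by extracting $f$ consecutive filtered de Rham terms from the flow together with the connecting isomorphisms (the last one obtained by applying $C_n^{-1}$ to the closing Higgs isomorphism and using the twist identity), and define a quasi-inverse $\mathcal{GR}$ by unrolling, arguing by induction on $n$. The paper is slightly more explicit on two points you leave implicit: in the base case $n=1$ it manufactures the \emph{initial} Higgs term of the unrolled flow by setting $(E,\theta)_0:=(E,\theta)_f\otimes(L_1,0)$ (so that $C_1^{-1}(E_0,\theta_0)\simeq(V_0,\nabla_0)$ via $\varphi_{f-1}$), and in the inductive step $n\ge2$ it performs a one-step index shift, replacing $(V_i)_{0\le i<f}$ by $(\overline V_i)_{-1\le i<f-1}$ with $\overline V_{-1}:=\overline V_{f-1}\otimes L_{n-1}^{p^{n-1}}$, before applying the inductive hypothesis --- but your sketch accommodates both of these without difficulty.
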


\begin{proof} Let 
	\[\mathscr E=\Big\{(V,\nabla,\Fil)_{n-2}^{(n-1)},(E,\theta)_{n-1}^{(n)},(V,\nabla,\Fil)_{n-1}^{(n)},(E,\theta)_{n}^{(n)},\cdots;\phi^{(n)}_{\bullet}\Big\}_{/X_n\subset X_{n+1}}\]
	 be an $f$-periodic $L_n$-twisted Higgs-de Rham flow over $X_n$ with level in $[0,a]$. Taking out $f$ terms of filtered de Rham bundles
	\[(V,\nabla,\Fil)_0^{(n)}, (V,\nabla,\Fil)_1^{(n)},\cdots,(V,\nabla,\Fil)_{f-1}^{(n)}\] 
	together with $f-1$ terms of identities maps
	\[\varphi_i: C^{-1}_n\circ \mathrm{Gr}\left( (V,\nabla,\Fil)_i^{(n)} \right)=(V,\nabla)_{i+1}^{(n)}, \quad i=0,1,\cdots,f-2,\]
	and $\varphi_{f-1}:= C^{-1}_n\left(\phi_{f}^{(n)}\right)$, one gets a tuple
	\[ \mathcal{IC}(\mathscr E):=\left(V_i^{(n)},\nabla_i^{(n)},\Fil_i^{(n)},\varphi_i\right)_{0\leq i<f},\]
	This tuple forms an $L_n$-twisted Fontaine-Faltings module by definition. It gives us the functor $\mathcal{IC}$ from $\mathcal{THDF}_{a,f}(X_{n+1}/W_{n+1})$ to $\mathcal {TMF}_{[0,a],f}^{\nabla}(X_{n+1}/W_{n+1})$.

	Conversely, let $(V_i,\nabla_i,\Fil_i,\varphi_i)_{0\leq i<f}$ be an $L_n$-twisted Fontaine-Faltings module. For $0\leq i\leq f-2$, we identify $(V_{i+1},\nabla_{i+1})$ with $C^{-1}_n\circ\mathrm{Gr}(V_i,\nabla_i,\Fil_i)$ via $\varphi_i$. We construct the corresponding flow by induction on $n$. 
	
	In case $n=1$, we already have following diagram 
	\begin{equation}\tiny
	\xymatrix@W=2cm@C=-13mm{  
		(V,\nabla,\Fil)_0\ar[dr]|{Gr}  
		&&  (V,\nabla,\Fil)_1\ar[dr]|{Gr}
		&&  \cdots \ar[dr]|{Gr}
		&&  (V,\nabla,\Fil)_{f-1} \ar[dr]|{Gr}	
		&&  (V,\nabla)_f \ar@/_15pt/[llllllll]|{\varphi_{f-1}}\\
		& (E,\theta)_1 \ar[ur]|{C_1^{-1}}  
		&&\cdots \ar[ur]|{C_1^{-1}}  
		&& (E,\theta)_{f-1}  \ar[ur]|{C_1^{-1}}  
		&& (E,\theta)_{f}  \ar[ur]|{C_1^{-1}}  
	}
	\end{equation}
	Denote $(E,\theta)_0=(E,\theta)_f\otimes(L_1,0)$. Then 
	\[C_1^{-1}(E_0,\theta_0)\simeq(V_f,\nabla_f)\otimes(L_1^p,\nabla_{\mathrm{can}})\simeq (V_0,\nabla_0).\]
	By this isomorphism, we identify $(V_0,\nabla_0)$ with $C_1^{-1}(E_0,\theta_0)$. Under this isomorphism, the Hodge filtration $\Fil_0$ induces a Hodge filtration $\Fil_f$ on $(V_f,\nabla_f)$. Take Grading and denote 
	\[(E_{f+1},\theta_{f+1}):=\mathrm{Gr}(V_f,\nabla_f,\Fil_f).\]
	Inductively, for $i>f$, we denote $(V_i,\nabla_i)=C^{-1}_1(E_i,\theta_i)$. By the isomorphism
	\[\left(C^{-1}_1\circ \mathrm{Gr}\right)^{i-f}(\varphi_{f-1}):(V_i,\nabla_i)\otimes (L^{p^{i+1-f}},\nabla_{\mathrm{can}})\rightarrow (V_{i-f},\nabla_{i-f}),\]
	the Hodge filtration $\Fil_{i-f}$ induces a Hodge filtration $\Fil_i$ on $(V_i,\nabla_i)$. Denote 
	\[(E_{i+1},\nabla_{i+1}):=\mathrm{Gr}(V_i,\nabla_i,\Fil_i).\]
	Then we extend above diagram into the following twisted periodic Higgs-de Rham flow over $X_1$
	\begin{equation}\tiny
	\xymatrix@W=2cm@C=-13mm{  
		& (V,\nabla,\Fil)_0\ar[dr]|{Gr}  
		&&  (V,\nabla,\Fil)_1\ar[dr]|{Gr}
		&&  \cdots \ar[dr]|{Gr}
		&&  (V,\nabla,\Fil)_{i} \ar[dr]|{Gr}	
		&&  (V,\nabla)_{i+1} \ar[dr]|{Gr}	 \\
		(E,\theta)_0 \ar[ur]|{C_1^{-1}}  
		&& (E,\theta)_1 \ar[ur]|{C_1^{-1}}  
		&&\cdots \ar[ur]|{C_1^{-1}}  
		&& (E,\theta)_{i}  \ar[ur]|{C_1^{-1}}  
		&& (E,\theta)_{i+1}  \ar[ur]|{C_1^{-1}}  
		&& \cdots 
	}
	\end{equation} 
	
	For $n\geq2$, denote  \[(\overline{V}_{-1},\overline{\nabla}_{-1},\overline{\Fil}_{-1}):=(\overline{V}_{f-1}\otimes L_{n-1}^{p^{n-1}},\overline\nabla_{f-1}\otimes \nabla_{can},\overline\Fil_{f-1}\otimes \Fil_{\mathrm{tri}}),\]
	where $(\overline{V}_{f-1},\overline{\nabla}_{f-1},\overline{\Fil}_{f-1})$ is the modulo $p^{n-1}$ reduction of $({V}_{f-1}, {\nabla}_{f-1}, {\Fil}_{f-1})$. Those $\varphi_i$ reduce to a $\varphi$-structure on $(\overline{V}_{i},\overline{\nabla}_{i},\overline{\Fil}_{i})_{-1\leq i<f-1}$. This gives us a $L_{n-1}$-twisted Fontaine-Faltings module over $X_{n-1}$ 
	\[(\overline{V}_{i},\overline{\nabla}_{i},\overline{\Fil}_{i},\overline{\varphi}_i)_{-1\leq i<f-1}\]
	By induction, we have a twisted periodic Higgs-de Rham flow over $X_{n-1}$  
	\begin{equation*} \tiny
	\xymatrix@W=2cm@C=-13mm{
		&\Big(\overline{V},\overline{\nabla},\overline{\Fil}\Big)_{-1}
		\ar[dr]|{\mathrm{Gr}}
		&&\Big(\overline{V},\overline{\nabla},\overline{\Fil}\Big)_{0}
		\ar[dr]|{\mathrm{Gr}}
		&&\cdots
		\ar[dr]|{\mathrm{Gr}} 
		&&\Big(\overline{V},\overline{\nabla},\overline{\Fil}\Big)_{f-1}
		\ar[dr]|{\mathrm{Gr}} 
		&&\cdots\\
		\left(\overline{E},\overline{\theta}\right)_{-1}
		\ar[ur]|{\mathcal C^{-1}_{n-1}}
		&&\left(\overline{E},\overline{\theta}\right)_{0}
		\ar[ur]|{\mathcal C^{-1}_{n-1}}
		&&\cdots\ar[ur]|{\mathcal C^{-1}_{n-1}}
		&&\left(\overline{E},\overline{\theta}\right)_{f-1}
		\ar[ur]|{\mathcal C^{-1}_{n-1}}
		&&\left(\overline{E},\overline{\theta}\right)_{f}  \ar[ur]|{\mathcal C^{-1}_{n-1}}\\}
	\end{equation*} 
	where the first $f$-terms of filtered de Rham bundles over $X_{n-1}$ are those appeared in the twisted Fontaine-Faltings module over $X_{n-1}$.
	
	Based on this flow over $X_{n-1}$,   we extend the diagram similarly as the $n=1$ case,
	\begin{equation}\tiny
	\xymatrix@W=2cm@C=-13mm{  
		(V,\nabla,\Fil)_0\ar[dr]|{Gr}  
		&&  (V,\nabla,\Fil)_1\ar[dr]|{Gr}
		&&  \cdots \ar[dr]|{Gr}
		&&  (V,\nabla,\Fil)_{f-1} \ar[dr]|{Gr}	
		&&  (V,\nabla)_f \ar@/_15pt/[llllllll]|{\varphi_{f-1}}\\
		& (E,\theta)_1 \ar[ur]|{C_n^{-1}}  
		&&\cdots \ar[ur]|{C_n^{-1}}  
		&& (E,\theta)_{f-1}  \ar[ur]|{C_n^{-1}}  
		&& (E,\theta)_{f}  \ar[ur]|{C_n^{-1}}  
	}
	\end{equation}
	Now it is a twisted periodic Higgs-de Rham flow over $X_n$. Denote this flow by
	\[\mathcal{GR}\big((V_i,\nabla_i,\Fil_i,\varphi_i)_{0\leq i<f}\big).\]
	It is straightforward to verify $\mathcal{GR}\circ \mathcal{IC}\simeq \mathrm{id}$ and $\mathcal{IC} \circ \mathcal{GR} \simeq \mathrm{id}$.
\end{proof} 
 
	This Theorem can be straightforwardly generalized to the logarithmic case and the proof is similar as that of Theorem~\ref{equiv:TFF&THDF}.
\begin{thm}\label{equiv:logTFF&THDF}
Let $\X$ be a smooth proper scheme over $W$ with a simple normal crossing divisor $\D\subset \X$ relative to $W$. Then for each natural number $f\in \N$, there is an equivalence of categories between $\mathcal{THDF}_{a,f}(X^o_{n+1}/W_{n+1})$ and $\mathcal {TMF}_{[0,a],f}^{\nabla}(X^o_{n+1}/W_{n+1})$
\end{thm}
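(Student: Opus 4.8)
The plan is to run the construction of the two mutually inverse functors $\mathcal{IC}$ and $\mathcal{GR}$ from the proof of Theorem~\ref{equiv:TFF&THDF} essentially verbatim, replacing every occurrence of the inverse Cartier transform, the grading functor, filtered de Rham bundles, Higgs-de Rham flows and twisted Fontaine-Faltings modules by their logarithmic counterparts with pole along $\D$. All of these logarithmic objects are already available: the log inverse Cartier transform $C_n^{-1}$ over truncated Witt rings is constructed in the Appendix of~\cite{LSYZ14} (so that the logarithmic analogue of diagram~(\ref{diag:C^{-1}}) and of Lemma~\ref{lem:anotherDiscribtionFM} holds), the category $\mathcal{TMF}_{[0,a],f}^{\nabla}(X^o_{n+1}/W_{n+1})$ of twisted logarithmic Fontaine-Faltings modules was defined at the end of Section~\ref{section TFFMES}, and the category $\mathcal{THDF}_{a,f}(X^o_{n+1}/W_{n+1})$ of twisted logarithmic periodic Higgs-de Rham flows was defined in Section~\ref{section TPHDF}.

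Concretely, first I would define $\mathcal{IC}$ on a twisted log flow $\mathscr E$ by extracting the first $f$ terms of the logarithmic filtered de Rham bundles $(V,\nabla,\Fil)_0^{(n)},\dots,(V,\nabla,\Fil)_{f-1}^{(n)}$, taking the $f-1$ tautological identifications $\varphi_i : C_n^{-1}\circ\mathrm{Gr}\big((V,\nabla,\Fil)_i^{(n)}\big) = (V,\nabla)_{i+1}^{(n)}$ and setting $\varphi_{f-1} := C_n^{-1}(\phi_f^{(n)})$; one checks exactly as in the non-logarithmic case that this tuple is an $L_n$-twisted logarithmic Fontaine-Faltings module. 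Next I would construct $\mathcal{GR}$ by induction on $n$: for $n=1$ one sets $(E,\theta)_0 := (E,\theta)_f\otimes(L_1,0)$, uses the twisted $\varphi$-structure to identify $(V_0,\nabla_0)$ with $C_1^{-1}(E_0,\theta_0)$, transports the Hodge filtration along this isomorphism, and iterates via $(C_1^{-1}\circ\mathrm{Gr})$ to recover the whole periodic log flow; for $n\geq 2$ one first reduces modulo $p^{n-1}$ (remembering to tensor $(V_{f-1},\nabla_{f-1},\Fil_{f-1})$ by $(L_{n-1}^{p^{n-1}},\nabla_{\mathrm{can}})$ to produce the $(-1)$-term), applies the inductive hypothesis over $X^o_{n-1}$, and lifts the first $f$ filtered de Rham bundles as prescribed, exactly mirroring the argument for Theorem~\ref{equiv:TFF&THDF}. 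Finally one verifies $\mathcal{GR}\circ\mathcal{IC}\simeq\mathrm{id}$ and $\mathcal{IC}\circ\mathcal{GR}\simeq\mathrm{id}$, which is a formal check since the two constructions are inverse to one another term by term.

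The only points requiring genuine attention — and which the word ``straightforward'' is hiding — are: (i) that the logarithmic $C_n^{-1}$ over $W_{n+1}(k)$ is a lifting of its mod-$p^{n-1}$ reduction and is compatible with the logarithmic grading functor and with Griffiths transversality, so that the inductive step over $X^o_{n-1}\subset X^o_n$ goes through as in the non-logarithmic case; (ii) that tensoring with $(L_n^{p^n},\nabla_{\mathrm{can}})$ is still a self-equivalence of the category of logarithmic de Rham bundles — here $L_n^{p^n}$ carries no log poles coming from $L_n$, so the twisting is unaffected by $\D$; and (iii) that the nilpotency/exponent bound $\leq p-2$ and the local Deligne–Illusie-type description of $C_n^{-1}$ survive in the presence of a simple normal crossing divisor, which is precisely what the construction of~\cite{LSYZ14} provides. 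Granting these, no idea beyond the proof of Theorem~\ref{equiv:TFF&THDF} is needed, and the main obstacle is simply the bookkeeping of the log poles through the inductive lifting argument.
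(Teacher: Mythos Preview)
Your proposal is correct and follows exactly the approach the paper takes: the paper does not give a separate proof of this theorem but simply states that it ``can be straightforwardly generalized to the logarithmic case and the proof is similar as that of Theorem~\ref{equiv:TFF&THDF}''. In fact you have spelled out more of the logarithmic bookkeeping (points (i)--(iii)) than the paper itself does.
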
 
 
\subsubsection{A sufficient condition for lifting the twisted periodic Higgs-de Rham flow}

We suppose that the field $k$ is finite in this section. Let $\X$ be a smooth proper variety over $W(k)$ and denote $X_n=X\times_{W(k)} W_n(k)$. Let 
$D_1\subset X_1$ be a $W(k)$-liftable normal crossing divisor over $k$. Let $\D\subset \X$ be a lifting of $D_1$.

\begin{prop}\label{Lifting_PHDF} Let $n$ be an positive integer and let $L_{n+1}$ be a line bundle over $X_{n+1}$. Denote by $L_\ell$ the reduction of $L_{n+1}$ on $X_{\ell}$.  Let 
	\[\Big\{(V,\nabla,\Fil)_{n-2}^{(n-1)},(E,\theta)_{n-1}^{(n)},(V,\nabla,\Fil)_{n-1}^{(n)},(E,\theta)_{n}^{(n)},\cdots;\phi^{(n)}_{\bullet}\Big\}_{/X_n\subset X_{n+1}}\] 
	be an $L_{n}$-twisted periodic Higgs-de Rham flow over $X_{n}\subset X_{n+1}$. Suppose 
	\begin{itemize}
		\item[-] Lifting of the graded Higgs bundle $(E,\theta)_i^{(n)}$ is unobstructed. i.e. there exist a logarithmic graded Higgs bundle $(E,\theta)_i^{(n+1)}$ over $X_{n+1}$, whose reduction on $X_{n}$ is isomorphic to $(E,\theta)_i^{(n)}$.
		\item[-] Lifting of the Hodge filtration $\Fil_i^{(n)}$ is unobstructed. i.e. for any lifting $(V,\nabla)_i^{(n+1)}$ of $(V,\nabla)_i^{(n)}$ over $X_{n+1}$, there exists a Hodge filtration $\Fil_i^{(n+1)}$ on $(V,\nabla)_i^{(n+1)}$, whose reduction on $X_{n}$ is $\Fil_i^{(n)}$. 
	\end{itemize}
	Then every twisted periodic Higgs-de Rham flow over $X_{n}$ can be lifted to a twisted periodic Higgs-de Rham flow over $X_{n+1}$.
\end{prop}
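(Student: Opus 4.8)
The plan is to repackage the flow as a twisted Fontaine--Faltings module, where the datum is finite, build a candidate lift by running the flow forward through one period using the two hypotheses, and then close up the periodicity; this last step is the crux. By Theorem~\ref{equiv:TFF&THDF} and its logarithmic version Theorem~\ref{equiv:logTFF&THDF}, an $L_n$-twisted $f$-periodic (logarithmic) Higgs--de Rham flow over $X_n\subset X_{n+1}$ is nothing but an object $(V_i,\nabla_i,\Fil_i,\varphi_i)_{0\le i<f}$ of $\mathcal{TMF}_{[0,a],f}^{\nabla}(X_{n+1}/W_{n+1})$: namely $f$ filtered (log) de Rham bundles of level $\le a$ over $X_n$ together with de Rham isomorphisms $\varphi_i\colon C_{n}^{-1}\circ\Gr(V_i,\nabla_i,\Fil_i)\xrightarrow{\sim}(V_{i+1},\nabla_{i+1})$ for $i<f-1$ and $\varphi_{f-1}\colon\bigl(C_{n}^{-1}\circ\Gr(V_{f-1},\nabla_{f-1},\Fil_{f-1})\bigr)\otimes(L_n^{p^n},\nabla_{\mathrm{can}})\xrightarrow{\sim}(V_0,\nabla_0)$. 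So it suffices to lift this object to one of $\mathcal{TMF}_{[0,a],f}^{\nabla}(X_{n+2}/W_{n+2})$ whose reduction mod $p^n$ is the given one, i.e.\ to lift the $f$ filtered de Rham bundles to $X_{n+1}$ and the $\varphi_i$ compatibly; the twisting bundle $L_{n+1}$ over $X_{n+1}$ is already given.

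To build a candidate, note that via $\varphi_{f-1}$ the de Rham bundle $(V_0,\nabla_0)$ is the $(L_n^{p^n},\nabla_{\mathrm{can}})$-twist of $C_n^{-1}$ of one of the graded Higgs bundles in the flow; by the first hypothesis that graded Higgs bundle lifts to $X_{n+1}$, and applying the log functor $C_{n+1}^{-1}$ --- which is a lifting of $C_n^{-1}$ --- and twisting by $(L_{n+1}^{p^{n+1}},\nabla_{\mathrm{can}})$ produces a lift $(\tilde{V}_0,\tilde{\nabla}_0)$ of $(V_0,\nabla_0)$. By the second hypothesis $\Fil_0$ lifts to a Hodge filtration $\widetilde{\Fil}_0$ of level $\le a$ on $(\tilde{V}_0,\tilde{\nabla}_0)$. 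Setting $(\tilde{V}_1,\tilde{\nabla}_1):=C_{n+1}^{-1}\circ\Gr(\tilde{V}_0,\tilde{\nabla}_0,\widetilde{\Fil}_0)$ and $\tilde{\varphi}_0:=\mathrm{id}$, and using that $\Gr$ and $C_{n+1}^{-1}$ commute with reduction mod $p^n$, gives a lift of $(V_1,\nabla_1)$ with $\tilde{\varphi}_0$ lifting $\varphi_0$; lifting $\Fil_1$ by the second hypothesis and repeating $f-1$ times yields filtered de Rham bundles $(\tilde{V}_i,\tilde{\nabla}_i,\widetilde{\Fil}_i)_{0\le i<f}$ over $X_{n+1}$ lifting the original ones, maps $\tilde{\varphi}_0,\dots,\tilde{\varphi}_{f-2}$ lifting $\varphi_0,\dots,\varphi_{f-2}$, and a graded Higgs lift $\Gr(\tilde{V}_{f-1},\tilde{\nabla}_{f-1},\widetilde{\Fil}_{f-1})$ of $\Gr(V_{f-1},\nabla_{f-1},\Fil_{f-1})$.

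The remaining task --- and the main obstacle --- is to exhibit a lift $\tilde{\varphi}_{f-1}\colon\bigl(C_{n+1}^{-1}\circ\Gr(\tilde{V}_{f-1},\tilde{\nabla}_{f-1},\widetilde{\Fil}_{f-1})\bigr)\otimes(L_{n+1}^{p^{n+1}},\nabla_{\mathrm{can}})\xrightarrow{\sim}(\tilde{V}_0,\tilde{\nabla}_0)$ of $\varphi_{f-1}$; granted this, transporting $\widetilde{\Fil}_0$ along it and propagating by $\tilde{\phi}^{(n+1)}_{\bullet+1}=\Gr\circ C_{n+1}^{-1}(\tilde{\phi}^{(n+1)}_{\bullet})$ extends the lift to the entire periodic flow, with the level condition preserved. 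The difficulty is that the source and target of the desired $\tilde{\varphi}_{f-1}$ are a priori two different lifts to $X_{n+1}$ of the same object over $X_n$, so $\tilde{\varphi}_{f-1}$ need not exist for arbitrary intermediate choices. I would remove this obstruction by a fixed-point argument: by the two unobstructedness hypotheses the set of admissible lifting data (a lift of the initial graded Higgs bundle together with the filtration lifts $\widetilde{\Fil}_0,\dots,\widetilde{\Fil}_{f-1}$) is a nonempty torsor under a finite group assembled from the relevant deformation hypercohomology on $X_1$, and the failure of the periodicity to close defines an affine self-map of the associated torsor of lifts of the seed, whose linear part along the seed direction is $\bar{F}^{*}-\mathrm{id}$, where $\bar{F}^{*}$ is the Frobenius pullback built into $C_{n+1}^{-1}$. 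Since $\bar{F}^{*}-\mathrm{id}$ is of Artin--Schreier type --- surjective over an algebraically closed field by a Lang-type argument, and manageable over a finite field after a harmless finite extension of $k$, which is anyway absorbed by the extensions appearing in Theorem~\ref{Mainthm} --- the closing class can be cancelled by adjusting the lifting data, yielding $\tilde{\varphi}_{f-1}$ and completing the lift. The step I expect to require the most care is precisely this: locating the closing obstruction inside the deformation group and checking that the Frobenius-twisted return map on the torsor of lifts has a fixed point.
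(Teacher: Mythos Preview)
Your approach is genuinely different from the paper's, and it is worth comparing the two.

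The paper does \emph{not} attempt to close up the periodicity by a fixed-point argument. Instead it runs the lifted flow forward indefinitely: pick any graded Higgs lift $(E',\theta')_n^{(n+1)}$ of the seed, apply $C_{n+1}^{-1}$, lift the filtration, take $\Gr$, and iterate. After each block of $f$ steps and a twist by the appropriate power of $L_{n+1}$, one obtains another lift of the \emph{same} graded Higgs bundle $(E,\theta)_n^{(n)}$. Since $k$ is assumed finite in this section, the torsor of such lifts (modelled on the hypercohomology~(\ref{gr-lifting space})) is a finite set, so by pigeonhole two iterates $m>m'\ge 0$ coincide. Replacing $f$ by $(m-m')f$ and shifting, the paper obtains the closing isomorphism $\phi^{(n+1)}$ directly, then propagates it. The cost is that the period is enlarged, which the paper explicitly notes in the remark following the proof.

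Your strategy --- solve $x=A\sigma(x)+c$ on the torsor --- is essentially the one the paper does carry out later, but only in the concrete rank-two case over $\mathbb{P}^1$ with marked points (Section~4.4), where the torsor is one-dimensional and the self-map is literally $z\mapsto az^p+b$. Two points deserve care if you want to make your argument general. First, your description of the linear part as ``$\bar F^*-\mathrm{id}$'' is too coarse: after one full period the return map is $\sigma^f$-semilinear, of the form $x\mapsto A\sigma^f(x)+c$ with $A$ a $k$-linear endomorphism built from the grading maps of Proposition~\ref{prop:torsor_Grading} and the Frobenius-semilinear maps of Corollary~\ref{InvCar_Torsor_map}; there is no reason for $A$ to be the identity. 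The fixed-point equation $x-A\sigma^f(x)=c$ is still an \'etale additive morphism of $\mathbb{G}_a^N$ (its differential at $0$ is the identity), hence surjective over $\bar k$, so your Lang/Artin--Schreier reasoning survives --- but only after passing to $\bar k$. Second, this field extension is not ``absorbed'' by the statement of the Proposition itself, which asserts a lift over the given $X_{n+1}$; the paper's pigeonhole argument stays over $k$ and pays instead with a longer period. For the downstream applications (Theorem~\ref{Mainthm}, Theorem~\ref{lifting-thm}) either trade-off is acceptable, which is why the paper freely switches to your viewpoint in Section~4.4.
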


\begin{proof} By assumption, we choose $(E',\theta')_{n}^{(n+1)}$ a lifting of $(E',\theta')_n^{(n)}$. Inductively, for all $i\geq n$, we construct $(V',\nabla',\Fil')_i^{(n+1)}$ and $(E',\theta')_{i+1}^{(n+1)}$ as follows. Denote 
	\[(V',\nabla')_i^{(n+1)}=\mathcal C^{-1}_{n+1}\left((E',\theta')_i^{(n+1)}\right),\]
	which is a lifting of $(V,\nabla)_i^{(n)}$. By assumption, we choose a lifting $\Fil_{i}^{'(n+1)}$  on $(V',\nabla')_i^{(n+1)}$ of the Hodge filtration $\Fil_i^{(n)}$  and denote 
	\[(E',\theta')_{i+1}^{(n+1)}=\mathrm{Gr}(V',\nabla',\Fil')_{i}^{(n+1)},\]
	which is a lifting of $(E,\theta)_{i+1}^{(n)}$. 
	
	From the $\phi$-structure of the Higgs-de Rham flow, for all $m\geq0$ there is an isomorphism  \[(E,\theta)_{n}^{(n)}\simeq (E,\theta)_{n+mf}^{(n)}\otimes L_n^{p^{n-1}+p^{n}+\cdots+p^{n+mf-2}}.\] 
	Twisting $(E',\theta')_{n+mf}^{(n+1)}$ with $L_{n+1}^{p^{n-1}+p^{n}+\cdots+p^{n+mf-2}}$, one gets a lifting of $(E,\theta)_{n}^{(n)}$.\\

By deformation theory, the lifting space of $(E,\theta)_{n}^{(n)}$ is a torsor space modeled by $H^1_{Hig}\left(X_1,\mathrm{End}\left((E,\theta)_{n}^{(1)}\right)\right)$. Therefore, the torsor space of lifting $(E,\theta)_{n}^{(n)}$ as a \emph{graded} Higgs bundle should be modeled by a subspace of $H^1_{Hig}$. We give a description of this subspace as follows. For simplicity of notations, we shall replace $(E,\theta)_{n}^{(1)}$ by $(E,\theta)$ in this paragraph. The decomposition of $E = \bigoplus_{p+q=n}E^{p,q}$ induces a decomposition of $\mathrm{End}(E)$:
\[
(\mathrm{End}(E))^{k,-k} := \bigoplus_{p+q=n} (E^{p,q})^{\vee} \otimes E^{p+k,q-k} 
\]  
Furthermore, it also induces a decomposition of the Higgs complex $\mathrm{End}(E,\theta)$. One can prove that the hypercohomology of the following Higgs subcomplex
\begin{equation}\label{gr-lifting space}
\mathbb{H}^1 ((\mathrm{End}(E))^{0,0} \stackrel{\theta^{\mathrm{End}}}{\longrightarrow} (\mathrm{End}(E))^{-1,1} \otimes \Omega^1 \stackrel{\theta^{\mathrm{End}}}{\longrightarrow} \cdots)
\end{equation}
gives the subspace corresponding to the lifting space of graded Higgs bundles.\\

Thus by the finiteness of the torsor space, there are two integers $m>m'\geq 0$, such that 
	\begin{equation}\label{equ_twisting_1}
		(E',\theta')_{n+mf}^{(n+1)} \otimes L_{n+1}^{p^{n-1}+p^{n}+\cdots+p^{n+mf-2}} \simeq (E',\theta')_{n+m'f}^{(n+1)} \otimes L_{n+1}^{p^{n-1}+p^{n}+\cdots+p^{n+m'f-2}}.
	\end{equation}
	
	By twisting suitable power of the line bundle $L_{n+1}$  we may assume $m'=0$. By replacing the period $f$ with $mf$, we may assume $m=1$.
	For integer $i\in[n,n+f-1]$ we denote
	\[(E,\theta,V,\nabla,\Fil)_i^{(n+1)}:=(E',\theta',V',\nabla',\Fil')_i^{(n+1)}.\]
	Then (\ref{equ_twisting_1}) can be rewritten as 
	\begin{equation}\label{equ_twisting_2}
		\phi_{n+f}^{(n+1)}:(E,\theta)_{n+f}^{(n+1)}\otimes L_{n+1}^{p^{n-1}+p^n+p^{n+f-2}}\rightarrow (E,\theta)_n^{(n+1)}
	\end{equation}
	where $(E,\theta)^{(n+1)}_{n+f}=(E',\theta')^{(n+1)}_{n+f}=\mathrm{Gr}\left((V,\nabla,\Fil)_{n+f-1}^{(n+1)}\right)$.
	Inductively, for all $i\geq n+f$, we construct $(V,\nabla,\Fil)_i^{(n+1)}$, $(E,\theta)_{i+1}^{(n+1)}$ and $\phi_{i+1}^{n+1}$ as follows. Denote 
	\[(V,\nabla)_i^{(n+1)}=\mathcal C^{-1}_{n+1}\left((E,\theta)_i^{(n+1)}\right).\]
	According to the isomorphism 
	\begin{equation}\label{equ_twisting_3}
		\mathcal C^{-1}_{n+1}(\phi_{i}^{(n+1)}):(V,\nabla)_i^{(n+1)}\otimes L_{n+1}^{p^{i-f-1}+p^{i-f}+\cdots+p^{i-2}}\rightarrow (V,\nabla)_{i-f}^{(n+1)},
	\end{equation}
	the Hodge filtration $\Fil_{i-f}^{(n+1)}$ on $(V,\nabla)_{i-f}^{(n+1)}$ induces a Hodge filtration $\Fil_{i}^{(n+1)}$ on $(V,\nabla)_{i}^{(n+1)}$. Denote
	\[(E,\theta)_{i+1}^{(n+1)}=\mathrm{Gr}\left((V,\nabla,\Fil)_{i}^{(n+1)}\right).\] 
	Taking the associated graded objects in equation~(\ref{equ_twisting_3}), one gets a lifting of $\phi_{i+1}^{(n+1)}$
	\begin{equation}\label{equ_twisting_4}
		\phi_{i+1}^{(n+1)}:(E,\theta)_{i+1}^{(n+1)}\otimes L_{n+1}^{p^{i-f-1}+p^{i-f}+p^{i-1}}\rightarrow (E,\theta)_{i+1-f}^{(n+1)}
	\end{equation}
	and a twisted Higgs-de Rham flow over $X_{n+1}\subset X_{n+2}$ 
	\[\Big\{(V,\nabla,\Fil)_{n-1}^{(n)},(E,\theta)_{n}^{(n+1)},(V,\nabla,\Fil)_{n}^{(n+1)},(E,\theta)_{n+1}^{(n+1)},\cdots;\phi^{(n+1)}_{\bullet}\Big\}_{/X_{n+1}\subset X_{n+2}}\] 
	which lifts the given twisted periodic flow over $X_n\subset X_{n+1}$. 
\end{proof}

\begin{rmk}
	In the proof, we see that one needs to enlarge the period for lifting the twisted periodic Higgs-de Rham flow. 
\end{rmk}

	\subsection{Twisted Higgs-de Rham self map on moduli schemes of semi-stable Higgs bundle with trivial discriminant}\label{section CTLBSSHBTD}
	Let $X_1$ be a smooth proper $W_2$-liftable variety over $k$, with $\mathrm{dim} \, X_1=n$. Let $H$ be a polarization of $X_1$. Let $r<p$ be a positive integer and $(E,\theta)_0$ be a semistable graded Higgs bundle over $X_1$ of rank $r$ and with the vanishing discriminant. 
	 \begin{thm}\label{thm:existence_of_HiggsdeRham} There is a Higgs-de Rham flow of Higgs bundles and de Rham bundles over $X_1$ with initial term $(E,\theta)_0$.
	\end{thm}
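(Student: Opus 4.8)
The plan is to build the flow term by term, the only genuine freedom being the choice of Hodge filtration at each stage, and to arrange these choices so that \emph{every} Higgs term of the flow is a semistable graded Higgs bundle of rank $r$ with vanishing discriminant. Suppose inductively that $(E,\theta)_m$ is such a bundle. Since it is graded of rank $r<p$, its nilpotency exponent is at most $r\le p-1$, so $(V,\nabla)_m:=C_1^{-1}(E,\theta)_m$ is defined (this is where $W_2$-liftability of $X_1$ enters). It then suffices to check two things: (i) $(V,\nabla)_m$ is a semistable de Rham bundle with vanishing discriminant; and (ii) $(V,\nabla)_m$ carries a Griffiths-transverse Hodge filtration $\Fil_m$ of level $\le p-2$ whose associated graded $(E,\theta)_{m+1}:=\mathrm{Gr}_{\Fil_m}(V,\nabla)_m$ is again a semistable graded Higgs bundle of rank $r$ with vanishing discriminant. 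Iterating (i)--(ii) starting from $(E,\theta)_0$ yields a sequence $\{(E,\theta)_0,(V,\nabla,\Fil)_0,(E,\theta)_1,(V,\nabla,\Fil)_1,\dots\}$ which, by construction, is a Higgs-de Rham flow over $X_1\subset X_2$ with the prescribed initial term.

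For (i) I would appeal to the Bogomolov-type inequality in positive characteristic for semistable Higgs sheaves of rank $<p$ on a $W_2$-liftable base: the inverse Cartier transform of a semistable Higgs bundle of rank $<p$ is semistable (one may pass to the Frobenius twist, which is harmless since the relevant slopes all vanish), and the discriminant transforms through a power of $p$, so $\Delta_H(C_1^{-1}(E,\theta)_m)=0$ follows from $\Delta_H((E,\theta)_m)=0$; the vanishing of the discriminant is exactly the equality case of this inequality, and it is this equality that is preserved by the construction. For (ii) the filtration should be produced by a positive-characteristic analogue of Simpson's gradation of a flat bundle into a system of Hodge bundles: because $(V,\nabla)_m$ is semistable of degree $0$ with $\Delta_H=0$, one extracts a Griffiths-transverse filtration with semistable associated graded of vanishing discriminant (for instance via a Langton-type properness argument in a moduli of $\lambda$-connections interpolating $(V,\nabla)_m$ and its graded, or by a direct greedy construction of the $\Fil^i$ from maximal sub-de Rham sheaves). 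Whichever mechanism is used, the level bound is automatic: after deleting redundant steps the filtration has all graded pieces nonzero, hence at most $r\le p-1$ of them, so its level is $\le r-1\le p-2$; and then $(E,\theta)_{m+1}$, having rank $r$, again has nilpotency exponent $\le r\le p-1$, so the induction continues.

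The main obstacle is (ii): producing, on a semistable de Rham bundle of rank $<p$ with vanishing discriminant, a Griffiths-transverse Hodge filtration whose associated graded is semistable. In characteristic $0$ this is Simpson's theory of systems of Hodge bundles, but in characteristic $p$ both Frobenius pullbacks and passage to associated gradeds can destroy semistability, so one must genuinely exploit the two hypotheses — $r<p$, to keep every nilpotency exponent and every filtration level strictly below $p$, and $\Delta_H=0$, to remain in the equality case of the Bogomolov inequality, where the relevant structure theory (hence the gradation) is available. Once (ii) is in place the induction of the first paragraph goes through with no further input, and the flow so obtained moreover lies in a bounded family of semistable Higgs bundles of fixed rank with vanishing discriminant — which is exactly what is needed to make the Higgs-de Rham self-map on the moduli space well defined in the later sections.
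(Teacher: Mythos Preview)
Your outline matches the paper's two-step argument in spirit: build the flow inductively, with the only real choice being the Hodge filtration at each stage, and arrange that semistability with vanishing discriminant propagates. The paper, however, is more specific at the point you flag as the main obstacle. For step (ii) it does not attempt a Langton-type or ad hoc greedy construction; it invokes a canonical object, the \emph{Simpson graded semistable Hodge filtration} (Theorem~A.4 in \cite{LSZ13a}, Theorem~5.12 in \cite{Langer14}), which by definition is the coarsest Griffiths-transverse filtration on a semistable de Rham module whose associated graded Higgs sheaf is torsion free and again semistable. This is exactly the positive-characteristic analogue of Simpson's gradation you were gesturing at, and it makes the induction canonical (which matters later, since the flow is supposed to be uniquely determined by $(E,\theta)_0$).

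There is one genuine omission in your proposal. You speak throughout of Higgs \emph{bundles}, but the associated graded of a Griffiths-transverse filtration on a vector bundle is a priori only a torsion-free sheaf, and the inverse Cartier transform in \cite{OgVo07} or \cite{LSZ13a} is developed for modules, not just bundles. So even after Step~1 one has only a flow of torsion-free Higgs and de Rham \emph{sheaves}, and the statement asserts bundles. The paper handles this as a separate Step~2: local freeness of all terms follows from Theorem~2.1 and Corollary~2.9 of Langer's recent paper \cite{Langer19}. This is not a formality --- the authors note in the acknowledgements that an earlier version had an incomplete argument here --- and your write-up should at least flag the issue and point to the relevant input.
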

	 The construction of the Higgs-de Rham flow given by Theorem \ref{thm:existence_of_HiggsdeRham} is made by two steps. \\
	 {\bf Step 1.} there exists a Simpson's graded semistable Hodge filtration $\Fil$ (Theorem A.4 in~\cite{LSZ13a} and Theorem $5.12$ in~\cite{Langer14}), which is the most coarse Griffiths transverse filtration on a semi-stable de Rham module such that the associated graded Higgs sheaf is torsion free and still semi-stable. Denote $(V,\nabla)_0:=C^{-1}_1(E_0,\theta_0)$ and $\Fil_0$  the Simpson's graded semistable Hodge filtration on $(V,\nabla)_0$. Denote $(V,\nabla)_1:=C^{-1}_1(E_1,\theta_1)$ and $\Fil_1$  the Simpson's graded semistable Hodge filtration on $(V,\nabla)_1$. Repeating this process, we construct a Higgs-de Rham flow of torsion free Higgs sheaves and de Rham sheaves over $X_1$ with initial term  $(E,\theta)$
	\begin{equation}\label{HDF_ass(E,Theta)}\tiny
	\xymatrix@W=10mm@C=0mm{
		&(V,\nabla,\Fil)_{0} \ar[dr]^{\mathrm{Gr}} 
		&& \cdots  \ar[dr]^{\mathrm{Gr}} 
		&& (V,\nabla,\Fil)_{f-1} \ar[dr]^{\mathrm{Gr}} 
		&& \cdots
		\\
		(E,\theta)_{0} \ar[ur]^{\mathcal C^{-1}_1}  
		&& (E,\theta)_{1}\ar[ur]^{\mathcal C^{-1}_1} 
		&& \cdots \ar[ur]^{\mathcal C^{-1}_1} 
		&& (E,\theta)_{f}\ar[ur]^{\mathcal C^{-1}_1} \\
	}
	\end{equation}  
	Since the Simpson's graded semistable Hodge filtration is unique, this flow is also uniquely determined by $(E,\theta)_0$.\\
   {\bf Step 2.}  The Higgs sheaves and de Rham sheaves
	appearing in the Higgs-de Rham flow are locally free.  Thanks to the recent paper by A. Langer \cite{Langer19}. The local freeness follows from Theorem 2.1 and Corollary 2.9 in his paper.\\[.2cm]
	The purpose of this subsection is to find a canonical choice of the twisting line bundle $L$ such that this Higgs-de Rham flow is twisted preperiodic.

 Firstly, we want to find a positive integer $f_1$ and a suitable twisting line bundle $L_1$ such that $(E'_{f_1},\theta'_{f_1}):=(E_{f_1},\theta_{f_1})\otimes (L_1,0)$ satisfies the following conditions 
\begin{equation}\label{chernclass}
\begin{split}
\text{(i).} & \quad c_1(E'_{f_1}) = c_1(E_0),\\
\text{(ii).} & \quad  c_2(E'_{f_1}) \cdot [H]^{n-2} = c_2(E_0) \cdot [H]^{n-2}.\\
\end{split}   
\end{equation}
Under these two condition, both $(E,\theta)_0$ and $(E,\theta)_{f_1}$ are contained in the moduli scheme $M^{ss}_{Hig}(X_1/k,r,a_1,a_2)$ constructed by Langer in~\cite{Langer04} classifying all semistable Higgs bundles over $X_1$ with some fixed topological invariants (which will be explained later).
Following~\cite{Langer04}, we introduce $\mathcal{S}'_{X_1/k}(d;r,a_1,a_2,\mu_{max})$ the family of Higgs bundles over $X_1$ such that $(E,\theta)$ is a member of the family, where $E$ is of rank $d$, $\mu_{max}(E,\theta) \leq \mu_{max}$,$a_0(E)=r$,$a_1(E)=a_1$ and $a_2(E) \geq a_2$. Here $\mu_{max}(E,\theta)$ is the slope of the maximal destabilizing sub sheaf of $(E,\theta)$, and $a_i(E)$ are defined by
		\[
		\chi (X_{1,\bar{k}},E(m)) = \Sigma^d_{i=0} a_i(E) {m+d-i \choose d-i}.
		\]
		By the results of Langer, the family $\mathcal{S}'_{X_1/k}(d;r,a_1,a_2,\mu_{max})$ is bounded (see Theorem $4.4$ of \cite{Langer04}). So $M^{ss}_{Hig}(X_1/k,r,a_1,a_2)$ is the moduli scheme which corepresents this family. Note that $a_i(E) = \chi(E|_{\bigcap_{j \leq d-i} H_j})$ where $H_1,\dots,H_d \in |\sO(H)|$ is an $E$-regular sequence (see \cite{HL}). Using Hirzebruch-Riemann-Roch theorem, one finds that $a_1(E)$,$a_2(E)$ will be fixed if $c_1(E)$ and $c_2(E) \cdot [H]^{n-2}$ are fixed.

	\begin{prop}\label{prop:find L1}
		Assume discriminant of $E_0$ (with respect to the polarization $H$) $\Delta(E_0):= (c_2(E_0)-\frac{r-1}{2r}c_1(E_0)^2)\cdot[H]^{n-2}$ equals to zero.
		 Let $f_1$ be the minimal positive integer with $r\mid p^{f_1}-1$, and let $L_1=\det(E_0)^{\frac{1-p^{f_1}}{r}}$. Then the two conditions in (\ref{chernclass}) are satisfied.
	\end{prop}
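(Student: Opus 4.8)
The plan is to follow the Higgs-de Rham flow $(\ref{HDF_ass(E,Theta)})$ attached to $(E,\theta)_0$ by Theorem~\ref{thm:existence_of_HiggsdeRham} and to control the Chern classes $c_1$ and $c_2$ of the Higgs bundles appearing in it: after $f_1$ steps these classes will have acquired factors $p^{f_1}$ and $p^{2f_1}$, and the twist by $L_1=\det(E_0)^{(1-p^{f_1})/r}$ is designed precisely to cancel them. By Step~2 above (local freeness, \cite{Langer19}) every sheaf in the flow is a vector bundle, so its Chern classes are the usual ones and I may freely use the splitting principle.

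First I would establish the effect of one flow step on Chern classes. The passage from $(E,\theta)_i$ to $(E,\theta)_{i+1}$ is the composition of $\mathcal C_1^{-1}$ with $\mathrm{Gr}$ with respect to a Hodge filtration. Taking the associated graded of a filtration by subbundles leaves Chern classes unchanged, so $c_j(\mathrm{Gr}_{\Fil}V)=c_j(V)$. For $\mathcal C_1^{-1}$, recall from Section~\ref{section ICF} that locally on a small affine $\U_i$ the bundle $\mathcal C_1^{-1}(E,\theta)$ is $F^{*}_{\U_i}(E\mid_{\U_i})$, and the global bundle is glued from these via the transition maps $F^{*}(g_{ij})\circ G_{ij}$, where $g_{ij}$ are the transition functions of $E$ and $G_{ij}=\exp\bigl(h_{ij}(F^{*}\theta)\bigr)$. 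Since $(E,\theta)$ is graded, $\theta$ is nilpotent of degree $-1$ for $E=\bigoplus_q E^q$, so each $G_{ij}$ is a finite unipotent sum, block-triangular with identity diagonal blocks for this grading; choosing local frames adapted to $E=\bigoplus_q E^q$ (so that $g_{ij}=\bigoplus_q g_{ij}^q$) makes the transition functions of $\mathcal C_1^{-1}(E,\theta)$ block-triangular with diagonal blocks $F^{*}(g_{ij}^q)$. Hence $\mathcal C_1^{-1}(E,\theta)$ carries, as a vector bundle, a filtration with associated graded $\bigoplus_q F^{*}E^q=F^{*}E$, so
\[
c\bigl(\mathcal C_1^{-1}(E,\theta)\bigr)=c(F^{*}E)=F^{*}c(E),\qquad\text{i.e. } c_j\bigl(\mathcal C_1^{-1}(E,\theta)\bigr)=p^{\,j}\,c_j(E)
\]
for all $j$, because the (absolute) Frobenius multiplies $\mathrm{CH}^j$ by $p^j$. (The base change $X_1'=X_1\times_{F_k}k$ in the construction of the inverse Cartier functor does not affect this: the projection $X_1'\to X_1$ is an isomorphism of schemes whose composition with the relative Frobenius is the absolute Frobenius, so on Chern classes of a bundle pulled back from $X_1$ the net effect is still $c_j\mapsto p^j c_j$; likewise $\det\mathcal C_1^{-1}(E,\theta)\cong(\det E)^{\otimes p}$.) Since the associated graded of a Hodge filtration is again a graded Higgs bundle, I may iterate, obtaining
\[
c_1(E_{f_1})=p^{f_1}c_1(E_0),\qquad c_2(E_{f_1})=p^{2f_1}c_2(E_0),\qquad \det E_{f_1}\cong(\det E_0)^{\otimes p^{f_1}}.
\]

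It then remains a short computation. Put $\lambda:=\tfrac{1-p^{f_1}}{r}$, which is an integer by the choice of $f_1$, so that $L_1=\det(E_0)^{\lambda}$, $c_1(L_1)=\lambda\,c_1(E_0)$ and $p^{f_1}=1-r\lambda$; write $c_1:=c_1(E_0)$, $c_2:=c_2(E_0)$. Using $c_1(E\otimes L)=c_1(E)+r\,c_1(L)$ for a rank-$r$ bundle, condition~(i) is immediate,
\[
c_1(E'_{f_1})=p^{f_1}c_1+r\lambda\,c_1=(1-r\lambda)c_1+r\lambda\,c_1=c_1 .
\]
For condition~(ii), the formula $c_2(E\otimes L)=c_2(E)+(r-1)c_1(E)c_1(L)+\binom{r}{2}c_1(L)^2$ gives, after simplification,
\[
c_2(E'_{f_1})-c_2=\bigl[(1-r\lambda)^2-1\bigr]c_2+\lambda\Bigl[(r-1)(1-r\lambda)+\tfrac{r(r-1)}{2}\lambda\Bigr]c_1^2=r\lambda(r\lambda-2)\Bigl(c_2-\tfrac{r-1}{2r}c_1^2\Bigr).
\]
Pairing with $[H]^{n-2}$ and invoking the hypothesis $\Delta(E_0)=\bigl(c_2-\tfrac{r-1}{2r}c_1^2\bigr)\cdot[H]^{n-2}=0$ yields $\bigl(c_2(E'_{f_1})-c_2\bigr)\cdot[H]^{n-2}=0$, which is~(ii).

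The hard part will be the first step — the Chern-class identity $c_j(\mathcal C_1^{-1}(E,\theta))=p^j c_j(E)$, in particular verifying that the unipotent gluing $G_{ij}$ respects the Higgs grading and that the Frobenius/base-change conventions in the construction of $\mathcal C_1^{-1}$ really contribute the plain factor $p^j$ (and that all sheaves in the flow are locally free, which is \cite{Langer19}). Granting this, the rest is the elementary rank-$r$ tensor-twist bookkeeping above, and the vanishing of the discriminant enters exactly once, at the very end, to kill the coefficient $r\lambda(r\lambda-2)\bigl(c_2-\tfrac{r-1}{2r}c_1^2\bigr)\cdot[H]^{n-2}$.
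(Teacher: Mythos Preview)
Your proof is correct and follows essentially the same approach as the paper: track how $c_1$ and $c_2$ transform under $\mathrm{Gr}\circ\mathcal C_1^{-1}$ (picking up factors $p$ and $p^2$), then undo this with the twist by $L_1$. The only difference is cosmetic: for condition~(ii) the paper invokes the one-line observation that the discriminant $\Delta$ is invariant under twisting by line bundles, so $\Delta(E'_{f_1})=p^{2f_1}\Delta(E_0)=0$ and then $c_1(E'_{f_1})=c_1(E_0)$ forces $c_2(E'_{f_1})\cdot[H]^{n-2}=c_2(E_0)\cdot[H]^{n-2}$, whereas you unpack the same identity via the explicit formula for $c_2(E\otimes L)$ and arrive at the factor $r\lambda(r\lambda-2)\bigl(c_2-\tfrac{r-1}{2r}c_1^2\bigr)$.
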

	\begin{proof}
		Since $c_1(C^{-1}_1(E_0,\theta_0)) = pc_1(E_0)$ and $c_1(L_1) = \frac{1-p^{f_1}}{r}\cdot c_1(E_0)$, we have 
		\[c_1(E'_{f_1}) = rc_1(L_1) + c_1\left((Gr\circ C^{-1}_1)^{f_1}(E_0,\theta_0)\right)=\left(r\cdot\frac{1-p^{f_1}}{r} +p^{f_1}\right)c_1(E_0).\]
		One gets Condition (i). Note that the discriminant $\Delta$ is invariant under twisting line bundles, and $\Delta(C^{-1}_1(E_0,\theta_0)) = p^2\Delta(E_0)$, one gets
		\[\Delta(E'_{f_1}) = \Delta(Gr\circ C^{-1}_1(E_0,\theta_0)) = p^{2f_2}\Delta(E_0) =0.\]
		So we have $c_2(E_0) \cdot [H]^{n-2} = c_1(E_0)^2\cdot [H]^{n-2}$ and $c_2(E'_{f_1}) \cdot [H]^{n-2} = c_1(E'_{f_1})^2\cdot [H]^{n-2}$. Since $c_1(E'_{f_1}) = c_1(E_0)$, we already get Condition (ii).   
	\end{proof}
	
	\begin{cor-defi}\label{def:selfmap}
		There is a self-map $\varphi$ on the set of $k$-points of $M^{ss}_{Hig}(X_1/k,r,a_1,a_2)$ defined by the twisted Higgs-de Rham flow, which sends a Higgs bundle $(E,\theta)_0$ to the Higgs bundle $(E,\theta)_{f_1}\otimes (\det(E_0)^\frac{1-p^{f_1}}{r},0)$. Here  $f_1$ is the minimal positive integer with $r\mid p^{f_1}-1$. 
	\end{cor-defi}

\begin{rmk}
In fact one can show that the self-map is a constructible map, i.e. there is a stratification of the moduli scheme such that there is a Simpson graded semistable Hodge filtration attached to the universal de Rham bundle restricted to each constructible subset. Taking the associated graded objects and twisting suitable line bundles, one gets a morphism from each constructible subset to the moduli scheme itself. For more explicit construction one can see the special case in \autoref{section SMMSHBPMP}.
\end{rmk}

	\begin{prop}\label{Self-map}
		Suppose that discriminant of $E_0$ equals to zero and there exists $f_2$ a positive integer with $\varphi^{f_2}(E_0,\theta_0) \simeq (E_0,\theta_0)$. Then the Higgs-de Rham flow~(\ref{HDF_ass(E,Theta)}) is $\det(E_0)^{\frac{p^f-1}{r}}$-twisted $f$-periodic, where  $f=f_1f_2$.   
	\end{prop}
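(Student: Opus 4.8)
The plan is to establish periodicity by iterating the self-map $\varphi$ of Corollary-Definition~\ref{def:selfmap} exactly $f_2$ times while keeping track of the accumulated twisting line bundle, and then to promote the resulting isomorphism of Higgs bundles to a full twisted periodic structure on the canonical flow~(\ref{HDF_ass(E,Theta)}). Two rigidity facts drive everything. First, the flow~(\ref{HDF_ass(E,Theta)}) is \emph{uniquely} determined by $(E,\theta)_0$, because at each stage the Hodge filtration is Simpson's graded semistable filtration, which is unique (Step~1 of the construction in Theorem~\ref{thm:existence_of_HiggsdeRham}), and all its terms are locally free (Step~2). Second, both $C^{-1}_1$ and $\mathrm{Gr}$ are compatible with tensoring by a line bundle carrying the zero Higgs field, respectively the canonical connection: for a Higgs bundle $(F,\eta)$ over $X_1$ and a line bundle $N$ on $X_1$ one has $C^{-1}_1\big((F,\eta)\otimes(N,0)\big)\cong C^{-1}_1(F,\eta)\otimes(N^{p},\nabla_{\mathrm{can}})$, and since tensoring by $(N^{p},\nabla_{\mathrm{can}})$ is an exact autoequivalence of the de Rham category preserving Griffiths transversality, torsion-freeness and semistability (it shifts all slopes by the same quantity), it carries Simpson's filtration to Simpson's filtration; consequently $\mathrm{Gr}\big(C^{-1}_1(F,\eta)\otimes(N^{p},\nabla_{\mathrm{can}})\big)\cong \mathrm{Gr}\big(C^{-1}_1(F,\eta)\big)\otimes(N^{p},0)$.

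First I would carry out the iteration. Put $N:=\det(E_0)^{\frac{1-p^{f_1}}{r}}$, the twisting bundle of one application of $\varphi$. In the untwisted flow~(\ref{HDF_ass(E,Theta)}) one has $\det\big((E,\theta)_j\big)\cong\det(E_0)^{p^{j}}$, since $C^{-1}_1$ is Frobenius pullback on determinants and $F^{*}M\cong M^{p}$ for any line bundle $M$ on $X_1$, while $\mathrm{Gr}$ leaves the determinant unchanged; hence $\det\big(\varphi(E_0,\theta_0)\big)\cong\det(E_0)^{p^{f_1}}\otimes N^{r}\cong\det(E_0)$, so $\varphi$ preserves the determinant and the twisting bundle used at \emph{every} application of $\varphi$ is again $N$. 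Since the untwisted flow is canonical, the canonical flow issuing from $(E,\theta)_j$ is exactly the tail of~(\ref{HDF_ass(E,Theta)}); combining this with the twist-compatibility above and inducting on $m$ yields
\[
\varphi^{m}(E_0,\theta_0)\ \cong\ (E,\theta)_{mf_1}\otimes\Big(N^{\,1+p^{f_1}+p^{2f_1}+\cdots+p^{(m-1)f_1}},\,0\Big).
\]
Taking $m=f_2$ and $f=f_1f_2$, the exponent is the geometric sum $\frac{p^{f}-1}{p^{f_1}-1}$, so the accumulated twist is $N^{(p^{f}-1)/(p^{f_1}-1)}=\det(E_0)^{\frac{1-p^{f_1}}{r}\cdot\frac{p^{f}-1}{p^{f_1}-1}}=\det(E_0)^{\frac{1-p^{f}}{r}}=:L$, the twisting line bundle of the statement; thus $\varphi^{f_2}(E_0,\theta_0)\cong(E,\theta)_{f}\otimes(L,0)$. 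Along the way one checks the routine points that every $\varphi^{m}(E_0,\theta_0)$ is again semistable with vanishing discriminant ($\Delta$ is invariant under line-bundle twist and is multiplied by $p^{2}$ by $\mathrm{Gr}\circ C^{-1}_1$, while Simpson's filtration gives a semistable graded quotient) and carries the fixed numerical invariants $a_1,a_2$ of Proposition~\ref{prop:find L1}, so that the equality of $k$-points $\varphi^{f_2}(E_0,\theta_0)\cong(E_0,\theta_0)$ is meaningful and $\varphi$ is applicable throughout.

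Now I would use the hypothesis $\varphi^{f_2}(E_0,\theta_0)\cong(E_0,\theta_0)$, which supplies an isomorphism of Higgs bundles $\phi_f:(E,\theta)_{f}\otimes(L,0)\xrightarrow{\sim}(E,\theta)_0$, and extend it to the system $\{\phi_{f+i}\}_{i\ge0}$ demanded by the definition of a twisted $f$-periodic Higgs-de Rham flow over $X_1\subset X_2$ by setting $\phi_{f+i+1}:=\mathrm{Gr}\circ C^{-1}_1(\phi_{f+i})$. Functoriality of $C^{-1}_1$ and the twist-compatibility make $C^{-1}_1(\phi_{f+i})$ an isomorphism of de Rham bundles $(V,\nabla)_{f+i}\otimes(L^{p^{i+1}},\nabla_{\mathrm{can}})\xrightarrow{\sim}(V,\nabla)_{i}$; and here uniqueness of Simpson's filtration is essential: that filtration equals $\Fil_{f+i}\otimes L^{p^{i+1}}$ on the source and $\Fil_{i}$ on the target, so the isomorphism must carry one onto the other, i.e. $C^{-1}_1(\phi_{f+i})$ strictly respects the filtrations, as required; applying $\mathrm{Gr}$ then gives the isomorphism of Higgs bundles $\phi_{f+i+1}:(E,\theta)_{f+i+1}\otimes(L^{p^{i+1}},0)\xrightarrow{\sim}(E,\theta)_{i+1}$, and the relation $\phi_{f+i+1}=\mathrm{Gr}\circ C^{-1}_1(\phi_{f+i})$ holds by construction. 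Hence~(\ref{HDF_ass(E,Theta)}) together with $\{\phi_{f+i}\}_{i\ge0}$ is an $L$-twisted $f$-periodic Higgs-de Rham flow, which is the assertion.

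The step I expect to be the real obstacle is this last propagation: the hypothesis only gives an abstract isomorphism of the underlying Higgs bundles at position $f$, and one must argue that it is automatically compatible with the connections, the filtrations and the $\mathrm{Gr}$-recursion downstream. The resolution hinges entirely on the canonicity of~(\ref{HDF_ass(E,Theta)}) — uniqueness of Simpson's graded semistable Hodge filtration, so that any isomorphism of de Rham bundles carries the unique Simpson filtration to the unique Simpson filtration — and on the behaviour of $C^{-1}_1$ and of that filtration under twisting by a line bundle with zero Higgs field; the remaining bookkeeping (that the Higgs bundles occurring stay nilpotent of exponent $\le p-2$, automatic from $r\le p-1$, and stay semistable with $\Delta=0$) is routine.
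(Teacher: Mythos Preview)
Your proposal is correct and follows essentially the same approach as the paper's own proof: establish inductively that $\varphi^{m}(E_0,\theta_0)\cong (E,\theta)_{mf_1}\otimes(\det(E_0)^{(1-p^{mf_1})/r},0)$, apply this with $m=f_2$ to obtain $\phi_f$, and then propagate the $\phi$-structure by $\phi_{f+i+1}=\mathrm{Gr}\circ C^{-1}_1(\phi_{f+i})$. The paper states these steps tersely; you have correctly unpacked the points it leaves implicit (determinant preservation under $\varphi$, twist-compatibility of $C^{-1}_1$ and $\mathrm{Gr}$, and strictness of $C^{-1}_1(\phi_{f+i})$ with respect to the filtrations via uniqueness of Simpson's filtration). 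One cosmetic remark: your accumulated twist $\det(E_0)^{(1-p^{f})/r}$ is the inverse of the line bundle written in the proposition's statement; the paper's own proof contains the same sign tension between its inductive formula and the displayed $\phi_f$, so this is a typographical issue in the paper rather than a flaw in your argument.
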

	\begin{proof}  Inductively, one shows that 
\begin{equation} 
	\varphi^{m} (E_0,\theta_0) =  (E,\theta)_{mf_1} \otimes (\det(E_0)^\frac{1-p^{mf_1}}{r},0). 
\end{equation} 
Since $\varphi^{f_2} (E_0,\theta_0)\simeq (E_0,\theta_0)$, there is an isomorphism of Higgs bundles
		\[\phi_f:(E_f,\theta_f)\otimes (\det(E_0)^{\frac{p^f-1}{r}},0)\rightarrow (E_0,\theta_0).\]
By formula $\phi_i=\big(\mathrm{Gr}\circ\mathcal C^{-1}_1\big)^{i-f}(\phi_f)$ for all $i\geq f$, we construct the twisted $\phi$-structure. Under this  $\phi$-structure the Higgs-de Rham flow is $\det(E_0)^{\frac{p^f-1}{r}}$-twisted $f$-periodic.
\end{proof}
	
	\begin{thm} \label{Main: preperiod}
		A semistable Higgs bundle over $X_1$ with trivial discriminant is preperiodic after twisting. Conversely, a twisted preperiodic Higgs bundle is semistable with a trivial discriminant.   
	\end{thm}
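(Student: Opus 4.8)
The plan is to prove the two implications separately. The forward one I would assemble from the pieces already in place; the converse I would split into a discriminant statement (easy) and a semistability statement (where the real work is).

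\textbf{Forward implication.} Suppose $(E,\theta)_0$ is semistable with $\Delta(E_0)=0$. By Theorem~\ref{thm:existence_of_HiggsdeRham} it initiates the canonical Higgs--de Rham flow~(\ref{HDF_ass(E,Theta)}), and by Corollary-Definition~\ref{def:selfmap} the self-map $\varphi$ acts on the $k$-points of $M^{ss}_{Hig}(X_1/k,r,a_1,a_2)$; Proposition~\ref{prop:find L1} shows it preserves the conditions ``semistable with vanishing discriminant'' and fixes the numerical invariants $a_1,a_2$, so the entire forward $\varphi$-orbit of $(E,\theta)_0$ stays inside this finite set of $k$-points. Hence the orbit is eventually periodic: there are $m>m'\ge 0$ with $\varphi^{m}(E,\theta)_0\simeq\varphi^{m'}(E,\theta)_0$. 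Applying Proposition~\ref{Self-map} to $\varphi^{m'}(E,\theta)_0$, with period $f_2=m-m'$, shows that the flow with that initial term is twisted $f$-periodic for $f=f_1f_2$. To pass back to the original flow I would use that the flow is compatible with twisting by a line bundle: $C^{-1}_1\big((E,\theta)\otimes(L,0)\big)\simeq C^{-1}_1(E,\theta)\otimes(L^{p},\nabla_{\mathrm{can}})$, and Simpson's graded semistable Hodge filtration on $(V,\nabla)\otimes(L^{p},\nabla_{\mathrm{can}})$ is the one on $(V,\nabla)$ tensored with the trivial filtration, so the flow from $\varphi^{m'}(E,\theta)_0$ is, up to tensoring terms with powers of $\det(E_0)$, the tail of~(\ref{HDF_ass(E,Theta)}) beginning at the $m'f_1$-th Higgs term; this makes the original flow twisted preperiodic. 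The only delicate point here is keeping track of the twisting line bundles along the tail (and, if one insists on isomorphisms of Higgs bundles rather than of $S$-equivalence classes, a separate word is needed unless $\mathrm{rank}$ and $\deg_H$ are coprime).

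\textbf{Converse, discriminant.} Suppose $(E,\theta)_0$ initiates a twisted preperiodic flow, so for some index $n_0$, line bundle $L$ and period $f$ the tail from $(E,\theta)_{n_0}$ is $L$-twisted $f$-periodic, giving
\[
\big(\mathrm{Gr}\circ C^{-1}_1\big)^{f}\big((E,\theta)_{n_0}\big)\otimes(L,0)\;\simeq\;(E,\theta)_{n_0}.
\]
Since $\mathrm{Gr}$ preserves the Chern character, $\Delta\big(C^{-1}_1(E,\theta)\big)=p^{2}\Delta(E)$, and $\Delta$ is unchanged by tensoring with a line bundle, the displayed isomorphism forces $\Delta(E_{n_0})=p^{2f}\Delta(E_{n_0})$, hence $\Delta(E_{n_0})=0$; and $\Delta(E_{n_0})=p^{2n_0}\Delta(E_0)$ then gives $\Delta(E_0)=0$.

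\textbf{Converse, semistability.} The key inequality I would establish is
\[
\mu_{\max}\big((E,\theta)_{n+1}\big)\;\ge\;p\,\mu_{\max}\big((E,\theta)_{n}\big)\qquad\text{for all }n,
\]
where $\mu_{\max}$ of a de Rham bundle is taken along its Harder--Narasimhan filtration by horizontal subsheaves. It splits into two steps. First, if $(F,\theta|_F)\subset(E,\theta)_n$ is the maximal destabilising Higgs subsheaf, then $C^{-1}_1(F,\theta|_F)$ is a horizontal subsheaf of $(V,\nabla)_n=C^{-1}_1(E,\theta)_n$ of slope $p\,\mu_{\max}((E,\theta)_n)$ — here one uses that $C^{-1}_1$ is exact (locally a Frobenius pull-back followed by a twist of the connection) and that $c_1\big(C^{-1}_1(E,\theta)\big)=p\,c_1(E)$, whence $\deg_H$ is multiplied by $p$; crucially there is no error term in this direction, in contrast with the upper bound on $\mu_{\max}$ under $C^{-1}_1$. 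Second, for any Griffiths-transverse filtration $\Fil$ on a de Rham bundle $(V,\nabla)$ and $V'\subset V$ its maximal destabilising horizontal subsheaf, the induced filtration on $V'$ is strict, so $\mathrm{Gr}(V',\Fil|_{V'})$ embeds in $\mathrm{Gr}(V,\nabla,\Fil)$ with slope $\mu(V')$, and Griffiths transversality makes it a sub-Higgs-sheaf; hence $\mu_{\max}\big(\mathrm{Gr}(V,\nabla,\Fil)\big)\ge\mu_{\max}(V,\nabla)$. Since $(E,\theta)_{n+1}=\mathrm{Gr}\big((V,\nabla,\Fil)_n\big)$, combining the two steps gives the inequality. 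Iterating, and using $\mu\big((E,\theta)_n\big)=p^{n}\mu\big((E,\theta)_0\big)$ (as $\mathrm{Gr}$ preserves rank and degree and $C^{-1}_1$ multiplies degree by $p$), one gets $\delta_n:=\mu_{\max}((E,\theta)_n)-\mu((E,\theta)_n)\ge p^{n}\delta_0$. On the other hand, along the twisted $f$-periodic tail one has $(E,\theta)_{n_0+f+i}\simeq(E,\theta)_{n_0+i}\otimes(L^{-p^{i}},0)$, and tensoring with a line bundle shifts $\mu_{\max}$ and $\mu$ by the same amount, so $\delta_{n_0+f+i}=\delta_{n_0+i}$ and $(\delta_n)$ is bounded. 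If $(E,\theta)_0$ were not semistable, $\delta_0>0$ and then $\delta_n\to\infty$, a contradiction; hence $(E,\theta)_0$ is semistable.

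\textbf{Main obstacle.} I expect the crux to be the second step of the slope estimate — verifying that passing to the Simpson grading cannot decrease $\mu_{\max}$ — together with pinning down all the $\deg_H$-normalisations and the behaviour of $C^{-1}_1$ on (not necessarily locally free) subsheaves; once those are in order, the rest is bookkeeping.
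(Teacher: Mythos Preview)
Your proposal is correct and follows essentially the same route as the paper. The forward implication and the discriminant part of the converse are identical to the paper's argument (finiteness of $k$-points of the moduli scheme forcing the $\varphi$-orbit to close up; $\Delta(C^{-1}_1(E,\theta))=p^2\Delta(E)$). For semistability the paper does exactly what you describe, only packaged slightly differently: rather than working with $\mu_{\max}$ and the sequence $\delta_n$, it fixes an arbitrary sub-Higgs sheaf $(F,\theta)\subset(E,\theta)$, pushes it through the flow to subsheaves $F_i\subset E_i$ with $\mu(F_i)=p^i\mu(F)$, and then uses the periodicity isomorphisms $\phi^m:(E_e,\theta_e)\simeq(E_{e+mf},\theta_{e+mf})\otimes(L^{1+p^f+\cdots+p^{(m-1)f}},0)$ to pull the $F_{e+mf}$ back to subsheaves of the \emph{fixed} bundle $E_e$ of slope $p^{mf}(\mu(F_e)-\mu(E_e))+\mu(E_e)$; unboundedness of these slopes gives the contradiction. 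Your ``second step'' (that passing to the associated graded of a $\nabla$-stable subsheaf with the induced filtration yields a sub-Higgs sheaf of the same slope) is precisely what the paper uses implicitly when it asserts $\mu(F_{i+1})=p\,\mu(F_i)$, so your worry about this being the main obstacle is unfounded --- it is straightforward once stated.
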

	\begin{proof}
		For a Higgs bundle $(E,\theta)$ in $M^{ss}_{Hig}(X_1/k,r,a_1,a_2)$, we consider the iteration of the self-map $\varphi$. Since $M^{ss}_{Hig}(X_1/k,r,a_1,a_2)$ is of finite type over $k$ and has only finitely many $k$-points, there must exist a pair of integers $(e,f_2)$ such that $\varphi^e(E,\theta) \cong \varphi^{e+f_2}(E,\theta)$. By Proposition \ref{Self-map}, we know that $(E,\theta)$ is preperiodic after twisting.\\
		Conversely, let $(E,\theta)$ be the initial term of a twisted $f$-preperiodic Higgs-de Rham flows. We show that it is semistable. Let $(F,\theta) \subset (E,\theta)$ be a proper sub bundle. Denote $(F^{(1)}_i,\theta^{(1)}_i)$ and $(E^{(1)}_i,\theta^{(1)}_i)$ are the terms appearing in the Higgs-de Rham flows.  By the preperiodicity, there exists a line bundle $L$ and an isomorphism $\phi : (E_e,\theta_e) \cong (E_{e+f},\theta_{e+f}) \otimes (L,0)$. Calculating the slope on both side, one get $\mu(L)=(1-p^f)\mu(E_e)$.
		Iterating $m$ times of this isomorphism $\phi$, one get
		\[\phi^m : (E_e,\theta_e) \cong (E_{e+mf},\theta_{e+mf}) \otimes (L^{1+p^f+\cdots+p^{(m-1)f}},0).\]
		So $\left(\phi^m\right)^{-1} \left(F_{e+mf}\otimes L^{1+p^f+\cdots+p^{m-1}f} \right)$ forms a sub sheaf of $E_e$ of slope
		\[p^{mf}\mu(F_e)+(1+p^f+\cdots+p^{(m-1)f})\mu(L)=p^{mf}\big(\mu(F_e)-\mu(E_e)\big)+\mu(E_e).\]
		So $\mu(F_e)\leq \mu(E_e)$ (otherwise there are subsheaves of $E_e$ with unbounded slopes, but this is impossible). So we have 
		\[\mu(F)=\frac{1}{p^e}\mu(F_e)\leq\frac{1}{p^e}\mu(E_e)=\mu(E).\] 
		This shows that $(E,\theta)$ is semistable. The discriminant equals zero follows from the fact that $\Delta(C^{-1}_1(E,\theta)) = p^2 \Delta(E)$.
\end{proof}

\begin{cor}\label{slop sub THDF}
	Let  $(E,\theta)\supset (F,\theta)$ be the initial terms of a twisted periodic Higgs-de Rham flow and a sub twisted periodic Higgs-de Rham flow. Then 
	\[\mu(F)=\mu(E).\]
\end{cor}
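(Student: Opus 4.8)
The plan is to extract the statement from the proof of Theorem~\ref{Main: preperiod} with essentially no new work, since the key slope estimate is already carried out there. Given the twisted periodic Higgs-de Rham flow with initial term $(E,\theta)$ and a sub-flow with initial term $(F,\theta)$, I would run the argument of the converse direction of Theorem~\ref{Main: preperiod} verbatim: the sub-flow structure provides, for each $m$, an isomorphism $\phi^m: (E_e,\theta_e)\cong (E_{e+mf},\theta_{e+mf})\otimes(L^{1+p^f+\cdots+p^{(m-1)f}},0)$ carrying the sub-bundle $(F_e,\theta_e)$ into $(F_{e+mf},\theta_{e+mf})\otimes L^{1+p^f+\cdots+p^{(m-1)f}}$, so that $(\phi^m)^{-1}$ of the latter is a subsheaf of $E_e$ of slope $p^{mf}(\mu(F_e)-\mu(E_e))+\mu(E_e)$. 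Boundedness of slopes of subsheaves forces $\mu(F_e)\le\mu(E_e)$, hence $\mu(F)\le\mu(E)$ after dividing by $p^e$.

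The only additional observation needed to upgrade the inequality $\mu(F)\le\mu(E)$ to an equality is that the same reasoning applies to the quotient. First I would note that a sub twisted periodic Higgs-de Rham flow $(F,\theta)\subset(E,\theta)$ gives rise to a quotient twisted periodic Higgs-de Rham flow with initial term $(E/F,\theta)$: the inverse Cartier functor $C_1^{-1}$ and the grading functor $\mathrm{Gr}$ are both exact (or at least compatible with the relevant short exact sequences of locally free sheaves appearing in the flow), so the quotient of the flow by the sub-flow is again a twisted periodic flow, with twisting line bundle the appropriate quotient/complement of $L$. Applying the inequality just proved to this quotient flow yields $\mu(E/F)\le\mu(E)$. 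Combining $\mu(F)\le\mu(E)$ and $\mu(E/F)\le\mu(E)$ with the additivity $\operatorname{rank}(F)\mu(F)+\operatorname{rank}(E/F)\mu(E/F)=\operatorname{rank}(E)\mu(E)$ forces both inequalities to be equalities, in particular $\mu(F)=\mu(E)$.

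Alternatively, and perhaps more cleanly, one can avoid discussing quotient flows: run the slope estimate of Theorem~\ref{Main: preperiod} simultaneously for $(F,\theta)$ inside $(E,\theta)$ and, by semistability of $(E,\theta)$ (which holds by the converse part of Theorem~\ref{Main: preperiod}, since $(E,\theta)$ initiates a twisted preperiodic flow), observe that $\mu(F)\le\mu(E)$ already. For the reverse inequality, one uses that the sub-flow is itself twisted periodic, hence by the converse of Theorem~\ref{Main: preperiod} the bundle $(F,\theta)$ is semistable with trivial discriminant; then the period-$f$ isomorphism $\phi_f$ restricts to an isomorphism $(F_f,\theta_f)\otimes(L,0)\cong(F_0,\theta_0)$, and computing the slope of $L$ in two ways — once from $\mu(L)=(1-p^f)\mu(E_e)$ as in the proof of Theorem~\ref{Main: preperiod}, and once from the analogous computation $\mu(L)=(1-p^f)\mu(F_e)$ performed with $F$ in place of $E$ — immediately gives $\mu(F_e)=\mu(E_e)$, hence $\mu(F)=\mu(E)$.

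**The main obstacle** is a bookkeeping point rather than a conceptual one: one must check that a "sub twisted periodic Higgs-de Rham flow" genuinely propagates the twisting data so that the restriction of the period isomorphism $\phi_f$ to the sub-object lands in $(F_f,\theta_f)\otimes(L,0)$ with the \emph{same} line bundle $L$ (up to the discrepancy measured by slopes), and that all the sheaves $F_i$ appearing are locally free so that slopes behave additively — this is where one invokes Langer's local freeness results cited in Step~2 of the construction in Theorem~\ref{thm:existence_of_HiggsdeRham}. Granting that the notion of sub-flow is set up so that $\phi_f|_{(F,\theta)}$ is defined and compatible, the slope computation is then immediate and the corollary follows in a few lines.
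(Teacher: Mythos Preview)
Your second (``alternative'') approach is correct and is exactly the argument the paper has in mind: since the sub-flow inherits the $\phi$-structure by restriction, both $(E,\theta)$ and $(F,\theta)$ are twisted $f$-periodic with the \emph{same} twisting line bundle $L$, so the slope computation from the proof of Theorem~\ref{Main: preperiod} gives $\deg(L)=(1-p^f)\mu(E)$ and equally $\deg(L)=(1-p^f)\mu(F)$, hence $\mu(F)=\mu(E)$. The paper leaves the corollary unproved here but spells out this very computation later in the proof of Theorem~\ref{ramified_Thm}.

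Your first approach is more roundabout and contains two slips worth noting. First, you write that applying the inequality to the quotient flow gives $\mu(E/F)\le\mu(E)$; semistability of $E$ (or the dual boundedness argument) actually gives $\mu(E/F)\ge\mu(E)$. With the sign corrected, additivity does force equality, but the detour is unnecessary: once you observe that $\phi^m$ carries $F_e$ isomorphically onto $F_{e+mf}\otimes L^{1+p^f+\cdots+p^{(m-1)f}}$, the subsheaf $(\phi^m)^{-1}\bigl(F_{e+mf}\otimes L^{1+\cdots+p^{(m-1)f}}\bigr)$ of $E_e$ is $F_e$ itself, so your computed slope $p^{mf}(\mu(F_e)-\mu(E_e))+\mu(E_e)$ must equal $\mu(F_e)$ on the nose, and equality drops out with no boundedness argument. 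Second, the quotient flow has twisting line bundle $L$, not a ``quotient/complement of $L$'': the isomorphism $\phi_f$ on $E$ descends to $E/F$ and the line bundle is unchanged.
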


\subsection{Sub-representations and sub periodic Higgs-de Rham flows}\label{section SRSPHdRF}
In this section, we assume $\F_{p^f}$ is contained in $k$. Recall that the functor $\mathbb D^P$ is contravariant and sends quotient object to subobject, i.e. for any sub twisted Fontaine-Faltings module $N\subset M$ with endomorphism structure, the projective representation $\mathbb D^P(M/N)$ is a projective subrepresentation of $\mathbb D^P(M)$. Conversely,  we will show that every projective subrepresentation comes from this way. By the equivalence of the category of twisted Fontaine-Faltings modules and the category of twisted periodic Higgs-de Rham flows, we construct a twisted periodic sub Higgs-de Rham flow for each projective subrepresentation. 

Let $\X$ be a smooth proper $W(k)$-variety. Denote by $X_{n}$ the reduction of $\X$ on $W_n(k)$ . Let $\{\U_i\}_{i\in I}$ be a finite covering of small affine open subsets and we choose a geometric point $x$ in $\bigcap\limits_{i\in I} U_{i,\overline{K}}$.  

\begin{prop}\label{subrep-1} Let $M$ be an object in $\mathcal{TMF} _{[a,b],f} ^{\nabla}(X_{2}/W_{2})$. Suppose we have a projective $\mathbb F_{p^f}$-subrepresentation of $\pi^\text{\'et}_1(X_K)$ ${\mathbb V} \subset \mathbb{D}^P(M)$, then there exists a subobject $N$ of $M$ such that ${\mathbb V}$ equals to $\mathbb{D}^P(M/N)$.
\end{prop}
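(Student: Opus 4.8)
The plan is to build $N$ by gluing local sub-objects produced from Faltings' local theory, the point being that over a small affine the projective picture and the linear picture differ only by the scalar unit $a_{1,r}$ of Lemma~\ref{a_n,r} (we are in the strict $p$-torsion case, so $n=1$ there). Recall that $\mathbb D^P(M)$ is assembled, via the gluing of Theorem~\ref{Mainthm:rep}, out of the local \emph{linear} $\mathbb F_{p^f}$-representations $\mathbb D_{\Phi_j}(M(\tau_j))$ of $\pi^\text{\'et}_1(U_j)$, whose projectivizations $\Sigma_j$ are patched by the identifications of Corollary~\ref{cor_compare_proj}. First I would restrict the given projective subrepresentation $\mathbb V\subset\mathbb D^P(M)$ to each $U_j$: this gives a projective subspace of $\Sigma_j$ stable under $\rho_j$, hence, passing to the corresponding linear subspace, an $\mathbb F_{p^f}$-subspace $\mathbb V_j\subset\mathbb D_{\Phi_j}(M(\tau_j))$. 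The key elementary remark is that stability of $\mathbb P(\mathbb V_j)$ under the projective action forces $\mathbb V_j$ itself to be stable under the linear action of $\pi^\text{\'et}_1(U_j)$ — a lift of an element of $\mathrm{PGL}$ to $\mathrm{GL}$ is well-defined up to a scalar, and scalars preserve every subspace — and $\mathbb V_j$ is moreover a $W_1(\mathbb F_{p^f})$-submodule because $\mathbb V$ is. So each $\mathbb V_j$ is an honest sub-$\mathbb F_{p^f}$-representation of $\pi^\text{\'et}_1(U_j)$ inside $\mathbb D_{\Phi_j}(M(\tau_j))$.

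Next, on an overlap $U_{ij}$ the subspace $\mathbb V_i|_{U_{ij}}$ is carried onto $\mathbb V_j|_{U_{ij}}$ by the linear isomorphism of Corollary~\ref{cor_compare_proj} (multiplication by the unit $a_{1,r}$, composed with the canonical $\alpha_{\Phi_i,\Phi_j}$), since both subspaces have the same projectivization $\mathbb V|_{U_{ij}}$ and projectivization is injective on subspaces. I would then invoke Faltings' local theorem (the statement recalled after the definition of $\mathbb D_\Phi$, together with its $W_n(\mathbb F_{p^f})$-linear refinement due to Lan--Sheng--Zuo): $\mathbb D_{\Phi_j}$ is fully faithful with essential image closed under sub-objects and quotients. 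Applied to $\mathbb V_j\subset\mathbb D_{\Phi_j}(M(\tau_j))$ it yields a sub-Fontaine--Faltings module $N_j\subset M(\tau_j)$ — automatically carrying an endomorphism structure of $W_1(\mathbb F_{p^f})$, transported from $\mathbb V_j$ by full faithfulness — with $\mathbb D_{\Phi_j}(M(\tau_j)/N_j)=\mathbb V_j$. Because $\mathbb V_i|_{U_{ij}}$ and $\mathbb V_j|_{U_{ij}}$ correspond under the overlap identification, full faithfulness of $\mathbb D_\Phi$ shows that $N_i$ and $N_j$ correspond under the gluing functor for local Fontaine--Faltings modules on $U_{ij}$. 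Consequently the underlying $f$-tuples of filtered de Rham sub-bundles glue to a global $f$-tuple $(\mathcal N_i)_{0\le i<f}$ of filtered de Rham sub-bundles of the $f$-tuple underlying $M$; since the local $\varphi$-structures of the $N_j$ are compatible with the (twisted) $\varphi$-structure of $M$, they descend to a twisted $\varphi$-structure on $(\mathcal N_i)$, producing a sub-object $N\subset M$ in $\mathcal{TMF}_{[a,b],f}^{\nabla}(X_2/W_2)$.

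Finally, the quotient $M/N$ lies again in $\mathcal{TMF}_{[a,b],f}^{\nabla}(X_2/W_2)$ (this is local and is Faltings' closure under quotients together with the obvious gluing), and the surjection $M\twoheadrightarrow M/N$ gives, by functoriality of $\mathbb D^P$, a monomorphism $\mathbb D^P(M/N)\hookrightarrow\mathbb D^P(M)$ of projective representations whose restriction to each $U_j$ is $\mathbb V_j=\mathbb D_{\Phi_j}(M(\tau_j)/N_j)\hookrightarrow\mathbb D_{\Phi_j}(M(\tau_j))$; since $\mathbb D^P$ is reconstructed by gluing exactly these local data, we conclude $\mathbb D^P(M/N)=\mathbb V$ inside $\mathbb D^P(M)$. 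The step I expect to be the main obstacle is the overlap compatibility in the second paragraph: one must check that the scalar discrepancies $a_{1,r}$ between the local linearizations are harmless for gluing the $N_j$, and that the whole construction can be run $W_1(\mathbb F_{p^f})$-equivariantly, so that $N$ is genuinely a sub-\emph{twisted} Fontaine--Faltings module with endomorphism structure rather than merely a family of compatible local sub-modules. The guiding principle making this go through is that projectivization is injective on (free, direct-summand) submodules, so ``same projective subspace on overlaps'' upgrades to ``same submodule on overlaps'' once compatible linearizations are fixed.
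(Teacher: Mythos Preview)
Your proposal is correct and follows essentially the same route as the paper: localize the projective subrepresentation to honest linear subrepresentations $\mathbb V_j\subset\mathbb D_{\Phi_j}(M(\tau_j))$, invoke Faltings' local full faithfulness and closure under subobjects to produce $N_j\subset M(\tau_j)$ with $\mathbb D(M(\tau_j)/N_j)=\mathbb V_j$, and then glue using that on overlaps the two local pictures differ only by the scalar $a_{1,r}$ (together with the canonical $\alpha_{\Phi_i,\Phi_j}$). The paper makes the overlap step slightly more explicit by introducing the auxiliary $N_i'$ obtained from $N_i$ by the same $r$-twist that takes $M_i$ to $M_j$ and then concluding $N_i'=N_j$ from $\mathbb D(M_j/N_i')=a_{1,r}\mathbb V_i=\mathbb V_j=\mathbb D(M_j/N_j)$, but this is exactly the content of the compatibility you flag as the main obstacle, and your justification via injectivity of projectivization on subspaces is adequate.
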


\begin{proof} Recall that the functor $\mathbb D^P$ is defined by gluing representations of $\Delta_i=\pi^\text{\'et}_1(U_{i,K},x)$ into a projective representation of $\Delta=\pi^\text{\'et}_1(X_K,x)$. Firstly, we show that the projective subrepresentation $\mathbb V$ is actually corresponding to some local subrepresentations. Secondly, since the Fontaine-Laffaille-Faltings' functor $\mathbb D$ is fully faithful, there exist local Fontaine-Faltings modules corresponding to those subrepresentations. Thirdly, we glue those local Fontaine-Faltings modules into a global twisted Fontaine-Faltings module. 

For $i\in I$, we choose a trivialization  $M_i=M(\tau_i)$  of $M$ on $\U_i$, which gives a local Fontaine-Faltings module with endomorphism structure on $\U_i$. By definition of $\mathbb D^P$, those representations $\mathbb{D}_{\U_i}(M_i)$ of $\Delta_i$ are glued into the projective representation $\mathbb{D}^P(M)$. In other words, we have the following commutative diagram of $\Delta_{ij}=\pi^\text{\'et}_1(U_{i,K}\cap U_{j,K},x)$-sets
\begin{equation}
\xymatrix@W=14mm@C=0mm@R=5mm{
  & \mathbb{D}_{\U_i}(M_i)/\mathbb F_{p^f}^\times \ar[dd]^{a_{1,r}}	\\
 \mathbb{D}^P(M)/\mathbb F_{p^f}^\times \ar[ur]\ar[dr] 
 &\\
 & \mathbb{D}_{\U_j}(M_j)/\mathbb F_{p^f}^\times\\
 }
\end{equation}
Here $r$ is the difference of the trivializations of the twisting line bundle on $\U_i$ and $\U_j$. And $a_{1,r}$ is the elements given in Lemma~\ref{a_n,r}.

Assume that $\mathbb V$ is a projective $\mathbb F_{p^f}$-subrepresentation of $\mathbb{D}^P(M)$ of $\pi^\text{\'et}_1(X_K,x)$, i.e.  $\mathbb V/\mathbb F_{p^f}^\times$ is a $\pi^\text{\'et}_1(X_K)$-subset of $\mathbb D^P(M)/\mathbb F_{p^f}^\times$. Then ${\mathbb V}_i$, the image of $\mathbb V$ under the map $\mathbb{D}^P(M) \to \mathbb{D}_{\U_i}(M_i)$, is a projective $\mathbb F_{p^f}$-subrepresentation of $\mathbb{D}_{\U_i}(M_i)$. So we have the following commutative diagram of $\Delta_{ij}$-sets
\begin{equation}\label{diag:5.2}
\xymatrix@W=14mm@C=0mm@R=5mm{
	& \mathbb V_i/\mathbb F_{p^f}^\times \ar@{>->}[rr] \ar[dd]^(0.3){a_{1,r}}|(0.5)\hole
	&& \mathbb{D}_{\U_i}(M_i)/\mathbb F_{p^f}^\times \ar[dd]^(0.3){a_{1,r}}	\\
	\mathbb V/\mathbb F_{p^f}^\times \ar@{>->}[rr] \ar[ur]\ar[dr]
	&& \mathbb{D}^P(M)/\mathbb F_{p^f}^\times \ar[ur]\ar[dr] 
	&\\
	& \mathbb V_j/\mathbb F_{p^f}^\times \ar@{>->}[rr]
	&& \mathbb{D}_{\U_j}(M_j)/\mathbb F_{p^f}^\times\\
}
\end{equation} 
Notice that $\mathbb{D}_{\U_i}(M_i) /\mathbb F_{p^f}^\times$ is the projectification of the $\mathbb F_{p^f}$-representation $\mathbb{D}_{\U_i}(M_i)$ of $\Delta_i$. So $\mathbb V_i\subset \mathbb D_{\U_i}(M_i)$ is actually a $\mathbb F_{p^f}$-subrepresentation of $\Delta_i$. 
	
Since the image of the contravariant functor $\mathbb D_{\U_i}$ is closed under subobjects, there exists $N_i\subset M_i$ as a sub Fontaine-Faltings module with endomorphism structure of $\F_{p^f}$, such that
		\[\mathbb V_i=\mathbb D_{\U_i}(M_i/N_i).\]	
On the overlap $\U_i\cap \U_j$, those two Fontaine-Faltings module $M_i$ and $M_j$ have the same underlying filtered de Rham sheaf. We can twist the $\varphi$-structure of $M_i$ to get $M_j$ by the element $r$. Doing the same twisting on $N_i$, we get a sub-Fontaine-Faltings module $N_i'$ of $M_j$. By the functoriality of $\mathbb{D}$, one has the following commutative diagram
\begin{equation}
\xymatrix{
	\mathbb D(M_i/N_i)  \ar@{>->}[r]\ar@{..>}[d]^{\exists}  & \mathbb D(M_i) \ar@{->>}[r] \ar[d]^{a_{1,r}}  & \mathbb D(N_i) \ar[d]^{a_{1,r}} \\
	\mathbb D(M_j/N_i') \ar@{>->}[r]  & \mathbb D(M_j) \ar@{->>}[r]   & \mathbb D(N_i') \\	
	}
\end{equation}
So we have $\mathbb D(M_j/N_i')= a_{1,r} \mathbb D(M_i/N_i)=a_{1,r}\mathbb V_i$. On the other hand,  one has $\mathbb D(M_j/N_j)=\mathbb V_j=a_{1,r}\mathbb V_i$ by diagram~(\ref{diag:5.2}). Thus one has $\mathbb D(M_j/N_i')=\mathbb D(M_j/N_j)$. Since $\mathbb{D}$ is fully faithful and contravariant,  $N_i'=N_j$. In particular, on the overlap $\U_i\cap \U_j$ the local Fontaine-Faltings modules $N_i$ and $N_j$ have the same underlying subbundle. 
By gluing those local subbundles together, we get a subbundle of the underlying bundle $M$. 
The connection, filtration and the $\varphi$-structure can be restricted locally on this subbundle, so does it globally. And we get the desired sub-Fontaine-Faltings module. 
\end{proof}

Let $\mathscr E$ be a twisted $f$-periodic Higgs-de Rham flow. Denote by $M=\mathcal{IC}(E)$ the Fontaine module with the endomorphism structure corresponding to $\mathscr E$. By the equivalence of the category of twisted Fontaine-Faltings modules and the category of periodic Higgs-de Rham flow, one get the following result.
\begin{cor}\label{subHDF}
Suppose $\mathbb V\subset \mathbb D^P(M)$ is a non-trivial projective $\mathbb F_{p^f}$-subrepresentation. Then there exists a non-trivial sub-twisted periodic Higgs-de Rham flow of $\mathscr E$ which corresponds to $\mathbb{D}^P(M)/\mathbb{V}$.
\end{cor}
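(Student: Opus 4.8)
The plan is to deduce the statement formally from Proposition~\ref{subrep-1} together with the equivalence of categories in Theorem~\ref{equiv:TFF&THDF}. First I would apply Proposition~\ref{subrep-1} to the twisted Fontaine-Faltings module $M=\mathcal{IC}(\mathscr E)\in\mathcal{TMF}_{[a,b],f}^{\nabla}(X_2/W_2)$ and to the given projective $\mathbb F_{p^f}$-subrepresentation $\mathbb V\subset\mathbb D^P(M)$: this yields a sub twisted Fontaine-Faltings module with endomorphism structure $N\subset M$ such that $\mathbb D^P(M/N)=\mathbb V$. Non-triviality of $\mathbb V$, i.e. $0\neq\mathbb V\subsetneq\mathbb D^P(M)$, forces $N$ to be a proper nonzero sub-object, since $N=0$ would give $\mathbb D^P(M/N)=\mathbb D^P(M)$ and $N=M$ would give $\mathbb D^P(M/N)=0$.

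Next I would transport $N\subset M$ through the inverse equivalence $\mathcal{GR}$ of Theorem~\ref{equiv:TFF&THDF}. Because $\mathcal{GR}$ is an equivalence of categories built term by term from the functorial operations $\mathcal C_n^{-1}$, $\mathrm{Gr}$ and the twisted $\phi$-structure, it sends the strict monomorphism $N\hookrightarrow M$ to a monomorphism of twisted $f$-periodic Higgs-de Rham flows $\mathscr F:=\mathcal{GR}(N)\hookrightarrow\mathcal{GR}(M)$; concretely, each filtered de Rham bundle and each graded Higgs bundle occurring in $\mathscr F$ is a sub-object of the corresponding term of $\mathcal{GR}(M)$ with all structure maps obtained by restriction. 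Using the canonical isomorphisms $\mathcal{GR}\circ\mathcal{IC}\simeq\mathrm{id}$ and $\mathcal{IC}\circ\mathcal{GR}\simeq\mathrm{id}$ from the proof of Theorem~\ref{equiv:TFF&THDF}, I identify $\mathcal{GR}(M)$ with $\mathscr E$, so that $\mathscr F$ is a non-trivial sub twisted periodic Higgs-de Rham flow of $\mathscr E$ with $\mathcal{IC}(\mathscr F)\cong N$.

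It then remains to identify the projective representation attached to $\mathscr F$, namely $\mathbb D^P(\mathcal{IC}(\mathscr F))=\mathbb D^P(N)$. Applying the contravariant functor $\mathbb D$, which is exact on the relevant subcategory closed under sub-objects and quotients (Theorem~\ref{faltingslocal}) and with respect to which the construction of $\mathbb D^P$ is compatible, to the short exact sequence $0\to N\to M\to M/N\to 0$ produces a short exact sequence of underlying linear representations $0\to\mathbb D(M/N)\to\mathbb D(M)\to\mathbb D(N)\to 0$; passing to projectifications gives $\mathbb D^P(N)\cong\mathbb D^P(M)/\mathbb D^P(M/N)=\mathbb D^P(M)/\mathbb V$. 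Hence $\mathscr F$ is a non-trivial sub twisted periodic Higgs-de Rham flow of $\mathscr E$ corresponding to $\mathbb D^P(M)/\mathbb V$, as claimed.

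I do not expect a genuine obstacle: the corollary is essentially a repackaging of Proposition~\ref{subrep-1} under the categorical equivalence. The only points requiring care are in the middle paragraph — verifying that $\mathcal{GR}$ actually carries a sub twisted Fontaine-Faltings module to a term-wise sub-flow rather than merely to an abstract categorical monomorphism, which one checks by unwinding the inductive construction of $\mathcal{GR}$ in the proof of Theorem~\ref{equiv:TFF&THDF}; and confirming that the exact sequence above is respected by $\mathbb D$ at the projective level, i.e. that the kernel of $\mathbb D^P(M)\to\mathbb D^P(N)$ is exactly $\mathbb V$.
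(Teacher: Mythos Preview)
Your proposal is correct and matches the paper's own (implicit) argument: the paper simply asserts that the corollary follows from Proposition~\ref{subrep-1} together with the categorical equivalence of Theorem~\ref{equiv:TFF&THDF}, and you have unpacked precisely that. One small imprecision worth tightening: since $M$, $N$, $M/N$ are \emph{twisted} Fontaine--Faltings modules, the global functor $\mathbb D$ is not defined on them, so the short exact sequence $0\to\mathbb D(M/N)\to\mathbb D(M)\to\mathbb D(N)\to 0$ should be read locally (for each trivialization $M(\tau_i)$, $N(\tau_i)$, $(M/N)(\tau_i)$ and the local functor $\mathbb D_{\Phi_i}$, as in the proof of Proposition~\ref{subrep-1}) and only then projectified and glued; you already flag this in your closing caveat, so the argument stands once that sentence is rephrased accordingly.
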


After Corollary~\ref{subHDF} we arrive at the main theorem 0.6 stated in the introduction. However we prove a weaker form of Theorem 0.6 in the below. The proof of the stronger form 
will be postponed in the section 5.
\begin{thm}\label{Mainthm} Let $k$ be a finite field of characteristic $p$. Let $\X$ be a smooth proper scheme over $W(k)$ together with a smooth log structure $\D/W(k)$. Assume that there exists a semistable graded logarithmic Higgs bundle 
$(E,\theta)/(\X,\D)_1$  with discriminant $\Delta_H(E)=0$, $\mathrm{rank}(E)<p$ and $(\mathrm{rank}(E),\deg_H(E))=1$. Then there exists a positive integer $f$ and an absolutely irreducible projective $\F_{p^f}$-representation $\rho$ of $\pi^\text{\'et}_1(X^o_{K'})$, where $\X^o=\X\setminus \D$ and $K'=W(k\cdot\F_{p^f})[1/p]$. 	
\end{thm}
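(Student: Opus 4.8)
The plan is to realize $\rho$ as $\mathbb D^{P}$ of a twisted logarithmic Fontaine--Faltings module coming from a twisted periodic Higgs--de Rham flow whose initial term is a $\varphi$-iterate of $(E,\theta)$, and then to force absolute irreducibility out of the coprimality of $(\mathrm{rank}(E),\deg_{H}(E))$ together with the slope identity of Corollary~\ref{slop sub THDF}. \emph{First I would produce a genuinely periodic flow.} By (the logarithmic form of) Theorem~\ref{Main: preperiod}, the semistable graded Higgs bundle $(E,\theta)$ with $\Delta_{H}(E)=0$ is twisted preperiodic. The self-map $\varphi$ of Corollary-Definition~\ref{def:selfmap} on $M^{ss}_{Hig}(X_{1}/k,r,a_{1},a_{2})$ preserves rank, degree, vanishing of the discriminant, semistability and the property of being graded (this is exactly what makes $\varphi$ well defined), and the moduli scheme has finitely many $k$-points; hence there are $e\geq 0$ and $f_{2}\geq 1$ with $\varphi^{e+f_{2}}(E,\theta)\cong\varphi^{e}(E,\theta)$. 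Replacing $(E,\theta)$ by $\varphi^{e}(E,\theta)$ we may assume $\varphi^{f_{2}}(E,\theta)\cong(E,\theta)$, all hypotheses still holding. By Proposition~\ref{Self-map} the logarithmic Higgs--de Rham flow with initial term $(E,\theta)$ is $\det(E)^{(p^{f}-1)/r}$-twisted $f$-periodic with $f:=f_{1}f_{2}$ and of level $\leq r-1\leq p-2$, and by iterating the $\phi$-structure it is also $\det(E)^{(p^{F}-1)/r}$-twisted $F$-periodic for every multiple $F$ of $f$. Enlarging $k$ to $k':=k\cdot\F_{p^{f}}$ (base change preserves everything above) we may assume $\F_{p^{f}}\subset k$ and put $K'=W(k')[1/p]$.

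\emph{Next I would pass to the representation and dispose of $\F_{p^{f}}$-subrepresentations.} Applying the functor $\mathcal{IC}$ of Theorem~\ref{equiv:logTFF&THDF} gives a strict $p$-torsion twisted logarithmic Fontaine--Faltings module $M$ over $X_{1}$ (pole along $\D$) with an endomorphism structure of $W_{1}(\F_{p^{f}})=\F_{p^{f}}$ and Hodge--Tate weights in $[0,p-2]$; by Theorem~\ref{ConsFunc:D^P}, $\mathbb D^{P}$ attaches to $M$ a projective representation $\rho:\pi^{\text{\'et}}_{1}(X^{o}_{K'})\to\mathrm{PGL}(\mathbb D^{P}(M))$, with $\mathbb D^{P}(M)$ free of rank $r$ over $\F_{p^{f}}$ (its underlying $\F_{p}$-representation has rank $rf$), so $\rho$ is valued in $\PGL_{r}(\F_{p^{f}})$. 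Now suppose $0\neq\mathbb V\subsetneq\mathbb D^{P}(M)$ is a projective $\F_{p^{f}}$-subrepresentation; by Proposition~\ref{subrep-1} (equivalently Corollary~\ref{subHDF}) it corresponds to a non-trivial proper sub twisted periodic Higgs--de Rham flow of the flow through $(E,\theta)$, whose initial term is a sub-Higgs bundle $(F,\theta)\subsetneq(E,\theta)$ with $0<\mathrm{rank}(F)<r$. By Corollary~\ref{slop sub THDF}, $\mu(F)=\mu(E)=\deg_{H}(E)/r$, so $r\mid\mathrm{rank}(F)\cdot\deg_{H}(E)$; since $\gcd(r,\deg_{H}(E))=1$ this forces $r\mid\mathrm{rank}(F)$, a contradiction. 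Hence $\rho$ has no non-trivial proper projective $\F_{p^{f}}$-subrepresentation.

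\emph{Then I would upgrade to absolute irreducibility.} If $\rho\otimes_{\F_{p^{f}}}\overline{\F}_{p}$ were reducible, there would be a proper nonzero $\rho$-stable subspace defined over $\F_{p^{F}}$ for some multiple $F$ of $f$ (the image of $\rho$ lies in the finite group $\PGL_{r}(\F_{p^{f}})$, so any stable $\overline{\F}_{p}$-subspace is defined over a finite extension of $\F_{p^{f}}$). Since the flow through $(E,\theta)$ is also $F$-periodic and the construction of the previous paragraph is compatible with enlarging the endomorphism ring along $W(\F_{p^{f}})\hookrightarrow W(\F_{p^{F}})$, it produces a twisted logarithmic Fontaine--Faltings module $M_{F}$ with $\mathbb D^{P}(M_{F})\cong\mathbb D^{P}(M)\otimes_{\F_{p^{f}}}\F_{p^{F}}$; the given subspace is then a non-trivial proper projective $\F_{p^{F}}$-subrepresentation of $\mathbb D^{P}(M_{F})$, contradicting the previous paragraph applied at level $F$. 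Therefore $\rho:=\rho$ is absolutely irreducible, which proves the theorem.

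\emph{The main obstacle.} The steps invoking Theorems~\ref{equiv:logTFF&THDF}, \ref{ConsFunc:D^P}, Propositions~\ref{Self-map}, \ref{subrep-1} and Corollary~\ref{slop sub THDF} are essentially bookkeeping with the machinery already developed; the delicate point is the passage to absolute irreducibility, where one must verify that enlarging the period $f$ to a multiple $F$ amounts exactly to extending the coefficients of the representation along $\F_{p^{f}}\hookrightarrow\F_{p^{F}}$, i.e. that $\mathbb D^{P}$ intertwines the inclusion of endomorphism structures $W(\F_{p^{f}})\hookrightarrow W(\F_{p^{F}})$ with the base change $(-)\otimes_{\F_{p^{f}}}\F_{p^{F}}$. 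This compatibility is what makes the coprimality hypothesis and Corollary~\ref{slop sub THDF} applicable uniformly in $F$, and it is why the statement only asserts the existence of some $f$ rather than pinning it down.
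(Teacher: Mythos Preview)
Your construction of the periodic flow, the passage to a twisted Fontaine--Faltings module via Theorem~\ref{equiv:logTFF&THDF}, the application of $\mathbb D^{P}$ via Theorem~\ref{ConsFunc:D^P}, and the $\F_{p^{f}}$-irreducibility argument using Corollary~\ref{slop sub THDF} and Corollary~\ref{subHDF} are exactly what the paper does. So on that core the two proofs coincide.

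The difference is in the handling of \emph{absolute} irreducibility. The paper's own proof of this theorem stops at $\F_{p^{f}}$-irreducibility and explicitly flags this (``we prove a weaker form of Theorem~0.6 in the below''); the upgrade to the stronger geometric absolute irreducibility is deferred to Section~5, where it is obtained by an entirely different mechanism --- base change over a very ramified valuation ring $W_\pi$ so as to trivialize the representation along a section to $\mathrm{Gal}(\overline{K}_0/K_0)$, and then a degree/slope comparison on $\sX_{\pi,1}$ (Theorem~\ref{ramified_Thm}). Your route instead enlarges the period $f$ to a multiple $F$ and invokes the compatibility $\mathbb D^{P}(M_{F})\cong\mathbb D^{P}(M)\otimes_{\F_{p^{f}}}\F_{p^{F}}$, so that the same coprimality/slope argument rules out $\F_{p^{F}}$-subrepresentations for every $F$. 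This is a perfectly natural strategy, and you are right to isolate the base-change compatibility of $\mathbb D^{P}$ under enlarging the endomorphism structure as the crux; note, however, that this compatibility is not proved anywhere in the present paper, so as written your absolute-irreducibility step rests on an unverified claim. Also be aware that your argument yields absolute irreducibility of $\rho$ as a representation of $\pi_{1}^{\text{\'et}}(X^{o}_{K'})$, whereas the paper's Section~5 argument is aimed at the genuinely stronger \emph{geometric} absolute irreducibility (restriction to $\pi_{1}^{\text{\'et}}(X^{o}_{\overline K_0})$); your period-enlargement trick does not obviously give the latter.
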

\begin{proof} We only show the result for $\D=\emptyset$, as the proof of the general case is similar.  By Theorem~\ref{Main: preperiod}, there is a twisted preperiodic Higgs-de Rham flow with initial term $(E,\theta)$. Removing finitely many terms if necessary, we may assume that it is twisted $f$-periodic, for some positive integer $f$. By using Theorem~\ref{equiv:TFF&THDF} and applying functor $\mathbb D^P$, one gets a $\mathrm{PGL}_{\mathrm{rank}(E)}(\mathbb F_{p^f})$-representation $\rho$ of $\pi^\text{\'et}_1(X^o_{K'})$.  

Since $(\mathrm{rank}(E),\deg_H(E))=1$, the semi-stable bundle $E$ is actually stable. According to Corollary~\ref{slop sub THDF}, there is no non-trivial sub twisted periodic Higgs-de Rham flow. By Corollary~\ref{subHDF}, there is no non-trivial projective subrepresentation of $\rho$, so that $\rho$ is irreducible.
\end{proof}

\begin{rmk} For simplicity, we only consider results on $X_1$. Actually, all results in this section can be extended to the truncated level.
\end{rmk}

\section{Constructing crystalline representations of \'etale fundamental groups of $p$-adic curves via Higgs bundles}\label{section CCREFGpCHB}

As an application of the main theorem (Theorem~\ref{Mainthm}), we construct irreducible $\mathrm{PGL}_2$ crystalline representations of $\pi^\text{\'et}_1$ of the projective line removing $m$ ($m\geq 4$) marked points.
Let $M$ be the moduli space of semistable graded Higgs bundles of rank $2$ degree $1$ over $\P^1/W(k)$, with logarithmic Higgs fields which have $m$ poles $\{x_1,x_2, \dots,x_m\}$ (actually stable, since the rank and degree are coprime to each other).
The main object of this section is to study the self-map $\varphi$ (Corollary-Definition~\ref{def:selfmap}) on $M$. In section~\ref{section DMS}, we decompose $M$ into connected components. In section~\ref{section SMMSHBPMP}, we show that the self-map is rational and dominant on the component of $M$ with maximal dimension. In section~\ref{section EFSMFMP}, we give the explicit formula in case of $m=4$.

\subsection{Connected components of the moduli space $M$}\label{section DMS}
First, let's investigate the geometry of $M$. For any $[(E,\theta)] \in M$, $E \cong \sO(d_2)\oplus \sO(d_1)$ with $d_1 +d_2=1$($d_2<d_1$).
And the graded semi-stable Higgs bundle with nilpotent non-zero Higgs field
\[\theta:\sO(d_1) \longrightarrow  \sO(d_2) \otimes \Omega^1_{\P^1}(m)\]

By the condition $\theta \neq 0$ in $\Hom_{\sO_{\P^1}} (\sO(d_1),\sO(d_2+m-2))$, we have $d_1 \leq d_2 +m-2$. Combining with the assumption $d_1 +d_2=1$($d_2<d_1$), one get $m \geq 3$ and 
\[(d_1,d_2)=(1,0),(2,-1),\cdots, \text{ or } (\left[\frac{m-1}{2}\right],\left[\frac{4-m}{2}\right]),\]
where $\left[\cdot\right]$ is the greatest integer function. Therefore, $M$ admits a decomposition
\[
M= \coprod_{(d_2,d_1)} M(d_2,d_1)
\]
where $M(d_2,d_1)$ is isomorphic to  
\[\P\left(\mathrm{Hom}_{\mathcal O_{\P^1}}\left(\mathcal O(d_1),\mathcal O(d_2)\otimes \Omega_{\P^1}^1(\log \D)
\right)\right)\simeq \P \Big(\mathrm{H}^0(\P^1,\mathcal O(d_2-d_1+m-2))\Big)\]
 (note that in this case two Higgs bundles are isomorphic if the Higgs fields differ by a scalar).
For $m=3,4$, the decomposition is trivial because $(d_2,d_1)=(0,1)$ is the only choice. But for $m \geq 5$, there are more choices. The following table presents the information of $M(d_2,d_1)$:
 \begin{center}
 	\begin{tabular}{|c|c|c|c|c|c|c|c|c}
 		\hline
 		\diagbox{$(d_1,d_2)$}{$M(d_2,d_1)$}{$m$}  &3&4&5&6&7&8&9& $\cdots$ \\
 		\hline
 		$(1,0)$  & $\mathbb P^0$ & $\mathbb P^1$ & $\mathbb P^2$ &\ $\mathbb P^3$ & $\mathbb P^4$ &$\mathbb P^5$ & $\mathbb P^6$  & $\cdots$\\
 		\hline
 		$(2,-1)$ &&& $\mathbb P^0$ & $\mathbb P^1$ & $\mathbb P^2$ & $\mathbb P^3$ & $\mathbb P^4$ & $\cdots$  \\
 		\hline
 		$(3,-2)$ &&&&& $\mathbb P^0$ & $\mathbb P^1$ & $\mathbb P^2$ & $\cdots$ \\
 		\hline
 		$\vdots$ &&&&&&& $\ddots$ & $\ddots$ \\
 	\end{tabular} 	
 \end{center} 


\subsection{Self-maps on moduli spaces of Higgs bundles on	$\mathbb P^1$ with marked points}\label{section SMMSHBPMP}
Let $p$ be an odd prime number. Since the rank $r=2$ for any element in $M$, by Corollary-Definition~\ref{def:selfmap} we know that $f_1=1$ and $L_1=\mathcal O_{\P^1}(\frac{1-p}{2})$. In other words, the self-map is given by 
\[\varphi: (E,\theta)\mapsto \left(\mathrm{Gr}\circ C^{-1}_1(E,\theta)\right)\otimes \mathcal O_{\P^1}(\frac{1-p}{2}),\]
where the filtration on $C^{-1}_1(E,\theta)$ is the Simpson's graded semistable Hodge filtration.  
Let's denote $(V, \nabla)=C^{-1}_1(E,\theta)$, which is a rank $2$ degree $p$ stable de Rham bundle over $\P^1$. Using Grothendieck's theorem, one gets $V \cong \sO(l_1) \oplus \sO(l_2)$ with $l_1 + l_2 = p$ (assume $l_1 <l_2$). In this case, the Simpson's graded semistable Hodge filtration is just the natural filtration $(\sO(l_2) \subset V)$.

 Since $(V,\nabla)$ is stable, $\sO(l_2)$ cannot be $\nabla$-invariant, which means the Higgs field
\[
Gr \nabla :  \sO(l_2) \longrightarrow \sO(l_1) \otimes \Omega^1_{\P^1}(m) \cong \sO(l_1+m-2)
\]
is nontrivial. Thus, $l_2 \leq l_1+m-2$. Combining with the fact $l_1 + l_2 = p$ and $\ell_1<\ell_2$, one gets
\[(l_1,l_2) = ({\frac{p-1}{2}},{\frac{p+1}{2}}),({\frac{p-3}{2}},{\frac{p+3}{2}}),\cdots, \text{ or } (\left[\frac{p-m+3}{2}\right],\left[\frac{p+m-2}{2}\right]).\]
For $m \geq 5$, the jumping phenomena appears, i.e. there exists $[(E,\theta)] \in M(d_2,d_1)$
such that the type of $\left(\mathrm{Gr}\circ C^{-1}_{1}(E,\theta)\right)\otimes \mathcal O(\frac{1-p}{2})$ is different from $(d_2,d_1)$.

Next we shall characterize the jumping locus on $M(d_2,d_1)$. Define a $\Z$-valued function $l$ on $M(d_2,d_1)$: for each $[(E,\theta)] \in M(d_2,d_1)$, set $l([(E,\theta)])=l([\theta]) := l_2$.

\begin{lem}
	The function $l$ on $M(d_2,d_1)$ is upper semicontinuous.   
\end{lem}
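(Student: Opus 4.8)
The plan is to show upper semicontinuity of $l$ by exhibiting, for each threshold value, the locus $\{[(E,\theta)] : l([(E,\theta)]) \geq k\}$ as a closed subset of $M(d_2,d_1)$. Recall that $M(d_2,d_1) \cong \mathbb P(\mathrm{H}^0(\mathbb P^1,\mathcal O(d_2-d_1+m-2)))$, so a point is a nonzero section $s$ (up to scalar) giving the Higgs field $\theta: \mathcal O(d_1) \to \mathcal O(d_2)\otimes\Omega^1_{\mathbb P^1}(m)$. First I would recall the explicit local construction of $C_1^{-1}$ from Section~\ref{section ICF}: the de Rham bundle $(V,\nabla) = C_1^{-1}(E,\theta)$ is glued from $F^*_{\mathcal U_i}(E|_{\mathcal U_i})$ with connection built from $\theta$, so as $(E,\theta)$ varies in a family over a base $T$, the construction yields a family $(V_T,\nabla_T)$ of de Rham bundles on $\mathbb P^1_T$, flat over $T$. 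The splitting type $(l_1,l_2)$ of $V$ on each fiber is then controlled by the standard semicontinuity of cohomology: $l_2 \geq k$ iff $h^0(\mathbb P^1, V(-k)) \geq 1$ (equivalently $h^0(V(-k)) $ jumps), and $h^0$ of a coherent sheaf flat over the base is upper semicontinuous in the fiber.

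Concretely, the key steps in order: (1) Build the universal de Rham bundle $(\mathcal V,\nabla)$ over $\mathbb P^1 \times M(d_2,d_1)$ by applying the relative inverse Cartier functor to the universal Higgs bundle; check it is locally free and compatible with base change, so that for $t \in M(d_2,d_1)$ the restriction $\mathcal V_t$ is $C_1^{-1}$ of the corresponding Higgs bundle. (2) For each integer $k$, consider the sheaf $\mathcal V(-k) := \mathcal V \otimes \mathrm{pr}_1^*\mathcal O_{\mathbb P^1}(-k)$ on $\mathbb P^1\times M(d_2,d_1)$ and push forward along $\mathrm{pr}_2$; by cohomology-and-base-change / semicontinuity (EGA~III), the function $t \mapsto h^0(\mathbb P^1, \mathcal V_t(-k))$ is upper semicontinuous on $M(d_2,d_1)$. (3) Translate: since $\mathcal V_t \cong \mathcal O(l_1)\oplus\mathcal O(l_2)$ with $l_1 \le l_2$ and $l_1+l_2 = p$, we have $h^0(\mathcal V_t(-k)) \ge 1 \iff l_2 \ge k$, hence $\{l \ge k\}$ is closed, which is exactly upper semicontinuity of $l = l_2$.

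An alternative, perhaps cleaner route avoids the universal de Rham bundle: note $l_2 \ge k$ is equivalent to the existence of a nonzero map $\mathcal O(k) \hookrightarrow V$ that is \emph{not} $\nabla$-horizontal is irrelevant—what matters is just $\mathrm{Hom}(\mathcal O(k), V) \ne 0$, i.e. $h^0(V(-k)) > 0$, a purely vector-bundle statement. So one only needs the family of underlying bundles $\mathcal V$ (forgetting $\nabla$) to be a flat family, then apply semicontinuity directly. I would present whichever is shortest given what Section~\ref{section ICF} already makes available in families.

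The main obstacle I anticipate is the first step: verifying that $C_1^{-1}$ genuinely works in families over $M(d_2,d_1)$ and produces a flat family of vector bundles on $\mathbb P^1$. The construction in \cite{LSZ13a} is local (gluing $F^*(E|_{\mathcal U_i})$ via exponentials of $h_{ij}\theta$), and one must check the gluing data vary algebraically with the parameter and that the resulting sheaf is locally free on the total space $\mathbb P^1 \times M(d_2,d_1)$ — local freeness on each fiber plus flatness over the (reduced, smooth) base $M(d_2,d_1)$ gives local freeness on the total space. Once the family is in hand, semicontinuity is entirely standard. A minor point to be careful about: the Simpson graded semistable Hodge filtration used in the definition of $\varphi$ is, for a rank-2 degree-$p$ stable de Rham bundle, just the Harder–Narasimhan/maximal-destabilizing line subbundle $\mathcal O(l_2)\subset V$, so $l_2$ is intrinsically determined by $V$ and there is no ambiguity in the definition of the function $l$.
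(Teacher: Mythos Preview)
Your proposal is correct and follows essentially the same approach as the paper: both reduce upper semicontinuity of $l$ to the semicontinuity of $t \mapsto h^0(\mathbb P^1, V_t(-k))$ for the family of de Rham bundles obtained by applying $C_1^{-1}$ to the universal Higgs bundle. The paper phrases it as openness of $\{l \le n\}$ via the vanishing $H^0(V_\theta(-n-1))=0$, while you phrase it as closedness of $\{l \ge k\}$ via $h^0(V(-k))\ge 1$; these are the same argument, and your explicit discussion of the family construction of $C_1^{-1}$ fills in exactly what the paper leaves implicit in the phrase ``by the semicontinuity of the rank of the direct image sheaf''.
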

\begin{proof}
	Define $\sU_n := \{ [\theta] \in \P H^0(\sO(d_2-d_1+m-2))\, |\, l([\theta]) \leq n\}$. One only need to prove that $\sU_n$ is Zariski open in $\P^{d_2-d_1+m-2}$ for all $n \in \Z$.
	Recall the proof of Grothendieck's theorem, for $(V_{\theta},\nabla) := C^{-1}_{1,2}(\sO(d_2) \oplus \sO(d_1),\theta)$ one defines
	\[
	m := \mathrm{min} \{\lambda \in \Z \, | \, H^0(\P^1 , V_{\theta}(\lambda)) \neq 0\}
	\]
	and gets the splitting $V_{\theta} \cong \sO(-m) \oplus \sO(p+m)$ ($p+m \leq -m$). Therefore, $l([\theta])=-m$. Since $[\theta] \in \sU_n$, one gets $-m \leq n$. But this means $-n-1 < -n \leq m$. Thus $H^0(\P^1 , V_{\theta}(-n-1))=0$.\\
	By the semicontinuity of the rank of the direct image sheaf, we know that 
	$H^0(\P^1 , V_{\theta'}(-n-1))=0$ for $\theta'$ in a neighborhood of $\theta$. This means $l([\theta']) \leq n$ in a neighborhood.
	Therefore, $\sU_n$ is Zariski open for each $n \in \Z$.
\end{proof}

\subsubsection*{Construction of the universal Simpson graded semistable Hodge filtration and the rational self-map}
Now we consider the first component of moduli scheme $M(1,0)$ and the universal Higgs bundle $(E^u,\theta^u))$  on $\mathbb{P}^1 \times \sU_{\frac{p+1}{2}}$:
\[
E^u= (\sO_{\mathbb{P}^1} \oplus \sO_{\mathbb{P}^1}(1)) \otimes \sO_{\mathbb{P}^1 \times \sU_{\frac{p+1}{2}}}, \,\,\,\theta^u|_{\mathbb{P}^1 \times \{x\}}=\theta_x \in \mathrm{Hom}_{\mathcal O_{\P^1}}\left(\mathcal O(1),\mathcal O \otimes \Omega_{\P^1}^1(\log \D)
\right) 
\]
for $x \in \sU_{\frac{p+1}{2}}$. Applying the inverse Cartier functor, we get the universal de Rham bundle $(V^u,\nabla^u)$ on $\mathbb{P}^1 \times M_{dR}({\frac{p-1}{2}},{\frac{p+1}{2}})$. Here $M_{dR}({\frac{p-1}{2}},{\frac{p+1}{2}})$ is the corresponded component of the moduli space of semistable de Rham bundles with rank 2 degree $p$, i.e. $[(V,\nabla)]$ with $V \cong \sO({\frac{p-1}{2}})\oplus \sO({\frac{p+1}{2}})$. For each $s \in M_{dR}({\frac{p-1}{2}},{\frac{p+1}{2}})$, we know that $\sO({\frac{p+1}{2}}) \hookrightarrow V_s$ gives a Hodge filtration. In order to find a Hodge filtration on $(V^u,\nabla^u)$, we shall construct a subsheaf $\mF \subset V^u$ such that $\mF_s \cong \sO({\frac{p+1}{2}})$. 
We have the following diagram
\[
\xymatrix{
 & \mathbb{P}^1 \times M_{dR}({\frac{p-1}{2}},{\frac{p+1}{2}}) \ar[ld]_{p_1} \ar[rd]^{p_2}  & \\
\mathbb{P}^1 & & M_{dR}({\frac{p-1}{2}},{\frac{p+1}{2}})
}
\]
Define $\mL:= {p_2}_*({p_1}^*\sO_{\mathbb{P}^1}(-\frac{p+1}{2}) \otimes V^u)$. For each $s \in M_{dR}({\frac{p-1}{2}},{\frac{p+1}{2}})$, $\mL_s = H^0(\mathbb{P}^1, \sO_{\mathbb{P}^1}(-\frac{p+1}{2}) \otimes V^u_s)$. By the definition of $\sU_{\frac{p+1}{2}}$ and $M_{dR}({\frac{p-1}{2}},{\frac{p+1}{2}})$, we know that $V^u_s \cong \sO({\frac{p-1}{2}})\oplus \sO({\frac{p+1}{2}})$. Thus $\mL$ is a line bundle on $M_{dR}({\frac{p-1}{2}},{\frac{p+1}{2}})$ by Grauert's theorem (see {\cite[Corollary~12.9]{hartshorne}}). Now we define $\mF:= {p_1}^*\sO({\frac{p+1}{2}}) \otimes {p_2}^*\mL$ as a line bundle on $\mathbb{P}^1 \times M_{dR}({\frac{p-1}{2}},{\frac{p+1}{2}})$.\\
Then there is a canonical nonzero morphism from $\mF$ to $V^u$:
\[
\mF= {p_1}^*\sO({\frac{p+1}{2}}) \otimes {p_2}^*{p_2}_*({p_1}^*\sO(-\frac{p+1}{2}) \otimes V^u) \stackrel{\neq0}{\longrightarrow} {p_1}^*\sO({\frac{p+1}{2}}) \otimes {p_1}^*\sO(-\frac{p+1}{2}) \otimes V^u \cong V^u.
\]
Thus the image $\text{Im}(\mF \to V^u)$ is a sub line bundle of $V^u$ on a Zariski dense open subset $W$ of $M_{dR}({\frac{p-1}{2}},{\frac{p+1}{2}})$, which gives the Hodge filtration of $V^u$ on $W$.\\[.2cm]

By the discussion above, $U:=C(W)$ is a Zariski open set of $M(1,0)$, where $C$ is the morphism induced by the Cartier functor. All Higgs bundles $(E,\theta)$ in $U$ will be sent back to $M(1,0)$ by applying the inverse Cartier transform, taking the quotient of $\mF_{[C^{-1}(E,\theta)]} \cong \sO(\frac{p+1}{2})$ and tensoring with $\mathcal O(\frac{1-p}{2})$. This process actually gives us a functor, which we denote as $\mathrm{Gr_{{\frac{p+1}{2}}}}\circ C_1^{-1}(\cdot)\otimes \mathcal O(\frac{1-p}{2})$.\\
Then we want to represent this functor as a rational self-map on the moduli scheme $M(1,0)$.
\begin{lem}\label{lem:rational map}
	The functor $\mathrm{Gr_{{\frac{p+1}{2}}}}\circ C_1^{-1}(\cdot)\otimes \mathcal O(\frac{1-p}{2})$ induces a rational map
	\[
	\varphi: M(1,0) \dashrightarrow M(1,0).
	\] 
\end{lem}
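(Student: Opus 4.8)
The plan is to convert the pointwise recipe $(E,\theta)\mapsto \mathrm{Gr}_{\frac{p+1}{2}}\bigl(C_1^{-1}(E,\theta)\bigr)\otimes\mathcal O_{\mathbb P^1}\bigl(\tfrac{1-p}{2}\bigr)$ into a flat family of stable Higgs bundles parametrised by a nonempty Zariski-dense open subset $U\subseteq M(1,0)$, and then to invoke the (co)representability of $M(1,0)$ to classify this family by a morphism $U\to M(1,0)$. That morphism, being defined on a dense open of an irreducible variety, is exactly the asserted rational self-map.

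First I would observe that the objects constructed above are already \emph{relative} over their bases. Over $\mathbb P^1\times\sU_{\frac{p+1}{2}}$ one has the universal Higgs bundle $(E^u,\theta^u)$, and applying the inverse Cartier transform $C_1^{-1}$ of Section~\ref{section ICF} fibrewise over the base yields $(V^u,\nabla^u)$ over $\mathbb P^1\times M_{dR}(\tfrac{p-1}{2},\tfrac{p+1}{2})$; here one uses that the construction of $C_1^{-1}$ recalled above is a gluing of the local models $V_i=F_{\mathcal U_i}^*(E|_{\mathcal U_i})$ (with the corresponding connection and transition maps), each manifestly compatible with flat base change on the parameter scheme, so that $C_1^{-1}$ propagates to families. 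On the Zariski-dense open $W\subseteq M_{dR}(\tfrac{p-1}{2},\tfrac{p+1}{2})$ the canonical morphism $\mF\to V^u$ is a sub-line bundle with locally free quotient, i.e.\ a relative Hodge filtration $\mF\subset V^u$ which on each fibre is the Simpson graded semistable Hodge filtration $\mathcal O_{\mathbb P^1}(\tfrac{p+1}{2})\subset V_s$. Passing to the associated graded $\bigl(\mF\oplus V^u/\mF,\ \mathrm{Gr}(\nabla^u)\bigr)$ and twisting by $p_1^*\mathcal O_{\mathbb P^1}(\tfrac{1-p}{2})$ produces a family of graded logarithmic Higgs bundles over $W$; since $C_1^{-1}$ and the Cartier functor $C$ are mutually quasi-inverse on the relevant subcategories, $W$ is identified with the open subset $U=C(W)\subseteq M(1,0)$, and under this identification we obtain a flat family $(\mathcal E,\vartheta)$ over $U$ whose fibre at the point classifying $(E,\theta)$ is precisely $\mathrm{Gr}_{\frac{p+1}{2}}\bigl(C_1^{-1}(E,\theta)\bigr)\otimes\mathcal O_{\mathbb P^1}(\tfrac{1-p}{2})$.

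Next I would check that $(\mathcal E,\vartheta)$ is a family of \emph{stable} Higgs bundles of numerical type $(1,0)$. On $W$ the de Rham bundle has the balanced splitting type $(\tfrac{p-1}{2},\tfrac{p+1}{2})$, so fibrewise $\mF_s\cong\mathcal O_{\mathbb P^1}(\tfrac{p+1}{2})$ and $(V^u/\mF)_s\cong\mathcal O_{\mathbb P^1}(\tfrac{p-1}{2})$; after the twist the underlying bundle is $\mathcal O_{\mathbb P^1}\oplus\mathcal O_{\mathbb P^1}(1)$, as required. The induced Higgs field is nonzero on every fibre, because $\mathcal O_{\mathbb P^1}(\tfrac{p+1}{2})\subset(V_s,\nabla_s)$ is not $\nabla_s$-invariant (the de Rham bundle $C_1^{-1}(E,\theta)$ being stable). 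Finally, a rank $2$, degree $1$ graded Higgs bundle on $\mathbb P^1$ with nonzero Higgs field is automatically stable: semistability equals stability since $\tfrac12\notin\mathbb Z$, and the unique sub-line bundle of $\mathcal O_{\mathbb P^1}\oplus\mathcal O_{\mathbb P^1}(1)$ of degree $\ge 1$ is the standard copy of $\mathcal O_{\mathbb P^1}(1)$, which is the degree-$1$ graded piece and hence not $\theta$-invariant. Thus $(\mathcal E,\vartheta)$ takes values in $M(1,0)$.

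Since $M(1,0)$ corepresents the moduli functor of stable graded logarithmic Higgs bundles of this type (Langer~\cite{Langer04}; explicitly $M(1,0)\cong\mathbb P^{m-3}$), the family $(\mathcal E,\vartheta)$ over $U$ is classified by a morphism $\varphi_U\colon U\to M(1,0)$ with $\varphi_U(x)=[(\mathcal E_x,\vartheta_x)]$ on closed points, which by construction computes the functor in the statement. As $M(1,0)$ is irreducible and $U=C(W)$ is a nonempty Zariski-open subset — nonemptiness and density following from $W$ being dense open in $M_{dR}(\tfrac{p-1}{2},\tfrac{p+1}{2})$ together with the upper semicontinuity of the function $l$, whose minimal value is $\tfrac{p+1}{2}$ — the morphism $\varphi_U$ defines the desired rational map $\varphi\colon M(1,0)\dashrightarrow M(1,0)$. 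The step I expect to require the most care is the first one: ensuring that the inverse Cartier transform, the choice of Hodge filtration via $\mF$, and the grading genuinely propagate to a \emph{flat} family over $U$ rather than merely a pointwise assignment; this rests on the local, base-change-compatible description of $C_1^{-1}$ together with the Grauert-type argument above that upgrades $\mL$, and hence $\mF$, to an honest bundle on $W$.
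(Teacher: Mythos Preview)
Your proposal is correct and follows essentially the same approach as the paper: both produce the morphism $U\to M(1,0)$ from the universal property of the moduli space. The paper phrases this slightly more abstractly, asserting that the functor $\mathrm{Gr}_{\frac{p+1}{2}}\circ C_1^{-1}(\cdot)\otimes\mathcal O(\tfrac{1-p}{2})$ yields a natural transformation $\underline{U}\to\underline{M}(1,0)$ of moduli functors and then evaluating at $\mathrm{id}_U$, whereas you construct the corresponding universal family explicitly and invoke corepresentability; these are two wordings of the same Yoneda-style argument, and your extra verifications (stability of the fibres, the splitting type after the twist, nonemptiness of $U$) simply fill in details the paper leaves implicit.
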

\begin{proof}
Let $\underline{M}(1,0)$ denote the moduli functor of semistable graded Higgs bundles of type $(1,0)$ (see \autoref{section DMS} for details), which is represented by the scheme $M(1,0)$. And $\underline{U}$ denotes the subfunctor corresponding to $U$.\\
Note that the functor $\mathrm{Gr_{{\frac{p+1}{2}}}}\circ C_1^{-1}(\cdot)\otimes \mathcal O(\frac{1-p}{2})$   gives a natural transform between these two moduli functors $\underline{U}$ and $\underline{M}(1,0)$. Since $\underline{M}(1,0)$ is represented by $M(1,0)$, one gets the following diagram
\[
\xymatrix{
\underline{U} \ar[d] \ar@{-->}[dr] & \\
\underline{M}(1,0) \ar[r]    &      \Hom_k (\cdot, M(1,0))
}
\]
By the universal property of the coarse moduli scheme, one get a natural transform
\[
\Hom_k (\cdot , U)  \longrightarrow \Hom_k (\cdot ,M(1,0))
\]
Take $Id \in \Hom_k (U,U)$, the natural transform will give the $k$-morphism
\[
U  \longrightarrow M(1,0)
\]
One can easily check that this map is induced by the self-map.   
\end{proof}
\begin{rmk}
We only deal with the first strata $\sU_{\frac{p+1}{2}}$ here. Actually the argument above can be applied for each strata $\sU_{k+1}/\sU_k$, for $k= {\frac{p+1}{2}}, {\frac{p+3}{2}}, {\frac{p+5}{2}}, \cdots$. The restriction of the self-map on each strata is a rational map from $\sU_{k+1}/\sU_k$ to $M(k-{\frac{p-1}{2}}, {\frac{p+1}{2}}-k)$. Therefore, the self-map is a contructible map.
\end{rmk}

Now we want to prove:
\begin{lem}
	\label{dominant}
	The rational map $\varphi$ is dominant.
\end{lem}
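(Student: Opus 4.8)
Since $M(1,0)\simeq\mathbb P^{m-3}$ is irreducible of dimension $m-3$ and the target is the same space, $\varphi$ is dominant if and only if its image has dimension $m-3$, i.e. if and only if the general fibre of $\varphi$ is finite; so the task is to rule out positive-dimensional fibres. The plan is to use the factorisation implicit in the construction: over the dense open $U\subset M(1,0)$ one has $\varphi=\big(\,\cdot\otimes\mathcal O_{\mathbb P^1}(\tfrac{1-p}{2})\,\big)\circ\mathrm{Gr}_{\frac{p+1}{2}}\circ C_1^{-1}$. The twist is an isomorphism of moduli schemes, and the inverse Cartier transform $C_1^{-1}$ is an equivalence of categories (Ogus-Vologodsky, see Section~\ref{section ICF}), hence injective on isomorphism classes; on the stable locus it induces a locally closed immersion $U\hookrightarrow M_{dR}({\frac{p-1}{2}},{\frac{p+1}{2}})$. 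Consequently it suffices to prove that the Simpson-grading map $\mathrm{Gr}_{\frac{p+1}{2}}\colon W':=C_1^{-1}(U)\to M(1,0)$ has a finite general fibre, where $W'$ is an irreducible $(m-3)$-dimensional subvariety of $M_{dR}({\frac{p-1}{2}},{\frac{p+1}{2}})$.

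To analyse the fibre of $\mathrm{Gr}_{\frac{p+1}{2}}$ over a general graded Higgs bundle $(G,\psi)$, with $G=\mathcal O({\frac{p+1}{2}})\oplus\mathcal O({\frac{p-1}{2}})$ and $\psi$ the lowering map prescribed by the chosen point of $M(1,0)$, I would describe the de Rham bundles $(V,\nabla)$ in $W'$ with that associated graded. Such a $(V,\nabla)$ has $V\cong G$ (generic splitting type), Hodge filtration $\mathrm{Fil}^1=\mathcal O({\frac{p+1}{2}})$, and $\nabla$ is a logarithmic connection with $\mathrm{Gr}\,\nabla=\psi$; two such connections differ by an element
$$A\in H^0\!\big(\mathbb P^1,\ \underline{\mathrm{Hom}}(\mathcal O(\tfrac{p-1}{2}),\mathcal O(\tfrac{p+1}{2}))\otimes\Omega^1_{\mathbb P^1}(\log D)\big)\cong H^0(\mathbb P^1,\mathcal O(m-1)),$$
an $m$-dimensional space. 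Onto this space one imposes: (i) that the residue of $\nabla$ at each of the $m$ marked points lies in the conjugacy class forced by being in the image of the logarithmic inverse Cartier transform; and (ii), the essential constraint, that the $p$-curvature of $\nabla$ be the Frobenius descent of the fixed Higgs field $C_1(V,\nabla)=(E_0,\theta_0)$, which is how $W'$ sits inside $M_{dR}$. One then passes to the quotient by the unipotent part of $\mathrm{Aut}(V,\mathrm{Fil})$. The plan is to show that conditions (i) and (ii), together with this quotient, cut the $m$-dimensional space of deformations down to a $0$-dimensional scheme; this gives finiteness of the fibre, and hence dominance.

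The main obstacle is precisely this interplay: the $p$-curvature is a Frobenius-semilinear, hence non-linear, function of $A$, so a naive count of conditions does not immediately force $0$-dimensionality, and at the same time the differential of $\varphi$ need not be an isomorphism---for $m=4$, $\varphi$ is conjecturally a twist of the multiplication-by-$p$ map on $\mathcal C_\lambda\simeq\mathbb P^1$, whose differential vanishes identically---so separability and generic-smoothness arguments are unavailable, and finiteness must be proved scheme-theoretically. I would handle this by an explicit local model at each marked point (writing the Frobenius lift in a local coordinate and computing the residue and the $p$-curvature of $\nabla_0+A$ exactly as in the construction of $C_1^{-1}$), showing that conditions (i) and (ii) force $A$ to vanish to sufficient order, and then by intersecting globally with $W'$. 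As a consistency check, the explicit formula derived in Section~\ref{section EFSMFMP} exhibits $\varphi$ for $m=4$ as a self-map of $\mathbb P^1$ of degree $p^2$, in particular dominant, in agreement with this analysis.
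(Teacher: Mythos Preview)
Your proposal has a genuine gap: you reduce correctly to showing that the fibre of $\mathrm{Gr}_{\frac{p+1}{2}}$ over $W'=C_1^{-1}(U)$ is zero-dimensional, but you never carry this out. You explicitly flag the ``main obstacle'' (the $p$-curvature constraint is Frobenius-semilinear, and $d\varphi$ may vanish identically so generic smoothness is unavailable) and then only say ``I would handle this by an explicit local model,'' which is a plan, not an argument. There is also a bookkeeping slip: two logarithmic connections on $V\cong\mathcal O(\tfrac{p+1}{2})\oplus\mathcal O(\tfrac{p-1}{2})$ with the same associated graded Higgs bundle differ by a section of $\mathrm{Fil}^0\mEnd(V)\otimes\Omega^1_{\mathbb P^1}(\log D)$, not just of the strictly upper-triangular piece $\underline{\Hom}(\mathcal O(\tfrac{p-1}{2}),\mathcal O(\tfrac{p+1}{2}))\otimes\Omega^1_{\mathbb P^1}(\log D)$; the diagonal entries vary as well, so the deformation space is larger than the $m$-dimensional one you wrote down, and the counting you allude to would need to absorb this. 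Until the $p$-curvature/residue conditions are actually shown to cut this space to dimension zero, nothing has been proved beyond the $m=4$ case you cite from the explicit formula.

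The paper sidesteps all of this with a short induction on the number $m$ of marked points. The base case $m=3$ is trivial since $M(1,0)$ is a point. For the step, forgetting a marked point $x_i$ gives a closed embedding $M(\hat x_i;1,0)\cong\mathbb P^{m-4}\hookrightarrow M(1,0)\cong\mathbb P^{m-3}$, and the self-map for $m-1$ points is the restriction of the self-map for $m$ points (the logarithmic inverse Cartier and the Simpson grading are compatible with enlarging the divisor when the Higgs field has no pole at $x_i$). If $\varphi$ were not dominant, set $Z=\overline{\varphi(M(1,0))}$; then $\dim Z\le m-4$. By the inductive hypothesis each $\varphi(M(\hat x_i;1,0))$ is dense in $M(\hat x_i;1,0)\cong\mathbb P^{m-4}$, so $Z$ contains all $m$ of these hyperplanes, forcing $\dim Z=m-4$ and $Z$ reducible. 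But $Z$ is the closure of the image of the irreducible $\mathbb P^{m-3}$, hence irreducible---contradiction. This argument never touches $p$-curvature or fibre dimensions and is what you should use.
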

\begin{proof}
	We prove this lemma by induction on the number $m$ of the marked points.	For $m=3$, the lemma trivially holds since $M$ is just a point. Now suppose the statement is true for the case of $m-1$ marked points. We want to prove $\varphi$ is dominant for the case of $m$ marked points.
	Set $Z := \overline{\mathrm{Im}(\varphi)} \subset M(1,0)$ and we want to prove $Z = M(1,0)$. Suppose $Z$ is a proper subscheme of $M(1,0) \cong \P^{m-3}$. Then $\mathrm{dim} \, Z \leq m-4$. Denote $M(\hat{x_i})$ to be the moduli space of semistable graded Higgs bundles of rank $2$ degree $1$ over $\P^1$, with nilpotent logarithmic Higgs fields which have $m-1$ poles $\{x_1, \dots,\hat{x_i},\dots, x_m\}$. Then one can define a natural embedding $M(\hat{x_i}) \hookrightarrow M$ by forgetting one marked point $x_i$. Therefore,
	\[
	\bigcup_i \varphi(M(\hat{x_i};1,0)) \subset Z
	\]
	where $M(\hat{x_i};1,0)$ is the component of $M(\hat{x_i})$ with maximal dimension.
	Then we know that  $M(\hat{x_i};1,0) \cong \P^{m-4}$. So $\mathrm{dim} \, Z = m-4$ by the assumption that $\varphi$ is dominant for $m-1$ case. And $Z$ has more than one irreducible component. But this is impossible since $Z$ is the Zariski closure of $\varphi (M(1,0)) \cong \varphi (\P^{m-3})$, which is irreducible.    
\end{proof}

Now we can state and prove the main result of this section:
\begin{thm}
	\label{Periodic}
	The set of periodic points of $\varphi$ is Zariski dense in $M(1,0)$. Combining this with Proposition \ref{Self-map}, one gets infinitely many irreducible crystalline projective representations of the fundamental group.  
\end{thm}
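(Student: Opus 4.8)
The plan is to reduce the first assertion to Hrushovski's theorem \cite{Hru} on intersections of correspondences with Frobenius graphs, and then to deduce the representation-theoretic statement by assembling the equivalences and functors built earlier in the paper. First I would record that the whole construction of $\varphi$ — the inverse Cartier transform $C^{-1}_1$, the Simpson graded semistable Hodge filtration, the grading functor, and the twist by $\mathcal O_{\mathbb P^1}(\tfrac{1-p}{2})$ — takes place over the finite field $k$, so that $\varphi$ is a rational self-map of $M(1,0)_k\cong\mathbb P^{m-3}_k$ defined over $k$, and in particular commutes with the $q$-power Frobenius endomorphism $F$ of $M(1,0)_k$, where $q=\#k$. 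By Lemma~\ref{dominant} the map $\varphi$ is dominant; since $\dim M(1,0)=m-3$, a dominant rational self-map of an irreducible variety to itself is automatically generically finite, so the closure $\Gamma$ of the graph of $\varphi$ in $M(1,0)_k\times M(1,0)_k$ projects dominantly and generically finitely to each factor.

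Next I would apply Hrushovski's theorem to $\Gamma$ over $\overline{\mathbb F}_q$: the set
\[
\bigcup_{n\geq 1}\bigl\{\,x\in M(1,0)(\overline{\mathbb F}_q)\ :\ (x,F^n(x))\in\Gamma\,\bigr\}
\]
is Zariski dense in $M(1,0)$. Since $\varphi$ is only a rational map, this must be applied after replacing $M(1,0)$ by the dense open $k$-subscheme on which $\varphi$ and the relevant iterates are defined; density is unaffected by passing to such a dense open subset. For a closed point $x$ in this set we have $\varphi(x)=F^n(x)$; choosing $d$ with $F^d(x)=x$ (possible as $x$ is defined over a finite field) and using that $\varphi$ commutes with $F$, iteration gives $\varphi^k(x)=F^{kn}(x)$, hence $\varphi^d(x)=F^{dn}(x)=x$. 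Thus $x$ is genuinely $\varphi$-periodic, and the periodic points of $\varphi$ are Zariski dense in $M(1,0)$.

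Finally I would derive the consequence. A $\varphi$-periodic point of period $f_2$ is a stable graded logarithmic Higgs bundle $(E,\theta)$ of rank $2$ and degree $1$ on $\mathbb P^1$ with poles along $D$ satisfying $\varphi^{f_2}(E,\theta)\simeq(E,\theta)$; as $r=2$ and $p$ is odd, $f_1=1$, so Proposition~\ref{Self-map} makes the associated Higgs-de Rham flow $\det(E)^{(p^{f_2}-1)/2}$-twisted $f_2$-periodic. Applying the equivalence of Theorem~\ref{equiv:logTFF&THDF} and then the functor $\mathbb D^P$ of Theorem~\ref{ConsFunc:D^P} produces a projective crystalline representation $\rho\colon\pi^{\text{\'et}}_1(\mathbb P^1_{K'}\setminus D)\to\mathrm{PGL}_2(\mathbb F_{p^{f_2}})$; since $(\operatorname{rank}E,\deg_H E)=1$ forces $(E,\theta)$ to be stable, Corollary~\ref{slop sub THDF} together with Corollary~\ref{subHDF}, exactly as in the proof of Theorem~\ref{Mainthm}, shows $\rho$ is irreducible. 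Because $m\geq 4$ gives $\dim M(1,0)=m-3\geq 1$, the Zariski-dense periodic locus is infinite; distinct periodic points give non-isomorphic twisted periodic Higgs-de Rham flows, and (via the equivalence and the properties of $\mathbb D^P$ discussed in Section~\ref{section SRSPHdRF}) these yield infinitely many pairwise non-isomorphic irreducible projective crystalline representations.

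I expect the two genuinely delicate points to be: (a) the bookkeeping in applying Hrushovski's theorem to a merely rational self-map — one must ensure that the periodic points produced avoid the indeterminacy loci of all the needed iterates of $\varphi$ while keeping Zariski density; and (b) verifying that the infinitely many periodic Higgs bundles really do give infinitely many non-isomorphic representations rather than collapsing to finitely many isomorphism classes, which requires that a fixed projective representation can arise from only finitely many twisted periodic Higgs-de Rham flows. The remainder is a formal assembly of results already established.
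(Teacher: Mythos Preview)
Your proposal is correct and follows essentially the same route as the paper: both invoke Hrushovski's theorem (Theorem~\ref{Hrushovski}) applied to the graph of the dominant rational self-map $\varphi$, use that $\varphi$ is defined over $\mathbb F_q$ to turn a point with $\varphi(x)=x^{q^m}$ into a genuinely $\varphi$-periodic point, and then appeal to Proposition~\ref{Self-map} and Theorem~\ref{Mainthm} for the representation-theoretic consequence. Your concern (a) is slightly overcautious --- since the indeterminacy locus of $\varphi$ is itself $\mathbb F_q$-defined, once $x$ avoids it so does every $x^{q^N}$, so you need only exclude the indeterminacy of $\varphi$ (not of its iterates), exactly as the paper does by placing it in the closed set $W$; and your concern (b) goes beyond what the paper actually argues in its proof.
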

To prove this we need a theorem of Hrushovski :

\begin{thm}[Hrushovski~\cite{Hru}, see also Theorem $3.7$ in \cite{ES}]
	\label{Hrushovski}
	Let $Y$ be an affine variety over $\F_q$, and let $\Gamma \subset (Y \times_{\F_q}Y) \otimes_{\F_q} \bar{\F}_q$ be an irreducible sub variety over $\bar{\F_q}$. Assume the two projections $\Gamma \to Y$ are dominant. Then, for any closed sub variety $W \subsetneq Y$, there exists $x \in Y(\bar{\F}_q)$ such that $(x, x^{q^m}) \in \Gamma$ and $x \notin W$ for large enough natural number $m$.
\end{thm}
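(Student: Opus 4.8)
The plan is to reduce Hrushovski's intersection theorem for self-correspondences to a ``twisted Lang--Weil'' point count and then conclude by comparing growth rates. First I would spread everything out: by a limit argument one may assume, after enlarging $q$, that $Y$ is a geometrically irreducible affine $\F_q$-variety, that $\Gamma\subset Y\times_{\F_q}Y$ is a geometrically irreducible closed $\F_q$-subvariety with both projections $p_1,p_2\colon\Gamma\to Y$ dominant, and that $W\subsetneq Y$ is closed over $\F_q$. I would also enlarge $W$ to contain the proper closed locus of $Y$ over which $p_1$ fails to be generically finite, so that $p_1^{-1}(W)$ is a proper closed subvariety of the irreducible $\Gamma$, hence of dimension $\le\dim\Gamma-1$. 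Finally I would reduce to the equidimensional case $\dim\Gamma=\dim Y=:d$, i.e.\ $p_1$ generically finite: if $\dim\Gamma>d$ one cuts $\Gamma$ by generic hyperplane sections (in an affine embedding) down to dimension $d$, keeping irreducibility and dominance of both projections, and applies the equidimensional case to the section.

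Next I would rephrase the conclusion as a count. For $m$ large (in the progression making $\Gamma$ invariant under $\mathrm{Frob}_{q^m}$) put
\[
N_m:=\#\bigl\{x\in Y(\bar{\F}_q)\ :\ (x,x^{q^m})\in\Gamma\bigr\}
 =\#\bigl\{\gamma\in\Gamma(\bar{\F}_q)\ :\ p_2(\gamma)=p_1(\gamma)^{q^m}\bigr\},
\]
and let $N_m^W$ be the same count with $x$ restricted to $W(\bar{\F}_q)$. It suffices to show $N_m>N_m^W$ for all $m\gg 0$, since any $x$ counted by the first but not the second is the point we seek. The easy half is $N_m^W=O(q^{m(d-1)})$: this set injects into $\{\gamma\in p_1^{-1}(W):p_2(\gamma)=p_1(\gamma)^{q^m}\}$, and the number of $\bar{\F}_q$-points of a variety of dimension $\le d-1$ cut out by such a ``twisted Frobenius'' condition is $O(q^{m(d-1)})$ by the elementary Lang--Weil upper bound, applied after covering by affine charts.

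The heart of the proof is the lower estimate $N_m=c\,q^{md}+O\!\left(q^{m(d-1/2)}\right)$ with $c\in\Z_{\ge1}$ --- the twisted Lang--Weil theorem. I would prove it by restricting $p_1$ to a dense open $Y^\circ\subset Y$ over which it is finite \'etale (the complement already having been absorbed into $W$), setting $\Gamma^\circ=p_1^{-1}(Y^\circ)$, and reading $N_m$, up to the negligible boundary contribution, as the intersection number in $Y^\circ\times Y^\circ$ of $\Gamma^\circ$ with the graph of $\mathrm{Frob}_{q^m}$ --- equivalently, the number of fixed points of the self-correspondence $(p_1,p_2)$ of $Y^\circ$ twisted by $\mathrm{Frob}_{q^m}$. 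For $m$ large this is computed by the Grothendieck--Lefschetz trace formula as an alternating sum of traces of $(p_1,p_2)^\ast\circ\mathrm{Frob}_{q^m}^\ast$ on the compactly supported $\ell$-adic cohomology of $Y^\circ$; by Deligne's Weil~II bounds every term but the top one is $O(q^{m(d-1/2)})$, while $H^{2d}_c(Y^\circ_{\bar{\F}_q},\Q_\ell)$ --- one dimensional because $Y^\circ$ is geometrically irreducible, with Frobenius eigenvalue $q^d$ and correspondence multiplicity a positive integer --- contributes the main term $c\,q^{md}$, $c\ge1$. (This is the geometric content of Hrushovski's original argument, where one shows $(\bar{\F}_q,\mathrm{Frob}_{q^m})$ tends as $m\to\infty$ to a model of $\mathrm{ACFA}$, in which the difference subvariety defined by $(x,\sigma x)\in\Gamma$ is automatically non-empty and generic when the projections are dominant; see \cite{Hru} and \cite{ES}.) Combining the two estimates, $N_m-N_m^W=c\,q^{md}(1+o(1))\to\infty$, which proves the theorem.

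I expect the only genuine obstacle to be this twisted Lang--Weil estimate. Its delicate points are: that $Y$ is merely affine and possibly singular, forcing the use of compactly supported cohomology and careful control of the boundary, handled by the dense-open-\'etale reduction together with Deligne's bounds (which hold for arbitrary varieties); that one must identify the top cohomology $H^{2d}_c(Y^\circ)$ as one dimensional with Frobenius eigenvalue exactly $q^d$ and strictly positive correspondence multiplicity, where geometric irreducibility of $Y$ and a degree computation for $(p_1,p_2)$ are used; and, for $m$ large, that the Lefschetz fixed-point contribution truly localizes at the fixed locus (Frobenius being ``expanding''), so that the naive trace formula applies. By contrast the spreading-out reductions, the reduction to the equidimensional case, and the upper bound $N_m^W=O(q^{m(d-1)})$ are routine.
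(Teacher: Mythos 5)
The paper does not prove this statement: it is imported as a black box from Hrushovski \cite{Hru} (see also Theorem 3.7 of \cite{ES}) and used only in the proof of Theorem \ref{Periodic}, so there is no internal proof to compare yours against. What you have sketched is the \emph{algebro-geometric} route to Hrushovski's theorem --- reduce to a fixed-point count for the correspondence $\Gamma$ twisted by a high power of Frobenius, establish a twisted Lang--Weil asymptotic $N_m=c\,q^{md}+O(q^{m(d-1/2)})$ with $c\geq 1$ via the Lefschetz trace formula and Weil~II, and beat the $O(q^{m(d-1)})$ count supported on $W$. This is genuinely different from the proof the citation actually points to, which is model-theoretic: Hrushovski passes to a non-principal ultraproduct of the difference fields $(\bar\F_q,\mathrm{Frob}_{q^m})$, identifies it as a model of ACFA, and reads off the existence of a generic solution of $(x,\sigma x)\in\Gamma$ avoiding $W$. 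Your route buys an explicit asymptotic (more than the theorem asserts); the model-theoretic route avoids \'etale cohomology entirely. The preliminary reductions (spreading out, absorbing the non-finite locus of $p_1$ into $W$, cutting down to $\dim\Gamma=d$ by generic hyperplane sections) are fine.

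That said, the two load-bearing steps of your sketch are each theorems at least as deep as the statement being proved, and your phrasing undersells what they require. First, the bound $\#\{\gamma\in Z: p_2(\gamma)=p_1(\gamma)^{q^m}\}=O(q^{m\dim Z})$ for an \emph{arbitrary} closed $Z\subset Y\times Y$ --- which you need both for $Z=p_1^{-1}(W)$ and for the boundary $p_1^{-1}(Y\setminus Y^\circ)$ --- is not ``the elementary Lang--Weil upper bound'': Lang--Weil controls $\#V(\F_{q^m})$, i.e.\ the case $Z=\Delta_V$, and the general case follows neither from covering by affine charts nor from B\'ezout (the graph of $\mathrm{Frob}_{q^m}$ has degree growing with $m$). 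It is a separate counting lemma of Hrushovski, proved by a nontrivial induction on dimension. Second, the assertion that for $m\gg0$ the global trace of $\Gamma^\circ\circ\mathrm{Frob}_{q^m}$ on $H^\bullet_c(Y^\circ_{\bar\F_q},\Q_\ell)$ equals the naive count of fixed points is exactly Deligne's conjecture (Fujiwara's theorem, and in the generality needed here --- an open subvariety of an affine, possibly singular $Y$, with only one projection finite --- Varshavsky's extension of it); its hypotheses, in particular the behaviour of fixed points near the boundary of a compactification, are precisely where the published algebro-geometric proofs spend their effort, and ``Frobenius is expanding'' is the slogan rather than the verification. You correctly flag both points as the delicate ones, but as written they are citations, so your proposal is a correct road map through the Varshavsky-style proof rather than a self-contained argument --- which, to be fair, is no less than the paper itself offers, since it cites the result outright.
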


\begin{proof}[proof of Theorem \ref{Periodic}]
	For each Zariski open subset $\U \subset M(1,0)$, we need to find a periodic point $x$ of $\varphi$ such that $x \in \U$. We take $Y$ to be an affine neighborhood of $M(1,0)$. And $\Gamma$ is the intersection of $\Gamma_{\varphi}\otimes_{\F_q} \bar{\F}_q$ and $(Y \times_{\F_q}Y) \otimes_{\F_q} \bar{\F}_q$. $W$ is defined to be the union of $(M(1,0) \setminus \U) \cap Y$ and the indeterminacy of $\varphi$.
	By Lemma \ref{dominant}, the projections $\Gamma \to Y$ are dominant. So we can apply Theorem \ref{Hrushovski} and find a point $x \in Y(\bar{\F}_q)$ such that $(x, x^{q^m}) \in \Gamma$ and $x \notin W$ for some $m$. Therefore, $x \in \U$, $\varphi$ is well-defined at $x$ and $\varphi(x) = x^{q^m}$ ($Y \subset \mathbb{A}^r$ , so $x$ can be written as $(x_1,\dots x_r) \in \mathbb{A}^r(\bar{\F}_q)$ and $x^{q^m} := (x^{q^m}_1,\dots x^{q^m}_r)$). The rational map $\varphi$ is well-defined at $x$ means that $\varphi$ is also well-defined at $x^{q^N}$ for any $N \in \mathbb{N}$.  Then we have
	\[
	\varphi (\varphi(x)) = \varphi (x^{q^m}) = \varphi(x)^{q^m}=x^{q^{2m}}
	\]
	Thus $\varphi^N(x) = x^{q^{Nm}}=x$ for $N$ large enough. That means, $x$ is a periodic point of $\varphi$.
\end{proof}

\subsection{An explicit formula of the self-map in the case of four marked points.}\label{section EFSMFMP}

\def\fai{\varphi_{\lambda,p}}
 In this section, we given an explicit formula of the self-map in case of $m=4$ marked point. Using M\"obius transformation on $\mathbb P^1$, we may assume these $4$ points are of form $\{0,1,\infty,\lambda\}$.
 By section~\ref{section DMS}, the moduli space $M$ is connected and isomorphic to $\P^1$, where the isomorphism is given by sending $(E,\theta)$ to the zero locus 
$(\theta)_0\in \mathbb P^1$. 
To emphasize the dependence of the self-map on $\lambda$ and $p$, we rewrite the self-map by  $\fai$. By calculation, details are given in the appendix section~\ref{Calculate_Selfmap}, we get 
\begin{equation}
\fai(z)=\frac{z^p}{\lambda^{p-1}}\cdot\left( \frac{f_\lambda(z^p)}{g_\lambda(z^p)}\right)^2,
\end{equation}
where $f_\lambda(z^p)$ is the determinant of matrix
\begin{equation*}\small
\left(\begin{array}{ccccc}  \frac{\lambda^p(1-z^p)-(\lambda^p-z^p)\lambda^{2}}{2}&
\frac{\lambda^p(1-z^p)-(\lambda^p-z^p)\lambda^{3}}{3}  &\cdots&
\frac{\lambda^p(1-z^p)-(\lambda^p-z^p)\lambda^{(p+1)/2}}{(p+1)/2} \\ \frac{\lambda^p(1-z^p)-(\lambda^p-z^p)\lambda^{3}}{3} &
\frac{\lambda^p(1-z^p)-(\lambda^p-z^p)\lambda^{4}}{4} &\cdots&
\frac{\lambda^p(1-z^p)-(\lambda^p-z^p)\lambda^{(p+3)/2}}{(p+3)/2} \\   \vdots&\vdots&\ddots&\vdots\\      \frac{\lambda^p(1-z^p)-(\lambda^p-z^p)\lambda^{(p+1)/2}}{(p+1)/2} &
\frac{\lambda^p(1-z^p)-(\lambda^p-z^p)\lambda^{(p+3)/2}}{(p+3)/2} &\cdots&
\frac{\lambda^p(1-z^p)-(\lambda^p-z^p)\lambda^{p-1}}{p-1} \\  
\end{array} \right)
\end{equation*}

and $g_\lambda(z^p)$ is the determinant of matrix
\begin{equation*}\small
\left(\begin{array}{ccccc}  \frac{\lambda^p(1-z^p)-(\lambda^p-z^p)\lambda^1}{1}&
\frac{\lambda^p(1-z^p)-(\lambda^p-z^p)\lambda^{2}}{2}  &\cdots&
\frac{\lambda^p(1-z^p)-(\lambda^p-z^p)\lambda^{(p-1)/2}}{(p-1)/2} \\ \frac{\lambda^p(1-z^p)-(\lambda^p-z^p)\lambda^{2}}{2} &
\frac{\lambda^p(1-z^p)-(\lambda^p-z^p)\lambda^{3}}{3} &\cdots&
\frac{\lambda^p(1-z^p)-(\lambda^p-z^p)\lambda^{(p+1)/2}}{(p+1)/2} \\   \vdots&\vdots&\ddots&\vdots\\      \frac{\lambda^p(1-z^p)-(\lambda^p-z^p)\lambda^{(p-1)/2}}{(p-1)/2} &
\frac{\lambda^p(1-z^p)-(\lambda^p-z^p)\lambda^{(p+1)/2}}{(p+1)/2} &\cdots&
\frac{\lambda^p(1-z^p)-(\lambda^p-z^p)\lambda^{p-2}}{p-2} \\  
\end{array} \right).
\end{equation*}
By calculation, for $p=3$ one has
\[\varphi_{\lambda,3}(z)= z^3 \left(\frac{z^3+\lambda(\lambda+1)}{(\lambda+1)z^3+\lambda^2}\right)^2
\] 
and $\varphi_{\lambda,3}(z)=z^{3^2}$ if and only if $\lambda=-1$; for $p=5$, one has
\[\varphi_{\lambda,5}(z)= z^5\left(\frac{z^{10}-\lambda(\lambda+1)(\lambda^2-\lambda+1)z^5+\lambda^4(\lambda^2-\lambda+1)}{(\lambda^2-\lambda+1)z^{10}-\lambda^2(\lambda+1)(\lambda^2-\lambda+1)z^5+\lambda^6 }\right)^2,\]

and $\varphi_{\lambda,5}(z)=z^{5^2}$ if and only if  $\lambda$ is a $6$-th primitive root of unit; for $p=7$ one has 
\begin{equation*}
\begin{split}
\varphi_{\lambda,7}(z)=& z^7\left(
 \begin{array}{l}   
z^{21}+2\lambda(\lambda+1)(\lambda^2+\lambda+1)(\lambda^2+3\lambda+1)z^{14}\\ 
\ + \lambda^4(\lambda+1)^2(\lambda^2+\lambda+1)(\lambda^2+1)z^{7} + \lambda^9(\lambda+1)(\lambda^2+\lambda+1)\\\hline
(\lambda+1)(\lambda^2+\lambda+1)z^{21}
	+\lambda^2(\lambda+1)^2(\lambda^2+\lambda+1)(\lambda^2+1)
	z^{14}\\
 \ 	+
	2\lambda^6(\lambda+1)(\lambda^2+\lambda+1)(\lambda^2+3\lambda+1)z^{7}
	+\lambda^{12}\\
\end{array}
\right)^2
\end{split}
\end{equation*}
and $\varphi_{\lambda,7}(z)=  z^{7^2}$ if and only if $(\lambda+1)(\lambda^2+\lambda+1)=0$.

We regard $\varphi_{\lambda,p}$ as a self-map on $\P^1$, which is rational and of degree $p^2\neq 1$. Thus it has $p^2+1$ fixed $\overline{k}$-points counting with multiplicity. Suppose the conjecture~\ref{conj-1} holds, then the multiplicity of each fixed point equals to $1$.
Let $(E,\theta)/\P_{k'}^1$ be a fixed point of $\varphi_{\lambda,p}$ defined over some extension field $k'$ of $k$. Then in the language of Higgs-de Rham flow, $(E,\theta)$ is the initial term of a twisted $1$-periodic Higgs-de Rham flow over $\P^1_{k'}$.

{\subsection {Lifting of twisted periodic logarithmic Higgs-de Rham flow on the projective line with marked points and strong irreducibility}
Here we just consider $1$-periodic case, for the higher-periodic case the treatment is similar.  First of all,  Higgs bundles considered here are given by logarithmic 1-forms on the punctured projective line vanishing at one point. They lift unobstructed to $W_n(k)$. Secondly, the obstruction group of lifting Hodge filtration in this case is $H^1(\bP^1_k, \mO(-1))=0$.
  Hence those two conditions required in Proposition~\ref{Lifting_PHDF}  hold true and one lifts $(E,\theta)$ to a twist periodic Higgs bundle over $\bP^1_{W_2}$. Recall the proof of Proposition~\ref{Lifting_PHDF}, one constructs a self-map on the torsor space of all liftings of $(E,\theta)$, and the fixed points of this self-map correspond to those liftings of the twisted $1$-periodic Higgs-de Rham flow. Fix a point $x_0$ in the torsor space, we identify the torsor space (\ref{gr-lifting space}) with $k$. Let $x$ be any point in the torsor space,  denote $z=x-x_0\in k$, by Corollary~\ref{InvCar_Torsor_map} and Proposition~\ref{prop:torsor_Grading}, there exists an element $a\in k$ such that 
$az^p=\mathrm{Gr}\circ C^{-1}(x)-\mathrm{Gr}\circ C^{-1}(x_0)$. Denote $b=\mathrm{Gr}\circ C^{-1}(x_0)-x_0\in k$, then the self-map on this torsor space is of form
\[z\mapsto az^p+b,\] 
where $a,b\in k$. 

\textbf{Case 1: $a=0$.}  Then $z=b$ is the unique fixed point of the self-map. In other words, there is a unique twisted periodic lifting of the given twisted $1$-periodic Higgs-de Rham flow
 over $\bP^1_{W_2(k)}$. 

\textbf{Case 2: $a\neq 0$.} Let $z_0\in \overline{k}$ be a solution of $z=az^p+b$. Then $\Sigma=\{i\cdot a^{-\frac{1}{p-1}}+z_0 \mid i\in \bF_p\}$ is the set of all solutions.
If $a\neq 0$ is not a $(p-1)$-th power of any element in $k^\times$, then $\#(\Sigma\cap k)\leq 1$. In other words there is at most one twisted $1$-periodic lifting over $\bP^1_{W_2(k)}$ of the given twisted $1$-periodic Higgs-de Rham flow. If $a\neq 0$ is a $(p-1)$-th power of some element in $k^\times$, then $\#(\Sigma\cap k)=0$ or $p$. In other words, if the twisted $1$-periodic Higgs-de Rham flow is liftable then there are exactly $p$ liftings over $\bP^1_{W_2(k)}$.
If we consider the lifting problem over an extension $k'$ of $k$, which contains $\Sigma$, then there are exactly $p$ liftings of the twisted $1$-periodic Higgs-de Rham flow over $\bP^1_{W_2(k')}$.
Repeating the same argument for lifting over truncated Witt ring of higher order we lift twisted periodic Higgs-de Rham flows over $W(\bar {\bF}_p)$. We prove
\begin{thm}\label{lifting-thm}
Any periodic Higgs bundle in $ M(1,0)_{\mathbf{F}_q}$ lifts to a periodic Higgs bundle in $M(1,0)_{\mathbb{Z}_p^\text{ur}}.$
\end{thm}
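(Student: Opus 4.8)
The plan is to reduce the statement to a level-by-level lifting problem for twisted periodic logarithmic Higgs-de Rham flows on $\P^1$ with the $m$ marked points, and then to solve that problem over the truncated Witt rings $W_n$ by induction, using the torsor description of liftings. First I would record that a periodic point of the self-map $\varphi$ on $M(1,0)$ defined over $\F_q$ is, after possibly replacing the period by a multiple, the initial term $(E,\theta)$ of a twisted $f$-periodic logarithmic Higgs-de Rham flow over $\P^1_{\F_q}$ (Corollary-Definition~\ref{def:selfmap} and Proposition~\ref{Self-map}), where $E\cong\mO_{\P^1}\oplus\mO_{\P^1}(1)$ and $\theta$ is a logarithmic $1$-form vanishing at one point. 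Hence it suffices to lift this flow to a twisted $f$-periodic logarithmic Higgs-de Rham flow over $\P^1_{W(\bar\F_p)}$; its initial term is then the desired periodic Higgs bundle in $M(1,0)_{\Z_p^{\mathrm{ur}}}$ (with $\Z_p^{\mathrm{ur}}$ understood $p$-adically completed).

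Next I would check the two unobstructedness hypotheses of Proposition~\ref{Lifting_PHDF}. Liftings of the graded Higgs bundle are unobstructed because the relevant obstruction space is the hypercohomology $\mathbb H^1$ of the Higgs subcomplex~(\ref{gr-lifting space}), which in this situation is a finite-dimensional affine space over $k$ (of dimension $m-3$; equal to $k$ in the four-marked-point case treated in the discussion above), and liftings of the Simpson graded semistable Hodge filtration are unobstructed because the obstruction lies in $H^1(\P^1_k,\mO(-1))=0$. Thus Proposition~\ref{Lifting_PHDF} applies. Running the mechanism of its proof, but this time insisting that the lift stay $f$-periodic, the $f$-periodic liftings to $W_{n+1}$ of a fixed flow over $W_n$ are exactly the fixed points of a self-map of this torsor space; by Corollary~\ref{InvCar_Torsor_map} and Proposition~\ref{prop:torsor_Grading}, after fixing a base point and the identification above, this self-map has the form $z\mapsto A z^{(p^f)}+b$, where $z^{(p^f)}$ denotes the coordinatewise $p^f$-th power, $A$ a linear endomorphism, and $b$ a constant vector.

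Then I would solve and iterate. The fixed-point equation is $z-Az^{(p^f)}=b$, and $z\mapsto z-Az^{(p^f)}$ is a homomorphism of vector groups with finite kernel (its Jacobian at the origin is the identity), hence surjective with finite fibres over $\bar\F_p$; so the equation has a solution over some finite extension $k'$ of the current residue field, and the flow lifts from $W_n(k_n)$ to $W_{n+1}(k_{n+1})$, $k_{n+1}/k_n$ finite, while remaining $f$-periodic. Iterating over all $n$ produces a finitely branching tree of compatible $f$-periodic liftings over the $W_n(k_n)$; it is infinite (every node has a child), so by K\"onig's lemma it has an infinite branch. Base-changing that branch along the embeddings $k_n\hookrightarrow\bar\F_p$ gives a compatible system of $f$-periodic flows over the $W_n(\bar\F_p)$; since $\P^1$ and the moduli scheme $M(1,0)$ are proper, Grothendieck's existence theorem over the $p$-adically complete ring $W(\bar\F_p)$ assembles this system into a twisted $f$-periodic logarithmic Higgs-de Rham flow over $W(\bar\F_p)$ lifting the original one, whose initial term is the required periodic Higgs bundle.

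I expect the main obstacle to be exactly the bookkeeping in this last step: one must carry along the growing tower of residue-field extensions $k_1\subseteq k_2\subseteq\cdots\subseteq\bar\F_p$, make compatible choices of fixed points along the way (this is where finiteness of the fibres and K\"onig's lemma come in), and then algebraize the resulting formal family over $W(\bar\F_p)$. By contrast, the unobstructedness inputs are immediate from $H^1(\P^1,\mO(-1))=0$ and the shape of~(\ref{gr-lifting space}), and the normal form $z\mapsto Az^{(p^f)}+b$ of the torsor self-map is precisely what the earlier torsor computations (Corollary~\ref{InvCar_Torsor_map}, Proposition~\ref{prop:torsor_Grading}) provide.
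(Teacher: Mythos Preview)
Your approach is essentially the paper's: verify the two unobstructedness hypotheses of Proposition~\ref{Lifting_PHDF}, identify the periodic liftings at each truncation level with the fixed points of a self-map on the torsor space of liftings, show via Corollary~\ref{InvCar_Torsor_map} and Proposition~\ref{prop:torsor_Grading} that this self-map is affine Frobenius-semilinear (the paper writes it as $z\mapsto az^p+b$ in the $1$-periodic, four-marked-point case and remarks that higher period is similar), and iterate over $W(\bar\F_p)$.

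Two small corrections. First, your justification for unobstructedness of the graded Higgs bundle is misstated: $\mathbb H^1$ of~(\ref{gr-lifting space}) is the \emph{torsor} (deformation) space, not the obstruction space; unobstructedness is the vanishing of the corresponding $\mathbb H^2$, which is automatic on a curve, and the paper argues it even more concretely by noting that such a Higgs bundle is just a section of $\mO(m-3)$ on $\P^1$, which evidently lifts. Second, K\"onig's lemma and Grothendieck existence are more machinery than needed: since you may work over $\bar\F_p$ throughout, at each level the equation $z-Az^{(p^f)}=b$ already has a solution in $\bar\F_p^{\,m-3}$ (your \'etale-surjectivity argument), so you simply choose one fixed point at each step and pass to the inverse limit; no compactness argument is required.
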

 We recall the notion of strong irreducibility of representations, which is introduced in \cite{LSYZ14}, Proposition 1.4.
Let $\rho: \pi^\text{\'et}_1(\mX^0_K)\to \mathrm{PGL_r}(\mathbb{Z}_p^{ur})$ be a representation and $\bar{\rho}$ be the restriction of $\rho$ to the geometric fundamental group $\pi^\text{\'et}_1(\mX^0_{\bar K}).$ We say $\rho$ is strongly irreducible if
for any surjective and generically finite logarithmic morphism 
$$f: \mathcal Y_{\bar K}\to \mX_{\bar K},$$
the pull-back representation $f^*(\bar \rho)$ is irreducible.\\[.2cm]
\begin{prop}\label{strong_irred} Let 
$$\rho :\pi^\text{\'et}_1((\bP^1\setminus \mathcal D)_{\mathbb Q_p^{ur}})
\to \mathrm{PGL_2}(\mathbb Z_p^{ur})$$
be a representation corresponding to a lifted twisted periodic logarithmic Higgs bundle $(E,\theta)=(\mathcal O(1)\oplus \mathcal O, \theta)$ over  $(\bP^1, \mathcal D)_{\mathbb Z_p^{ur}}$. Then $\rho$ is strongly irreducible.
\end{prop}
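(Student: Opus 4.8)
The plan is to argue by contradiction: assume that for some surjective, generically finite logarithmic morphism $f\colon\mathcal Y_{\bar K}\to\bP^1_{\bar K}$ (with $\mathcal Y$ a smooth projective curve carrying the induced log divisor $\mathcal D_{\mathcal Y}$, and $d:=\deg f\ge1$) the pulled-back representation $f^*\bar\rho$ is reducible, and to derive a contradiction from the very special shape of $(E,\theta)=(\mO(1)\oplus\mO,\theta)$. First I would pass to equal characteristic: reducibility of a $\mathrm{PGL}_2(\Z_p^{\mathrm{ur}})$-representation means its image fixes a point of $\bP^1$ over $\bar{\mathbb Q}_p$, which by properness of $\bP^1$ extends to a $\bar\Z_p$-point and reduces to a fixed point of the mod-$p$ representation, so $f^*\bar\rho$ stays reducible after reduction modulo $p$. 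By Theorem~\ref{lifting-thm} and the discussion preceding it, $\rho$ is a lift of the $\mathrm{PGL}_2(\mathbb F_q)$-representation attached, through Theorem~\ref{equiv:TFF&THDF} and the functor $\mathbb D^P$, to the periodic logarithmic Higgs bundle $(E,\theta)_1=(\mO(1)\oplus\mO,\theta_1)$ over $\bP^1_k$, with $\theta_1\neq0$; after spreading $f$ out over a model (and possibly a finite base change) one gets $f_1\colon\mathcal Y_1\to\bP^1_k$, and using the functoriality of the logarithmic inverse Cartier transform and of $\mathbb D^P$ under $f_1$ I would obtain a twisted periodic logarithmic Higgs--de Rham flow over $(\mathcal Y_1,\mathcal D_{\mathcal Y_1})$ whose initial Higgs bundle is $f_1^*(E,\theta)_1$ (up to a harmless twist by a line bundle coming from the twisting bundle of the flow) and whose associated projective representation is $f_1^*$ of the reduced $\bar\rho$.

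Next I would convert reducibility into a sub-flow. Since $\rho$ takes values in $\mathrm{PGL}_2$, a non-trivial projective subrepresentation is $1$-dimensional in each small affine chart, so by Corollary~\ref{subHDF} it corresponds to a non-trivial sub-twisted-periodic Higgs--de Rham flow of the pulled-back flow; its initial term is a $\theta$-invariant rank-$1$ subbundle $\mathcal F\subseteq f_1^*(E,\theta)_1$ — locally free, since all terms of a Higgs--de Rham flow are — and the associated quotient flow is again twisted periodic. Applying Corollary~\ref{slop sub THDF} over $\mathcal Y_1$ then forces
\[
\mu(\mathcal F)=\mu\big(f_1^*E_1\big)=\tfrac12\deg\big(f_1^*E_1\big)=\tfrac d2,
\]
so in particular $\deg\mathcal F=\tfrac d2>0$.

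The contradiction is now structural. Write $f_1^*E_1=\mathcal L_1\oplus\mathcal L_2$ for the grading, with $\mathcal L_1=f_1^*\mO_{\bP^1}(1)$ of degree $d$ and $\mathcal L_2=\mO_{\mathcal Y_1}$ of degree $0$; the Higgs field is a morphism $\theta_{\mathcal Y}\colon\mathcal L_1\to\mathcal L_2\otimes\Omega^1_{\mathcal Y_1}(\log\mathcal D_{\mathcal Y_1})$ which, being the pullback of the nonzero (hence injective) morphism $\theta_1\colon\mO(1)\to\mO\otimes\Omega^1_{\bP^1}(\log\mathcal D)=\mO(m-2)$ composed with the canonical map $f_1^*\Omega^1_{\bP^1}(\log\mathcal D)\to\Omega^1_{\mathcal Y_1}(\log\mathcal D_{\mathcal Y_1})$, is itself a nonzero — hence injective — morphism of line bundles on the integral curve $\mathcal Y_1$, so $\ker(\theta_{\mathcal Y}|_{\mathcal L_1})=0$. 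The composite $\mathcal F\hookrightarrow f_1^*E_1\twoheadrightarrow\mathcal L_2=\mO_{\mathcal Y_1}$ must vanish, because on a smooth projective curve there is no nonzero morphism from a line bundle of positive degree to $\mO_{\mathcal Y_1}$; hence $\mathcal F\subseteq\mathcal L_1$. Then $\theta$-invariance gives $\theta_{\mathcal Y}(\mathcal F)\subseteq(\mathcal F\otimes\Omega^1)\cap(\mathcal L_2\otimes\Omega^1)\subseteq(\mathcal L_1\cap\mathcal L_2)\otimes\Omega^1=0$ inside $(\mathcal L_1\oplus\mathcal L_2)\otimes\Omega^1$, so $\mathcal F\subseteq\ker(\theta_{\mathcal Y}|_{\mathcal L_1})=0$, contradicting $\mathcal F\neq0$; therefore $f^*\bar\rho$ is irreducible.

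The elementary last step is not the hard part. The main obstacle is everything in the first two paragraphs: making the functoriality of the inverse Cartier transform and of the functor $\mathbb D^P$ under the logarithmic morphism $f$ (equivalently $f_1$) precise enough that a reduction of $f^*\bar\rho$ genuinely produces a $\theta$-invariant subbundle of $f^*(E,\theta)$ lying inside a sub-flow, and verifying that the slope-equality of Corollary~\ref{slop sub THDF} persists over the base $\mathcal Y_1$ — for which one may either invoke the base-change results over very ramified valuation rings developed in Section~5 or, as sketched above, remain in equal characteristic. Once that is in place, the whole weight of the argument rests on the single inequality $\deg\mathcal F=\tfrac d2>0=\deg\mO_{\mathcal Y_1}$; and, as the argument makes clear, it uses only that the Higgs field is injective on the top graded piece of the initial bundle, which holds throughout the component $M(1,0)$ and not merely in the four-point case.
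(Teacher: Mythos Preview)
Your final degree argument is essentially the paper's: once a rank-$1$ Higgs sub-line-bundle of the pullback is in hand, comparing its degree with that of the bottom graded piece (the unique $\theta$-invariant sub-line-bundle, since the Higgs field is nonzero) gives the contradiction. The substantive divergence is in how one arrives at that sub-line-bundle.

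The paper does \emph{not} attempt to reduce the arbitrary cover $f$ to characteristic $p$. It proceeds in two steps. First it pulls back along the \emph{specific} log-\'etale double cover $\sigma\colon\bP^1\to\bP^1$, $z\mapsto z^2$, ramified at $\{0,\infty\}\subset\mD$; log-\'etaleness lets one choose compatible Frobenius liftings, so the inverse Cartier transforms on the two curves agree under $\sigma^*$, and $\sigma^*(E,\theta)\otimes\mO(-1)=(\mO(1)\oplus\mO(-1),\sigma^*\theta)$ becomes an \emph{untwisted} periodic Higgs bundle, whence $\sigma^*\rho$ lifts to a genuine $\mathrm{GL}_2(\Z_p^{\mathrm{ur}})$-representation $\rho'$. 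Second, strong irreducibility of $\rho'$ under an arbitrary $f\colon\mZ_{\bar K}\to\mY_{\bar K}$ is proved entirely over $\mathbb C_p$ via Faltings' $p$-adic Simpson correspondence \cite{Fal05}: that theory supplies a functorial twisted pullback $f^{\circ}$ on Higgs bundles and turns a one-dimensional $\mathbb C_p$-subrepresentation directly into a degree-$0$ rank-$1$ Higgs subbundle $(N,0)\subset f^{\circ}(E',\theta')_{\mathbb C_p}$, after which $0=\deg N>\deg f^*\mO(-1)$ finishes it. Crucially, Faltings' construction needs only a model of $\mZ$ with toroidal singularities, not smooth reduction.

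Your route through characteristic $p$ runs into precisely the obstacle that the paper's detour through $\mathbb C_p$ is designed to avoid. An arbitrary smooth projective curve $\mY_{\bar K}$ over $\bar\bQ_p$ need not have smooth reduction; at best one gets a semistable model after a finite extension. Over a non-smooth special fibre $\mY_1$ neither the inverse Cartier functor, nor Corollary~\ref{subHDF}, nor Corollary~\ref{slop sub THDF} is available as stated, and the compatibility of $\mathbb D^P$ with pullback is established in the paper only for the log-\'etale $\sigma$, not for general finite $f_1$. The base-change results of Section~5 concern ramification of the coefficient ring for a fixed scheme with good reduction over $W$; they do not manufacture smoothness of an arbitrary cover's special fibre, so they do not fill this gap. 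Your own assessment that ``the first two paragraphs'' are the hard part is accurate, but as written it is a genuine gap rather than a routine verification: the spreading-out and functoriality package you would need is exactly what the paper sidesteps by invoking \cite{Fal05}.
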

\begin{proof}
Denote $\mY=\bP^1=\mX$. We take the double cover of $\bP^1$   
\[\sigma:\mY \to \mX\] 
defined by $z\mapsto z^2$, which is ramified on $\{0,\infty\} \subset \mathcal D$. 
Taking the logarithmic structure $\mathcal D':=\sigma^*(\mathcal D)$ on $\mY$, 
then $\sigma$ is a logarithmic \'etale morphism with respect to the logarithmic structures $\mathcal D$ and $\mathcal D'$. 

The logarithmic inverse Cartier transforms on both logarithmic curves are compatible with respect to $\sigma^*$. 
We may choose compatible local Frobenius liftings on both logarithmic curves. Let $\mU$ be a small affine open subset of $\mX$ and denote $\mV=\sigma^{-1}\mU$. According to the following commutative diagram of logarithmic schemes
\begin{equation}
\xymatrix@C=2cm{(V_1,\mD'\mid_{V_1})\ar[d]_{\text{close embedding}} \ar[r]^{\Phi_{V_1}} & (\mV,\mD'\mid_{\mV}) \ar[d]^{\sigma}\\ (\mV,\mD'\mid_{\mV}) \ar@{..>}[ur]^{\exists \Phi_V} \ar[r]_{\Phi_\mU\circ\sigma} & (\mU,\mD\mid_{\mU})\\}
\end{equation}
 and the logarithmic \'etaleness of $\sigma$,
the Proposition~3.12 in~\cite{Kato88} implies that there exists a Frobenius lifting $\Phi_{\mV}$ on $\mV$ fitting into the commutative diagram. Since the inverse Cartier transforms is constructed by using the pullback via local Frobenius liftings, the local inverse Cartier transforms
    \footnote{The original inverse Cartier transform is defined by Ogus and Vologodsky~\cite{OgVo07} for characteristic $p$. And lately, it was generalized to the truncated version by Lan-Sheng-Zuo~\cite{LSZ13a} and to the logarithmic version by Schepler~\cite{Sch08} and Lan-Sheng-Yang-Zuo~\cite{LSYZ14}. Here we need a truncated logarithmic version. In this case, the inverse Cartier transform is defined in the same manner as in~\cite{LSZ13a} except that there are some restrictions on the choices of local liftings of the Frobenius map. The existence of such kind liftings is given by proposition~9.7 in~\cite{EV-92}. It is routine to give an explicit definition. We left it to the readers.}
on both logarithmic curves are compatible with respect to $\sigma^*$. After the gluing process, one gets global compatibility.  

According to the compatibility of logarithmic inverse Cartier transforms on both logarithmic curves, the periodicity is preserved by the pull-back $\sigma^*$, i.e. if $(E,\theta)^{(1)}$ is a logarithmic  $\mathcal O(1)^{\otimes{ p-1\over 2}}$-twisted periodic Higgs bundle over $(\mX, \mD)_1$, then $\sigma^*(E,\theta)^{(1)}\otimes \mO(1)^{-1}$ is a logarithmic periodic Higgs bundle over $(\mY, \mD')_1$ and   $\sigma^*\theta\not=0$. Furthermore if $(E,\theta)^{(l)}$ is a lifting of $(E,\theta)^{(1)}$ to $(\mX, \mD)_l$ as a twisted periodic Higgs bundle, then 
$\sigma^*(E,\theta)^{(l)}\otimes \mO(1)^{-1}$ is a lifting of $\sigma^*(E,\theta)^{(1)}\otimes\mO(1)^{-1}$ to $(\mX,\mD)_\ell$ as a periodic Higgs bundle. In this way we show that the projective representation
$\sigma^*\rho$ lifts to a $\mathrm{GL_2}$-crystalline representation
$$\rho':\pi^\text{\'et}_1((\mY\setminus\mD')_{\mathbb Q_p^{ur}})\to \mathrm{GL_2}((\mathbb Z_p^{ur}))$$
 corresponding to the Higgs bundle $\sigma^*((E,\theta))\otimes \mO(-1)=:(E,\theta)'$ over
$(\mY, \mD')_{\bZ_p^{ur}}$ of the form
$$(\mO(1)\oplus \mO(-1),\sigma^*\theta_{\not=0}: \mO(1)\to \mO(-1)\otimes \Omega^1_{\mY}(\log \mD')).$$
 By the same argument used in the proof of Proposition 1.4 in \cite{LSYZ14},   we are going to show that 
$\rho'$ is strongly irreducible. Hence $\rho$ is strongly irreducible. Let $f: \mZ\to \mY$ be a surjective logarithmic morphism between logarithmic curves. By the example in page 861 of \cite{Fal05}, one can see that the generalized representation associated to $(E,\theta)'_{\mathbb C_p}:=(E,\theta)'
\otimes\mathbb C_p$ is compatible with $\bar \rho'$ by tensoring with $\mathbb C_p$. We can find a finite extension field $K'$ of $\mathbb Q_p^{ur}$ with its integral ring $\mathcal O_{K'}$, such that  $ \mZ_{\bar K}$ has an integral model $\mZ_{\mathcal O_{K'}}$ over $ \mathcal O_{K'}$ and with toroidal singularity. By the construction of the correspondence 
 (\cite{Fal05}, Theorem 6), the twisted pullback of the graded Higgs bundle $f^{\circ}(E,\theta)'_{\mathbb C_p}$ corresponds to the pullback representation of
$\bar \rho'\otimes \mathbb C_p$ to $\pi^\text{\'et}_1(\mZ^o_{\mathbb C_p})$.  By the construction of the twisted pullback, one has a short exact sequence
$$ 0\to (f^*\mathcal O(-1), 0)_{\mathbb C_p}\to f^\circ(E,\theta)'_{\mathbb C_p}\to (f^*\mathcal O(1),0)_{\mathbb C_p}\to 0,$$ 
and that the Higgs field of $f^\circ(E,\theta)'_{\mathbb C_p}$ is nonzero. Assume by contradiction $f^*\bar \rho'\otimes \mathbb C_p$ is not irreducible. Then it contains a one-dimensional 
$\mathbb C_p$-subrepresentation. By the last paragraph in Page 860 of \cite{Fal05}, it follows that  $f^\circ(E,\theta)'_{\mathbb C_p}$ contains a rank-1 Higgs subbundle $(N,0)$ of $\deg N=0.$
Since the Higgs field of $f^\circ(E,\theta)'_{\mathbb C_p}$ is nonzero,  $ (f^*\mathcal O(-1), 0)_{\mathbb C_p}$ in the above short exact sequence is the unique rank-1 Higgs subbundle.
Hence, one obtains a nonzero factor map $(N,0)\to (f^*\mathcal O(-1), 0)_{\mathbb C_p}.$ But it is impossible, since $\deg N>\deg f^*\mathcal O(-1).$ The proof is completed.
\end{proof}

\begin{rmk}
The inverse Cartier functor over truncated Witt rings was defined by Lan-Sheng-Zuo~\cite{LSZ13a}
\end{rmk}

\subsection{Examples of dynamics of Higgs-de Rham flow on
$\mathbb{P}^1$ with four-marked points}
In the following, we give some examples in case $k=\F_{3^4}$. For any $\lambda\in k\setminus\{0,1\}$, the map $\varphi_{\lambda,3}$ is a self $k$-morphism on $\P^1_k$. So it can be restricted as a self-map on the set of all $k$-points
\[\varphi_{\lambda,3}:k\cup \{\infty\}\rightarrow k\cup \{\infty\}.\] 
Since $\alpha=\sqrt{1+\sqrt{-1}}$ is a generator of $k=\F_{3^4}$ over $\F_3$, every elements in $k$ can be uniquely expressed in form $a_3\alpha^3+a_2\alpha^2+a_1\alpha+a_0$, where $a_3,a_2,a_1,a_0\in\{0,1,2\}$. We use the integer $27a_3+9a_2+3a_1+a_0\in [0,80]$ stand for the element $a_3\alpha^3+a_2\alpha^2+a_1\alpha+a_0$. By identifying the set $k\cup\infty$ with $\{0,1,2,\cdots,80,\infty\}$ in this way, we get a self-map on $\{0,1,2,\cdots,80,\infty\}$ for all $\lambda\in k$
\[\varphi_{\lambda,3}:\{0,1,2,\cdots,80,\infty\}\rightarrow \{0,1,2,\cdots,80,\infty\}.\]
In the following diagrams, the arrow
\textcircled{$\beta$} $\rightarrow$ \textcircled{$\gamma$} means $\gamma=\varphi_{\lambda,3}(\beta)$.
And an $m$-length loop in the following diagrams just stands for a twisted $m$-periodic Higgs-de Rham flow, which corresponds to $\mathrm{PGL}_2(\F_{3^m})$-representation by Theorem~\ref{equiv:logTFF&THDF} and Theorem~\ref{Mainthm}.

$\bullet$ For $\lambda=2\sqrt{1+\sqrt{-1}}$, we have 
\begin{center}
	\begin{tikzpicture}
	[L1Node/.style={circle,draw=black!50, very thick, minimum size=7mm}]
	\node[L1Node] (n1) at (-2, 1){$21$}; 
	\draw [thick,->](-1.62,0.8) -- (-0.4,0.2);
	\node[L1Node] (n1) at (-2.2, 0){$43$};
	\draw [thick,->](-1.8,0) -- (-0.45,0);
	\node[L1Node] (n1) at (-2, -1){$54$};
	\draw [thick,->](-1.62,-0.8) -- (-0.4,-0.2); 
	\node[L1Node] (n1) at (0, 0){$27$}; 
	\draw [thick,->](0.4,0) -- (1.6,0);
	\node[L1Node] (n1) at (2, 0){$~6\,$};  
	\draw [thick,->](2.3,0.3) .. controls (4,2) and (4,-2) .. (2.35,-0.35);   
	\end{tikzpicture}	
\end{center}

\begin{center}
	\begin{tikzpicture}
	[L1Node/.style={circle,draw=black!50, very thick, minimum size=7mm}]
	\node[L1Node] (n1) at (-2, 2.5){$34$}; 
	\node[L1Node] (n1) at (-2.2, 1.5){$61$};
	\node[L1Node] (n1) at (-2, 0.5){$62$};
	\node[L1Node] (n1) at (0, 1.5){$15$}; 
	\node[L1Node] (n1) at (-2, -2.5){$38$}; 
	\draw [thick,->](-1.62,-2.3) -- (-0.4,-1.7);
	\node[L1Node] (n1) at (-2.2, -1.5){$47$};
	\draw [thick,->](-1.8,-1.5) -- (-0.45,-1.5);
	\node[L1Node] (n1) at (-2, -0.5){$25$};
	\node[L1Node] (n1) at (0, -1.5){$35$};
	\node[L1Node] (n1) at (2, 0){$65$};
	\draw [thick,->](-1.62,-0.7) -- (-0.4,-1.3); 
	\draw [thick,->](-1.8,1.5) -- (-0.45,1.5);
	\draw [thick,->](-1.62,2.3) -- (-0.4,1.7);
	\draw [thick,->](-1.62,0.7) -- (-0.4,1.3); 
	\draw [thick,->](0.38,1.25) -- (1.6,0.25);
	\draw [thick,->](0.38,-1.25) -- (1.6,-0.25);
	\draw [thick,->](2.3,0.3) .. controls (4,2) and (4,-2) .. (2.35,-0.35); 
	\end{tikzpicture}
\end{center}
The $1$-length loops $\xymatrix{\textcircled{\tiny{6}}\ar@(ur,dr)}$ \quad and $\xymatrix{\textcircled{\tiny{65}}\ar@(ur,dr)}$ \quad in the diagrams above correspond to projective representations of form
\[\pi^\text{\'et}_1\left(\P_{W(\F_{3^4})[1/3]}^1\setminus\left\{0,1,\infty,2\sqrt{1+\sqrt{-1}}\right\}\right)\longrightarrow \mathrm{PGL}_2(\F_3),\]
here $W(\F_{3^4})[1/3]$ is the unique unramified extension of $\Q_3$ of degree $4$.

$\bullet$ For $\lambda=\sqrt{-1}$, we have
\begin{center}
	\begin{tikzpicture}
	[L1Node/.style={circle,draw=black!50, very thick, minimum size=7mm}]
	\node[L1Node] (n1) at (0, 1){$47$}; 
	\draw [thick,->](0.38,0.8) -- (1.6,0.2);
	\node[L1Node] (n1) at (0, -1){$60$};
	\draw [thick,->](0.38,-0.8) -- (1.6,-0.2);
	\node[L1Node] (n1) at (2, 0){$31$};
	\draw [thick,->](2.3,0.3) .. controls (3,1) and (4,1) .. (4.65,0.35); 
	\node[L1Node] (n1) at (7, 1){$35$}; 
	\draw [thick,->](6.62,0.8) -- (5.4,0.2);
	\node[L1Node] (n1) at (7, -1){$57$};
	\draw [thick,->](6.62,-0.8) -- (5.4,-0.2);
	\node[L1Node] (n1) at (5, 0){$15$};
	\draw [thick,->](4.7,-0.3) .. controls (4,-1) and (3,-1) .. (2.35,-0.35); 
	\end{tikzpicture}
\end{center}
The $2$-length loop $\xymatrix{\textcircled{\tiny{31}}\ar@/^/[r] &\textcircled{\tiny{15}}\ar@/^/[l]}$ corresponds to a projective representation of form \[\pi^\text{\'et}_1\left(\P_{W(\F_{3^4})[1/3]}^1\setminus\left\{0,1,\infty,\sqrt{-1}\right\}\right)\longrightarrow \mathrm{PGL}_2(\F_{3^2}).\] 
We also have diagram
\begin{center}
	\begin{tikzpicture}
	[L1Node/.style={circle,draw=black!50, very thick, minimum size=7mm}]
	\node[L1Node] (n1) at (-1,-2.4){$21$}; 
	\draw [thick,->](-0.6,-2.4) -- (0.5,-2.4);
	\node[L1Node] (n1) at (1,-2.4){$64$}; 
	\draw [thick,->](1.3,-2.1) -- (2.05,-1.35);
	\node[L1Node] (n1) at (2.4,-1){$48$}; 
	\draw [thick,->](2.4,-0.6) -- (2.4,0.5);
	\node[L1Node] (n1) at (2.4,1){$53$}; 
	\draw [thick,->](2.1,1.3) -- (1.35,2.05);
	\node[L1Node] (n1) at (1,2.4){$24$}; 
	\draw [thick,->](0.6,2.4) -- (-0.5,2.4);
	\node[L1Node] (n1) at (-1,2.4){$37$}; 
	\draw [thick,->](-1.3,2.1) -- (-2.05,1.35);
	\node[L1Node] (n1) at (-2.4,1){$78$}; 
	\draw [thick,->](-2.4,0.6) -- (-2.4,-0.5);
	\node[L1Node] (n1) at (-2.4,-1){$77$}; 
	\draw [thick,->](-2.1,-1.3) -- (-1.35,-2.05);
	\end{tikzpicture}
\end{center}
which is an $8$-length loop and corresponds to a projective representation of form \[\pi^\text{\'et}_1\left(\P_{W(\F_{3^8})[1/3]}^1\setminus\left\{0,1,\infty,\sqrt{-1}\right\}\right)\longrightarrow \mathrm{PGL}_2(\F_{3^8}).\]

$\bullet$ For $\lambda=2+\sqrt{1+\sqrt{-1}}$, one has 
\begin{center}
	\begin{tikzpicture}
	[L1Node/.style={circle,draw=black!50, very thick, minimum size=7mm}] 
	\node[L1Node] (n1) at (-1,3.4){$33$};
	\draw [thick,->](-0.78,3.03) -- (-0.24,2.1); 
	\node[L1Node] (n1) at (1,3.4){$34$}; 
	\draw [thick,->](0.78,3.03) -- (0.24,2.1); 
	\node[L1Node] (n1) at (0,1.7){$32$}; 
	\draw [thick,->](0.22,1.33) -- (0.76,0.4); 
	\node[L1Node] (n1) at (-3,0){$65$}; 
	\draw [thick,->](-2.57,0) -- (-1.45,0);  
	\node[L1Node] (n1) at (-1,0){$35$}; 
	\draw [thick,->](-0.78,0.37) -- (-0.24,1.3);  
	\node[L1Node] (n1) at (1,0){$59$}; 
	\draw [thick,->](0.57,0) -- (-0.55,0);  
	\node[L1Node] (n1) at (3,0){$60$}; 
	\draw [thick,->](2.57,0) -- (1.45,0);  
	\node[L1Node] (n1) at (-2,-1.7){$74$}; 
	\draw [thick,->](-1.78,-1.33) -- (-1.24,-0.4);  
	\node[L1Node] (n1) at (2,-1.7){$61$};  
	\draw [thick,->](1.78,-1.33) -- (1.24,-0.4);  
	\end{tikzpicture}
\end{center}
and the $3$-length loop in this diagram corresponds to a projective representation of form  \[\pi^\text{\'et}_1\left(\P_{W(\F_{3^{12}})[1/3]}^1\setminus\left\{0,1,\infty,2+\sqrt{1+\sqrt{-1}}\right\}\right)\longrightarrow \mathrm{PGL}_2(\F_{3^3}).\]
We also have
\begin{center}
	\begin{tikzpicture}
	[L1Node/.style={circle,draw=black!50, very thick, minimum size=7mm}]  
	\node[L1Node] (n1) at (1.5,1.5){$15$};
	\draw [thick,->](-1.07,-1.5) -- (1.05,-1.5);   
	\node[L1Node] (n1) at (-1.5,1.5){$58$};
	\draw [thick,->](1.5,-1.07) -- (1.5,1.05);   
	\node[L1Node] (n1) at (1.5,-1.5){$38$};
	\draw [thick,->](1.07,1.5) -- (-1.05,1.5);   
	\node[L1Node] (n1) at (-1.5,-1.5){$31$};
	\draw [thick,->](-1.5,1.07) -- (-1.5,-1.05);  
	\end{tikzpicture}
\end{center}
which is a $4$-length loop and corresponds to a projective representation of form  \[\pi^\text{\'et}_1\left(\P_{W(\F_{3^4})[1/3]}^1\setminus\left\{0,1,\infty,2+\sqrt{1+\sqrt{-1}}\right\}\right)\longrightarrow \mathrm{PGL}_2(\F_{3^4}).\]

\subsection{Question on periodic Higgs bundles and torsion points on the associated elliptic curve.}
For $\mathbb{P}^1_{W(\mathbb{F}_q)}$  with 4 marked 
points $\{0,\,1,\,\infty,\,\lambda\}$  we denote the associated elliptic curve as the double cover $\pi: \mathcal{C}_\lambda\to \mathbb{P}^1$ ramified on 
$\{0,\,1,\,\infty,\,\lambda\}$ and 
$[p]: \mathcal{C}_\lambda \to  \mathcal{C}_\lambda$ to be the multiplication by $p$ map.
\begin{conj}\label{conj-1}
The following diagram commutes
\[
\xymatrix{
	\mathcal{C}_\lambda \ar[r]^{[p]} \ar[d]^{\pi} & \mathcal{C}_\lambda \ar[d]^{\pi} \\
	\mathbb{P}^1 \ar[r]^{\varphi_{\lambda,p}} & \mathbb{P}^1 \\
	}
\]
where $\varphi_{\lambda,p}$ is the self-map induced by the Higgs-de Rham flow.
\end{conj}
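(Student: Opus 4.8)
The four branch points $\{0,1,\infty,\lambda\}$ of $\pi$ are exactly the images of the four $2$-torsion points of $\mathcal{C}_\lambda$, so $\pi$ identifies $\mathbb{P}^1$ with $\mathcal{C}_\lambda/[-1]$. Since $[p]$ commutes with $[-1]$ it descends to a rational self-map $\psi_{\lambda,p}\colon\mathbb{P}^1\dashrightarrow\mathbb{P}^1$ of degree $p^2$ — the Latt\`es map of $\mathcal{C}_\lambda$ attached to multiplication by $p$ — and Conjecture~\ref{conj-1} is precisely the assertion $\varphi_{\lambda,p}=\psi_{\lambda,p}$. Two immediate consistency checks: $\psi_{\lambda,p}$ has degree $\deg[p]=p^2$, matching the degree of $\varphi_{\lambda,p}$ computed in \S\ref{section EFSMFMP}; and the preperiodic points of $\psi_{\lambda,p}$ are exactly the images of the torsion points of $\mathcal{C}_\lambda$, the periodic ones being the images of the prime-to-$p$ torsion (if $[p]^mP=\pm[p]^{m'}P$ then $P$ is torsion, and it is $\psi_{\lambda,p}$-periodic iff its order is prime to $p$) — which is the periodicity criterion conjectured in the abstract. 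So the conjecture is internally consistent and what is really needed is an identity of rational maps.

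\textbf{The plan.} I would prove the conjecture by pulling the whole twisted periodic Higgs--de Rham flow back along $\pi$ and reducing the self-map to the descent of the isogeny $[p]$ on $\mathrm{Pic}^0(\mathcal{C}_\lambda)\cong\mathcal{C}_\lambda$. \emph{Step one:} the morphism $\pi$ is log-\'etale for the log structures $\D$ on $\mathbb{P}^1$ and $R:=(\pi^{-1}\D)_{\mathrm{red}}$ on $\mathcal{C}_\lambda$ (ramification index $2$, $p$ odd, so $\pi^{*}\Omega^1_{\mathbb{P}^1}(\log\D)=\Omega^1_{\mathcal{C}_\lambda}(\log R)$), so by the compatibility of the logarithmic inverse Cartier transform with log-\'etale pullback — exactly the mechanism used in the proof of Proposition~\ref{strong_irred} — the self-map $\varphi_{\lambda,p}$ lifts to an operation on rank-$2$ graded logarithmic Higgs bundles over $(\mathcal{C}_\lambda,R)$ commuting with $\pi^{*}$. \emph{Step two:} a type-$(1,0)$ Higgs bundle with Higgs zero $z$ pulls back to $(\mathcal{O}_{\mathcal{C}_\lambda}\oplus\pi^{*}\mathcal{O}_{\mathbb{P}^1}(1),\pi^{*}\theta)$ whose Higgs field vanishes on the degree-$2$ divisor $\pi^{-1}(z)=P+[-1]P$; using $\pi^{*}\mathcal{O}_{\mathbb{P}^1}(1)\cong\mathcal{O}_{\mathcal{C}_\lambda}(2O)$ and an overall twist by $\mathcal{O}_{\mathcal{C}_\lambda}(-O)$ this rank-$2$ datum is equivalent to the point $P\in\mathcal{C}_\lambda$ up to $[-1]$, i.e.\ to its image $z\in\mathbb{P}^1$ (equivalently to the pair $\{\pm[\mathcal{O}_{\mathcal{C}_\lambda}(P-O)]\}\subset\mathrm{Pic}^0(\mathcal{C}_\lambda)$). \emph{Step three:} show that under this dictionary the step $(E,\theta)\mapsto\bigl(\mathrm{Gr}\circ C^{-1}_1(E,\theta)\bigr)\otimes\mathcal{O}_{\mathbb{P}^1}(\tfrac{1-p}{2})$ becomes the endomorphism induced by $[p]^{*}$ of $\mathrm{Pic}^0(\mathcal{C}_\lambda)$, i.e.\ multiplication by $p$ on $\mathcal{C}_\lambda$: $C^{-1}_1$ is Frobenius pullback (which on $\mathrm{Pic}$ is the $p$-th power) followed by a modification supported on the Higgs-zero divisor, and the Simpson graded-semistable filtration together with the regrading performs the balanced Hecke modification returning to the discrete type $\pm O$; twisting by $\mathcal{O}_{\mathbb{P}^1}(\tfrac{1-p}{2})$ only renormalizes degrees. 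Transporting the Higgs zero through the identification then gives $P\mapsto[p]P$, i.e.\ the square of Conjecture~\ref{conj-1} commutes. (As a degenerate reality check, for line bundles with \emph{zero} Higgs field the flow is literally $N\mapsto N^{\otimes p}$, which under $\mathrm{Pic}^0(\mathcal{C}_\lambda)\cong\mathcal{C}_\lambda$ is $[p]$ — this is the projective Lange--Stuhler picture of the end of the introduction, and it isolates the phenomenon we must extend across the nonzero-Higgs locus.)

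\textbf{The main obstacle.} The hard part will be step three: proving that choosing the Simpson graded-semistable Hodge filtration on $C^{-1}_1(E,\theta)$ and passing to the associated graded corresponds \emph{exactly} to the Hecke modification of line bundles on $\mathcal{C}_\lambda$ realizing multiplication by $p$ — equivalently, that the ``apparent singularity'' (jumping point) of the Frobenius-pulled-back de Rham bundle is read off with the correct normalization of $\mathcal{O}_{\mathcal{C}_\lambda}(2O)$ and the correct treatment of the four ramification points, where the logarithmic residues meet the $2$-torsion. The entire numerical coincidence is concentrated here. I see two lines of attack. $(a)$ \emph{Localize:} use the Ogus--Vologodsky identification of the $p$-curvature of $C^{-1}_1(E,\theta)$ with $F^{*}\theta$ to locate the jumping divisor and compute directly that its class in $\mathrm{Pic}^0(\mathcal{C}_\lambda)$ is $p$ times the original. $(b)$ \emph{Degenerate:} let $\lambda\to 0,1,\infty$, where $\mathcal{C}_\lambda$ acquires a node, $[p]$ degenerates to the $p^2$-power map on $\mathbb{G}_m$ and $\psi_{\lambda,p}$ to $z\mapsto z^{p^2}$; combined with the explicit determinantal formula of \S\ref{section EFSMFMP} and rigidity of an identity of rational functions in $(z,\lambda)$, this should propagate the equality over the whole $\lambda$-line.

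\textbf{A computational fallback.} Failing a clean conceptual argument, one attacks the equality of rational functions head-on: the determinants $f_\lambda,g_\lambda$ are Hankel determinants in the coefficients of a hypergeometric-type expansion (morally the period integral $\int dt/\sqrt{t(t-1)(t-\lambda)}$ developed at $z$), hence Pad\'e denominators; by the classical theory of elliptic functions (Halphen, Frobenius--Stickelberger) such Pad\'e denominators are division polynomials of $\mathcal{C}_\lambda$ up to explicit elementary factors, and matching them against the standard formula $x([p]P)=x-\psi_{p-1}\psi_{p+1}/\psi_p^{2}$ for the Latt\`es map finishes the proof; the cases $p\le 50$ already verified then serve as the base of an induction along the division-polynomial recursion.
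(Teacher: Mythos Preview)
This statement is labeled a \emph{conjecture} in the paper, and the paper does not prove it. What the paper actually does is: (i) derive an explicit determinantal formula for $\varphi_{\lambda,p}$ (\S6.1, Theorem~\ref{Thm:selfmap_formula}), (ii) derive an explicit determinantal formula for the Latt\`es map $\psi_{\lambda,p}$ (\S6.2, Theorem~\ref{thm:multp_formula}), (iii) reduce the equality $\varphi_{\lambda,p}=\psi_{\lambda,p}$ to a single determinantal identity (Conjecture~\ref{var_conj}) between two explicit $m\times m$ matrices over $\mathbb{F}_p[\lambda,a]$, and (iv) verify that identity symbolically in Maple for all odd primes $p<50$. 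No conceptual argument is offered, and the statement remains open in the paper.

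Your proposal therefore goes well beyond what the paper attempts. Your reinterpretation is correct and your consistency checks (degree $p^2$, periodic points $=$ images of prime-to-$p$ torsion) are exactly right. Your ``computational fallback'' is the closest in spirit to the paper's route --- the paper's matrices $A_i$ are indeed Hankel in the quantities $\delta_n$, and the $B_j$ are Toeplitz in the coefficients $\gamma_n$ of $(x(x-1)(x-\lambda))^m$ --- but the paper stops at brute-force verification and does not attempt the Pad\'e/division-polynomial identification you propose. If that identification can be made rigorous it would settle the conjecture; the paper does not carry this out.

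Your conceptual Steps~1--2 are sound (Step~1 is precisely the log-\'etale pullback mechanism used in Proposition~\ref{strong_irred}), but you correctly flag Step~3 as the real obstacle, and neither your localization route~(a) nor your degeneration route~(b) is a proof as written. Route~(b) in particular needs a separate rigidity argument: two one-parameter families of degree-$p^2$ rational maps can agree at a nodal degeneration without agreeing identically, so ``$\lambda\to 0,1,\infty$'' alone does not propagate the identity. In short, your proposal is a reasonable research programme toward the conjecture rather than a proof --- and that is consistent with the paper, which makes no stronger claim.
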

Conjecture \ref{conj-1} has been checked to be true for all primes $p\leq 50.$\\[.2cm]
\begin{cor}  Assuming Conjecture \ref{conj-1} holds. A Higgs bundle $(E,\theta)\in M(1,0)_{\mathbb{F}_q}$ is periodic if and only the zero $(\theta)_0=\pi(x)$, where $x$ is a torsion point in $ \mathcal{C}_\lambda$ of order coprime to $p$.
\end{cor}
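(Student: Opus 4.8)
The plan is to reduce the corollary to an elementary statement about the multiplication-by-$p$ map on $\mathcal C_\lambda$, using Conjecture~\ref{conj-1} as the only nontrivial input. Recall from Section~\ref{section DMS} that for the four marked points $\{0,1,\infty,\lambda\}$ the moduli space $M(1,0)$ is identified with $\mathbb{P}^1$ via $(E,\theta)\mapsto(\theta)_0$, and that by Corollary-Definition~\ref{def:selfmap} and Proposition~\ref{Self-map} (the trivial-discriminant hypothesis being automatic on a curve), together with the discussion around Theorem~\ref{Periodic}, a Higgs bundle $(E,\theta)\in M(1,0)_{\mathbb{F}_q}$ is periodic if and only if its point $z:=(\theta)_0\in\mathbb{P}^1$ is a periodic point of the self-map $\varphi_{\lambda,p}\colon\mathbb{P}^1\to\mathbb{P}^1$ (a morphism, as $\mathbb{P}^1$ is a smooth curve), i.e. $\varphi_{\lambda,p}^{\,N}(z)=z$ for some $N\geq1$. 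So it suffices to show that $z\in\mathbb{P}^1$ is $\varphi_{\lambda,p}$-periodic if and only if $z=\pi(x)$ for a torsion point $x\in\mathcal C_\lambda$ of order prime to $p$.

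First I would iterate the commutative square of Conjecture~\ref{conj-1}: from $\varphi_{\lambda,p}\circ\pi=\pi\circ[p]$ one obtains $\varphi_{\lambda,p}^{\,N}\circ\pi=\pi\circ[p^N]$ for all $N\geq1$. Since $\pi$ is surjective, write $z=\pi(x)$ with $x\in\mathcal C_\lambda(\overline{\mathbb{F}}_q)$. Taking the origin of $\mathcal C_\lambda$ at one of the four Weierstrass points, the double cover $\pi$ is precisely the quotient by the hyperelliptic involution, which coincides with negation $x\mapsto-x$; hence $\pi(y)=\pi(x)$ if and only if $y=\pm x$. Therefore $\varphi_{\lambda,p}^{\,N}(z)=z$ holds if and only if $\pi([p^N]x)=\pi(x)$, that is, if and only if $[p^N]x=x$ or $[p^N]x=-x$, equivalently $[p^N-1]x=0$ or $[p^N+1]x=0$.

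It remains to note that ``$[p^N-1]x=0$ or $[p^N+1]x=0$ for some $N\geq1$'' is equivalent to ``$x$ is a torsion point whose order is prime to $p$''. In the forward direction such a relation makes $x$ torsion, of some order $n$ dividing $p^N-1$ or $p^N+1$, and in either case $\gcd(n,p)=1$. Conversely, if $x$ is torsion of order $n$ with $\gcd(n,p)=1$, then $p$ is a unit modulo $n$; letting $d$ be its multiplicative order we get $p^d\equiv1\pmod n$, so $[p^d-1]x=0$ and $z$ is periodic of period dividing $d$. (The four Weierstrass points have $n=2$, prime to the odd prime $p$, and are indeed fixed by $\varphi_{\lambda,p}$, consistent with $[p]x=x$ for $2$-torsion $x$ and odd $p$.) Chaining the two equivalences yields the corollary.

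Because Conjecture~\ref{conj-1} is assumed, no genuine obstacle remains; the only points requiring a little care are the identification of the fibres of $\pi$ with $\{\pm\}$-orbits under the group law on $\mathcal C_\lambda$ --- standard for the hyperelliptic double cover of an elliptic curve --- and the translation between periodic points of $\varphi_{\lambda,p}$ and periodic twisted Higgs-de Rham flows, which has already been carried out in Sections~\ref{section SMMSHBPMP} and~\ref{section EFSMFMP}.
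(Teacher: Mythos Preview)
Your argument is correct and is exactly the natural one: the paper states this corollary without proof, treating it as an immediate consequence of the commuting square in Conjecture~\ref{conj-1}, and your write-up supplies precisely the details one would expect --- iterating the square, identifying the fibres of $\pi$ with $\{\pm x\}$, and reducing to the elementary equivalence between ``$[p^N\pm1]x=0$ for some $N$'' and ``$\mathrm{ord}(x)$ is prime to $p$''. Nothing is missing.
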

In Theorem \ref{lifting-thm} we have shown that any periodic Higgs bundle in $M(1,0)_{\mathbb{F}_q}$  lifts to a periodic Higgs bundle over $\mathbb{Z}_p^\text{ur}$. In fact, there are infinitely many liftings if $\phi_{\lambda, p}\not= z^{p^2}.$ \\[.2cm]
\begin{conj}\label{conj-2} 
A periodic Higgs bundle $(E,\theta)$ in $M(1,0)_{\mathbb{F}_q}$
 lifts to a periodic Higgs bundle $(\mathcal{E}, \mathcal{\theta})$  in $M(1,0)_{W(\mathbb{F}_{q'})}$ if and only
 if $(\mathcal{\theta})_0=\pi(x)$, where $x$ is a torsion point in $ \mathcal{C}_\lambda$ of order coprime to $p$.
\end{conj}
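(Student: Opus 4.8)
The plan is to deduce Conjecture~\ref{conj-2} from Conjecture~\ref{conj-1}, the Corollary following it, Theorem~\ref{lifting-thm}, and one further input, which I will treat as a hypothesis and return to at the end: a lift of Conjecture~\ref{conj-1} over the truncated Witt rings. Fix $\lambda$ so that $0,1,\infty,\lambda$ remain pairwise distinct modulo $p$; then $\mathcal C_\lambda$ has good reduction over $W(\F_q)$, so for every $n$ coprime to $p$ the group scheme $\mathcal C_\lambda[n]$ is finite étale over $W(\F_q)$ — hence its geometric points are defined over $W(\F_{q'})$ for a suitable finite $\F_{q'}$ and reduction modulo $p$ is a bijection on $n$-torsion — while in characteristic $0$ the map $[p]$ on $\mathcal C_\lambda$ is just multiplication by the integer $p$. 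The further input is that the Higgs--de Rham-flow self-map $\varphi_{\lambda,p}$ of Corollary-Definition~\ref{def:selfmap} lifts to a self-map $\widetilde\varphi_n$ of an open subscheme of $M(1,0)_{W_n(\F_{q'})}=\P^1_{W_n(\F_{q'})}$ — which it does, because the obstruction to lifting Simpson's graded semistable Hodge filtration lies in $H^1(\P^1_k,\mO(-1))=0$, so that $\mathrm{Gr}\circ C_n^{-1}(\cdot)\otimes\mO(\tfrac{1-p}{2})$ is a well-defined operation over $W_n$ — and that these lifts fit into a commutative square with $[p]$ on the integral model $\mathcal C_{\lambda,W_n}$ via $\pi$, i.e. $\widetilde\varphi_n\circ\pi=\pi\circ[p]$ over $W_n$, at least along the prime-to-$p$ torsion locus.

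\emph{If direction.} Let $(E,\theta)\in M(1,0)_{\F_q}$ be periodic. By the Corollary to Conjecture~\ref{conj-1}, $(\theta)_0=\pi(\bar x)$ with $\bar x\in\mathcal C_\lambda(\bar\F_p)$ a torsion point of order $n$ coprime to $p$. Let $x\in\mathcal C_\lambda[n](W(\F_{q'}))$ be the unique lift of $\bar x$ and let $(\mathcal E,\theta)$ be the unique graded logarithmic Higgs bundle of type $(1,0)$ over $(\P^1,\mathcal D)_{W(\F_{q'})}$ with $(\theta)_0=\pi(x)$. Because $\gcd(p,n)=1$, multiplication by $p$ is invertible on $\Z/n$, so $x$ lies on a finite $[p]$-orbit, say of length $N$; applying $\pi$ and the Witt-level compatibility, $\pi(x)$ lies on a $\widetilde\varphi_n$-orbit of length dividing $N$ at every level. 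Unwinding the coarse-moduli construction of $\varphi_{\lambda,p}$ in Section~\ref{section SMMSHBPMP} and invoking Theorem~\ref{equiv:TFF&THDF} together with Proposition~\ref{Lifting_PHDF} — whose hypotheses hold here, since these Higgs bundles (given by logarithmic $1$-forms vanishing at a point) lift without obstruction and the Hodge-filtration obstruction group $H^1(\P^1_k,\mO(-1))$ vanishes — such a finite orbit assembles, compatibly in $n$, into a twisted periodic Higgs--de Rham flow over $(\P^1,\mathcal D)_{W(\F_{q'})}$ with initial term $(\mathcal E,\theta)$. Hence $(\mathcal E,\theta)$ is periodic over $W(\F_{q'})$.

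\emph{Only if direction.} Let $(\mathcal E,\theta)$ be periodic over $W(\F_{q'})$ and put $z'=(\theta)_0$. Over $W(\F_{q'})[1/p]$ (equivalently over $\Z_p^{\mathrm{ur}}$) write $z'=\pi(y')$ with $y'\in\mathcal C_\lambda$. For each $n$, periodicity of the flow over $W_n(\F_{q'})$ — say with period $f_n$ — gives $\widetilde\varphi_n^{f_n}(z')=z'$, hence $\pi([p]^{f_n}y')=\pi(y')$ by the compatibility, hence $[p]^{f_n}y'=\pm y'$; since in characteristic $0$ this reads $(p^{f_n}\mp1)y'=0$, the point $y'$ is torsion of order dividing $p^{f_n}\mp1$, in particular of order coprime to $p$. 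Reducing modulo $p$, the reduction $\bar z'=\pi(\bar y')$ is periodic for $\varphi_{\lambda,p}$, so by the Corollary $\bar z'=\pi(\bar x)$ for a prime-to-$p$ torsion point $\bar x$; by injectivity of reduction on prime-to-$p$ torsion, $\bar y'=\pm\bar x$, and therefore $y'=\pm x$ is the torsion lift of $\bar x$. Thus $z'=\pi(y')=\pi(x)$ with $x$ torsion of order coprime to $p$, as claimed. (For orientation: a lift of $\pi(\bar x)$ of the form $\pi(x+t)$ with $t\neq0$ of $p$-power order satisfies $[p]^N(x+t)=[p]^Nx=x$, so $\widetilde\varphi_n^N(\pi(x+t))=\pi(x)\neq\pi(x+t)$, making $\pi(x+t)$ strictly preperiodic; so such lifts cannot underlie a periodic flow.)

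The main obstacle is the truncated-Witt refinement of Conjecture~\ref{conj-1} invoked above. Conjecture~\ref{conj-1} itself is stated and verified only in characteristic $p$ (and only for $p\le50$), and the two $W_n$-morphisms $\widetilde\varphi_n\circ\pi$ and $\pi\circ[p]$ agree modulo $p$ but, a priori, not on the nose: as is already visible from the torsor self-map $z\mapsto a_nz^p+b_n$ in the lifting analysis preceding Theorem~\ref{lifting-thm}, distinct lifts of a fixed periodic Higgs bundle are distinct fixed points of that map, and one must argue that the one coming from the torsion point is among them. I would try to control this by a Hensel-type argument level by level — tracking the fixed locus of $z\mapsto a_nz^p+b_n$ in a coordinate adapted to $\pi$ and exploiting the controlled degeneration of $[p]$ modulo $p$ — or, more robustly, by deducing the compatibility from a crystalline/companion comparison with the $\ell$-adic picture of Kontsevich and Drinfeld in the spirit of \cite{preparation}. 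In either case the routine bookkeeping in the two directions above is not the hard part; the difficulty is entirely concentrated in lifting Conjecture~\ref{conj-1} to the truncated-Witt level.
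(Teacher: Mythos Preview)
The paper does not prove this statement: it is presented as Conjecture~\ref{conj-2}, an open problem, with no accompanying argument. There is therefore no ``paper's own proof'' to compare your proposal against. What the paper does do in this vicinity is (i) state Conjecture~\ref{conj-1}, verify it numerically for $p\le 50$, and deduce the characteristic-$p$ Corollary characterizing periodic points as images of prime-to-$p$ torsion; and (ii) prove Theorem~\ref{lifting-thm} that every periodic Higgs bundle over $\bF_q$ admits \emph{some} periodic lift to $\bZ_p^{\ur}$, noting that there are in fact infinitely many such lifts when $\varphi_{\lambda,p}\neq z^{p^2}$. Conjecture~\ref{conj-2} is then the assertion pinning down which lifts are the periodic ones, and the paper leaves this open.

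Your proposal is therefore not a proof but a reduction: you show that Conjecture~\ref{conj-2} would follow from a truncated-Witt strengthening of Conjecture~\ref{conj-1}, namely the compatibility $\widetilde{\varphi}_n\circ\pi=\pi\circ[p]$ over $W_n$. You are candid about this, and the reduction itself is reasonable --- the passage from a finite $[p]$-orbit on prime-to-$p$ torsion to a periodic $\widetilde{\varphi}_n$-orbit, and the converse via $(p^{f_n}\mp 1)y'=0$, are the right mechanisms. But two points deserve emphasis. First, the hypothesis you need is strictly stronger than anything the paper asserts or checks: even Conjecture~\ref{conj-1} itself is open, and its Witt-level lift is a new assumption. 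Second, the lifting analysis before Theorem~\ref{lifting-thm} shows that the torsor self-map $z\mapsto az^p+b$ governing the periodic lifts has, generically, $p$ fixed points per level; your argument must single out the one coming from the torsion lift, and this is precisely where the unproven compatibility does all the work. So the honest summary is: you have correctly identified a plausible route, but the statement remains conjectural both in the paper and in your proposal.
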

\subsection{Question on $\ell$-adic representation and  $\ell$-to-$p$ companions.}
Kontsevich has observed a relation between the set of isomorphic classes of $\text{GL}_2(\bar{\mathbb Q}_l)$-local systems over $\mathbb{P}^1\setminus \{0,\,1,\infty,\lambda\}$ over $\mathbb{F}_q$ and the set of rational points on $C_\lambda$ over $\mathbb{F}_q$ (see {\cite[section 0.1]{kontsevich}})  via the work of Drinfeld on the Langlands program over function field. It looks quite mysterious as the elliptic curve appears in $p$-adic as well in $\ell$-adic case. There should exist a relation between periodic Higgs bundles in the $p$-adic world and the Hecke-eigenforms in the $\ell$-adic world via Abe's solution of Deligne conjecture on $\ell$-to-$p$ companions.
 To make the analogy, we first lift the $\mathrm{PGL}_2$-representations to $\mathrm{GL}_2$-representations.\\

In this paragraph, we keep the notations in the proof of Proposition~\ref{strong_irred}.
Now we want to descent the $\mathrm{GL}_2$-representation 
 \[\rho':\pi^\text{\'et}_1((\mY\setminus\mD')_{\bQ_{q'}})\to \mathrm{GL_2}((\bZ_q))\] 
to a $\mathrm{GL}_2$-representation of $\pi^\text{\'et}_1((\mX\setminus\mD)_{\bQ_{q'}})$, whose projectification is just the projective representation $\rho$. 
There is a natural action of the deck transformation group $G=\mathrm{Gal}(\mY/\mX)$ on $\mO_\mY=\sigma^*\mO_\mX$, with $\mO_\mX=\mO_\mY^G$. Since $0$ and $\infty$ are fixed by $G$, the action of $G$ on $\mO_\mY$ can be extended to $\mO_\mY((0))$ and $\mO_\mY((\infty))$. On the other hand, both $\mO_\mY((0))$ and $\mO_\mY((\infty))$ are isomorphic to $\mO_\mY(1)$.
We could endow two actions of $G$ on the periodic Higgs bundle $\sigma^*(E,\theta)\otimes\mO_\mY(1)^{-1}$. This is equivalent to endow a parabolic structure
\footnote{
Recall that to give a Higgs bundle with parabolic structure is equivalent to give a Higgs bundle over some Galois covering with an action of the deck transformation group. In our case, we can view $\sigma^*(E,\theta)\otimes\mO_\mY(1)^{-1}$ with a $G$-action as the Higgs bundle $(E,\theta)$ with a parabolic structure.
}
on the Higgs bundle $(E,\theta)$.  Extend the actions to the periodic graded Higgs-de Rham flow with initial term $\sigma^*(E,\theta)\otimes\mO_\mY(1)^{-1}$. Via Faltings' functor $\mathbb D$, the actions of $G$ on the flow induce actions of $G$ on the sections of locally constant sheaf on $(\mY\setminus\mD')_{\mathbb Q_{q'}}$. Then those $G$-invariant sections forms a locally constant sheaf on $(\mX\setminus\mD)_{\mathbb Q_{q'}}$. This gives us a $\mathrm{GL_2}((\mathbb Z_q))$-representation of  $\pi^\text{\'et}_1((\mX\setminus\mD)_{\mathbb Q_{q'}})$.

 For example, if we choose the $G$-action on $\mO_\mY(1)$ via the isomorphism $\mO_\mY(1)\simeq \mO_\mY((\infty))$. Then one could lift the $\text{PGL}_2( \mathbb{Z}_q  )$-representation to 
\[\rho: \pi^\text{\'et}_1(X\setminus \{0,\,1,\infty,\lambda\})\to \text{GL}_2(\mathbb{Z}_q)\]
such that the local monodromy around $\{0,\,1,\,\lambda\}$ are unipotent and around $\infty$ is quasi-unipotent with eigenvalue $-1$.\\[.1cm]
 Let $(V,\nabla, Fil^\bullet,\Phi)$ be the Fontaine-Faltings module corresponding to $\rho$.
Forgetting the Hodge filtration on $V$  one obtains
 a logarithmic  $F$-isocrystal
 $(V,\nabla, \Phi)/{(\mathbb{P}^1,\{0,\,1,\,\lambda,\infty\})}_{\mathbb{Q}_{q'}}$, which should correspond to an $\ell$-adic representation $\rho_{\ell} :\pi^\text{et}_1(\mathbb{P}^1_{\mathbb {F}_{q'}}\setminus  \{0,\,1,\,\lambda,\infty\} )\to \text{GL}_2( \bar{\mathbb{Q}}_\ell)$ by
applying Abe's solution of Deligne's conjecture on $\ell$-to-$p$ companion (see {\cite[Theorem 4.4.1]{Abe-13}} or {\cite[Theorem 7.4.1]{Drinfeld-18}}). However,  in order to apply Abe's theorem one has to check the determinant of the $F$-isocrystal $(V,\nabla, \Phi_p)/(\mathbb{P}^1,\{0,\,1,\,\lambda,\infty\})$ is of finite order (note that the category of $F$-isocrystal is a tensor category, and $\text{det}(V,\nabla, \Phi_p)$ is of finite order just means that its some tensor power becomes the trivial $F$-isocrystal $\mathcal{O}_{\mathbb{P}^1}$).
\begin{conj}\label{conj-3}   
There exist elements $ u\in \mathbb{Z}_{q'}^*$ such that
 $(\det V,\det\nabla, u\det \Phi)$
 corresponds to a finite character of 
 $  \pi^\text{\'et}_1(\mathbb{P}^1_{\mathbb{Q}_{q'}}\setminus \{0,\,1,\infty,\lambda\}).$
\end{conj}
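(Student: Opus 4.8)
\emph{First}, the plan is to reduce Conjecture~\ref{conj-3} to a single rank-one object, to dispose of its connection (exponent) part by a local computation, and to isolate the Frobenius scalar as the genuine content. I would take determinants in the twisted periodic Higgs--de Rham flow attached to $\rho$ by Theorem~\ref{equiv:logTFF&THDF} (with the parabolic/double-cover bookkeeping as in the proof of Proposition~\ref{strong_irred}). Since $\det\circ\,\mathcal C^{-1}_{1}=\mathcal C^{-1}_{1}\circ\det$, since the determinant of a two-step Hodge filtration is a one-step filtration which commutes with grading, and since $\mathrm{tr}\,\theta=0$ for nilpotent $\theta$, the flow with initial Higgs bundle $(\mathcal{O}(1)\oplus\mathcal{O},\theta)$ (resp.\ $(\mathcal{O}(1)\oplus\mathcal{O}(-1),\sigma^{*}\theta)$ on the double cover $\mathcal{Y}$) has determinant a rank-one twisted periodic logarithmic Higgs--de Rham flow with initial term $(\det E,0)$, and the associated rank-one twisted Fontaine--Faltings module is exactly $(\det V,\det\nabla,\det\Fil,\det\Phi)$. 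For a line bundle the filtration and grading are inert, so this flow is simply the self-map $(\mathcal{L},0)\mapsto\mathcal C^{-1}_{1}(\mathcal{L},0)\otimes\det(E_{0})^{1-p}$, which fixes $(\det E,0)$ with period one; the only freedom in turning such a twisted --- hence a priori only projective --- rank-one datum into an honest $F$-isocrystal is precisely the scalar $u\in\mathbb{Z}_{q'}^{\times}$ of the statement. Thus the conjecture becomes: for a suitable $u$, some tensor power of $(\det V,\det\nabla,u\det\Phi)$ on $(\mathbb{P}^{1},\{0,1,\infty,\lambda\})$ is the trivial $F$-isocrystal $\mathcal{O}_{\mathbb{P}^{1}}$.

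\emph{Second}, I would compute the exponents of $\det\nabla$ along the four points. The prescribed local monodromy of $\rho$ --- unipotent at $0,1,\lambda$ and quasi-unipotent with eigenvalue $-1$, hence with both local eigenvalues $-1$, at $\infty$ --- forces the residue of $\nabla$ to be nilpotent at $0,1,\lambda$ and to have both eigenvalues in $\tfrac12+\mathbb{Z}_{p}$ (legitimate since $p$ is odd) at $\infty$; on the double cover the latter becomes an honest integer. Hence the residue of $\det\nabla$ is an integer $n_{x}$ at each of the four points, so after tensoring with the constant object $\mathcal{O}(-\sum_{x}n_{x}x)$ attached to these integers --- which restricts to the trivial $F$-isocrystal on $\mathbb{P}^{1}\setminus\{0,1,\infty,\lambda\}$ --- the underlying de Rham bundle of $\det V$ acquires the trivial connection on the complement; moreover quasi-unipotence of the monodromy precludes wild ramification. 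Therefore the only invariant left is the Frobenius eigenvalue, and finiteness of the $F$-isocrystal is equivalent to this eigenvalue, multiplied by $u$, being a root of unity.

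\emph{Third}, and this is where the real work lies, I would control that Frobenius scalar. The period-one twisted structure produces, through the local trivialisations of Section~\ref{section TFFMES} and the elements of Lemma~\ref{a_n,r}, a distinguished invertible element of $B^{+}$ whose Frobenius-twist records the eigenvalue $\alpha$ of $\det\Phi$, and $u$ is exactly the ambiguity in $\alpha$. One route is explicit: for four marked points the self-map is the rational map $\varphi_{\lambda,p}$ of Section~\ref{section EFSMFMP}, and --- granting Conjecture~\ref{conj-1}, or already in the cases where $\varphi_{\lambda,p}(z)=z^{p^{2}}$ such as $\lambda=-1$ for $p=3$ --- the resulting flow is maximally rigid and $\alpha$ can be written down and checked to be a root of unity. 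The conceptual route is a weight argument: $(\det V,\det\nabla,\det\Phi)$ is an $F$-isocrystal of geometric origin on a curve over a finite field, hence pure; its weight is read off from the Hodge--Tate weights $\{0,1\}$ of $\rho$, gets absorbed into the twist, and one then invokes Kronecker's theorem that a weight-zero Weil number which is an algebraic integer with all archimedean absolute values $1$ is a root of unity. The main obstacle is twofold: one must check that the weight produced by $\{0,1\}$ is genuinely absorbable by a $p$-adic \emph{unit} $u$ --- this is precisely the point forcing the normalisation in which $\det\Phi$ is an isomorphism on the associated ``tilde'' module, equivalently the double-cover model with $\det(E,\theta)=(\mathcal{O},0)$ --- and one must establish purity, i.e.\ that these representations, obtained a priori only via the density argument of Theorem~\ref{Periodic}, are of motivic origin. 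I expect this purity/weight bookkeeping to be the hard part.
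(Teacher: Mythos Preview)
The statement you are attempting to prove is labelled \texttt{conj-3} and is stated in the paper as a \emph{Conjecture}; the paper offers no proof, not even a sketch. There is therefore nothing to compare your proposal against: the authors regard this as open.

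As for the content of your proposal itself: steps one and two (passing to the determinant, which is a rank-one twisted $1$-periodic object, and computing the residues of $\det\nabla$ from the prescribed local monodromy) are sound bookkeeping and correctly isolate the problem as a question about a single Frobenius eigenvalue on a rank-one $F$-isocrystal with trivial connection. But your third step is not a proof --- and you say so yourself. The ``conceptual route'' via purity requires knowing that the $F$-isocrystal is pure, which in turn requires it to be of geometric/motivic origin; but the periodic Higgs bundles in question are produced by the Hrushovski-type density argument of Theorem~\ref{Periodic}, and nothing in the paper (or in your sketch) supplies a geometric realisation. The ``explicit route'' you mention only covers the special values of $\lambda$ where $\varphi_{\lambda,p}(z)=z^{p^{2}}$, or else presupposes Conjecture~\ref{conj-1}, itself open. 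So what you have written is a correct reduction of Conjecture~\ref{conj-3} to the purity of a specific rank-one $F$-isocrystal, together with an honest acknowledgement that this purity is unproved. That is a reasonable outline of why the conjecture is plausible, but it is not a proof, and it does not go beyond what the paper's authors presumably already had in mind when they recorded the statement as a conjecture rather than a theorem.
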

 \subsection{Projective $F$-units crystal on smooth projective curves}\label{proj F-units}
 Let $\X$ be a smooth proper scheme over $W(k)$.  In~\cite{LSZ13a} an equivalence between the category of $f$-periodic vector bundles  $(E,0)$ of rank-$r$  over $X_n$ (i.e.  $(E,0)$ initials an $f$-periodic Higgs-de Rham flow with zero Higgs fields in all Higgs terms)  and the category of $\mathrm{GL}_r(W_n(\F_{p^f}))$-representations of $\pi^\text{\'et}_1(X_1)$ has been established. This result generalizes Katz's original theorem for $\X$ being an affine variety. As an application of our main theorem, 
 we show that
 \begin{thm}
 The $\mathbb D^P$ functor is faithful from the category of rank-$r$ twisted $f$-periodic vector bundles $(E,0)$ over $X_n$ to the the category of projective $W_n(\F_{p^f})$-representations of $\pi^\text{\'et}_1(X_{1,k'})$ of rank $r$, where $k'$ is the minimal extension of $k$ containing $\F_{p^f}$.	
 \end{thm}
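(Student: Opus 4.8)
I would present $\mathbb D^P$ on this category as a composite of faithful functors. First, reduce to the case $\F_{p^f}\subset k$: if $\F_{p^f}\not\subset k$, replace $\X$ by its base change to $W(k')$ and observe that the base change functor along the faithfully flat morphism $X_{n,k'}\to X_{n,k}$ is faithful on coherent sheaves, so it suffices to treat $k'=k$. A rank-$r$ twisted $f$-periodic vector bundle $(E,0)$ over $X_n$ is, by definition, the initial term of a twisted $f$-periodic Higgs-de Rham flow over $X_n$ with vanishing Higgs fields; the equivalence $\mathcal{IC}$ of Theorem~\ref{equiv:TFF&THDF} sends it to the associated object of $\mathcal{TMF}_{[0,0],f}^{\nabla}(X_{n+1}/W_{n+1})$, and the construction of Section~\ref{section FDP} (Theorem~\ref{ConsFunc:D^P}) attaches a projective representation of $\pi^\text{\'et}_1(X_K)$ on a free $W_n(\F_{p^f})$-module of rank $r$. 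As in Katz's theory of $F$-unit crystals and the periodic case of~\cite{LSZ13a}, the vanishing of the Higgs fields forces this representation to descend along the surjection $\pi^\text{\'et}_1(X_K)\twoheadrightarrow\pi^\text{\'et}_1(X_{1,k'})$, and the inflation functor along this surjection is faithful by Corollary~\ref{Cor:compHom}. Hence the functor of the theorem is the composite of base change, $\mathcal{IC}$, the functor of Theorem~\ref{ConsFunc:D^P}, and this descent, and faithfulness is reduced to faithfulness of $\mathbb D^P$ on twisted Fontaine-Faltings modules.

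\textbf{Faithfulness of $\mathbb D^P$.} For this I would go back to the construction in Section~\ref{section FDP}. Fix a finite small affine covering $\{\U_j\}_{j\in J}$ of $\X$, Frobenius lifts $\Phi_j$, a geometric base point, and trivializations $\tau_j$ of the twisting line bundle; then, by the local trivialization procedure of Section~\ref{section TFFMES}, a twisted Fontaine-Faltings module $M$ yields local ordinary Fontaine-Faltings modules $M(\tau_j)$ with $W_n(\F_{p^f})$-structure, and $\mathbb D^P(M)$ is obtained via the gluing Theorem~\ref{Mainthm:rep} from the representations $\mathbb D_{\Phi_j}(M(\tau_j))$ of $\pi^\text{\'et}_1(U_{j,K})$ by projectifying, the comparison on overlaps being supplied by the elements $a_{n,r}$ of Lemma~\ref{a_n,r} and Corollary~\ref{cor_compare_proj}. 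A morphism $g$ of twisted Fontaine-Faltings modules restricts on each $\U_j$ to a morphism $g_j$ of the $M(\tau_j)$, and by the construction together with the uniqueness in Theorem~\ref{Mainthm:rep} the morphism $\mathbb D^P(g)$ restricts on $U_{j,K}$ to the morphism of projective representations carried by the linear map $\mathbb D_{\Phi_j}(g_j)$. Now use that Faltings' functor $\mathbb D_{\Phi_j}$ is fully faithful, hence faithful, and that the projectification of an $\mathcal O$-linear map is the zero morphism of projective representations only if the map itself vanishes (a vector with nonzero image survives in the quotient by $\mathcal O^\times$). Therefore $\mathbb D^P(g)=0$ forces each $\mathbb D_{\Phi_j}(g_j)=0$, hence each $g_j=0$, hence $g=0$ since a morphism of sheaves vanishing on the members of an open covering is zero. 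Unwinding back through $\mathcal{IC}$, the descent, and the faithful base change shows the original morphism of twisted periodic vector bundles vanishes, which is what faithfulness asserts.

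\textbf{Main obstacle, and a remark.} The bookkeeping above is formal once the functor is set up; the delicate point I would spell out is the well-definedness of the target — that a flow with \emph{vanishing} Higgs fields produces, via $\mathbb D^P$, a representation that genuinely descends to $\pi^\text{\'et}_1(X_{1,k'})$ and lands among rank-$r$ projective $W_n(\F_{p^f})$-representations, compatibly with the trivializations of the twisting line bundle and with morphisms. This is the projective analogue of the Lange-Stuhler / Katz $F$-unit correspondence, and is cleanest to obtain by tracing the proof of Theorem~\ref{equiv:TFF&THDF} in the zero-Higgs-field case, where the inverse Cartier transform is merely Frobenius pullback with its canonical connection, so that the full twisted Fontaine-Faltings module is determined by the bundle $E$ and the isomorphism $F^{*f}E\otimes L\cong E$. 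I would also note that one should not expect $\mathbb D^P$ to be full here: projectification forgets the $W_n(\F_{p^f})^\times$-scalar ambiguity in the twisted $\phi$-structure — equivalently, twisting $(E,0)$ by a line bundle that becomes trivial in $\mathrm{PGL}_r$ — so ``faithful'' is the sharp statement.
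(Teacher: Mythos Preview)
The paper does not give a proof of this theorem: it is stated in Section~\ref{proj F-units} as an application of the preceding machinery (``As an application of our main theorem, we show that\ldots'') and followed immediately by a remark and the next statement. So there is no proof in the paper to compare against; your proposal is effectively supplying the argument the authors leave implicit.

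Your decomposition of $\mathbb D^P$ as a composite of faithful functors --- base change to $k'$, the equivalence $\mathcal{IC}$ of Theorem~\ref{equiv:TFF&THDF}, the construction of Theorem~\ref{ConsFunc:D^P}, and inflation along the surjection $\pi^\text{\'et}_1(X_K)\twoheadrightarrow\pi^\text{\'et}_1(X_{1,k'})$ --- is exactly the shape one expects from how the paper assembles $\mathbb D^P$, and is consistent with the untwisted precedent in~\cite{LSZ13a} that the paper invokes in the opening of Section~\ref{proj F-units}. One small clarification: in the paper's category $\mathrm{PRep}_{\mathcal O}(G)$ a morphism \emph{is} an $\mathcal O$-linear map (merely subject to a $G$-equivariance condition on the induced map of quotient sets), not an equivalence class of such; so your step ``projectification of a linear map is zero only if the map is zero'' is in fact a tautology here, and the faithfulness of $\mathbb D^P$ on twisted Fontaine--Faltings modules reduces directly to the (full) faithfulness of the local $\mathbb D_{\Phi_j}$, as you argue. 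Your identification of the descent step --- that vanishing Higgs fields force the representation to factor through $\pi^\text{\'et}_1(X_{1,k'})$ --- as the one substantive point is apt: this is the twisted/projective analogue of the Katz and Lange--Stuhler statements the paper cites, and the paper does not spell it out either.
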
 
 
 
 \begin{rmk}
 	For $n=1$  the above theorem is just a projective version of Lange-Stuhler's theorem.
 \end{rmk}

 \begin{thm}[lifting twisted periodic vector bundles]
 Let $(E,0)/X_1$ be an twisted $f$-periodic vector bundle. Assume $H^2(X_1, End(E))=0$.
 Then for any $n\in \mathbb N$ there exists some positive integer $f_n$ with $f\mid f_n$ such that $(E,0)$ lifts to a twisted $f_n$-periodic vector bundle over $X_n$.	
 \end{thm}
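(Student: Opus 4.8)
The plan is to realize $(E,0)$ as a degenerate Higgs-de Rham flow — all Higgs fields zero, all Hodge filtrations trivial — and then quote Proposition~\ref{Lifting_PHDF} inductively to lift it from $X_1$ to $X_n$, while tracking the (unavoidable) growth of the period. First I would unwind what the flow attached to $(E,0)$ looks like. Since $\theta=0$, the explicit description of the inverse Cartier functor in Section~\ref{section ICF} shows $C^{-1}_1(E,0)=(F^*E,\nabla_{\mathrm{can}})$ (the Deligne--Illusie gluing cocycle $\exp(h_{ij}F^*\theta)$ is the identity when $\theta=0$); the only Hodge filtration of level in $[0,0]$ is the trivial one; and $\mathrm{Gr}$ of the result is $(F^*E,0)$. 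So every Higgs term of the flow has the form $(E_i,0)$ whose reduction mod $p$ on $X_1$ is $(F^*)^iE$, every filtration is trivial, and twisted $f$-periodicity is precisely the datum of a line bundle $L_1$ on $X_1$ and an isomorphism $(F^*)^fE\otimes L_1\cong E$. In particular $\mathrm{End}(E_i)\cong(F^*)^i\mathrm{End}(E)$ on $X_1$; since $\mathrm{End}$ kills line-bundle twists, periodicity gives $(F^*)^f\mathrm{End}(E)\cong\mathrm{End}(E)$, so among the $\mathrm{End}(E_i)$ only the finitely many sheaves $\mathrm{End}(E),F^*\mathrm{End}(E),\dots,(F^*)^{f-1}\mathrm{End}(E)$ occur up to isomorphism.

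Next I would apply Proposition~\ref{Lifting_PHDF} with $n$ replaced successively by $1,2,\dots$ and with empty log divisor. Its hypothesis on lifting the Hodge filtrations holds vacuously here, because a trivial filtration lifts only to a trivial filtration. Its hypothesis on lifting the graded Higgs bundles $(E_i,0)$ amounts — a Higgs bundle with zero field being graded by concentration in a single degree — to lifting the underlying vector bundle $E_i$ from $X_m$ to $X_{m+1}$; by standard deformation theory that obstruction lies in $H^2\!\big(X_1,\mathrm{End}(E_i)\otimes_k(p^mW_{m+1}/p^{m+1}W_{m+1})\big)\cong H^2(X_1,\mathrm{End}(E_i))$, which by the previous paragraph runs (up to isomorphism) through $H^2(X_1,(F^*)^i\mathrm{End}(E))$ for $0\le i<f$. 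On a curve — the setting of this subsection — these vanish for dimension reasons and nothing is needed; in general one needs the vanishing of $H^2(X_1,\mathrm{End}(E_i))$ for each $E_i$ occurring in the flow, of which the stated hypothesis $H^2(X_1,\mathrm{End}(E))=0$ is the case $i=0$. Granting this, both hypotheses of Proposition~\ref{Lifting_PHDF} are met at every stage.

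Applying Proposition~\ref{Lifting_PHDF} repeatedly then lifts the twisted periodic flow from $X_1$ to $X_2$, then to $X_3,\dots,X_n$. As in the proof of that proposition, each step may force replacing the period by a multiple of itself: the liftings of the relevant initial Higgs term form a torsor under $H^1(X_1,\mathrm{End}(E_i))$, which is finite, so the self-map on this torsor induced by one turn of the flow has a periodic point, producing a lifted twisted periodic flow whose period is a multiple of the previous one. After the $n-1$ steps one obtains a twisted $f_n$-periodic vector bundle over $X_n$ lifting $(E,0)$, with $f\mid f_n$.

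The hard part, as indicated, is the $H^2$-control along the flow: Frobenius pull-back does not in general preserve vanishing of top cohomology, so the clean input $H^2(X_1,\mathrm{End}(E))=0$ only visibly suffices when every $E_i$ is a line-bundle twist of $E$ — in particular when $X_1$ is a curve, where $H^2$ is identically zero and the statement is unconditional. The only other point, controlling the growth of the period across the $n-1$ inductive steps, is purely bookkeeping and is already built into Proposition~\ref{Lifting_PHDF}, whose conclusion claims periodicity only after passing to a multiple of the original period.
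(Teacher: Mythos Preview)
The paper gives no proof of this statement; your route — recognising that a twisted periodic vector bundle is the degenerate Higgs--de Rham flow with all Higgs fields zero and all filtrations trivial, and then iterating Proposition~\ref{Lifting_PHDF} from $X_1$ up to $X_n$ — is exactly what is intended, and your unwinding of the flow ($C_1^{-1}(E,0)=(F^*E,\nabla_{\mathrm{can}})$, trivial filtration, $\mathrm{Gr}$ returns $(F^*E,0)$) is correct.

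Your reservation about the $H^2$ hypothesis is well taken and points to an imprecision in the paper's statement rather than in your argument. Proposition~\ref{Lifting_PHDF} asks that \emph{every} Higgs term $(E,\theta)_i^{(n)}$ lift unobstructedly; in the present situation the reduction mod $p$ of that term is $(F^*)^iE$ up to a line-bundle twist, so the obstruction sits in $H^2\big(X_1,(F^*)^{i\bmod f}\mathrm{End}(E)\big)$, and the single condition $H^2(X_1,\mathrm{End}(E))=0$ only covers $i\equiv 0\pmod f$. A clean hypothesis would be $H^2\big(X_1,\mathrm{End}(E_i)\big)=0$ for $0\le i<f$. One can alternatively salvage the stated hypothesis by observing that, since the flow is periodic, at each inductive step one may choose to begin the lifting at an index congruent to $0$ modulo $f$ (so that the term to be lifted reduces to a twist of $E$ itself); the paper does not spell this out. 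In the only application made --- smooth proper curves, a few lines later --- all $H^2$'s vanish for dimension reasons and the issue disappears, exactly as you note.
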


 Translate the above theorem in the terms of representations:
 \begin{thm}[lifting projective representations of $\pi^\text{\'et}_1(X_1)$]
  Let $\rho$ be a projective $\F_{p^f}$-representation of $\pi^\text{\'et}_1(X_1)$. Assume $H^2(X_1, End(\rho))=0$, then there exist an positive integer $f_n$ divided by $f$ such that
  $\rho$  lifts to a projective $W_n(\F_{p^{f_n}})$-representation of $\pi^\text{\'et}_1(X_{1,k'})$ for any $n\in\mathbb N$, where $k'$ is the minimal extension of $k$ containing $\F_{p^{f_n}}$.	
 \end{thm}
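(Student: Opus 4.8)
The plan is to deduce this statement from the immediately preceding theorem on lifting twisted periodic vector bundles, transporting both hypothesis and conclusion through the projective Lange--Stuhler correspondence. First I would translate $\rho$ into a geometric object: specialising Theorem~\ref{equiv:TFF&THDF} to vanishing Higgs fields and level in $[0,0]\subset[0,p-2]$, and combining it with the functor $\mathbb D^P$ (Theorem~\ref{ConsFunc:D^P}) together with the $\mathrm{GL}_r$-equivalence of~\cite{LSZ13a} between $f$-periodic vector bundles over $X_1$ and $\mathrm{GL}_r(\F_{p^f})$-representations of $\pi^\text{\'et}_1(X_1)$ (and its projective refinement, which is exactly the Lange--Stuhler-type statement recorded in the Remark above), one recovers from $\rho$ a twisted $f$-periodic vector bundle $(E,0)$ of rank $r$ over $X_1$ with $\mathbb D^P(E,0)\simeq\rho$. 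Since line-bundle twists and Frobenius pullbacks pass through $\mathcal{E}nd$, the endomorphism bundle $\End(E)$ (carrying the zero Higgs field) is again twisted $f$-periodic and corresponds, under the same dictionary, to the adjoint local system $\End(\rho)$; hence the standing hypothesis $H^2(X_1,\End(\rho))=0$ is precisely the hypothesis $H^2(X_1,\End(E))=0$ required by the vector-bundle lifting theorem.

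Next I would invoke that theorem for the prescribed $n$: it furnishes a positive integer $f_n$ with $f\mid f_n$ and a twisted $f_n$-periodic vector bundle $(\mathcal E_n,0)$ over $X_n$ whose reduction modulo $p$ is $(E,0)$. I would stress here the phenomenon already visible in Proposition~\ref{Lifting_PHDF} and its Remark: lifting successively along $X_1\subset X_2\subset\cdots\subset X_n$ may enlarge the period from $f$ to $f_n$, and accordingly the natural field of definition of the lifted representation grows to $k'=k\cdot\F_{p^{f_n}}$; this is why both $f_n$ and $k'$ in the conclusion are allowed to depend on $n$.

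Finally I would run the correspondence forward over $X_n$: base-changing to $k'$ (so that $\F_{p^{f_n}}\subset k'$) and regarding $(\mathcal E_n,0)$ as a twisted $f_n$-periodic Higgs--de Rham flow with vanishing Higgs fields, the $\mathbb D^P$-functor for twisted periodic vector bundles yields a free projective $W_n(\F_{p^{f_n}})$-representation $\rho_n$ of $\pi^\text{\'et}_1(X_{1,k'})$. It then remains to check that $\rho_n$ genuinely lifts $\rho$, i.e.\ that its reduction modulo $p$ is the restriction of $\rho$ along the finite-index inclusion $\pi^\text{\'et}_1(X_{1,k'})\hookrightarrow\pi^\text{\'et}_1(X_1)$. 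This follows because $\mathbb D^P$ is compatible with reduction modulo $p$ (the strict $p^n$-torsion twisted Fontaine--Faltings module attached to $\mathcal E_n$ reduces to the strict $p$-torsion one attached to $E$) and with base change of the ground field (base-changing an \'etale-trivialisable bundle corresponds to restricting its monodromy).

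The only non-formal input is the preceding vector-bundle lifting theorem itself, which rests on Proposition~\ref{Lifting_PHDF}; the subtle point there is that the two unobstructedness conditions of that proposition have to hold not just for $E$ but for every bundle occurring in the periodic flow, so one must know that $H^2(X_1,\End(-))$ stays zero along the Frobenius pullbacks and line-bundle twists appearing in the flow. Within the present reduction, by contrast, the hypothesis transfer $H^2(X_1,\End(\rho))=H^2(X_1,\End(E))$ is immediate, and the genuine (but routine) thing to verify is the compatibility of $\mathbb D^P$ with mod-$p$ reduction and with the base change $k\to k'$, which is what guarantees that the constructed lift $\rho_n$ reduces to the given $\rho$.
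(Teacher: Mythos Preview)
Your proposal is correct and matches the paper's approach exactly: the paper gives no independent proof of this statement, presenting it simply as the translation of the preceding vector-bundle lifting theorem into the language of representations, and you have spelled out precisely that translation via the $\mathbb D^P$-correspondence.

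One point both you and the paper leave implicit is the \emph{essential surjectivity} step: to start the argument you must know that an arbitrary projective $\F_{p^f}$-representation $\rho$ of $\pi^\text{\'et}_1(X_1)$ actually arises as $\mathbb D^P(E,0)$ for some twisted $f$-periodic vector bundle. The theorem in this subsection asserts only that $\mathbb D^P$ is \emph{faithful} on twisted periodic vector bundles, and the Remark identifies the $n=1$ case with a projective Lange--Stuhler statement without proving the converse direction. Your appeal to that Remark is therefore the right pointer, but be aware that the surjectivity you need (every $\mathrm{PGL}_r(\F_{p^f})$-representation comes from a twisted Frobenius-periodic bundle) is being taken for granted rather than established in the paper.
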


 Assume $\X$ is a smooth proper curve over $W(k)$,  de Jong and Osserman (see Appendix A in~\cite{Osserman}) have shown that the subset of periodic vector bundles over $X_{1,\overline{k}}$  is Zariski dense in the moduli space of semistable vector bundles over $X_1$ (Laszlo and Pauly have also studied some special case, see~
 \cite{LP}). Hence by Lange-Stuhler's theorem (see~\cite{LangeStuhe}) every periodic vector bundle corresponds to a $(P)\mathrm{GL}_r(\F_{p^f})$-representations of $\pi^\text{\'et}_1(X_{1, k'})$, where $f$ is the period and $k'$ is a definition field of the periodic vector bundle containing $\F_{p^f}$.

 \begin{cor}Every $(\mathrm{P})\mathrm{GL}_r(\F_{p^f})$-representation of $\pi^\text{\'et}_1(X_{1,k'})$ lifts to a $\mathrm{(P)GL}_r(W_n(\F_{p^{f_n}}))$-representation of $\pi^\text{\'et}_1(X_{1,k''})$ for some positive integer $f_n$ divided by $f$, where  $k''$ is a definition field of the periodic vector bundle containing $\F_{p^{f_n}}$. 
 \end{cor}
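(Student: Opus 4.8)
The plan is to deduce the corollary by concatenating the (projective) Lange--Stuhler correspondence with the two lifting theorems proved just above and the functor $\mathbb D^P$.

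\emph{Step 1: from a representation to a twisted periodic vector bundle.} Given a $(\mathrm{P})\mathrm{GL}_r(\F_{p^f})$-representation $\rho$ of $\pi^\text{\'et}_1(X_{1,k'})$, the Lange--Stuhler theorem~\cite{LangeStuhe} in the $\mathrm{GL}_r$ case, together with its projective version supplied by the first theorem of this subsection in the $\mathrm{PGL}_r$ case, shows that, after possibly enlarging $k'$ to a finite extension over which the relevant objects are defined, $\rho$ is the image under $\mathbb D$ (resp. $\mathbb D^P$) of an $f$-periodic vector bundle $(E,0)$ over $X_1$ --- untwisted in the $\mathrm{GL}_r$ case and twisted by a line bundle in the $\mathrm{PGL}_r$ case. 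Equivalently, $\rho$ corresponds to a twisted $f$-periodic Higgs--de Rham flow over $X_1$ with zero Higgs fields.

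\emph{Step 2: lifting on a curve.} Since $\X$ is a smooth proper curve, $X_1$ is one-dimensional, so $H^2(X_1,\mathrm{End}(E))=0$; hence the hypothesis of the theorem on lifting twisted periodic vector bundles is automatically satisfied. Applying that theorem, for every $n$ there is an integer $f_n$ with $f \mid f_n$ and a twisted $f_n$-periodic vector bundle $(\mathcal E,0)$ over $X_n$ lifting $(E,0)$, defined over some finite extension $k''$ of $k'$ containing $\F_{p^{f_n}}$ so that the $W_n(\F_{p^{f_n}})$-structure is defined. By Theorem~\ref{equiv:TFF&THDF} this datum is the same as a strict $p^n$-torsion twisted Fontaine--Faltings module over $X_n$ with endomorphism structure of $W_n(\F_{p^{f_n}})$ and all Hodge--Tate weights equal to $0$.

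\emph{Step 3: back to a representation.} Apply $\mathbb D^P$ of Theorem~\ref{ConsFunc:D^P} (with $\D=\emptyset$) --- or $\mathbb D$ together with the induced $W_n(\F_{p^{f_n}})$-structure for the $\mathrm{GL}_r$ statement, using the equivalence of~\cite{LSZ13a} --- to $(\mathcal E,0)$. This produces a projective (resp. linear) $W_n(\F_{p^{f_n}})$-representation $\tilde\rho$ of $\pi^\text{\'et}_1(X_{1,k''})$ of rank $r$. Because the inverse Cartier transform and $\mathbb D^P$ are compatible with reduction modulo $p$ --- the reduction modulo $p$ of a strict $p^n$-torsion twisted Fontaine--Faltings module is a strict $p$-torsion one, and $\mathbb D^P$ commutes with this reduction --- the mod-$p$ reduction of $\tilde\rho$ is the base change of $\rho$ to $\pi^\text{\'et}_1(X_{1,k''})$, so $\tilde\rho$ is the desired lift; letting $n$ vary gives the statement. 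The only genuine subtlety, as already noted in the proof of Proposition~\ref{Lifting_PHDF}, is the bookkeeping around the growth of the period and of the coefficient field: one must check that the $W_n(\F_{p^{f_n}})$-endomorphism structure on the lifted flow is precisely the one induced by its $f_n$-periodicity, so that $\mathbb D^P$ really outputs a $W_n(\F_{p^{f_n}})$-representation reducing to $\rho$; this follows from the way the $\phi$-structure is transported through the inverse Cartier functor in the construction of the lift, and everything else is a direct application of results already established above.
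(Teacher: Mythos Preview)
Your proof is correct and follows exactly the strategy the paper intends: the corollary is stated in the paper without proof, as an immediate consequence of the preceding lifting theorems once one observes that $H^2(X_1,\End(E))=0$ automatically on a curve, combined with the (projective) Lange--Stuhler correspondence recalled in the paragraph just before. One small remark: in Step~3 you invoke Theorem~\ref{ConsFunc:D^P}, which lands in representations of $\pi_1^\text{\'et}(X_K^o)$, whereas the corollary speaks of $\pi_1^\text{\'et}(X_{1,k''})$; the right reference is the first theorem of \S\ref{proj F-units}, which records that for twisted periodic vector bundles with zero Higgs field (Hodge--Tate weight $0$) the output of $\mathbb D^P$ is a representation of $\pi_1^\text{\'et}(X_{1,k'})$ --- but this is only a matter of citing the specialized statement rather than the general one.
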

 
 \begin{rmk}
 	It shall be very interesting to compare this result with Deninger-Werner's theorem (see~\cite{DW}). Let $\mathscr E$ be a vector bundle over $\X$, we view it as a graded Higgs bundle with trivial filtration and trivial Higgs field. Suppose it is preperiodic over $X_1$. Then it has strongly semistable reduction
of degree zero. According to Deninger-Werner's theorem, this vector bundle induces a $\mathrm{GL}_r(\C_p)$-representation of $\pi^\text{\'et}_1(X_{\bar K})$.
 \end{rmk}

\section{Base change of twisted Fontaine-Faltings modules and twisted Higgs-de Rham flows over very ramified valuation rings}
  Let $k$ be a finite field of characteristic $p$ containing $\mathbb F_{p^f}$. Denote $K_0=W(k)[\frac1p]$. Let $\X$ be a smooth proper scheme over $W(k)$ together with a smooth log structure $\D/W(k)$. For any finite extension $K$ of $K_0$, denote $X_K^o=(\X\times_{W(k)} \Spec K)\setminus (\D\times_{W(k)} \Spec K)$. Recall that Theorem~\ref{Mainthm} guarantees the existence of non-trivial representations of \'etale fundamental group in terms of the existence of semistable graded Higgs bundles. Then the  $\mathrm{PGL}_r(\mathbb{F}_{p^f})$-crystalline representation of $\pi^\text{\'et}_1((X_{K_0}^o)$ corresponding to the stable Higgs bundle should have some stronger property: its restriction to the geometric fundamental group $\pi^\text{\'et}_1((\mathcal{X} \setminus \mathcal{D})_{\bar{\mathbb{Q}}_p})$ is absolutely irreducible.\\
We outline the proof as follows. Fix a $K_0$-point in $X_{K_0}$, one can pull back the representation $\rho$ to a representation of the Galois group, whose image is finite. This finite quotient will give us a field extension $K/K_0$ such that the restriction of $\rho$ on $\mathrm{Gal}(\bar{K_0}/K)$ is trivial. That means $\rho(\pi^\text{\'et}_1(X_{K}^o))=\rho(\pi^\text{\'et}_1(X^o_{\overline{K}_0}))$. So it suffices to prove the irreducibility of $\rho$ on $\pi^\text{\'et}_1(X_{K}^o)$, which gives us the chance to apply the method of twisted periodic Higgs-de Rham flows as before. But the field extension $K/K_0$ is usually ramified. So we have to work out the construction in the previous sections to the very ramified case. Note that this section is deeply inspired by Faltings' work~\cite{Fal99}.

\subsection{Notations in the case of $\Spec \,k$.} In this notes, $k$ will always be a perfect field of characteristic $p>0$. Let $\pi$ be a root of an Eisenstein polynomial
\[f(T)=T^e+\sum_{i=0}^{e-1}a_i T^i\]
of degree $e$ over the Witt ring $W=W(k)$. Denote $K_0=\Frac(W)=W[\frac1p]$ and $K=K_0[\pi]$, where $K_0[\pi]$ is a totally ramified extension of $K_0$ of degree $e$. Denote by $W_\pi=W[\pi]$ the ring of integers of $K$, which is a complete discrete valuation ring with maximal ideal $\pi W_\pi$ and the residue field $W_\pi/\pi W_\pi=k$. Denote by $W[[T]]$ the ring of formal power-series over $W$. Then 
\[W_\pi=W[[T]]/fW[[T]].\]
The PD-hull $\mB_{W_\pi}$ of $W_\pi$ is the PD-completion of the ring obtained by adjoining to $W[[T]]$ the divided powers $\frac{f^n}{n!}$. 
 More precisely
\[\mB_{W_\pi}=\left\{\left.\sum_{n=0}^{\infty}a_nT^n\in K_0[[T]] \right| a_n[n/e]!\in W \text{ and }a_n[n/e]!\rightarrow 0\right\}.\]
A decreasing filtration is defined on $\mB_{W_\pi}$ by the rule that $F^q(\mB_{W_\pi})$ is the closure of the ideal generated by divided powers $\frac{f^n}{n!}$ with $n\geq q$. Note that the ring $\mB_{W_\pi}$ only depends on the degree $e$ while this filtration depends on $W_\pi$ and $e$. One has 
\[\mB_{W_\pi}/\Fil^1\mB_{W_\pi}\simeq W_\pi. \]
There is a unique continuous homomorphism of $W$-algebra $\mB_{W_\pi}\rightarrow B^+(W_\pi)$ which sends $T$ to $[\underline{\pi}]$. Here $\underline{\pi} = (\pi,\pi^{\frac{1}{p}},\pi^{\frac{1}{p^2}},\dots) \in \varprojlim \bar{R}$. 
We denote
\[\tmB_{W_\pi}=\mB_{W_\pi}[\frac{f}{p}]\]
which is a subring of $K_0[[T]]$. The idea $(\frac{f}{p})$ induces a decreasing filtration $\Fil^\cdot \tmB_{W_\pi}$ such that
\[\tmB_{W_\pi}/\Fil^1\tmB_{W_\pi}\simeq W_\pi. \]
The Frobenius endomorphism on $W$ can be extended to an endomorphism $\varphi$ on $K_0[[T]]$, where $\varphi$ is given by $\varphi(T)=T^p$. Since $\varphi(f)$ is divided by $p$, we have $\varphi(\tmB_{W_\pi})\subset \mB_{W_\pi}$. Thus one gets two restrictions 
\[\varphi:\tmB_{W_\pi}\rightarrow \mB_{W_\pi} \text{ and } \varphi:\mB_{W_\pi}\rightarrow \mB_{W_\pi} .\]
Note that the ideal of $\mB_{W_\pi}$, generated by $\Fil^1\mB_{W_\pi}$ and $T$, is stable under $\varphi$. Then we have the following commutative diagram
\begin{equation}\label{diag:FrobLift}
\xymatrix{
\mB_{W_\pi}\ar[d]^{\varphi} \ar@{->>}[r]  &\mB_{W_\pi}/(\Fil^1\mB_{W_\pi},T)=k \ar[d]^{(\cdot)^p}\\
\mB_{W_\pi} \ar@{->>}[r] &\mB_{W_\pi}/(\Fil^1\mB_{W_\pi},T)=k \\
}.
\end{equation}

\subsection{Base change in the small affine case.}

For a smooth and small $W$-algebra $\R$ (which means there exists an \'etale map  
	\[W[T_1^{\pm1},T_2^{\pm1},\cdots, T_{d}^{\pm1}]\rightarrow R,\]
 see \cite{Fal89}),  Lan-Sheng-Zuo constructed categories $\MIC(\R/p\R)$, $\tMIC(\R/p\R)$, $\MCF(\R/p\R)$ and $\MF(\R/p\R)$. 
A Fontaine-Faltings module over $\R/p\R$ is an object $(V,\nabla,\Fil)$ in $\MCF(\R/p\R)$ together with an isomorphism $\varphi: \widetilde{(V,\nabla,\Fil)}\otimes_{\Phi}\hR\rightarrow (V,\nabla)$ in $\MIC(\R/p\R)$, where $\widetilde{(\cdot)}$ is the Faltings' tilde functor.

We generalize those categories over the $W_\pi$-algebra $R_\pi=R\otimes_W{W_\pi}$. In general, there does not exist Frobenius lifting on the p-adic completion of $\hR_\pi$. We lift the absolute Frobenius map on $R_\pi/\pi R_\pi$ to a map $\Phi:\mBR\rightarrow \mBR$
\begin{equation}\label{diag:FrobLift}
\xymatrix{
 \mBR  \ar[d]^{\Phi} \ar@{->>}[r]  &  R_\pi/\pi R_\pi=\R/p\R \ar[d]^{(\cdot)^p}\\
\mBR \ar@{->>}[r]  &  R_\pi/\pi R_\pi=\R/p\R\\
}
\end{equation}
where $\mBR$ is the $p$-adic completion of $\mB_{W_\pi}\otimes_W \R$. This lifting is compatible with $\varphi:\mB_{W_\pi}\rightarrow \mB_{W_\pi}$.
Denote $\tmBR=\mBR[\frac{f}{p}]$. Then $\Phi$ can be extended to 
\[\Phi:\tmBR\rightarrow \mBR\]
uniquely, which is compatible with $\varphi:\tmB_{W_\pi}\rightarrow \mB_{W_\pi}$. 
The filtrations on $\mB_{W_\pi}$ and $\tmB_{W_\pi}$ induce filtrations on $\mBR$ and $\tmBR$ respectively, which satisfy
\[\mBR/\Fil^1\mBR\simeq \hR_\pi\simeq \tmBR/\Fil^1\tmBR.\]
\begin{lem}\label{lem:F^iB&F^itB}
Let $n<p$ be a natural number and let $b$ be an element in $F^n\mBR$. Then $\frac{b}{p^n}$ is an element in $F^n\tmBR$.
\end{lem}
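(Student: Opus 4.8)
The plan is to reduce the assertion to the topological generators of $F^n\mBR$ and then to a single $p$-adic valuation estimate in which the hypothesis $n<p$ is used. Recall that $F^n\mBR$ is the closure of the $\mBR$-ideal generated by the divided powers $\frac{f^m}{m!}$ with $m\geq n$, whereas $F^n\tmBR=\bigl(\frac fp\bigr)^n\tmBR$, so $p^nF^n\tmBR=f^n\tmBR$. Since $\tmBR$ is $p$-torsion free and $p$-adically separated, $f$ is not a zero divisor and $f^n\tmBR$ is a closed ideal; hence it suffices to prove $F^n\mBR\subseteq f^n\tmBR$, and for this it is enough to check that each generator $\frac{f^m}{m!}$ with $m\geq n$ lies in $f^n\tmBR$. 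Dividing the resulting inclusion by $p^n$ then gives $\frac{1}{p^n}F^n\mBR\subseteq F^n\tmBR$, which is exactly the claim.

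For the generators I would write, for $m\geq n$,
\[
\frac{f^m}{m!}=f^n\cdot\Bigl(\frac fp\Bigr)^{m-n}\cdot\frac{p^{m-n}}{m!}.
\]
The factor $\bigl(\frac fp\bigr)^{m-n}$ lies in $\tmBR=\mBR[\tfrac fp]$ by definition, so the only point to verify is that the rational number $\frac{p^{m-n}}{m!}$ is a $p$-adic integer, i.e. that $v_p(m!)\leq m-n$; granting this, $\frac{f^m}{m!}\in f^n\tmBR$.

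The arithmetic core is therefore the inequality $v_p(m!)\leq m-n$ for all integers $m\geq n$, and this is precisely where $n<p$ is needed. By Legendre's formula $v_p(m!)=\frac{m-s_p(m)}{p-1}$, where $s_p(m)\geq 1$ is the sum of the base-$p$ digits of $m$. When $m=n$ one has $v_p(n!)=0=m-n$ because $n<p$. When $m>n$ one bounds $v_p(m!)\leq\frac{m-1}{p-1}$, and $\frac{m-1}{p-1}\leq m-n$ is equivalent to $(p-2)m\geq (p-1)n-1$; this holds at $m=n+1$ because $n\leq p-1$, and the left-hand side is nondecreasing in $m$, so it holds for all $m\geq n+1$. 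This completes the estimate, and hence the lemma.

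The step I expect to demand the most care is not the valuation computation but the bookkeeping with closures: one must justify that ``checking on generators suffices'', i.e. that $f^n\tmBR$ (equivalently $p^nF^n\tmBR$) really is closed and that the closure operation defining $F^n\mBR$ is compatible with division by the fixed scalar $p^n$. I would settle this by working inside a common overring into which $\mBR$, $\tmBR$ and $\hR_\pi$ all embed --- for instance the $p$-adic completion of $K_0[[T]]\otimes_W R$ --- in which a convergent series $\sum_m c_m\frac{f^m}{m!}$ with $c_m\in\mBR$ is carried by $p^{-n}$ to the convergent series $\sum_m c_m f^n\bigl(\frac fp\bigr)^{m-n}\frac{p^{m-n}}{m!}$, each term of which lies in the closed ideal $f^n\tmBR$.
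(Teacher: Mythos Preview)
Your proposal is correct and follows essentially the same approach as the paper: both reduce the question to the single arithmetic estimate $v_p(m!)\leq m-n$ for $m\geq n$ with $n<p$, which is exactly what makes $\frac{p^{m-n}}{m!}$ a $p$-adic integer. The paper's presentation is slightly more direct: it writes down explicit series descriptions $F^n\mBR=\{\sum_{i\geq n}a_i\frac{f^i}{i!}:a_i\in\hR[[T]],\,a_i\to 0\}$ and $F^n\tmBR=\{\sum_{i\geq n}a_i\frac{f^i}{p^i}:a_i\in\hR[[T]],\,\frac{i!}{p^i}a_i\to 0\}$, then rewrites $\frac{b}{p^n}=\sum_{i\geq n}\frac{p^ia_i}{p^ni!}\cdot\frac{f^i}{p^i}$ and observes that the new coefficients satisfy the required conditions. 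This bypasses the closure bookkeeping you flagged as the delicate point, since the explicit series form already encodes convergence; but your generator-plus-closure argument is equally valid and leads to the same computation.
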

\begin{proof} Since the filtrations on $\mBR$ and $\tmBR$ are induced by those on  $\mB_{W_\pi}$ and $\tmB_{W_\pi}$ respectively, we have 
\begin{equation}\label{FnBR}
 F^n\mBR=\left\{\left.\sum_{i\geq n}^{\infty}a_i\frac{f^i}{i!}\right| a_i\in \hR[[T]] \text{ and }a_i \rightarrow 0\right\},
\end{equation}
and
\begin{equation}\label{FntBR}
\begin{split}
F^n\tmBR &=\left\{\left.\sum_{i\geq n}^{\infty}a_i\frac{f^i}{p^i}\right| a_i\in \mBR \text{ and }a_i=0 \text{ for } i\gg0\right\}\\
&=\left\{\left.\sum_{i\geq n}^{\infty}a_i\frac{f^i}{p^i}\right| a_i\in \hR[[T]] \text{ and }\frac{i!}{p^i}a_i\rightarrow 0 \right\}.\\
\end{split}
\end{equation} 
Assume $b=\sum_{i\geq n}a_i\frac{f^i}{i!}$ with $a_i\in \hR[[T]]$ and $a_i\rightarrow 0$, then 
\[\frac{b}{p^n}=\sum_{i\geq n} \frac{p^i a_i }{p^n i!}\cdot \frac{f^i}{p^i},\] 
and the lemma follows.
\end{proof}

Recall that $\mBR$ and $\tmBR$ are  $\hR$-subalgebras of $\hR(\frac{1}{p})[[T]]$. We denote by $\Omega^1_{\mBR}$ (resp. $\Omega^1_{\tmBR}$) the $\mBR$-submodule (resp. $\tmBR$-submodule) of $\Omega^1_{\hR(\frac{1}{p})[[T]]/W}$ generated by elements $\rmd b$, where $b\in \mBR$ (resp. $b\in \tmBR$).  There is a filtration on $\Omega^1_{\mBR}$ (resp. $\Omega^1_{\tmBR}$) given by
$F^n \Omega^1_{\mBR}=F^n \mBR \cdot \Omega^1_{\mBR}$ (resp. $F^n \Omega^1_{\tmBR}=F^n \tmBR \cdot \Omega^1_{\tmBR}$). One gets the following result directly by Lemma~\ref{lem:F^iB&F^itB}. 
\begin{cor}
Let $n<p$ be a natural number. Then $\frac{1}{p^n}F^n \Omega^1_{\mBR}\subset F^n \Omega^1_{\tmBR}$.
\end{cor}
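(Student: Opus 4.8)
The plan is to deduce this statement in one move from Lemma~\ref{lem:F^iB&F^itB}, after first pinning down how the two modules of differentials $\Omega^1_{\mBR}$ and $\Omega^1_{\tmBR}$ sit relative to one another. So the first thing I would record is the inclusion $\Omega^1_{\mBR}\subset\Omega^1_{\tmBR}$. Both are defined as submodules of the ambient module $\Omega^1_{\hR(\frac1p)[[T]]/W}$, and $\Omega^1_{\mBR}$ is generated by the elements $\rmd b$ with $b\in\mBR$; since $\mBR\subset\tmBR$, every such $\rmd b$ already lies in $\Omega^1_{\tmBR}$, which gives the claimed inclusion. This is the only place where one has to pause and think, and it is really immediate.

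Next I would unwind the definition of the filtration on $\Omega^1_{\mBR}$. Because $F^n\mBR$ is an ideal of $\mBR$, the module $F^n\Omega^1_{\mBR}=F^n\mBR\cdot\Omega^1_{\mBR}$ is precisely the additive subgroup spanned by products $b\,\omega$ with $b\in F^n\mBR$ and $\omega\in\Omega^1_{\mBR}$. Hence an arbitrary element $\eta\in F^n\Omega^1_{\mBR}$ can be written as a finite sum $\eta=\sum_j b_j\omega_j$ with $b_j\in F^n\mBR$ and $\omega_j\in\Omega^1_{\mBR}$.

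Finally I would simply divide by $p^n$: one gets $\frac{\eta}{p^n}=\sum_j\frac{b_j}{p^n}\,\omega_j$. Here the hypothesis $n<p$ enters through Lemma~\ref{lem:F^iB&F^itB}, which yields $\frac{b_j}{p^n}\in F^n\tmBR$ for every $j$; and by the first step each $\omega_j$ lies in $\Omega^1_{\tmBR}$. Therefore every term $\frac{b_j}{p^n}\omega_j$ belongs to $F^n\tmBR\cdot\Omega^1_{\tmBR}=F^n\Omega^1_{\tmBR}$, and hence so does the finite sum $\frac{\eta}{p^n}$. Since $\eta$ was an arbitrary element of $F^n\Omega^1_{\mBR}$, this proves $\frac{1}{p^n}F^n\Omega^1_{\mBR}\subset F^n\Omega^1_{\tmBR}$.

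In short, there is no genuine obstacle: the corollary is a formal consequence of Lemma~\ref{lem:F^iB&F^itB} combined with the bookkeeping that $F^n\Omega^1$ is generated over the base ring by $(F^n$-filtered scalar$)\times(\text{differential})$ and that differentials of elements of $\mBR$ are differentials of elements of $\tmBR$. The closest thing to a subtle point is the comparison $\Omega^1_{\mBR}\subset\Omega^1_{\tmBR}$ inside the common module $\Omega^1_{\hR(\frac1p)[[T]]/W}$, which is why I would make that explicit at the outset rather than leave it implicit.
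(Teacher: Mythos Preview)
Your proof is correct and is exactly the argument the paper intends: the paper simply states that the corollary follows ``directly by Lemma~\ref{lem:F^iB&F^itB}'', and your write-up spells out that direct deduction, including the one point worth making explicit, namely the inclusion $\Omega^1_{\mBR}\subset\Omega^1_{\tmBR}$ inside the common ambient module.
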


\begin{lem}
The graded pieces $\Gr^n \tmBR$ and $\Gr^n \mBR$ are free $\hR_\pi$-modules of rank $1$ generated by $\frac{f^n}{p^n}$ and $\frac{f^n}{n!}$ respectively.
\end{lem}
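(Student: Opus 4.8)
The plan is to construct, for every $n\ge 0$, an explicit isomorphism $\hR_\pi\xrightarrow{\ \sim\ }\Gr^n\mBR$ carrying $1$ to the class of $\tfrac{f^n}{n!}$, and likewise $\hR_\pi\xrightarrow{\ \sim\ }\Gr^n\tmBR$ carrying $1$ to the class of $\tfrac{f^n}{p^n}$. I would work from the two descriptions of the filtration steps recorded in the proof of Lemma~\ref{lem:F^iB&F^itB}: by (\ref{FnBR}), $F^n\mBR=\bigl\{\sum_{i\ge n}a_i\tfrac{f^i}{i!}:a_i\in\hR[[T]],\ a_i\to0\bigr\}$, and by (\ref{FntBR}), $F^n\tmBR=\bigl\{\sum_{i\ge n}a_i\tfrac{f^i}{p^i}:a_i\in\hR[[T]],\ \tfrac{i!}{p^i}a_i\to0\bigr\}$. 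Since multiplication on $\mBR$ is filtered and $\mBR/\Fil^1\mBR\simeq\hR_\pi$, each $\Gr^n\mBR$ is an $\hR_\pi$-module and $\tfrac{f^n}{n!}\in F^n\mBR$ defines an $\hR_\pi$-linear map $\psi_n\colon\hR_\pi\to\Gr^n\mBR$. Surjectivity of $\psi_n$ is immediate: for $b=\sum_{i\ge n}a_i\tfrac{f^i}{i!}\in F^n\mBR$ the tail $\sum_{i\ge n+1}a_i\tfrac{f^i}{i!}$ lies in $F^{n+1}\mBR$, so $b\equiv a_n\tfrac{f^n}{n!}$ modulo $F^{n+1}\mBR$ and $[b]=\psi_n(\bar a_n)$, where $\bar a_n$ is the image of $a_n\in\hR[[T]]\subset\mBR$ in $\hR_\pi$ — here one uses that the composite $\hR[[T]]\hookrightarrow\mBR\twoheadrightarrow\hR_\pi$ is reduction modulo $f$, so $\hR[[T]]\twoheadrightarrow\hR_\pi=\hR[[T]]/(f)$.

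For injectivity I would show that $a\in\hR[[T]]$ with $a\tfrac{f^n}{n!}\in F^{n+1}\mBR$ forces $a\in f\hR[[T]]$. Multiplying by $n!$ and passing into $\hR(\tfrac1p)[[T]]$, the relation reads $af^n=\sum_{i\ge n+1}\tfrac{n!}{i!}a_if^i$ with every term on the right divisible by $f^{n+1}$ (note $\tfrac{n!}{i!}\in W[\tfrac1p]$, as $i!/n!$ is a power of $p$ times a unit of $W$). Because $f$ is a distinguished polynomial — its reduction mod $p$ being $T^e$ since $f$ is Eisenstein — $f$ is a non-zero-divisor in $\hR(\tfrac1p)[[T]]$ and $f^{n+1}\hR(\tfrac1p)[[T]]$ is closed, so the sum $af^n$ again lies in $f^{n+1}\hR(\tfrac1p)[[T]]$ and hence $a\in f\hR(\tfrac1p)[[T]]$. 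On the other hand $\hR[[T]]/f\hR[[T]]$ is finite free over $\hR$ by Weierstrass division, hence $p$-torsion-free, so the natural map $\hR[[T]]/f\hR[[T]]\hookrightarrow\hR(\tfrac1p)[[T]]/f\hR(\tfrac1p)[[T]]$ is injective; combined with $a\in\hR[[T]]$ this gives $a\in f\hR[[T]]$. Thus $\psi_n$ is an isomorphism, identifying $\Gr^n\mBR$ with $\hR[[T]]/f\hR[[T]]\cong\hR\otimes_W(W[[T]]/fW[[T]])=\hR_\pi$, free of rank one on $\bigl[\tfrac{f^n}{n!}\bigr]$. The statement for $\tmBR$ is obtained by the same argument verbatim, replacing $\tfrac{f^i}{i!}$ by $\tfrac{f^i}{p^i}$ and the identity $f\cdot\tfrac{f^n}{n!}=(n+1)\tfrac{f^{n+1}}{(n+1)!}\in F^{n+1}\mBR$ by $f\cdot\tfrac{f^n}{p^n}=p\cdot\tfrac{f^{n+1}}{p^{n+1}}\in F^{n+1}\tmBR$.

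I expect the only genuinely non-formal point to be this injectivity step, and within it the topological bookkeeping — pinning down the topology on $\mBR$ and $\tmBR$ in which the defining series converge, and with it the closedness of $f^{n+1}\hR(\tfrac1p)[[T]]$ — rather than the purely module-theoretic manipulations around $\psi_n$, which are formal. Should this become unwieldy, there is a clean alternative: prove the case $R=W$ first, which is the standard structure theorem for the divided-power filtration on the PD-envelope $\mB_{W_\pi}$ of the ideal $(f)\subset W[[T]]$ (and its $\tfrac fp$-adic analogue $\tmB_{W_\pi}$), giving $\Gr^n\mB_{W_\pi}=W_\pi\cdot\gamma_n(f)$ and $\Gr^n\tmB_{W_\pi}=W_\pi\cdot(f/p)^n$; then invoke that formation of the PD-envelope commutes with the flat base change $W\to R$, $p$-adically complete, and use that each $\Gr^n\mB_{W_\pi}$ is finite free over $W_\pi$ — hence $W$-flat — so that $p$-completion is exact on $0\to F^{n+1}\mB_{W_\pi}\to F^n\mB_{W_\pi}\to\Gr^n\mB_{W_\pi}\to0$ and yields $\Gr^n\mBR\cong\Gr^n\mB_{W_\pi}\otimes_W\hR=\hR_\pi\cdot\bigl[\tfrac{f^n}{n!}\bigr]$, free of rank one, and similarly for $\tmBR$.
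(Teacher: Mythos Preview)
Your proposal is correct and follows essentially the same route as the paper: both arguments come down to the identity $\hR[[T]]\cap f\cdot\hR[\tfrac1p][[T]]=f\cdot\hR[[T]]$, which is exactly the $p$-torsion-freeness of $\hR[[T]]/f\hR[[T]]$ that you justify via Weierstrass division. The paper simply records the chain of inclusions
\[
\hR[[T]]\cdot\tfrac{f^n}{n!}\cap F^{n+1}\mBR\ \subset\ \hR[[T]]\cdot\tfrac{f^n}{n!}\cap f^{n+1}\hR[\tfrac1p][[T]]\ \subset\ \bigl(\hR[[T]]\cap f\hR[\tfrac1p][[T]]\bigr)\cdot\tfrac{f^n}{n!}\ =\ f\hR[[T]]\cdot\tfrac{f^n}{n!},
\]
so your surjectivity/injectivity packaging via $\psi_n$ is just a rephrasing of this. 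The one place where you add genuine care is the topological step: the paper asserts $F^{n+1}\mBR\subset f^{n+1}\hR[\tfrac1p][[T]]$ without comment, whereas you worry about closedness of $f^{n+1}\hR(\tfrac1p)[[T]]$. In fact no abstract closedness is needed: since $f$ is Eisenstein, the $T^N$-coefficient of $f^{i-n-1}/i!$ has $p$-valuation tending to $+\infty$ with $i$ for each fixed $N$, so the tail $\sum_{i\ge n+1}a_i f^{i-n-1}/i!$ converges coefficientwise to an element of $\hR[\tfrac1p][[T]]$. Your alternative via base change from $R=W$ is sound but unnecessary here.
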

\begin{proof}By equation~(\ref{FnBR}), one has $\hR[[T]]\cdot \frac{f^{n+1}}{n!} \subset \hR[[T]]\cdot \frac{f^n}{n!} \bigcap F^{n+1}\mBR$ and 
\begin{equation*}
 \Gr^n \mBR=\frac{\hR[[T]]\cdot \frac{f^n}{n!}}{\hR[[T]]\cdot \frac{f^n}{n!} \bigcap F^{n+1}\mBR}.
\end{equation*} 
On the other hand,
\begin{equation*}
\begin{split}
 \hR[[T]]\cdot \frac{f^n}{n!} \bigcap F^{n+1}\mBR 
&  \subset \hR[[T]]\cdot \frac{f^n}{n!}\bigcap \hR[\frac1p][[T]]\cdot f^{n+1}\\
&  \subset \left(\hR[[T]]\bigcap \hR[\frac1p][[T]]\cdot f\right)\cdot \frac{f^n}{n!}\\
 & =\hR[[T]]\cdot \frac{f^{n+1}}{n!}.\\
\end{split}
\end{equation*}
Then the result for $\Gr^n \mBR$ follows that $\hR_\pi\simeq\hR[[T]]/(f)$. The proof of the result for $\Gr^n \tmBR$ is similar, one only need to replace $n!$ by $p^n$ and to use equation~(\ref{FntBR}).
\end{proof}

We have the following categories 
\begin{itemize}
\item $\MIC(\mBR/p\mBR)$: the category of free $\mBR/p\mBR$-modules with integrable connections.
\item $\tMIC(\tmBR/p\tmBR)$: the category of free $\tmBR/p\tmBR$-modules with integrable nilpotent $p$-connection.
\item $\MCFa(\mBR/p\mBR)$: the category of filtered free $\mBR/p\mBR$-modules equipped with integrable connections, which satisfy the Griffiths transversality, and each of these $\mBR/p\mBR$-modules admits a filtered basis $v_i$ of degree $q_i$, $0\leq q_i\leq a$.
\end{itemize} 
A Fontaine-Faltings module over $\mBR/p\mBR$ of weight $a$ ($0\leq a\leq p-2$) is an object $(V,\nabla,\Fil)$ in the category $\MCFa(\mBR/p\mBR)$ together with an isomorphism in $\MIC(\mBR/p\mBR)$
\[\varphi: \widetilde{(V,\nabla,\Fil)}\otimes_{\Phi}\mBR\rightarrow (V,\nabla),\] 
where $\widetilde{(\cdot)}:\MCF(\mBR/p\mBR) \rightarrow \tMIC(\tmBR/p\tmBR)$ is an analogue of the Faltings' tilde functor.
For an object $(V,\nabla,\Fil)$ in $\MCF(\mBR/p\mBR)$ with filtered basis $v_i$ (of degree $q_i$, $0\leq q_i\leq a$), $\tV$ is defined as a filtered free $\tmBR/p\tmBR$-module 
\[\tV=\bigoplus_i \tmBR/p\tmBR\cdot [v_i]_{q_i}\]
with filtered basis $[v_i]_{q_i}$ (of degree $q_i$, $0\leq q_i\leq a$). Informally one can view $[v_i]_{q_i}$ as ``$\frac{v_i}{p^{q_i}}$". 
Since $\nabla$ satisfies the Griffiths transversality, there are $\omega_{ij}\in F^{q_j-1-q_i}\Omega^1_\mBR$ satisfying
\[\nabla(v_j)=\sum_{i} v_i\otimes \omega_{ij}.\] 
Since $q_j-1-q_i<a\leq p-2$, $\frac{\omega_{ij}}{p^{q_j-1-q_i}}\in F^{q_j-1-q_i}\Omega^1_\tmBR$.
We define a $p$-connection $\tnabla$ on $\tV$  via 
\[\tnabla([v_j]_{q_j})=\sum_{i} [v_i]_{q_i}\otimes \frac{\omega_{ij}}{p^{q_j-1-q_i}}.\]
\begin{lem}
The $\tmBR/p\tmBR$-module $\tV$ equipped with the $p$-connection $\tnabla$ is independent of the choice of the filtered basis $v_i$ up to a canonical isomorphism.   
\end{lem}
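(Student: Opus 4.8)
The plan is to produce, for any two filtered bases of $(V,\nabla,\Fil)$, a canonical $\tmBR/p\tmBR$-linear isomorphism between the two resulting modules-with-$p$-connection, and to observe that these isomorphisms satisfy the evident cocycle identity, so that $(\tV,\tnabla)$ is well defined up to canonical isomorphism. First I would reduce to the case of two filtered bases with the same multiset of degrees: since the filtration $\Fil$ is intrinsic to $(V,\nabla,\Fil)$, the graded pieces $\Gr^q V=\Fil^qV/\Fil^{q+1}V$ are well-defined free $\mBR/p\mBR$-modules, and the number of basis vectors of degree $q$ equals $\rank\Gr^qV$; after reindexing we may thus assume filtered bases $(v_i)$ and $(v'_i)$ with $v_i,v'_i$ of the same degree $q_i$, $0\le q_i\le a$. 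Writing $v'_j=\sum_i g_{ij}v_i$ with $g\in\GL_N(\mBR/p\mBR)$, the condition $v'_j\in\Fil^{q_j}V$ forces $g_{ij}\in F^{\max(q_j-q_i,0)}\mBR/p\mBR$, and the same for the entries of $h:=g^{-1}$.

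Next I would unwind the definition $\tV=\bigoplus_q\Fil^qV/\!\sim$ to read off the transition formula
\[
[v'_j]_{q_j}=\sum_i\frac{g_{ij}}{p^{\,q_j-q_i}}\,[v_i]_{q_i},
\]
where for $q_j\ge q_i$ the coefficient $g_{ij}/p^{q_j-q_i}$ is the element of $F^{q_j-q_i}\tmBR$ supplied by Lemma~\ref{lem:F^iB&F^itB} — legitimate because $q_j-q_i\le a\le p-2<p$ — and for $q_j<q_i$ it is simply $p^{\,q_i-q_j}g_{ij}\in\mBR\subset\tmBR$. This defines a $\tmBR/p\tmBR$-linear map $\widetilde g$ from the $v'$-presentation of $\tV$ to the $v$-presentation; the analogous map $\widetilde h$ built from $h$ is a two-sided inverse, the identities $\widetilde h\widetilde g=\id=\widetilde g\widetilde h$ following from $gh=hg=I$ together with the filtration estimates that keep each product $g_{ik}h_{kj}/p^{q_j-q_i}$ inside $\tmBR$. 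Then I would check that $\widetilde g$ intertwines the $p$-connections. Differentiating $v'_j=\sum_i g_{ij}v_i$ and comparing coefficients gives the gauge identity $g\,\omega'=\omega\, g+\rmd g$ between the connection matrices of $\nabla$ in the two bases; since $\omega_{ij}\in F^{q_j-1-q_i}\Omega^1_{\mBR}$, the corollary to Lemma~\ref{lem:F^iB&F^itB} puts $\omega_{ij}/p^{q_j-1-q_i}$ in $F^{q_j-1-q_i}\Omega^1_{\tmBR}$ (again $q_j-1-q_i<p$), and $\rmd$ drops the filtration level by one so that $\rmd g_{ij}/p^{q_j-1-q_i}\in\Omega^1_{\tmBR}$. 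Applying $\tnabla$ to $\sum_i(g_{ij}/p^{q_j-q_i})[v_i]_{q_i}$ via the $p$-connection Leibniz rule $\tnabla(\beta x)=\beta\tnabla x+p\,x\otimes\rmd\beta$, inserting $\tnabla[v_i]_{q_i}=\sum_k[v_k]_{q_k}\otimes\omega_{ki}/p^{q_i-1-q_k}$, and substituting the gauge identity, one gets exactly $(\widetilde g\otimes\id)\,\tnabla'[v'_j]_{q_j}$ after cancelling matching powers of $p$. Finally, for a third filtered basis the three transition maps visibly compose, giving canonicity.

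The one genuinely delicate point — and the place where the weight hypothesis $a\le p-2$ is indispensable — is precisely this bookkeeping: one must guarantee that dividing the change-of-basis entries $g_{ij}$, their differentials $\rmd g_{ij}$, and the connection-matrix entries $\omega_{ij}$ by the prescribed powers of $p$ lands back in $\tmBR$ and $\Omega^1_{\tmBR}$, which is controlled by Lemma~\ref{lem:F^iB&F^itB} and its corollary only in the range $n<p$; this is also where one sees that $\tnabla$ is genuinely a $p$-connection valued in $\tV\otimes\Omega^1_{\tmBR}$ rather than merely in the inverted ring. Everything else is a routine, if lengthy, matrix computation modulo $p\tmBR$, and I do not expect it to present any conceptual difficulty.
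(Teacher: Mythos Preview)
Your proposal is correct and follows essentially the same argument as the paper: define the transition map by $[v'_j]_{q_j}\mapsto\sum_i(g_{ij}/p^{q_j-q_i})[v_i]_{q_i}$ (the paper packages this as $\tau([v'])=[v]\cdot QAQ^{-1}$ with $Q=\mathrm{diag}(p^{q_i})$), verify it is an isomorphism over $\tmBR/p\tmBR$, check compatibility with the $p$-connections via the gauge identity $\omega'=A^{-1}\rmd A+A^{-1}\omega A$, and observe the cocycle condition. One small framing point: in this section $\tV$ is \emph{defined} as the free $\tmBR/p\tmBR$-module on the symbols $[v_i]_{q_i}$, not as the quotient $\bigoplus_q\Fil^qV/\!\sim$ from the unramified setting, so there is nothing to ``unwind'' --- the transition formula is something you must posit and then justify, exactly as you go on to do using Lemma~\ref{lem:F^iB&F^itB}.
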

\begin{proof}
We write  $v=(v_1,v_2,\cdots)$ and $\omega=(\omega_{ij})_{i,j}$. Then
\[\nabla(v)=v\otimes \omega \text{ and } \tnabla([v])=[v]\otimes (pQ\omega Q^{-1}),\] 
where $Q=\mathrm{diag}(p^{q_1},p^{q_2},\cdots)$ is a diagonal matrix.
Assume that $v_i'$ is another filtered basis (of degree $q_i$, $0\leq q_i\leq a$) and $(\tV',\tnabla')$ is the corresponding $\tmBR/p\tmBR$-module equipped with the $p$-connection. Similarly, we have
\[\nabla(v')=v'\otimes \omega' \text{ and } \tnabla([v'])=[v']\otimes (pQ\omega' Q^{-1}),\] 
Assume $v_j'=\sum_i a_{ij}v_i$ ($a_{ij}\in F^{q_j-q_i}\mBR$). Then $A=(a_{ij})_{i,j}\in \GL_{\rank(V)}(\mBR)$ and 
$QAQ^{-1}=\left(\frac{a_{ij}}{p^{q_j-q_i}}\right)_{i,j}\in \GL_{\rank(V)}(\tmBR)$. 
We construct an isomorphism from $\tV'$ to $\tV$ by
\[\tau([v'])=[v] \cdot (QAQ^{-1}),\]
where $[v]=([v_1]_{q_1},[v_2]_{q_2},\cdots)$ and $[v']=([v_1']_{q_1},[v_2']_{q_2},\cdots)$. Now we only need to check that $\tau$ preserve the $p$-connections. Indeed, 
\begin{equation}
\tau\circ\tnabla'([v']) =[v]\otimes (QAQ^{-1}\cdot p Q\omega' Q^{-1})=[v]\otimes (pQ\cdot A\omega'\cdot Q^{-1})
\end{equation}
and 
\begin{equation}
\begin{split}
\tnabla\circ\tau([v']) & =\tnabla([v]QAQ^{-1})\\
& =[v]\otimes (pQ\omega Q^{-1}\cdot QAQ^{-1} +p\cdot Q \rmd A Q^{-1})\\
&=[v]\otimes (pQ\cdot (\omega A  + \rmd A)\cdot Q^{-1})\\
\end{split}
\end{equation}
Since $\nabla(v')=\nabla(vA)=v\otimes \rmd A + v\otimes \omega A=v'\otimes (A^{-1}\rmd A+ A^{-1}\omega A)$, we have $\omega'=A^{-1}\rmd A+ A^{-1}\omega A$ by definition. Thus $\tau\circ\tnabla'=\tnabla\circ\tau$.

If there are third filtered bases $v''$, one has the following commutative diagram under the isomorphisms constructed:
\begin{equation}
\xymatrix{
(\tV,\tnabla) \ar[r] \ar[rd] &(\tV',\tnabla') \ar[d]\\
& (\tV'',\tnabla'')\\
}
\end{equation}
This can be checked easily by definition. In this sense, the isomorphism constructed is canonical.
\end{proof}
The functor 
\[-\otimes_\Phi\mBR : \tMIC(\tmBR/p\tmBR) \rightarrow \MIC(\mBR/p\mBR)\]
 is induced by base change under $\Phi$. Note that the connection on $(\tV,\tnabla)\otimes_\Phi\mBR$ is given by
\[\rmd + \frac{\rmd \Phi}{p}(\Phi^*\tnabla)\]
Denote by $\MFa(\mBR/p\mBR)$ the category of all Fontaine-Faltings module over $\mBR/p\mBR$ of weight $a$.

Let $(M,\nabla,\Fil,\Psi)$ be an object defined in Definition 2 of \cite{Fal99}. Then we can construct an Fontaine-Faltings module over $B_{W_\pi}/pB_{W_\pi}$ as follows. Denote
\begin{itemize}
	\item $V:=M/pM$;
	\item $\overline{\nabla}=\nabla\pmod{p}$;
	\item $\overline{\Fil}^iV=(\Fil^iM+pM)/pM$.
\end{itemize}
By i) and ii) in Definition 2 of \cite{Fal99}, one gets 
\[(V,\overline{\nabla},\overline{\Fil})\in \MCFa(\mB_{W\pi}/p\mB_{W_\pi}).\]
Assume $\{m_i\}$ is a filtered basis of $M$ with filtered degree $q_i$. Then $v_i=m_i+pM\in V$ forms a filtered basis of $V$ with filtered degree $q_i$. By the definition of tilde functor we have
\[\tV=\bigoplus_i \tmB_{W_\pi}/p\tmB_{W_\pi}\cdot [v_i]_{q_i}.\]
Now we can construct a $\mB_{W_{\pi}}$-morphism
\[\varphi:\tV\otimes_{\Phi}\mB_{W_{\pi}} \rightarrow V,\]
by giving
\[\varphi([v_i]_{q_i}\otimes_{\Phi}1)=\frac{\Psi(m_i)}{p^{q_i}}\pmod{p}.\]
Since $\Psi$ is a $\nabla$-horizontal semilinear endomorphism and $\frac{\Psi(m_i)}{p^{q_i}}$ forms a new $R_{W_{\pi}}$-basis of $M$, the morphism $\varphi$ is actually an isomorphism of modules with connections. Thus we get a Fontaine-Faltings module
\[(V,\overline{\nabla},\overline{\Fil},\varphi)\in \MFa(\mB_{W_\pi}/p\mB_{W_\pi}).\]
Replacing every $W_\pi$ by $R_\pi$, one gets a functor from the category of Fontaine modules defined in \cite{Fal99} to the category $\MFa(\mBR/p\mBR)$. In this sense the Fontaine-Faltings modules we defined above is compatible with the Fontaine modules defined in \cite{Fal99}.

\begin{lem}\label{lem:coeffExt} We have the following commutative diagram by extending the coefficient ring from $\R$ to $\mBR$ (or $\tmBR$)
\begin{equation*}
\xymatrix@C=1cm{
\MCF(\R/p\R) \ar[r]^{\widetilde{(\cdot)}}  \ar[d]^{-\otimes_R \mBR} 
& \tMIC(\R/p\R)\ar[r]^{-\otimes_{\Phi} {\R}}  \ar[d]^{-\otimes_R \tmBR} 
& \MIC(\R/p\R)  \ar[d]^{-\otimes_R \mBR} \\
\MCF(\mBR/p\mBR) \ar[r]^{\widetilde{(\cdot)}}  
& \tMIC(\tmBR/p\tmBR)\ar[r]^{-\otimes_\Phi\mBR}  
& \MIC(\mBR/p\mBR) \\ 
}
\end{equation*} 
In particular, we get a functor from the category of Fontaine-Faltings modules over $R/pR$ to that over $\mBR/p\mBR$  
\[\MFa(\R/p\R)\rightarrow \MFa(\mBR/p\mBR).\] 
\end{lem}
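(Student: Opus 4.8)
The plan is to verify commutativity of the two inner squares at the level of objects --- carrying filtered bases along the coefficient-extension maps --- and then to observe that morphisms are treated in exactly the same way; commutativity of the outer rectangle then transports the $\varphi$-structure of a Fontaine-Faltings module and produces the asserted functor $\MFa(\R/p\R)\to\MFa(\mBR/p\mBR)$. Throughout one works with a fixed object $(V,\nabla,\Fil)\in\MCF(\R/p\R)$ equipped with a filtered basis $v_1,v_2,\dots$ of degrees $q_1,q_2,\dots$, $0\le q_i\le a\le p-2$, and writes $\nabla(v_j)=\sum_i v_i\otimes\omega_{ij}$ with $\omega_{ij}\in F^{q_j-1-q_i}\Omega^1_{\R}$.

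For the left square: since $\R$ carries the trivial filtration, $\{v_i\otimes 1\}$ is again a filtered basis, of the same degrees, for $(V,\nabla,\Fil)\otimes_R\mBR$, its connection matrix is the image of $(\omega_{ij})$ in $\Omega^1_{\mBR}$, and Griffiths transversality is inherited, so the left vertical arrows land where claimed. I would then compare the two tilde objects through this basis: $\widetilde{(V\otimes_R\mBR)}$ is free on $[v_i\otimes1]_{q_i}$ over $\tmBR/p\tmBR$ with $\tnabla([v_j\otimes1]_{q_j})=\sum_i[v_i\otimes1]_{q_i}\otimes\frac{\omega_{ij}}{p^{q_j-1-q_i}}$, while $\widetilde{(V,\nabla,\Fil)}\otimes_R\tmBR$ is free on $[v_i]_{q_i}\otimes1$ carrying the base change of the corresponding $p$-connection over $\R$. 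The assignment $[v_i\otimes1]_{q_i}\leftrightarrow[v_i]_{q_i}\otimes1$ matches the two $p$-connections as soon as one knows each $\frac{\omega_{ij}}{p^{q_j-1-q_i}}$ is a legitimate element of $\Omega^1_{\tmBR}$, which is exactly the corollary to Lemma~\ref{lem:F^iB&F^itB}; by the independence-of-basis lemma proved above, this isomorphism is canonical, hence functorial, and the left square commutes.

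For the right square I would use that the Frobenius lift $\Phi$ on $\mBR$, together with its extension $\Phi\colon\tmBR\to\mBR$, is chosen compatibly with the fixed lift $\Phi\colon\hR\to\hR$, so that the square with vertical inclusions $\hR\hookrightarrow\tmBR$, $\hR\hookrightarrow\mBR$ and horizontal arrows $\Phi$ commutes; transitivity of base change then yields a canonical $\mBR/p\mBR$-linear isomorphism $(\widetilde{(V,\nabla,\Fil)}\otimes_\Phi\hR)\otimes_R\mBR\cong(\widetilde{(V,\nabla,\Fil)}\otimes_R\tmBR)\otimes_\Phi\mBR$, and the two connections, both of the shape $\rmd+\frac{\rmd\Phi}{p}(\Phi^*\tnabla)$, agree because $\Phi$ restricts to the chosen lift on $\hR$ and $\frac{\rmd\Phi}{p}$ on $\Omega^1_{\mBR}$ restricts on the image of $\Omega^1_{\hR}$ to its counterpart over $\hR$. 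Composing the two squares gives a canonical isomorphism $(\widetilde{(V,\nabla,\Fil)}\otimes_\Phi\hR)\otimes_R\mBR\cong\widetilde{(V,\nabla,\Fil)\otimes_R\mBR}\otimes_\Phi\mBR$; applying $-\otimes_R\mBR$ to the $\varphi$-structure $\varphi\colon\widetilde{(V,\nabla,\Fil)}\otimes_\Phi\hR\to(V,\nabla)$ and precomposing produces an isomorphism $\widetilde{(V,\nabla,\Fil)\otimes_R\mBR}\otimes_\Phi\mBR\to(V,\nabla)\otimes_R\mBR$, i.e.\ an object of $\MFa(\mBR/p\mBR)$, and since a filtered, $\nabla$-horizontal, $\varphi$-compatible $R$-linear map base changes to one with the same properties, the construction is functorial. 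The point I expect to need genuine care --- and would write out in detail --- is not the diagram chase but the claim that every $\tfrac1p$-rescaling appearing in the tilde functor and in the Frobenius-pullback connection remains $p$-integral after extending scalars to $\mBR$ and $\tmBR$; this is precisely what Lemma~\ref{lem:F^iB&F^itB} and its corollary are for, and the reason the hypothesis $a\le p-2$ is imposed throughout.
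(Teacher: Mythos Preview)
The paper states this lemma without proof; it is treated as a routine compatibility check between base change and the tilde/Frobenius-pullback functors. Your argument --- tracking a fixed filtered basis through each square and using that the Frobenius lift $\Phi$ on $\mBR$ restricts to the chosen lift on $\hR$ for the right square --- is precisely the natural verification and is correct.

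One minor remark on the left square: you invoke the corollary to Lemma~\ref{lem:F^iB&F^itB} to ensure that $\frac{\omega_{ij}}{p^{q_j-1-q_i}}$ lands in $\Omega^1_{\tmBR}$. This is harmless but in fact unnecessary at this point. Since $(V,\nabla,\Fil)$ comes from $\MCF(R/pR)$ and $R$ carries the trivial filtration, Griffiths transversality over $R$ already forces $\omega_{ij}=0$ whenever $q_j-1-q_i>0$; hence the only rescalings that actually occur are by nonnegative powers of $p$, and no integrality issue arises. The corollary to Lemma~\ref{lem:F^iB&F^itB} is genuinely needed for a \emph{general} object of $\MCF(\mBR/p\mBR)$ --- where the nontrivial ring filtration allows nonzero $\omega_{ij}$ in positive filtration degree --- but not for one obtained by base change from $R$.
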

Those categories of Fontaine-Faltings modules are independent of the choice of the Frobenius lifting by the Taylor formula.
\begin{thm}\label{thm:gluingFFmod} For any two choices of $\Phi_{\mBR}$ there is an equivalence between the corresponding categories $\MFa(\mBR/p\mBR)$ with different $\Phi_{\mBR}$. These equivalences satisfy the obvious cocycle condition. Therefore, $\MFa(\mBR/p\mBR)$ is independent of the choice of $\Phi_{\mBR}$ up to a canonical isomorphism.
\end{thm}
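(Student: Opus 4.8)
The plan is to adapt Faltings' comparison of Frobenius liftings (\cite[Theorem~2.3]{Fal89}, recalled in Section~\ref{section FFM}) and its very ramified refinement in~\cite{Fal99} to the rings $\mBR$ and $\tmBR$. Fix two Frobenius liftings $\Phi_1,\Phi_2\colon\mBR\to\mBR$, both compatible with $\varphi$ on $\mB_{W_\pi}$. Since they agree on $\mB_{W_\pi}$, they differ only on the images of a fixed system of \'etale coordinates $T_1,\dots,T_d$ of $R$, and $z_i:=\Phi_2(T_i)-\Phi_1(T_i)$ lies in the kernel of $\mBR\to R/pR$. For a filtered de Rham module $(V,\nabla,\Fil)\in\MCFa(\mBR/p\mBR)$ with tilde $(\tV,\tnabla)$, I would define a comparison map
\[
\alpha_{\Phi_1,\Phi_2}\colon (\tV,\tnabla)\otimes_{\Phi_1}\mBR\longrightarrow (\tV,\tnabla)\otimes_{\Phi_2}\mBR
\]
in $\MIC(\mBR/p\mBR)$ by the divided-power Taylor series $\sum_{\underline m}\frac{1}{\underline m!}\,\underline z^{\,\underline m}\,(\Phi_2^{*}\tnabla)^{\underline m}$, where $\underline m=(m_1,\dots,m_d)$ runs over multi-indices; the induced functor keeps the underlying filtered de Rham module $(V,\nabla,\Fil)$ and the underlying morphisms unchanged and sends a $\varphi$-structure over $\Phi_1$ to $\varphi\circ\alpha_{\Phi_2,\Phi_1}$ over $\Phi_2$.

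The first and main step is to make sense of this series over $\mBR/p\mBR$, and the point is that it \emph{truncates}. In a filtered basis $[v_i]_{q_i}$ of $\tV$ (with $q_i\in[0,a]$, $a\le p-2$), Griffiths transversality together with the explicit shape \eqref{FnBR}--\eqref{FntBR} of the filtrations shows, after a telescoping computation, that the matrix of the iterate $(\Phi_2^{*}\tnabla)^{\underline m}$ has entries of the form $p^{-c}\omega$ with $\omega$ a section of $F^{c}\Omega^1_{\tmBR}$ and $c\le a-|\underline m|$. For $0\le c<p$ such an entry lies in $F^{c}\Omega^1_{\tmBR}$ by the corollary to Lemma~\ref{lem:F^iB&F^itB}, while for $c<0$ the factor $p^{-c}=p^{|c|}$ is divisible by $p$. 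Hence $(\Phi_2^{*}\tnabla)^{\underline m}\equiv 0\pmod p$ whenever $|\underline m|\ge a+1$, only finitely many terms of the series survive, all surviving denominators $\underline m!$ are units in $\Z_p\subset\mBR$, and $\alpha_{\Phi_1,\Phi_2}$ is a well-defined $\mBR/p\mBR$-linear map. Lemma~\ref{lem:F^iB&F^itB}, its corollary, and the lemma on the graded pieces $\Gr^n\mBR$, $\Gr^n\tmBR$ are exactly the estimates that license these manipulations inside $\tmBR$.

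The remaining verifications are formal and parallel the unramified case: (i) $\alpha_{\Phi_1,\Phi_2}$ is horizontal for the connections $\rmd+\frac{\rmd\Phi_i}{p}(\Phi_i^{*}\tnabla)$, by the usual Taylor-formula computation; (ii) $\alpha_{\Phi_1,\Phi_2}$ is natural in $(V,\nabla,\Fil)$, so composing with it transports both objects and morphisms of $\MFa(\mBR/p\mBR)$ and defines a functor; (iii) $\alpha_{\Phi_1,\Phi_1}=\mathrm{id}$ and $\alpha_{\Phi_1,\Phi_3}=\alpha_{\Phi_1,\Phi_2}\circ\alpha_{\Phi_2,\Phi_3}$, from the additivity $z^{(13)}_i=z^{(12)}_i+z^{(23)}_i$ and the composition law for such exponential-type series. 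By (iii) each $\alpha_{\Phi_1,\Phi_2}$ is an isomorphism with inverse $\alpha_{\Phi_2,\Phi_1}$, so the functor is an equivalence, and the cocycle relation makes the equivalences between the categories $\MFa(\mBR/p\mBR)$ for varying choices coherent; hence the category is independent of $\Phi_\mBR$ up to canonical isomorphism. I expect the genuine obstacle to be exactly the truncation estimate of the second paragraph: the ring $\mBR$ carries no divided powers of $p$, so the naive $p$-adic Taylor argument is unavailable, and one must instead play the $p$-adic and the $f$-adic filtrations off against each other through the tilde functor over $\tmBR=\mBR[f/p]$, which is precisely where the weight bound $a\le p-2$ enters.
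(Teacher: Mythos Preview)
The paper states this theorem without proof; it is asserted as a direct adaptation of Faltings' gluing functor (\cite[Theorem~2.3]{Fal89}, recalled in Section~\ref{section FFM}) to the very ramified setting of~\cite{Fal99}. Your approach via the divided-power Taylor formula is exactly the expected one, and you have correctly located the only genuine issue: making the series finite over $\mBR/p\mBR$, where no divided powers of $p$ are available. Your resolution is right in substance. Modulo $p$, the $p$-connection $\tnabla$ becomes $\mO$-linear, and the explicit formula $\tnabla([v_j]_{q_j})=\sum_i[v_i]_{q_i}\otimes\omega_{ij}/p^{q_j-1-q_i}$ shows that the coefficient from degree $q_j$ to degree $q_i$ is divisible by $p$ whenever $q_i\ge q_j$; hence $\tnabla\pmod p$ strictly lowers the filtration degree and is nilpotent of order at most $a+1\le p-1$. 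This forces $(\Phi_2^{*}\tnabla)^{\underline m}\equiv 0\pmod p$ for $|\underline m|>a$, and the surviving factorials $\underline m!$ are units. One small inaccuracy: after $|\underline m|$ iterations the entries live in $\Omega^{|\underline m|}$, not $\Omega^1$, so your bookkeeping ``$p^{-c}\omega$ with $\omega\in F^c\Omega^1_{\tmBR}$'' should be phrased for higher forms; the corollary to Lemma~\ref{lem:F^iB&F^itB} extends to $\Omega^m$ in the obvious way.

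For comparison, the paper does carry out the companion computation in Lemma~\ref{gluingFlatBundle}, where the same Taylor mechanism appears as the transition isomorphism $G_{ij}=\exp(h_{\Phi_i,\Phi_j}(\overline\Phi^{*}\tnabla))$ used to glue the local inverse Cartier transforms; your $\alpha_{\Phi_1,\Phi_2}$ is the affine, single-chart version of that gluing, and the horizontality and cocycle checks you list are the same as the ones verified there.
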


\begin{defi}\label{lem:defiFunctorD}
For an object $(V,\nabla,\Fil,\varphi)$ in $\MFa(\mBR/p\mBR)$, denote
\[\bD(V,\nabla,\Fil,\varphi)=\Hom_{B^+(R),\Fil,\varphi}(V\otimes_{\mBR} B^+(R),B^+(R)/pB^+(R)).\]  
\end{defi}
The proof of Theorem~2.6 in~\cite{Fal89} works in this context. we can define an adjoint functor $\bE$ of $\bD$ as 
\[\bE(L)= \varinjlim \{V\in \MFa(\mBR/p\mBR)\mid L\rightarrow \bD(V)\}.\]
The proof in page~41 of~\cite{Fal89} still works. Thus we obtain:
\begin{thm} $i).$ The homomorphism set $\bD(V,\nabla,\Fil,\varphi)$ is an $\bF_p$-vector space with a linear $\mathrm{Gal}(\overline{R}_K/R_K)$-action whose rank equals to the rank of $V$.\\
$ii).$ The functor $\bD$ from $\MFa(\mBR/p\mBR)$ to the category of $W_n(\bF_p)$-$\mathrm{Gal}(\overline{R}_K/R_K)$-modules is fully faithful and its image on objects is closed under subobjects and quotients. 
\end{thm}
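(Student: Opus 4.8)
The plan is to transport Faltings' proof of \cite[Theorem~2.6]{Fal89} to the present ramified situation, using the divided power algebra $\mBR$ (together with its companion $\tmBR=\mBR[f/p]$ and the Frobenius‑like map $\Phi\colon\tmBR\to\mBR$) in place of the $p$‑adically completed small affine algebra $\hR$ with its Frobenius lift, and $B^+(R)$ in place of $B^+(\hR)$. The first step is to assemble the structural inputs these rings must satisfy: $\mBR$ is $p$‑torsion free; $\Phi$ reduces modulo $p$ to the absolute Frobenius on $R_\pi/\pi R_\pi$ and is compatible with $\varphi$ on $\mB_{W_\pi}$; one has $\mBR/\Fil^1\mBR\simeq\hR_\pi\simeq\tmBR/\Fil^1\tmBR$; and --- crucially, in the weight range $[0,p-2]$ --- Lemma~\ref{lem:F^iB&F^itB} and its corollary guarantee that division by $p^n$ carries $\Fil^n\mBR$ into $\Fil^n\tmBR$, so that the tilde functor $\widetilde{(\cdot)}\colon\MCF(\mBR/p\mBR)\to\tMIC(\tmBR/p\tmBR)$ and the base change $-\otimes_\Phi\mBR$ land where they should. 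One also needs the continuous $W$‑algebra map $\mBR\to B^+(R)$ sending $T\mapsto[\underline\pi]$ to respect Frobenius lifts and filtrations; this is the analogue of Faltings' $\kappa_\Phi$, and it is what forces $\bD$ to take values in filtered, $\varphi$‑compatible homomorphisms and hence, via the connection on $V$, to carry a linear $\mathrm{Gal}(\overline R_K/R_K)$‑action exactly as the original $\mathbb D_\Phi$ does.

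For part (i) I would first establish the rank statement for the rank‑one ``unit'' object and reduce the general case to it by d\'evissage along the filtration (using that the graded pieces $\Gr^n\mBR$, $\Gr^n\tmBR$ are free of rank one over $\hR_\pi$). The core is the comparison isomorphism: after passing along the tower of $p$‑power roots defining $\overline R_K$, the object $V\otimes_\mBR B^+(R)$ together with its connection and $\varphi$‑structure becomes trivial, so that $\bD(V)\otimes_{\bF_p}B^+(R)\xrightarrow{\ \sim\ }V\otimes_\mBR B^+(R)$ and in particular $\dim_{\bF_p}\bD(V)=\rank_\mBR V$. This is Faltings' argument via the fundamental exact sequence and the (almost) acyclicity of $B^+(R)$ for the relevant Galois cohomology; the point to be re‑checked in the ramified setting is the ``Poincar\'e lemma'' identifying the horizontal sections of $V\otimes_\mBR B^+(R)$ compatible with $\varphi$ and the filtration with $\bD(V)$, which goes through because the $p$‑connection and the divided powers $f^n/n!$, $f^n/p^n$ interact with $\Phi$ in the same way as in the smooth case once Lemma~\ref{lem:F^iB&F^itB} is available.

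For part (ii) I would construct the adjoint functor $\bE(L)=\varinjlim\{V\mid L\to\bD(V)\}$ as displayed above and verify the adjunction $\bE\dashv\bD$, following page~41 of \cite{Fal89}; full faithfulness of $\bD$ then reduces to the unit $V\to\bE\bD(V)$ being an isomorphism, which follows from the rank count of (i) together with the fact that a morphism of Fontaine--Faltings modules over $\mBR/p\mBR$ which becomes an isomorphism after $-\otimes_\mBR B^+(R)$ is already one. Closedness of the essential image under subobjects and quotients is then formal: given a sub‑ or quotient $\mathrm{Gal}(\overline R_K/R_K)$‑representation of $\bD(V)$, applying $\bE$ produces a Fontaine--Faltings module mapping from or to $V$ whose $\bD$ recovers the given representation, and strictness of the structure maps (for the filtration and for $\varphi$) is automatic since source and target already have the right rank by (i).

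The step I expect to be the main obstacle is the comparison/trivialization underlying (i): checking that Faltings' almost‑\'etale descent and the fundamental exact sequence survive the replacement of $\hR$ by $\mBR$. The delicate feature is that $\Phi$ is not a Frobenius lift in the naive sense --- $\mBR$ is not formally smooth, and $\Phi$ is genuinely defined only on $\tmBR$ after inverting $f/p$ --- so the smooth‑case statements cannot be quoted directly; one must instead verify that all of Faltings' estimates (convergence of the Taylor series comparing Frobenius lifts, the precise shape of the filtration on $B^+(R)$, and the $\Phi$‑behaviour of $f^n/n!$ and $f^n/p^n$) remain valid within the weight range $[0,p-2]$. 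Granting those estimates, the remainder of the argument is formal and parallels \cite{Fal89}.
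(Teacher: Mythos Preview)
Your proposal is correct and follows the same approach as the paper: both defer to Faltings' argument in \cite[Theorem~2.6 and p.~41]{Fal89}, with the paper simply asserting that ``the proof of Theorem~2.6 in~\cite{Fal89} works in this context'' and that the adjoint $\bE(L)=\varinjlim\{V\mid L\to\bD(V)\}$ can be constructed as on page~41 of \cite{Fal89}. Your sketch is considerably more detailed about which structural inputs must be verified (the role of Lemma~\ref{lem:F^iB&F^itB}, the map $\mBR\to B^+(R)$, the comparison isomorphism), but the strategy is identical.
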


\subsection{ Categories and Functors on proper smooth variety over very ramified valuation ring $W_\pi$} 
Let $\mX$ be a smooth proper $W$-scheme and $\mX_\pi=\mX\otimes_{W}W_\pi$.
Let $\sX_\pi$ be the formal completion of $\mX\otimes_{W}\mB_{W_\pi}$ and $\tsX_\pi$ be the formal completion of $\mX\otimes_W{\tmB_{W_\pi}}$. Then $\sX_\pi$ is an infinitesimal thickening of $\mX_\pi$ and the ideal defining $\mX_\pi$ in $\sX_\pi$ has a nilpotent PD-structure which is compatible with that on $F^1(\mB_{W_\pi})$ and $(p)$
\begin{equation}
\xymatrix@R=2mm{
\mX_\pi \ar[rr] \ar[dd] \ar[dr] && \sX_\pi \ar[dd]|(0.5)\hole \ar[dr]&\\
& \widetilde{\sX_\pi}  \ar[rr] \ar[dd] && \mX \ar[dd]\\
\Spec W_\pi  \ar[rr]|(0.5)\hole
 \ar[dr] && \Spec \mB_{W_\pi} \ar[dr] &\\
& \Spec\widetilde{\mB}_\pi \ar[rr] && \Spec W \ .\\
}
\end{equation}
 Let $\{\mU_{i}\}_i$ be a covering of small affine open subsets of $\mX$. By base change, we get a covering $\{\sU_i=\mU_{i}\times_\mX \sX_\pi\}_i$ of $\sX_\pi$ and a covering $\{\widetilde{\sU}_i=\mU_{i}\times_\mX \tsX_\pi\}_i$ of $\widetilde{\sX_\pi}$. For each $i$, we denote $R_{i}=\mO_{\mX_\pi}(\mU_{i}\times_\mX \mX_\pi)$. Then $\mB_{R_i}=\mathcal O_{\sX_\pi}(\sU_i)$ and 
$\tmB_{R_i}=\mathcal O_{\widetilde{\sX_\pi}}(\widetilde{\sU}_i)$ are the coordinate rings. Fix a Frobenius-lifting $\Phi_i:\tmB_{R_i}\rightarrow \mB_{R_i}$, one gets those categories of Fontaine-Faltings modules
\[\MFa(\mB_{R_i}/p\mB_{R_i}).\]
By the Theorem~\ref{thm:gluingFFmod}, these categories are glued into one category. Moreover those underlying modules are glued into a bundle over $\sX_{\pi,1}=\sX_\pi\otimes_{\bZ_p}\bF_p$. We denote this category by $\MFa(\sX_{\pi,1})$. 

\subsubsection{Inverse Cartier functor and a description of $\MFa(\sX_{\pi,1})$ via Inverse Cartier functor} Let  $\overline{\Phi}: \mBR/p\mBR \rightarrow \mBR/p\mBR$ be the $p$-th power map. Then we get the following lemma directly.
\begin{lem}\label{lem:FrobLift}
Let $\Phi:\mBR\rightarrow \mBR$ and $\Psi:\mBR\rightarrow \mBR$ be two liftings of $\overline{\Phi}$ which are both compatible with the Frobenius map on $\mB_{W_\pi}$. 
\begin{itemize}
\item[i).] Since $\varphi(f)$ is divided by $p$, we extend $\Phi$ and $\Psi$ to maps on $\tmBR$ via $\frac{f^n}{p}\mapsto \left(\frac{\varphi(f)}{p^n}\right)^n$  uniquely;
\item[ii).] the difference $\Phi-\Psi$ on $\tmBR$ is still divided by $p$;
\item[iii).] the differentials $\rmd \Phi : \Omega^1_{\tmBR}\rightarrow \Omega^1_{\mBR}$ and $ \rmd \Psi : \Omega^1_{\tmBR}\rightarrow \Omega^1_{\mBR}$ are divided by $p$.
\end{itemize}
\end{lem}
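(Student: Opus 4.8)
The plan is to reduce all three assertions to computations on a set of generators, exploiting that $\tmBR=\mBR[f/p]$ is topologically generated over $\mBR$ by the single element $f/p$, and that $\Phi,\Psi$ are lifts of the $p$-th power map $\overline{\Phi}$ on $\mBR/p\mBR$ restricting on $\mB_{W_\pi}$ to $\varphi$. For (i), uniqueness is automatic: a continuous $W$-algebra homomorphism $\tmBR\to\mBR$ prolonging $\Phi$ is determined by the image of $f/p$, and applying it to the relation $p\cdot(f/p)=f$ forces that image to be $\Phi(f)/p=\varphi(f)/p$ — an element of $\mBR$ precisely because $\varphi(f)\in p\mBR$, the divisibility recorded just before the statement (from $T^{ep}\equiv f^p\equiv 0\pmod{p\,\mB_{W_\pi}}$ and $\sigma(a_i)\in pW$). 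For existence one checks only that $f/p\mapsto\varphi(f)/p$ respects that single relation and then extends $\Phi$ continuously to the completion; the displayed formula is this prolongation evaluated on powers of $f/p$, and the same applies to $\Psi$.

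For (ii): on $\mBR$ one has $(\Phi-\Psi)(\mBR)\subseteq p\mBR$ since both maps reduce mod $p$ to $\overline{\Phi}$, and on the generator $f/p$ the two prolongations agree exactly, both equal to $\varphi(f)/p$ by (i) (using $\Phi(f)=\varphi(f)=\Psi(f)$). Expanding a general element of $\tmBR$ as a convergent series $\sum_j b_j(f/p)^j$ with $b_j\in\mBR$, the operator $\Phi-\Psi$ acts only through the differences $\Phi(b_j)-\Psi(b_j)\in p\mBR$, so $(\Phi-\Psi)(\tmBR)\subseteq p\mBR\subseteq p\tmBR$ because $p\mBR$ is closed.

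For (iii) I would first record the statement over $\mBR$, namely $\rmd\Phi(\Omega^1_{\mBR})\subseteq p\,\Omega^1_{\mBR}$: evaluating on the differentials of topological generators of $\mBR$ over $W$, on the smooth coordinates $T_i$ of $R$ one has $\Phi(T_i)\equiv T_i^p\pmod p$, so $\rmd(\Phi T_i)$ is divisible by $p$; on the divided powers $f^n/n!$ one writes $\Phi(f^n/n!)=\varphi(f)^n/n!=(p^n/n!)(\varphi(f)/p)^n$ and uses $v_p(p^n/n!)\geq 1$ for every $n\geq 1$, so $\Phi(f^n/n!)\in p\mBR$ and its differential is $p$-divisible. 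Since the differentials that appear in the inverse-Cartier formula $\rmd+\tfrac{\rmd\Phi}{p}(\Phi^*\tnabla)$ are the relative ones — generated by the $\rmd T_i$, the one extra direction $\rmd(f/p)$ being the differential of an element of the base $\tmB_{W_\pi}$ — the case of $\tmBR$ then follows formally from the case of $\mBR$.

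The main obstacle is the remaining case if one insists on $\Omega^1$ relative to $W$ exactly as written, where one must control $\rmd\Phi(\rmd(f/p))=\rmd(\varphi(f)/p)$; this is the only point where the divided-power structure, rather than the bare divisibility $\varphi(f)\in p\mBR$, is used. I would write $\varphi(f)=f^p+pg$ with $g\in\mB_{W_\pi}$, so that $\varphi(f)/p=(p-1)!\,(f^p/p!)+g$ and $\rmd(\varphi(f)/p)=f^{p-1}\,\rmd f+\rmd g$, and then feed in the Corollary preceding the statement, $\tfrac1{p^n}F^n\Omega^1_{\mBR}\subseteq F^n\Omega^1_{\tmBR}$, together with $\rmd\Phi(F^1\Omega^1_{\mBR})\subseteq p\,\Omega^1_{\mBR}$, to push the conclusion through — the forms that actually have to be controlled in applications being precisely the pulled-back forms $\omega_{ij}\in F^{q_j-1-q_i}\Omega^1_{\mBR}$ that enter the definition of $\tnabla$. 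Once (iii) is in place, (i)--(iii) together are exactly what is needed to make the inverse-Cartier formula define an integral connection and to glue the categories $\MFa(\mBR/p\mBR)$ independently of the Frobenius lift $\Phi$.
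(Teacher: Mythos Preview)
The paper does not prove this lemma; it is stated immediately after the sentence ``Then we get the following lemma directly.'' Your arguments for (i) and (ii) are sound --- minor quibble: $\tmBR=\mBR[f/p]$ is a polynomial extension, so write ``finite sum'' rather than ``convergent series,'' which only simplifies the argument.

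For (iii) there is a genuine subtlety that you partially uncover. Your reduction to relative differentials (over $\mB_{W_\pi}$ rather than over $W$) is correct and is what the inverse Cartier construction actually requires: once $\rmd(f/p)$ is killed as the differential of a base element, the extra generators of $\Omega^1_{\tmBR}$ over $\Omega^1_{\mBR}$ disappear and the claim follows from the $\mBR$ case exactly as you say. Your subsequent attempt to handle the absolute case (over $W$) is vague, and in fact that case appears to fail as literally stated: with $e=2$ and $p$ odd one computes $\rmd(\varphi(f)/p)=2T^{2p-1}\,\rmd T$, and $T^{2p-1}\notin p\mB_{W_\pi}$ since $[(2p-1)/2]!=(p-1)!$ is a $p$-adic unit. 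This strongly suggests the paper's $\Omega^1_{\mBR}$ should be read as differentials relative to the base $\mB_{W_\pi}$, consistent with Faltings' setup in \cite{Fal99}, despite the ``$/W$'' in the displayed definition. With that reading your argument is complete, and your instinct to isolate the relative case was exactly right.
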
 
From now on, we call the extension given by i) of Lemma~\ref{lem:FrobLift} the \emph{Frobenius liftings} of $\overline{\Phi}$ on $\tmBR$.

\begin{lem}Let $\Phi:\tmBR\rightarrow \mBR$ and $\Psi:\tmBR\rightarrow \mBR$ be two Frobenius liftings of $\overline{\Phi}$ on $\tmBR$. Then there exists a $\mBR/p\mBR$-linear morphism \[h_{\Phi,\Psi}:\Omega^1_{\tmBR/p\tmBR}\otimes_{\overline\Phi}\mBR/p\mBR \rightarrow  \mBR/p\mBR\]
 satisfying that:
\begin{itemize}
\item[i).] we have $\frac{\rmd \Phi}{p}-\frac{\rmd \Psi}{p}=\rmd h_{\Phi,\Psi}$ over $\Omega^1_{\tmBR/p\tmBR}\otimes_{\overline\Phi}1$;
\item[ii).] the cocycle condition holds.
\end{itemize}
\end{lem}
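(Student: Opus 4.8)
The plan is to mimic the construction of the comparison isomorphism $\alpha_{\Phi,\Psi}$ in Faltings' original work (Theorem~2.3 in~\cite{Fal89}) via the Taylor formula, but carefully tracking the divided powers so that the formula still makes sense over $\tmBR$ modulo $p$. The point is that the two Frobenius liftings $\Phi,\Psi:\tmBR\to\mBR$ agree modulo $p$ (both lift $\overline\Phi$), so that $\Phi-\Psi$ is divisible by $p$ on $\tmBR$ by Lemma~\ref{lem:FrobLift}~ii), and likewise $\rmd\Phi-\rmd\Psi$ is divisible by $p$ by part iii); dividing by $p$ therefore produces genuine morphisms after reduction mod $p$.

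First I would make the target precise: $\Omega^1_{\tmBR/p\tmBR}\otimes_{\overline\Phi}\mBR/p\mBR$ is the base change of the module of differentials along the $p$-th power map $\overline\Phi$, and $h_{\Phi,\Psi}$ should be the $\mBR/p\mBR$-linear map determined on generators by sending $\rmd T_j\otimes_{\overline\Phi}1$ (for the étale coordinates $T_1,\dots,T_d$) to $\tfrac{1}{p}\bigl(\Phi(T_j)-\Psi(T_j)\bigr)\bmod p$, and sending $\rmd\bigl(\tfrac{f^n}{p^n}\bigr)\otimes_{\overline\Phi}1$ to the corresponding difference $\tfrac1p\bigl(\Phi(\tfrac{f^n}{p^n})-\Psi(\tfrac{f^n}{p^n})\bigr)\bmod p$ coming from the explicit formula $\tfrac{f^n}{p^n}\mapsto(\tfrac{\varphi(f)}{p^n})^n$ in Lemma~\ref{lem:FrobLift}~i). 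One checks this is well defined: the relations among the $\rmd T_j$ and $\rmd(\tfrac{f^n}{p^n})$ inside $\Omega^1_{\tmBR}$ are respected because $\Phi$ and $\Psi$ are ring homomorphisms, so that $\tfrac1p(\Phi-\Psi)$ is a $W$-derivation $\tmBR\to\mBR/p\mBR$ (the usual computation: $\tfrac1p(\Phi(xy)-\Psi(xy))=\tfrac1p(\Phi(x)-\Psi(x))\Phi(y)+\Psi(x)\tfrac1p(\Phi(y)-\Psi(y))$, and mod $p$ one may replace $\Phi(y),\Psi(x)$ by their common reduction), hence factors through $\Omega^1_{\tmBR}$ and then, being $\overline\Phi$-semilinear after reduction, through the base change.

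Next, property i) is essentially the definition unwound: for a $1$-form $\omega\in\Omega^1_{\tmBR}$, applying $\tfrac{\rmd\Phi}{p}$ versus $\tfrac{\rmd\Psi}{p}$ differs exactly by the derivation $\tfrac1p(\Phi-\Psi)$, which is $\rmd h_{\Phi,\Psi}$ by construction. For property ii), given a third lifting $\Xi$, one must verify $h_{\Phi,\Xi}=h_{\Phi,\Psi}+h_{\Psi,\Xi}$ after the appropriate identifications; this is immediate from additivity of $\tfrac1p(\Phi-\Psi)+\tfrac1p(\Psi-\Xi)=\tfrac1p(\Phi-\Xi)$ modulo $p$, using that all three liftings share the same reduction $\overline\Phi$ so the base-change twists match up. The main obstacle I anticipate is bookkeeping with the divided-power variables: one must confirm that the explicit extension $\tfrac{f^n}{p^n}\mapsto(\tfrac{\varphi(f)}{p^n})^n$ is compatible with the multiplicative relations $\tfrac{f^m}{p^m}\cdot\tfrac{f^n}{p^n}=\tfrac{f^{m+n}}{p^{m+n}}$, so that $\tfrac1p(\Phi-\Psi)$ genuinely is a derivation on all of $\tmBR$ and not merely on the subalgebra generated by the $T_j$ — this needs the bound $n<p$ (so no $p$-adic denominators appear in $n!$ beyond what is controlled) exactly as in Lemma~\ref{lem:F^iB&F^itB}, and a short computation with $\varphi(f)/p$. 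Once that compatibility is in hand, everything else is the standard Taylor-formula argument transported verbatim to the ramified setting.
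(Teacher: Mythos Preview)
Your approach is essentially the same as the paper's: both define $h_{\Phi,\Psi}(\rmd g\otimes 1)=\tfrac{1}{p}\bigl(\Phi(\hat g)-\Psi(\hat g)\bigr)\bmod p$ and verify well-definedness via the Leibniz computation you wrote down, after which properties i) and ii) are immediate from the definition. Your anticipated obstacle about divided-power bookkeeping is a red herring, however: since $\Phi,\Psi:\tmBR\to\mBR$ are already given as ring homomorphisms with $\Phi-\Psi$ divisible by $p$ (Lemma~\ref{lem:FrobLift}~ii)), the derivation identity holds for \emph{all} $x,y\in\tmBR$ automatically, so there is no need to single out the generators $T_j$ and $\tfrac{f^n}{p^n}$ separately or to invoke any bound like $n<p$ from Lemma~\ref{lem:F^iB&F^itB}.
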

\begin{proof}
As $\Omega^1_{\tmBR/p\tmBR}\otimes_{\overline\Phi}\mBR/p\mBR$ is an $\mBR/p\mBR$-module generated by elements of the form $\rmd g\otimes 1$ ($g\in\tmBR/p\tmBR$) with relations $\rmd(g_1+g_2)\otimes 1-\rmd g_1 \otimes 1-\rmd g_2 \otimes 1$ and $\rmd(g_1g_2)\otimes 1-\rmd g_1 \otimes\overline{\Phi}(g_2)-\rmd g_2 \otimes \overline{\Phi}(g_1)$.
Since $\Phi-\Psi$ is divided by $p$, we can denote $h_{ij}({\rmd g}\otimes 1)=\frac{\Phi(\hat{g})-\Psi(\hat{g})}{p}\pmod{p}\in \mBR/p\mBR$ for any element $g\in \mO_{U_1}$ (the definition does not depend on the choice of the lifting $\hat{g}$ of $g$ in $\mO_{\mU}$). By direct calculation, we have
\[h_{ij}(\rmd(g_1+g_2)\otimes 1)=h_{ij}(\rmd g_1 \otimes 1)+h_{ij}(\rmd g_2 \otimes 1)\]
and
\[h_{ij}(\rmd(g_1g_2)\otimes 1)=\overline{\Phi}(g_2)\cdot h_{ij}(\rmd g_1 \otimes 1)+\overline{\Phi}(g_1)\cdot h_{ij}(\rmd g_2 \otimes 1)\]
Thus $h_{ij}$ can be $\mBR/p\mBR$-linearly extended.
One checks i) and ii) directly by definition.
\end{proof}

Let $(\tV,\tnabla)$ be a locally filtered free sheaf over $\tsX_{\pi,1}=\tsX_\pi\otimes_{\bZ_p}\bF_p$ with an integrable $p$-connection. Here a ``filtered free'' module over a filtered ring $R$ is a direct sum of copies of $R$ with the filtration shifted by a constant amount. The associated graded then has a basis over $gr_F(R)$ consisting of homogeneous elements(see \cite{Fal99}).  Let $(\tV_i,\tnabla_i)=(\tV,\tnabla)\mid_{\widetilde{\sU}_{i,1}}$ be its restriction on the open subset $\widetilde{\sU}_{i,1}=\widetilde{\sU}_{i}\otimes_{\bZ_p}\bF_p$. By taking functor $\Phi_i^*$, we get bundles with integrable connections over $\sU_{i,1}=\sU_i\otimes_{\bZ_p}\bF_p$
\[\left(\Phi_i^*\tV_i, \rmd + \frac{\rmd \Phi_i}{p}(\Phi_i^*\tnabla)\right).\]
\begin{lem}\label{gluingFlatBundle} Let $(\tV,\tnabla)$ be a locally filtered free sheaf over $\tsX_{\pi,1}$ with an integrable $p$-connection. Then these local bundles with connections
	\[\left(\Phi_i^*\tV_i, \rmd + \frac{\rmd \Phi_i}{p}(\Phi_i^*\tnabla)\right)\]
can be glued into a global bundle with a connection on $\sX_{\pi,1}$ via transition functions
\[G_{ij}=\exp\left(h_{\Phi_i,\Phi_j}(\overline{\Phi}^*\tnabla)\right):\Phi_{i}^*(\tV_{ij}) \rightarrow \Phi_{j}^*(\tV_{ij}).\] 
Denote this global bundle with connection by $C^{-1}_{\sX_{\pi,1}}(\tV,\tnabla)$. Then we can construct a functor 
\[C^{-1}_{\sX_{\pi,1}}: \tMIC(\tsX_{\pi,1})\rightarrow \MIC(\sX_{\pi,1}).\] 
\end{lem}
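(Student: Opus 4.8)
The plan is to carry over the explicit construction of the inverse Cartier transform of Lan--Sheng--Zuo (\cite{LSZ12a,LSZ13a}) from the truncated Witt ring to the present setting, using the Frobenius liftings $\Phi_i\colon\tmB_{R_i}\to\mB_{R_i}$ and the maps $h_{\Phi_i,\Phi_j}$ produced in the lemma above. First I would check that each local datum $\bigl(\Phi_i^*\tV_i,\ \rmd+\tfrac{\rmd\Phi_i}{p}(\Phi_i^*\tnabla)\bigr)$ really is a bundle with integrable connection over $\sU_{i,1}$: the operator $\tfrac{\rmd\Phi_i}{p}$ makes sense by Lemma~\ref{lem:FrobLift}(iii), and since $\tnabla$ is a $p$-connection the factor $p$ hidden in its Leibniz rule cancels the denominator, so $\rmd+\tfrac{\rmd\Phi_i}{p}(\Phi_i^*\tnabla)$ obeys the ordinary Leibniz rule; its curvature vanishes because $\rmd^2=0$ and $\tnabla$ is integrable. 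I would also record that $\Phi_i$ and $\Phi_j$ induce the \emph{same} ring map $\tmBR/p\tmBR\to\mBR/p\mBR$ (their difference is divisible by $p$ by Lemma~\ref{lem:FrobLift}(ii)), so that $\Phi_i^*\tV_{ij}$ and $\Phi_j^*\tV_{ij}$ are canonically identified and $G_{ij}$ is an honest automorphism of this common module.

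Next I would analyse the transition maps $G_{ij}=\exp\bigl(h_{\Phi_i,\Phi_j}(\overline{\Phi}^*\tnabla)\bigr)$. The exponential is a \emph{finite} sum: by definition of $\tMIC(\tsX_{\pi,1})$ the $p$-connection $\tnabla$ is nilpotent, and since $h_{\Phi_i,\Phi_j}$ is $\mBR/p\mBR$-linear the induced endomorphism $h_{\Phi_i,\Phi_j}(\overline{\Phi}^*\tnabla)$ of $\Phi_i^*\tV_{ij}$ is nilpotent as well (here one uses integrability of $\tnabla$ to contract the iterated $1$-forms, reducing to vanishing of a high power of $\tnabla$); in particular each $G_{ij}$ is an isomorphism with inverse $\exp\bigl(-h_{\Phi_i,\Phi_j}(\overline{\Phi}^*\tnabla)\bigr)$. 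Two identities then have to be verified, exactly as in the unramified case. (a) Horizontality: $G_{ij}$ intertwines $\rmd+\tfrac{\rmd\Phi_i}{p}(\Phi_i^*\tnabla)$ with $\rmd+\tfrac{\rmd\Phi_j}{p}(\Phi_j^*\tnabla)$; this is the Taylor-formula computation built on the identity $\tfrac{\rmd\Phi_i}{p}-\tfrac{\rmd\Phi_j}{p}=\rmd h_{\Phi_i,\Phi_j}$ from part (i) of the $h$-lemma. (b) Cocycle condition $G_{jk}\circ G_{ij}=G_{ik}$ on triple overlaps, which follows from the additive cocycle identity for the $h_{\Phi_i,\Phi_j}$ (part (ii) of the $h$-lemma) together with integrability of $\tnabla$, which makes the relevant nilpotent operators commute so that $\exp$ is additive on them. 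Granting (a) and (b), the local de Rham bundles glue along the $G_{ij}$ to a well-defined object $C^{-1}_{\sX_{\pi,1}}(\tV,\tnabla)\in\MIC(\sX_{\pi,1})$.

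Finally I would check functoriality. Given a morphism $g\colon(\tV,\tnabla)\to(\tV',\tnabla')$ in $\tMIC(\tsX_{\pi,1})$, the pullbacks $\Phi_i^*(g|_{\widetilde{\sU}_{i,1}})$ are horizontal for the connections $\rmd+\tfrac{\rmd\Phi_i}{p}(\Phi_i^*\tnabla)$ and $\rmd+\tfrac{\rmd\Phi_i}{p}(\Phi_i^*\tnabla')$; because the transition maps $G_{ij}$ and $G'_{ij}$ are built from the same $h_{\Phi_i,\Phi_j}$ applied to $\tnabla$ respectively $\tnabla'$, naturality of $h_{\Phi_i,\Phi_j}(\cdot)$ in the $p$-connection gives $G'_{ij}\circ\Phi_i^*g=\Phi_j^*g\circ G_{ij}$ after the canonical identifications, so the local maps glue to a morphism $C^{-1}_{\sX_{\pi,1}}(g)$ in $\MIC(\sX_{\pi,1})$; compatibility with identities and with composition is immediate from the local formula. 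I expect the main obstacle to be the two identities (a) and (b), i.e. carrying out the Taylor-formula manipulations and the cocycle check modulo $p$ over the non-reduced ring $\mBR/p\mBR$, where one must use in an essential way both the nilpotency built into the definition of $\tMIC$ and the precise properties of $h_{\Phi_i,\Phi_j}$ established in the preceding lemma.
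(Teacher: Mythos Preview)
Your proposal is correct and follows essentially the same approach as the paper: the two key points are exactly your (a) and (b), namely horizontality of $G_{ij}$ via the identity $\tfrac{\rmd\Phi_i}{p}-\tfrac{\rmd\Phi_j}{p}=\rmd h_{\Phi_i,\Phi_j}$ from the preceding lemma, and the cocycle condition from integrability of $\tnabla$ together with the cocycle property of the $h$'s. The paper's proof is terser---it simply writes out the conjugation identity $G_{ij}^{-1}\rmd G_{ij}+G_{ij}^{-1}\tfrac{\rmd\Phi_j}{p}(\Phi_j^*\tnabla)G_{ij}=\tfrac{\rmd\Phi_i}{p}(\Phi_i^*\tnabla)$ and dismisses the cocycle condition in one line---whereas you additionally spell out why the exponential terminates (nilpotency), why $\Phi_i^*\tV_{ij}$ and $\Phi_j^*\tV_{ij}$ coincide as modules, and why the construction is functorial; these points are implicit in the paper but your treatment is more complete.
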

\begin{proof}The cocycle condition easily follows from the integrability of the Higgs field. We show that the local connections coincide on the overlaps, that is
\[\left(G_{ij}\otimes \id\right)
\circ
 \left(\rmd + \frac{\rmd \Phi_i}{p}(\Phi_i^*\tnabla)\right)
=\left(\rmd + \frac{\rmd \Phi_j}{p}(\Phi_j^*\tnabla)\right) \circ G_{ij}. \]
It suffices to show
\[\frac{\rmd \Phi_i}{p}(\Phi_i^*\tnabla)
=
G^{-1}_{ij}\circ \rmd G_{ij}
+
G^{-1}_{ij}\circ \frac{\rmd \Phi_j}{p}(\Phi_j^*\tnabla) \circ G_{ij}.\]
Since $G^{-1}_{ij}\circ \rmd G_{ij}=\rmd h_{\Phi_i,\Phi_j}(\overline{\Phi}^*\tnabla)$ and $G_{ij}$ commutes with $\frac{\rmd \Phi_j}{p}(\Phi_j^*\tnabla)$ we have
\begin{equation*}
\begin{split}
G^{-1}_{ij}\circ \rmd G_{ij}+G^{-1}_{ij}\circ \frac{\rmd \Phi_j}{p}(\Phi_j^*\tnabla) \circ G_{ij} 
& =\rmd h_{\Phi_i,\Phi_j}(\overline{\Phi}^*\tnabla)
+\frac{\rmd \Phi_j}{p}(\Phi_j^*\tnabla)\\
& =\frac{\rmd \Phi_i}{p}(\Phi_i^*\tnabla)
\end{split}
\end{equation*}
by the integrability of the Higgs field.
Thus we glue those local bundles with connections into a global bundle with connection via $G_{ij}$.
\end{proof}
\begin{lem} To give an object in the category $\MF(\sX_{\pi,1})$ is equivalent to give a tuple $(V,\nabla,\Fil,\phi)$ satisfying
\begin{itemize}
\item[i).] $V$ is filtered local free sheaf over $\sX_{\pi,1}$ with local basis having filtration degrees contained in $[0,a]$;
\item[ii).] $\nabla:V\rightarrow V\otimes_{\mO_{\sX_{\pi,1}}} \Omega^1_{\sX_{\pi,1}}$ is an integrable connection satisfying the Griffiths transversality;
\item[iii).] $ \varphi:C_{\sX_{\pi,1}}^{-1}\widetilde{(V,\nabla,\Fil)}\simeq (V,\nabla)$ is an isomorphism of sheaves with connections over $\sX_{\pi,1}$.
\end{itemize}
\end{lem}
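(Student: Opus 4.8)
The plan is to prove this "equivalence" lemma in exactly the same spirit as Lemma~\ref{lem:anotherDiscribtionFM} and the global description of the $\varphi$-structure in Section~\ref{section GDphiFFM}, but now over the thickened base $\sX_{\pi,1}$. The statement is really the assertion that the local datum $\varphi$ appearing in $\MF(\mBR/p\mBR)$ (the isomorphism $\widetilde{(V,\nabla,\Fil)}\otimes_\Phi\mBR\xrightarrow{\sim}(V,\nabla)$) glues, over a small affine covering $\{\sU_i\}$, into a single global isomorphism $C^{-1}_{\sX_{\pi,1}}\widetilde{(V,\nabla,\Fil)}\simeq(V,\nabla)$, and conversely. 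So first I would fix a small affine covering $\{\mU_i\}$ of $\mX$, pass to $\sU_{i,1}=\sU_i\otimes_{\Z_p}\F_p$, and choose compatible Frobenius liftings $\Phi_i:\tmB_{R_i}\to\mB_{R_i}$ as in Lemma~\ref{lem:FrobLift}. Given $(V,\nabla,\Fil,\{\varphi_i\})\in\MF(\sX_{\pi,1})$, restriction to $\sU_{i,1}$ produces $\widetilde{(V,\nabla,\Fil)}|_{\widetilde\sU_{i,1}}$, and applying $\Phi_i^*$ gives $\left(\Phi_i^*\widetilde V_i,\rmd+\tfrac{\rmd\Phi_i}{p}(\Phi_i^*\widetilde\nabla)\right)$, which by definition of the tilde functor and of $C^{-1}_{\sX_{\pi,1}}$ (Lemma~\ref{gluingFlatBundle}) is canonically $C^{-1}_{\sX_{\pi,1}}\widetilde{(V,\nabla,\Fil)}|_{\sU_{i,1}}$. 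So the local $\varphi_i$'s are exactly isomorphisms $C^{-1}_{\sX_{\pi,1}}\widetilde{(V,\nabla,\Fil)}|_{\sU_{i,1}}\simeq(V,\nabla)|_{\sU_{i,1}}$.

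The heart of the argument is then the compatibility on overlaps: the transition maps used to build $C^{-1}_{\sX_{\pi,1}}$ are $G_{ij}=\exp\!\left(h_{\Phi_i,\Phi_j}(\overline\Phi^*\widetilde\nabla)\right)$, whereas the change-of-Frobenius isomorphism intertwining the local Fontaine–Faltings modules $M_i$ and $M_j$ is the Taylor-formula isomorphism $\alpha_{\Phi_i,\Phi_j}$ of Theorem~\ref{thm:gluingFFmod} (i.e. the analogue of Faltings' $\alpha_{\Phi,\Psi}$ from \cite[Theorem~2.3]{Fal89}). I would check that these two are the \emph{same} map — this is the key point, and it is precisely the statement that the Taylor series $\alpha_{\Phi_i,\Phi_j}$, written out on the $p$-connection side and then pushed through $\Phi_i^*$, reduces to $\exp(h_{\Phi_i,\Phi_j}\cdot(\text{$p$-connection}))$ once one divides $\rmd\Phi$ by $p$; the weight bound $a\le p-2$ is exactly what makes the relevant power series terminate/converge mod $p$ and makes the division $\tfrac{\rmd\Phi_i-\rmd\Phi_j}{p}=\rmd h_{\Phi_i,\Phi_j}$ legitimate (Lemma~\ref{lem:F^iB&F^itB} and the Corollary after it). Granting this identification, the cocycle/compatibility condition built into $\MF(\sX_{\pi,1})$ via Theorem~\ref{thm:gluingFFmod} says exactly that $\varphi_j\circ\alpha_{\Phi_i,\Phi_j}=\varphi_i$ on $\sU_{ij,1}$, which — now that $\alpha_{\Phi_i,\Phi_j}=G_{ij}$ — is precisely the condition that the $\varphi_i$'s agree as maps out of the already-glued sheaf $C^{-1}_{\sX_{\pi,1}}\widetilde{(V,\nabla,\Fil)}$. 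Hence they glue to a global $\varphi:C^{-1}_{\sX_{\pi,1}}\widetilde{(V,\nabla,\Fil)}\simeq(V,\nabla)$.

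Conversely, given a tuple $(V,\nabla,\Fil,\varphi)$ as in (i)–(iii), one restricts $\varphi$ to each $\sU_{i,1}$ and, using again the canonical identification $C^{-1}_{\sX_{\pi,1}}\widetilde{(V,\nabla,\Fil)}|_{\sU_{i,1}}\simeq\Phi_i^*\widetilde{(V,\nabla,\Fil)}|_{\sU_{i,1}}=\widetilde{(V,\nabla,\Fil)}\otimes_{\Phi_i}\mB_{R_i}\,(\mathrm{mod}\ p)$, reads off local $\varphi_i\in\MF(\mB_{R_i}/p\mB_{R_i})$; the overlap compatibility is automatic because $\varphi$ is globally defined and $G_{ij}=\alpha_{\Phi_i,\Phi_j}$. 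One also checks that morphisms match up: a morphism of tuples is a horizontal, strict, $\varphi$-compatible map, and locally this is exactly a morphism in $\MF(\mB_{R_i}/p\mB_{R_i})$, so the two descriptions of $\MF(\sX_{\pi,1})$ are equivalent as categories, not merely in bijection on objects. The one step I expect to be genuinely delicate is the identification $\alpha_{\Phi_i,\Phi_j}=\exp\!\left(h_{\Phi_i,\Phi_j}(\overline\Phi^*\widetilde\nabla)\right)$ over the ramified base: one must track the filtration shift (the $[v_i]_{q_i}\leftrightarrow v_i/p^{q_i}$ bookkeeping in the tilde functor) through the Taylor expansion and verify that each term lives in the right $F^\bullet\tmBR$, using $q_j-1-q_i<a\le p-2$; everything else is a faithful transcription of Section~\ref{section GDphiFFM} with $(\U_i,\Phi_i)$ replaced by $(\sU_{i},\Phi_i)$ and $C^{-1}_n$ replaced by $C^{-1}_{\sX_{\pi,1}}$.
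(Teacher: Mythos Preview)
The paper states this lemma without proof; it is evidently meant to be read as the direct analogue of Lemma~\ref{lem:anotherDiscribtionFM} and the discussion in Section~\ref{section GDphiFFM}, transported to the very ramified base $\sX_{\pi,1}$. Your proposal reconstructs exactly this intended argument: you identify $C^{-1}_{\sX_{\pi,1}}\widetilde{(V,\nabla,\Fil)}|_{\sU_{i,1}}$ with $\Phi_i^*\widetilde{(V,\nabla,\Fil)}|_{\sU_{i,1}}$, observe that the local $\varphi_i$'s are isomorphisms out of these pieces, and reduce the gluing to the identification of the Taylor-formula transition $\alpha_{\Phi_i,\Phi_j}$ (Theorem~\ref{thm:gluingFFmod}) with the transition $G_{ij}=\exp(h_{\Phi_i,\Phi_j}(\overline\Phi^*\widetilde\nabla))$ defining $C^{-1}_{\sX_{\pi,1}}$ (Lemma~\ref{gluingFlatBundle}). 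This is precisely the mechanism behind the commutative square displayed in Section~\ref{section GDphiFFM}, and the paper's setup in Section~5 (in particular Lemma~\ref{lem:FrobLift}, Lemma~\ref{lem:F^iB&F^itB}, and the explicit tilde construction with $[v_i]_{q_i}$) is designed so that that identification carries over verbatim. So your approach is correct and is the one the paper implicitly relies on.
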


\subsubsection{The functors $\bD$ and $\bD^P$}
For an object in $\MFa(\sX_{\pi,1})$, we get locally constant sheaves on $\mU_K$ by applying the local $\bD$-functors. These locally constant sheaves can be expressed in terms of certain finite \'etale coverings. They can be glued into a finite covering of $\mX_{\pi,K}=\mX_K$. We have the following result.   
\begin{thm}
Suppose that $X$ is a proper smooth and geometrically connected scheme over $W$. Then there exists a fully faithful contravariant functor $\bD$ from $\MFa(\sX_{\pi,1})$ to the category of $\bF_p$-representations of $\pi^\text{\'et}_1(\mX_K)$. The image of $\bD$ on objects is closed under subobjects and quotients. Locally $\bD$ is given by the same as in Lemma~\ref{lem:defiFunctorD}. 
\end{thm}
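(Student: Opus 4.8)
The plan is to globalize the local functor $\bD$ on $\MFa(\mB_{R}/p\mB_{R})$ exactly as Faltings globalizes the unramified $\bD$ in the construction recalled in Section~\ref{section FDF}, now feeding in the local $\bD$-theorem just established over the ramified coefficient rings together with the gluing results of Section~\ref{section GRPR}. First I would fix a finite covering $\{\mU_j\}_{j\in J}$ of $\mX$ by small affine open subsets, a Frobenius lifting $\Phi_j$ on each $\tmB_{R_j}$, and a geometric point $\bar x$ of $\mX_K$ contained in all the $\mU_{j,K}$. By the definition of $\MFa(\sX_{\pi,1})$ as glued from the categories $\MFa(\mB_{R_j}/p\mB_{R_j})$ via Theorem~\ref{thm:gluingFFmod}, an object $M$ of $\MFa(\sX_{\pi,1})$ amounts to local Fontaine--Faltings modules $M_j\in\MFa(\mB_{R_j}/p\mB_{R_j})$ together with canonical identifications over the overlaps. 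Applying the local $\bD$-functor gives finite $\bF_p$-representations $\varrho_j$ of $\pi^\text{\'et}_1(\mU_{j,K},\bar x)=\mathrm{Gal}(\overline{R}_{j,K}/R_{j,K})$, all of rank $\rank M$, and I would check that the canonical equivalences of Theorem~\ref{thm:gluingFFmod} transport under $\bD$ to canonical isomorphisms $\varrho_i\simeq\varrho_j$ of $\pi^\text{\'et}_1(\mU_{i,K}\cap\mU_{j,K},\bar x)$-representations.

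Next I would invoke Theorem~\ref{Mainthm:rep} for the open covering $\{\mU_{j,K}\}$ of $\mX_K$: the $\varrho_j$, being pairwise isomorphic over the overlaps, descend uniquely to a finite $\bF_p$-representation of $\pi^\text{\'et}_1(\mX_K,\bar x)$, independent of the covering, the $\Phi_j$ and the base point up to canonical isomorphism; this is $\bD(M)$, and morphisms are glued the same way, producing a contravariant functor. For full faithfulness I would use that a morphism $M\to M'$ of global objects is a morphism of underlying sheaves restricting to $M_j\to M_j'$ on each $\mU_j$, so $\Hom(M,M')$ is the set of compatible families in $\prod_j\Hom(M_j,M_j')$; dually, since the restriction functor on $\pi_1$-representations along $\mU_{j,K}\to\mX_K$ is fully faithful (Corollary~\ref{Cor:compHom}), a $\pi^\text{\'et}_1(\mX_K)$-morphism $\bD(M')\to\bD(M)$ is the same as a compatible family of morphisms $\bD(M_j')\to\bD(M_j)$, each of which comes from a unique $M_j\to M_j'$ by full faithfulness of the local functor; uniqueness forces compatibility and these glue. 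The claim on subobjects and quotients I would prove as in Proposition~\ref{subrep-1}: a subrepresentation of $\bD(M)$ restricts to subrepresentations $\bD(M_j/N_j)\subset\bD(M_j)$ for unique sub-objects $N_j\subset M_j$, these agree over overlaps by the rigidity already built into the gluing, hence glue to a sub-Fontaine--Faltings module $N\subset M$ with $\bD(M/N)$ the given subrepresentation; a quotient of $\bD(M)$ has kernel of the form $\bD(M/N)$, so it equals $\bD(N)$, using that $\bD$ is exact (again from the local theorem).

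The hard part --- really the only non-formal point --- will be verifying that the canonical equivalence of Theorem~\ref{thm:gluingFFmod} between the local categories for two Frobenius lifts $\Phi_i,\Phi_j$ is carried by the local $\bD$ precisely onto the canonical ``change of $\kappa$'' isomorphism of Galois representations, so that the gluing data on the module side and the representation side coincide and Theorem~\ref{Mainthm:rep} genuinely applies. Concretely this amounts to unwinding $\bD$ through the map $\mB_{R_i}\to B^+(R_i)$ of Definition~\ref{lem:defiFunctorD} and checking, as in Faltings' original argument, that changing the section $\kappa$ by the Taylor-formula automorphism $\alpha_{\Phi_i,\Phi_j}$ changes the identification of $V\otimes_{\kappa}B^+(R_i)$ exactly by the automorphism induced by $\nabla$; the ramified rings $\mB_{R_i}$, $\tmB_{R_i}$ enter only through the local theorem, so no new $p$-adic Hodge-theoretic input is needed. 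Finally I would record at the outset that $\mX_K$ is geometrically connected, inherited from $\mX/W$, so that the formalism of Section~\ref{section GRPR} applies verbatim.
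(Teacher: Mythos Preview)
Your proposal is correct and follows essentially the same approach as the paper. In fact the paper does not give a formal proof of this theorem at all: it only records the two-sentence sketch preceding the statement (apply the local $\bD$-functors to get locally constant sheaves on the $\mU_K$, interpret them as finite \'etale coverings, and glue over $\mX_{\pi,K}=\mX_K$), relying implicitly on the unramified construction already carried out in Section~\ref{section FDF} and the gluing machinery of Section~\ref{section GRPR}; your write-up is a faithful and more detailed execution of exactly that plan.
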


Again one can define the category $\MFa(\sX_{\pi,1}^o)$ in the logarithmic case, if one replaces all "connections" by "logarithmic connections" and "Frobenius lifting" by "logarithmic Frobenius lifting". We also have the version of $\MF_{[0,a],f}(\sX_{\pi,1}^o)$ with endomorphism structures of $\bF_{p^f}$, which is similar as the \emph{Variant 2} discussed in section $2$ of\cite{LSZ13a}. And the twisted versions $\TMF_{[0,a],f}(\sX_{\pi,1}^o)$ can also be defined on $\sX_{\pi,1}$ in a similar way as before. More precisely, let $\mL$ be a line bundle over $\sX_{\pi,1}$. The $\mL$-twisted Fontaine-Faltings module is defined as follows.
\begin{defi}
An $\mL$-twisted Fontaine-Faltings module over $\sX_{\pi,1}$ with endomorphism structure is a tuple \[((V,\nabla,\Fil)_0,(V,\nabla,\Fil)_1,\cdots,(V,\nabla,\Fil)_{f-1},\varphi_\cdot)\]
where $(V,\nabla,\Fil)_i$ are objects in $\MCF(\sX_{\pi,1}^o)$ equipped with isomorphisms in $\MIC(\sX_{\pi,1}^o)$
\[\varphi_i:C_{\sX_{\pi,1}}^{-1}\widetilde{(V,\nabla,\Fil)}_i\simeq (V,\nabla)_{i+1} \text{ for } i=0,1,\cdots,f-2;\]
and 
\[\varphi_{f-1}:C_{\sX_{\pi,1}}^{-1}\widetilde{(V,\nabla,\Fil)}_{f-1} \otimes (\mL^{p},\nabla_\can)\simeq (V,\nabla)_0.\] 
\end{defi}
 The proof of Theorem~\ref{ConsFunc:D^P} works in this context. Thus we obtain the following result.
\begin{thm}\label{thm:functorD^P}
Suppose that $\mX$ is a proper smooth and geometrically connected scheme over $W$ equipped with a smooth log structure $\mD/W(k)$. Suppose that the residue field $k$ contains $\bF_{p^f}$. Then there exists an exact and fully faithful contravariant functor $\bD^P$ from $\TMF_{a,f}(\sX_{\pi,1}^o)$ to the category of projective $\bF_{p^f}$-representations of $\pi^\text{\'et}_1(\mX_{K}^o)$. The image of $\bD^p$  is closed under subobjects and quotients.
\end{thm}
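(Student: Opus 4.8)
The plan is to transcribe the construction of the functor $\bD^P$ from Section~\ref{section FDP} to the ramified base $\sX_{\pi,1}^o$, replacing each unramified ingredient by the ramified counterpart established earlier in this section. Fix a finite covering $\{\mU_j\}_{j\in J}$ of $\mX$ by small affine opens, a Frobenius lifting $\Phi_j:\tmB_{R_j}\to\mB_{R_j}$ for each $j$, a geometric point $\bar x$ of $\mX_K^o$ lying in every $\mU_{j,K}^o$, and an index $j_0\in J$. Given an $\mL$-twisted Fontaine-Faltings module $M$ over $\sX_{\pi,1}^o$ with endomorphism structure of $\bF_{p^f}$ (a strict $p$-torsion object, so all representations below are $\bF_{p^f}$-linear), a choice of trivialization $\tau_j$ of $\mL$ on $\sU_{j,1}$ produces, exactly as in the local-trivialization paragraph of Section~\ref{section TFFMES}, an ordinary Fontaine-Faltings module $M(\tau_j)$ over $\mB_{R_j}/p\mB_{R_j}$ with endomorphism structure. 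Decomposing into eigenspaces as in Lemma~\ref{lem:anotherDiscribtionFMwithEnd} and applying the local functor $\bD$ of Definition~\ref{lem:defiFunctorD}, one obtains an $\bF_{p^f}$-representation of $\pi^\text{\'et}_1(\mU_{j,K}^o,\bar x)$; call its projectivization $\varrho_j$.

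The key point is the comparison on overlaps. On $\mU_{j_1}\cap\mU_{j_2}$ the two trivializations of $\mL$ differ by a unit $r$, so $M(\tau_{j_1})$ and $M(\tau_{j_2})$ share the same underlying filtered de Rham module and their $\varphi$-structures differ only by the factor $r^p$ in the cyclic slot. First I would prove the ramified analogue of Lemma~\ref{a_n,r}: starting from the rank-$f$ Fontaine-Faltings module over $\mB_{R}/p\mB_{R}$ built from a cyclic permutation with an $r^p$ in the corner, applying $\bD$, choosing a generator of the resulting rank-$f$ $\bF_p$-module, and lifting it along $B^+(R)\to B^+(R)/pB^+(R)$, one gets a unit $a_r\in B^+(R)^\times$ with $\Phi_B^f(a_r)\equiv \kappa(r)^p\,a_r\pmod p$, where $\kappa$ is the structure map $\mBR\to B^+(R)$. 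Multiplication by $a_r$ on $\Hom_{B^+(R)}(V\otimes_{\mBR}B^+(R),B^+(R)/pB^+(R))$ then sends $\bD(M(\tau_{j_1}))$ isomorphically onto $\bD(M(\tau_{j_2}))$ compatibly with the $\bF_{p^f}$-structures --- the filtration check is trivial since $a_r\in\Fil^0\setminus\Fil^1$ and the $\varphi$-check is the displayed transformation law --- hence induces an isomorphism $\varrho_{j_1}\simeq\varrho_{j_2}$ of projective $\bF_{p^f}$-representations over $\pi^\text{\'et}_1((\mU_{j_1}\cap\mU_{j_2})_K^o,\bar x)$, as in Corollary~\ref{cor_compare_proj}. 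Now Theorem~\ref{Mainthm:rep}, being a statement purely about \'etale fundamental groups and their finite quotients, applies verbatim and glues the $\varrho_j$ into a single projective $\bF_{p^f}$-representation of $\pi^\text{\'et}_1(\mX_K^o,\bar x)$ (here $\mX_{\pi,K}=\mX_K$); a routine verification that this is independent of the covering, the $\Phi_j$, the $\tau_j$ and $j_0$ up to canonical isomorphism yields the functor $\bD^P$, which is contravariant because $\bD$ is.

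Finally, exactness, full faithfulness and closure of the image under subobjects and quotients follow by combining the corresponding properties of the local functor $\bD$ over $\mB_{R_j}/p\mB_{R_j}$ (fully faithful, image closed under subobjects and quotients, as established in this section) with the rigidity machinery of Section~\ref{section GRPR} and the argument of Proposition~\ref{subrep-1}: a projective $\bF_{p^f}$-subrepresentation of $\bD^P(M)$ restricts to local $\bF_{p^f}$-subrepresentations, these come from sub-Fontaine-Faltings modules $N_j\subset M(\tau_j)$ by local full faithfulness, the overlap twists by $a_r$ identify them, and one glues the underlying subbundles to a global sub-object $N\subset M$ with $\bD^P(M/N)$ the given subrepresentation; quotients (equivalently, subobjects of $M$, since $\bD^P$ is contravariant) are handled dually, and exactness of $\bD^P$ on short exact sequences is then immediate. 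The main obstacle I expect is the ramified version of Lemma~\ref{a_n,r}: one must check carefully that the period element $a_r$ extracted from $\bD$ over $\mB_R/p\mB_R$ satisfies the exact transformation law under $\Phi_B$ and is a genuine unit in $B^+(R)$, which requires tracking the interaction of the map $\mB_{W_\pi}\to B^+(W_\pi)$, $T\mapsto[\underline{\pi}]$, with the Frobenius liftings and the strong $p$-divisibility condition; everything else is a faithful transcription of Section~\ref{section FDP}.
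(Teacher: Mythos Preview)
Your proposal is correct and follows exactly the approach the paper takes: the paper's entire proof is the single sentence ``The proof of Theorem~\ref{ConsFunc:D^P} works in this context,'' and you have spelled out in detail what that transcription entails. Your identification of the ramified analogue of Lemma~\ref{a_n,r} as the one genuinely new ingredient is apt, though the paper itself does not isolate or reprove it separately---it simply asserts that the Section~\ref{section FDP} argument carries over verbatim once the local $\bD$ over $\mB_R/p\mB_R$ and its properties (Definition~\ref{lem:defiFunctorD} and the theorem following it) are in place.
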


Recall that $\{\sU_i\}_i$ is an open covering of $\sX$. A line bundle on $\sX$ can be expressed by the transition functions on $\sU_{ij}$.
\begin{lem}
Let $\mL$ be a line bundle on $\sX_{\pi,1}$ expressed by $(g_{ij})$. Denote by $\widetilde{\mL}$ the line bundle on $\tsX_{\pi,1}$ defined by the same transition functions $(g_{ij})$. Then one has 
\[C^{-1}_{\sX_{\pi,1}}(\widetilde{\mL},0)=\mL^p.\]
\end{lem}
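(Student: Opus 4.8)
The plan is to compute $C^{-1}_{\sX_{\pi,1}}(\widetilde{\mL},0)$ directly from the construction in Lemma~\ref{gluingFlatBundle}, using that $(\widetilde{\mL},0)$ has zero $p$-connection. First I would fix the covering $\{\sU_i\}_i$ of $\sX_{\pi,1}$ (with $\{\widetilde{\sU}_i\}_i$ the corresponding covering of $\tsX_{\pi,1}$) and the Frobenius liftings $\Phi_i:\tmB_{R_i}\to\mB_{R_i}$, exactly as in the setup preceding the lemma. On each $\widetilde{\sU}_{i,1}$ the restriction of $(\widetilde{\mL},0)$ is the free rank-$1$ module $\mO_{\widetilde{\sU}_{i,1}}\cdot s_i$ with $\tnabla(s_i)=0$, and the transition functions are the given $g_{ij}\in \mO_{\widetilde{\sX}_{\pi,1}}(\widetilde\sU_{ij})^\times$, i.e. $s_i = g_{ij}s_j$ on the overlap. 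Applying $\Phi_i^*$ gives the local de Rham bundle
\[
\left(\Phi_i^*(\mO_{\widetilde\sU_{i,1}}\cdot s_i),\ \rmd + \tfrac{\rmd\Phi_i}{p}(\Phi_i^*\cdot 0)\right) = \left(\mO_{\sU_{i,1}}\cdot(1\otimes s_i),\ \rmd\right),
\]
since the $p$-connection is zero the twisted term $\tfrac{\rmd\Phi_i}{p}(\Phi_i^*\tnabla)$ vanishes. So locally $C^{-1}_{\sX_{\pi,1}}(\widetilde{\mL},0)$ is just $(\mO_{\sU_{i,1}},\rmd)$.

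Next I would identify the gluing data. By Lemma~\ref{gluingFlatBundle} the transition functions are $G_{ij}=\exp\!\left(h_{\Phi_i,\Phi_j}(\overline\Phi^*\tnabla)\right)$; but $\tnabla=0$ on $\widetilde{\mL}$, hence $\overline\Phi^*\tnabla=0$ and $G_{ij}=\exp(0)$ is the identity up to the pullback of the scalar comparison. The point to be careful about: the underlying module of $\Phi_i^*\widetilde{\mL}_i$ is $\mO_{\sU_{i,1}}\otimes_{\overline\Phi,\mO}\widetilde{\mL}_i$, and under the transition $s_i=g_{ij}s_j$ the induced identification of $\Phi_i^*\widetilde{\mL}_i$ with $\Phi_i^*\widetilde{\mL}_j$ on the overlap carries $1\otimes s_i$ to $\overline\Phi(g_{ij})\cdot(1\otimes s_j)=g_{ij}^p\cdot(1\otimes s_j)$, because $\overline\Phi$ is the $p$-th power map $\mO_{\sX_{\pi,1}}\to\mO_{\sX_{\pi,1}}$. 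Therefore $C^{-1}_{\sX_{\pi,1}}(\widetilde{\mL},0)$ is the line bundle with connection whose transition functions are $(g_{ij}^p)$ together with the trivial connection on each $\mO_{\sU_{i,1}}$; and the line bundle with transition functions $(g_{ij}^p)$ is precisely $\mL^{\otimes p}$. One checks that the trivial connection on $\mL^{\otimes p}$ assembled this way is exactly $\nabla_{\can}$ of Section~\ref{section TFFMES} (the canonical connection on a $p$-th power of a line bundle over a characteristic-$p$ base), so in fact one obtains $C^{-1}_{\sX_{\pi,1}}(\widetilde{\mL},0)=(\mL^{\otimes p},\nabla_{\can})$, consistent with the notation $\mL^p$ in the statement.

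Finally I would verify that the comparison map $G_{ij}$ is genuinely the identity and not merely the identity up to a unit, by invoking parts i)--ii) of the lemma $h_{\Phi_i,\Phi_j}(-)$ is $\mO$-linear in its argument and that argument is $0$, so $h_{\Phi_i,\Phi_j}(\overline\Phi^*\tnabla)=0$ strictly, hence $G_{ij}=\id$ and the cocycle condition is trivially met; the only nontrivial gluing comes from $g_{ij}^p$ as above. I expect the main obstacle to be purely bookkeeping: making the distinction between "transition functions of the $p$-connection module" ($g_{ij}$ pulled back through the $p$-power map, producing $g_{ij}^p$) and the comparison isomorphisms $G_{ij}$ (which are trivial here) completely explicit, and matching the resulting trivially-glued connection on $\mL^{\otimes p}$ with the definition of $\nabla_{\can}$ used elsewhere in the paper. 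No estimates or hard analysis are needed; once the indices are tracked correctly the identity $C^{-1}_{\sX_{\pi,1}}(\widetilde{\mL},0)=\mL^p$ drops out.
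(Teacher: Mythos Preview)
Your argument is correct and is essentially the same as the paper's proof, just written out in more detail: the paper condenses your ``$G_{ij}=\exp(0)=\id$ and the pullback transition is $\overline\Phi(g_{ij})=g_{ij}^p$'' into the single observation that, with trivial $p$-connection, $C^{-1}_{\sX_{\pi,1}}(\widetilde{\mL},0)=(\Phi\bmod p)^*\widetilde{\mL}$ together with $\Phi(g_{ij})\equiv g_{ij}^p\pmod{p}$.
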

\begin{proof}
Since $g_{ij}$ is an element in $\mB_{R_{ij}} \subset \tmB_{R_{ij}}$, by diagram~(\ref{diag:FrobLift}), one has
\[\Phi(g_{ij})\equiv g_{ij}^p \pmod{p}.\] 
On the other hand, since the $p$-connection is trivial, one has 
 \[C^{-1}_{\sX_{\pi,1}}(\widetilde{\mL},0)=(\Phi\,\mathrm{mod}\,p)^*(\widetilde{\mL}).\]
Thus one has $C^{-1}_{\sX_{\pi,1}}(\widetilde{\mL},0)=(\mO_{\widetilde{\sU}_{i,1}},g_{ij}^p)=\mL^p$.
\end{proof}

In a similar way, one can define the Higgs-de Rham flow on $\sX_{\pi,1}$ as a sequence consisting of infinitely many alternating terms of Higgs bundles over $\tsX_{\pi,1}$ and filtered de Rham bundles over $\sX_{\pi,1}$
	\[\left\{ (E,\theta)_{0},  
	(V,\nabla,\Fil)_{0},
	(E,\theta)_{1},  
	(V,\nabla,\Fil)_{1},
	\cdots\right\}\]
with $(V,\nabla)_i=C^{-1}_{\sX_{\pi,1}}((E,\theta)_i)$ and $(E,\theta)_{i+1}=\widetilde{(V,\nabla,\Fil)_i}$ for all $i\geq 0$.

 \emph{$f$-periodic  $\mL$-twisted Higgs-de Rham flow} over $\sX_{\pi,1}$ of level in $[0,a]$ is a Higgs-de Rham flow over $\sX_{\pi,1}$
	\[\left\{ (E,\theta)_{0},  
	(V,\nabla,\Fil)_{0},
	(E,\theta)_{1},  
	(V,\nabla,\Fil)_{1},
	\cdots\right\}\]
equipped with isomorphisms $\phi_{f+i}:(E,\theta)_{f+i}\otimes (\widetilde{\mL}^{p^i},0)\rightarrow (E,\theta)_i$ of Higgs bundles for all $i\geq0$ 
\begin{equation*}
\tiny\xymatrix@W=2cm@C=-13mm{
	&\left(V,\nabla,\Fil\right)_{0} \ar[dr]^{\widetilde{(\cdot)}}
	&&\left(V,\nabla,\Fil\right)_{1}\ar[dr]^{\widetilde{(\cdot)}}
	&&\cdots \ar[dr]^{\widetilde{(\cdot)}}
	&&\left(V,\nabla,\Fil\right)_{f}\ar[dr]^{\widetilde{(\cdot)}}  
	&&\left(V,\nabla,\Fil\right)_{f+1}\ar[dr]^{\widetilde{(\cdot)}} 
	&&\cdots\\
	\left(E,\theta\right)_{0}\ar[ur]_{\mathcal C^{-1}_{\sX_{\pi,1}}}
	&&\left(E,\theta\right)_{1}\ar[ur]_{\mathcal C^{-1}_{\sX_{\pi,1}}}
	&& \cdots \ar[ur]_{\mathcal C^{-1}_{\sX_{\pi,1}}}
	&&\left(E,\theta\right)_{f}\ar[ur]_{\mathcal C^{-1}_{\sX_{\pi,1}}} \ar@/^20pt/[llllll]^{\phi_f}|(0.33)\hole
	&&\left(E,\theta\right)_{f+1}\ar[ur]_{\mathcal C^{-1}_{\sX_{\pi,1}}} \ar@/^20pt/[llllll]^{\phi_{f+1}}|(0.33)\hole
	&& \cdots \ar@/^20pt/[llllll]^{\cdots}\ar[ur]_{\mathcal C^{-1}_{\sX_{\pi,1}}}\\} 
\end{equation*} 
And for any $i\geq 0$ the isomorphism
\begin{equation*}
 C^{-1}_{\sX_{\pi,1}}(\phi_{f+i}): (V,\nabla)_{f+i}\otimes (\mL^{p^{i+1}},\nabla_{\mathrm{can}})\rightarrow (V,\nabla)_{i},
\end{equation*} 
		strictly respects filtrations $\Fil_{f+i}$ and $\Fil_{i}$. Those $\phi_{f+i}$'s are related to each other by formula
\[\phi_{f+i+1}=\mathrm{Gr}\circ C^{-1}_{\sX_{\pi,1}}(\phi_{f+i}).\]
Just taking the same construction as before, we obtain the following result.
\begin{thm}\label{thm:equivalent}
There exists an equivalent functor $\IC_{\sX_{\pi,1}}$ from the category of twisted periodic Higgs-de Rham flows over $\sX_{\pi,1}$ to the category of twisted Fontaine-Faltings modules over $\sX_{\pi,1}$ with a commutative diagram 
\begin{equation}
\xymatrix{
\THDF(X_1) \ar[r]^{\IC_{X_1 }}  \ar[d]_{-\otimes_{\mO_{X_{1}}}\mO_{\sX_{\pi,1}}} & \TMF(X_1)  \ar[d]^{-\otimes_{\mO_{X_{1}}}\mO_{\sX_{\pi,1}} }\\
\THDF(\sX_{\pi,1}) \ar[r]^{\IC_{\sX_{\pi,1}}} & \TMF(\sX_{\pi,1})\ .\\
}
\end{equation}
\end{thm}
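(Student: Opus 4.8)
The plan is to mimic the construction of the equivalence $\mathcal{IC}\leftrightarrow\mathcal{GR}$ from the proof of Theorem~\ref{equiv:TFF&THDF}, but now working over $\sX_{\pi,1}$ with the inverse Cartier functor $C^{-1}_{\sX_{\pi,1}}$ constructed in Lemma~\ref{gluingFlatBundle} in place of $\mathcal C^{-1}_n$, and the Faltings tilde functor $\widetilde{(\cdot)}:\MCF(\sX_{\pi,1})\to\tMIC(\tsX_{\pi,1})$ in place of $\overline{\mathrm{Gr}}$ composed with tilde. First I would define the functor $\IC_{\sX_{\pi,1}}$: given a twisted $f$-periodic Higgs-de Rham flow over $\sX_{\pi,1}$, one extracts the $f$ filtered de Rham bundles $(V,\nabla,\Fil)_0,\dots,(V,\nabla,\Fil)_{f-1}$ together with the isomorphisms $\varphi_i$ for $0\le i\le f-2$ (coming from the defining identities $(V,\nabla)_{i+1}=C^{-1}_{\sX_{\pi,1}}((E,\theta)_i)$ and $(E,\theta)_{i+1}=\widetilde{(V,\nabla,\Fil)_i}$) and $\varphi_{f-1}:=C^{-1}_{\sX_{\pi,1}}(\phi_f)$ twisted by $(\mL^p,\nabla_{\can})$; this is precisely an $\mL$-twisted Fontaine-Faltings module in the sense of the definition preceding Theorem~\ref{thm:functorD^P}. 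One must check it lands in $\TMF_{a,f}(\sX_{\pi,1}^o)$, which is formal from the level bounds on the flow.

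Next I would construct the quasi-inverse $\mathcal{GR}_{\sX_{\pi,1}}$. Starting from a twisted Fontaine-Faltings module $((V,\nabla,\Fil)_0,\dots,(V,\nabla,\Fil)_{f-1},\varphi_\cdot)$, use the $\varphi_i$ ($0\le i\le f-2$) to identify $(V,\nabla)_{i+1}\cong C^{-1}_{\sX_{\pi,1}}\widetilde{(V,\nabla,\Fil)_i}$, set $(E,\theta)_{i+1}:=\widetilde{(V,\nabla,\Fil)_i}$, and then put $(E,\theta)_0:=(E,\theta)_f\otimes(\widetilde{\mL},0)$ where $(E,\theta)_f:=\widetilde{(V,\nabla,\Fil)_{f-1}}$; the isomorphism $\varphi_{f-1}$ together with the identity $C^{-1}_{\sX_{\pi,1}}(\widetilde{\mL},0)=\mL^p$ (the Lemma just before Theorem~\ref{thm:equivalent}) gives $C^{-1}_{\sX_{\pi,1}}(E_0,\theta_0)\cong(V,\nabla)_0$, so $\Fil_0$ transports to a Hodge filtration $\Fil_f$, and one extends the flow periodically exactly as in the $n=1$ case of the proof of Theorem~\ref{equiv:TFF&THDF}, defining the higher $\phi_{f+i}$ by $\phi_{f+i+1}=\mathrm{Gr}\circ C^{-1}_{\sX_{\pi,1}}(\phi_{f+i})$. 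Verifying $\mathcal{GR}_{\sX_{\pi,1}}\circ\IC_{\sX_{\pi,1}}\simeq\id$ and $\IC_{\sX_{\pi,1}}\circ\mathcal{GR}_{\sX_{\pi,1}}\simeq\id$ is then the same straightforward bookkeeping as before, on morphisms as well as objects. Finally the commutativity of the square with the base-change functor $-\otimes_{\mO_{X_1}}\mO_{\sX_{\pi,1}}$ follows from Lemma~\ref{lem:coeffExt} (the tilde functor commutes with coefficient extension) together with the analogous compatibility of $C^{-1}_{\sX_{\pi,1}}$ with $C^{-1}_1$ under base change — which one reads off from the fact that $C^{-1}_{\sX_{\pi,1}}$ is built, via the same gluing recipe, out of the local Frobenius pullbacks $\Phi_i^*$ that restrict to the characteristic-$p$ ones modulo $\Fil^1$.

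The main obstacle I anticipate is not in the formal category-theoretic gluing but in making sure the inverse Cartier functor $C^{-1}_{\sX_{\pi,1}}$ genuinely behaves like its untwisted counterpart on the relevant objects: one needs that $\widetilde{(V,\nabla,\Fil)}$ for $(V,\nabla,\Fil)\in\MCF(\sX_{\pi,1})$ is a \emph{locally filtered free} object of $\tMIC(\tsX_{\pi,1})$ so that Lemma~\ref{gluingFlatBundle} applies, that the nilpotence/exponent hypotheses implicit in the level bound $a\le p-2$ are preserved, and that the transition cocycles $G_{ij}=\exp(h_{\Phi_i,\Phi_j}(\overline\Phi^*\tnabla))$ interact correctly with the $\varphi_i$'s so that twisting by $(\mL^p,\nabla_{\can})$ on one chart is compatible across overlaps — this last point is exactly where the auxiliary Lemma identifying $C^{-1}_{\sX_{\pi,1}}(\widetilde{\mL},0)$ with $\mL^p$ is used, and one should double-check that the graded piece $\mathrm{Gr}$ in the definition of the higher $\phi_{f+i}$ makes sense in $\tMIC(\tsX_{\pi,1})$ (i.e. that $\widetilde{(\cdot)}$ plays the role of grading here, as in the definition of the flow on $\sX_{\pi,1}$). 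Once these local checks are in place, the proof is a routine transcription of the argument for Theorem~\ref{equiv:TFF&THDF}, so I would simply say "the same construction as before gives the functor and its quasi-inverse; we leave the details to the reader" after spelling out the two functors and the key compatibility lemmas.
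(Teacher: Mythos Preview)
Your proposal is correct and follows exactly the approach the paper takes: the paper's entire ``proof'' is the single sentence ``Just taking the same construction as before, we obtain the following result,'' referring back to Theorem~\ref{equiv:TFF&THDF}. Your write-up is in fact a careful unpacking of precisely this transcription, including the use of the lemma $C^{-1}_{\sX_{\pi,1}}(\widetilde{\mL},0)=\mL^p$ and Lemma~\ref{lem:coeffExt} for the commutativity of the square, so there is nothing to add.
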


\subsection{degree and slope}
Recall that $\sX_\pi$ is a smooth formal scheme over $\mB_{W_\pi}$. Then $\sX_{\pi,1}$ and $X_1$ are the modulo-$p$ reductions of $\sX_\pi$ and $X$ respectively. Also note that $X_1$ is the closed fiber of $Y_1=\mX_\pi\otimes_{\bZ_p}\bF_p$, $\sX_{\pi,1}=\sX_\pi\otimes_{\bZ_p}\bF_p$, $\mX_\pi$ and $\sX_\pi$.
\begin{equation}
\xymatrix@R=2mm{
X_1\ar[r] \ar[dd] &\mX_{\pi,1} \ar[rr] \ar[dd] \ar[dr] && \sX_{\pi,1} \ar[dd]|(0.5)\hole \ar[dr]&\\
&& \widetilde{\sX_\pi}_1  \ar[rr] \ar[dd] && X_1 \ar[dd]\\
X_1\ar[r] \ar[dd]  & \mX_\pi \ar[dr] \ar[dd]  \ar[rr]|(0.5)\hole && \sX_\pi \ar[dr]\ar[dd]|(0.5)\hole &\\
&& \widetilde{\sX_\pi}  \ar[rr] \ar[dd]  && \mX \ar[dd]  \\
\Spec k \ar[r] & \Spec W_\pi \ar[drrr]|(0.34)\hole \ar[rr]|(0.5)\hole
 \ar[dr] && \Spec \mB_{W_\pi} \ar[dr] &\\
&& \Spec\widetilde{\mB}_\pi \ar[rr] && \Spec W\\
}
\end{equation}
For a line bundle $V$ on $\sX_{\pi,1}$ (resp. $\tsX_{\pi,1}$), $V\otimes_{\mO_{\sX_{\pi,1}}}\mO_{X_1}$ (resp. $V\otimes_{\mO_{\tsX_{\pi,1}}}\mO_{X_k}$) forms a line bundle on the special fiber $X_1$ of $\mX$. We denote
\[\deg(V):=\deg(V\otimes_{\mO_{\sX_{\pi,1}}}\mO_{X_1}).\]
For any bundle $V$ on $\sX_{\pi,1}$ (resp. $\tsX_{\pi,1}$) of rank $r>1$, we denote
\[\deg(V):=\deg(\bigwedge_{i=1}^{r}V).\]

By Lemma~\ref{lem:FrobLift}, the modulo-$p$ reduction of the Frobenius lifting is globally well-defined. We denote it by $\Phi_1:\tsX_{\pi,1}\rightarrow \sX_{\pi,1}$. Since $\tsX_{\pi,1}$ and $\sX_{\pi,1}$ have the same closed subset $X_1$, we have the following diagram 
\begin{equation}
\xymatrix{
X_1 \ar[r]^{\widetilde{\tau}} \ar[d]^{\Phi_{X_1}} & \tsX_{\pi,1} \ar[d]^{\Phi_1}\\
X_1 \ar[r]^{\tau} & \sX_{\pi,1}\\
}
\end{equation}
Here $\tau$ and $\widetilde{\tau}$ are closed embeddings and $\Phi_{X_1}$ is the absolute Frobenius lifting on $X_1$. We should remark that the diagram above is not commutative, because $\Phi_1$ does not preserve the defining ideal of $X_1$.

\begin{lem}\label{lem:IsoPullbacks} Let $(V,\nabla,\Fil)$ be an object in $\MCF(\sX_{\pi,1})$ of rank $1$. Then there is an isomorphism 
\[\Phi_{X_1}^*\circ\widetilde{\tau}^*(\tV)\overset{\sim}{\longrightarrow} \tau^*\circ\Phi_1^*(\tV).\]
\end{lem}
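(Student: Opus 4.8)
The plan is to reduce the statement to a local computation on each small affine open $\sU_i$ and then glue. First I would fix a covering $\{\mU_i\}$ of $\mX$ by small affine opens, with coordinate rings $R_i$ and Frobenius liftings $\Phi_i:\tmB_{R_i}\to\mB_{R_i}$ as chosen in the construction of $C^{-1}_{\sX_{\pi,1}}$; write $\Phi_{i,1}$ for the reduction mod $p$, which by Lemma~\ref{lem:FrobLift} is independent of the choice and hence globally glues to $\Phi_1:\tsX_{\pi,1}\to\sX_{\pi,1}$. Because $V$ has rank $1$, the tilde functor $\widetilde{(\cdot)}$ does nothing but shift the filtration degree by the single integer $q$ occurring in a filtered basis $v$ of degree $q$ with $0\le q\le a\le p-2$; concretely $\tV$ is the free $\tmB_{R_i}/p$-module on $[v]_q$. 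Both $\Phi_{X_1}^*\circ\widetilde\tau^*(\tV)$ and $\tau^*\circ\Phi_1^*(\tV)$ are then line bundles on $X_1$, and the point is to identify their transition functions.

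Next I would make the two pullbacks explicit. On $\sU_{i,1}$, $\Phi_1^*(\tV)_i$ is the free $\mB_{R_i}/p$-module on $1\otimes_{\Phi_i}[v_i]_q$; restricting along $\tau$ (i.e.\ killing the defining ideal of $X_1$, equivalently the ideal generated by $F^1\mB_{R_i}$ and $T$) gives a free $\mO_{X_1}(\mU_i)$-module, and by diagram~(\ref{diag:FrobLift}) the composite $\mB_{R_i}\twoheadrightarrow R_i/p$ intertwines $\Phi_i$ with the $p$-th power map, so $\tau^*\circ\Phi_1^*(\tV)|_{\mU_i}$ is canonically $\Phi_{X_1}^*$ applied to $\widetilde\tau^*(\tV)|_{\mU_i}$. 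Thus on each open there is a canonical isomorphism; what remains is to check the two sides have the same gluing data. The transition functions of $C^{-1}_{\sX_{\pi,1}}$ are $G_{ij}=\exp(h_{\Phi_i,\Phi_j}(\overline\Phi^*\tnabla))$ from Lemma~\ref{gluingFlatBundle}, but in rank $1$ the $p$-connection $\tnabla$ is a scalar $1$-form and $h_{\Phi_i,\Phi_j}$ lands in $\mB_{R_{ij}}/p$; under the restriction $\tau^*$ I need to see that $h_{\Phi_i,\Phi_j}$ reduces to the Deligne--Illusie-type datum $h_{ij}$ that appears in the transition functions $G_{ij}=\exp(h_{ij}(F^*\theta))$ of the ordinary inverse Cartier functor $C^{-1}_1$ on $X_1$ — this is exactly the compatibility encoded in Lemma~\ref{lem:coeffExt} after reducing mod $p$ and mod the ideal of $X_1$.

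The main obstacle will be this last compatibility of the gluing cocycles: one must trace carefully that the $\mBR/p$-linear map $h_{\Phi_i,\Phi_j}$ constructed from $\tfrac{\rmd\Phi_i}{p}-\tfrac{\rmd\Phi_j}{p}$ restricts, under $\tau^*$ and along $\Phi_{X_1}$, to the homomorphism $h_{ij}:F_{X_1}^*\Omega^1\to\mO_{X_1}$ of Deligne--Illusie used to build $C^{-1}_1$, and that the filtration-degree shift by $q$ in the rank-$1$ tilde functor matches on the two sides (both pick up the same power of the relevant line bundle, since $\widetilde\tau^*$ and $\tau^*$ see the same degree-$q$ piece). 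Once the cocycles agree, the canonical local isomorphisms patch to the desired global isomorphism; naturality in $(V,\nabla,\Fil)$ is then immediate from the construction. I would finish by remarking that the argument is insensitive to whether we are in the logarithmic or non-logarithmic setting, since in both cases the transition functions of the inverse Cartier functor have the same exponential shape.
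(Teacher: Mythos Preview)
Your first two paragraphs head in the right direction---work locally and compare transition functions, invoking diagram~(\ref{diag:FrobLift})---and this is exactly how the paper proceeds. But in your third paragraph you introduce a spurious obstacle by conflating the plain pullback $\Phi_1^*$ with the inverse Cartier functor $C^{-1}_{\sX_{\pi,1}}$, and likewise $\Phi_{X_1}^*$ with $C^{-1}_1$. The point of Lemma~\ref{lem:FrobLift}(ii) is precisely that any two local Frobenius lifts on $\tmBR$ agree modulo $p$, so $\Phi_1:\tsX_{\pi,1}\to\sX_{\pi,1}$ is a \emph{globally defined morphism of schemes}. Hence $\Phi_1^*(\tV)$ is an ordinary sheaf pullback with transition functions $\Phi_1^*(g_{ij})$; no exponential cocycle $G_{ij}=\exp(h_{\Phi_i,\Phi_j}(\overline\Phi^*\tnabla))$ enters. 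Similarly $\Phi_{X_1}$ is just the absolute Frobenius on $X_1$, so Deligne--Illusie's $h_{ij}$ and Lemma~\ref{lem:coeffExt} are irrelevant here. (Incidentally, in rank $1$ the filtration is a pure shift, so $\tnabla$ is not merely ``a scalar $1$-form'' but in fact zero; this is used in the next lemma, not this one.)

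The paper's proof is accordingly a two-line computation. Since $V$ has rank $1$, the filtration is trivial and $\tV$ has the \emph{same} transition functions $g_{ij}\in(\mB_{R_{ij}}/p\mB_{R_{ij}})^\times$ as $V$, now viewed inside $\tmB_{R_{ij}}/p$. The key observation is that these $g_{ij}$ already live in $\mB_{R_{ij}}/p$, so diagram~(\ref{diag:FrobLift}) applies directly: both $(\tau\mid_{\sU_{i,1}})^*\circ(\Phi_1\mid_{\sU_{i,1}})^*(g_{ij})$ and $(\Phi_{X_1}\mid_{U_{i,1}})^*\circ(\widetilde\tau\mid_{\widetilde\sU_{i,1}})^*(g_{ij})$ equal the $p$-th power of the image of $g_{ij}$ in $R_{ij}/pR_{ij}$. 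Equality of transition functions gives the isomorphism. Your proposed ``main obstacle'' simply does not arise.
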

\begin{proof}Recall that $\{\sU_i\}_i$ is an open covering of $\sX$. We express the line bundle $V$ by the transition functions $(g_{ij})$, where $g_{ij}\in \left(\mB_{R_{ij}}/p\mB_{R_{ij}}\right)^\times$. Since $V$ is of rank $1$, the filtration $\Fil$ is trivial. Then 
by definition $\tV$ can also be expressed by $(g_{ij})$. Since $g_{ij}\in\mB_{R_{ij}}/p\mB_{R_{ij}}$, one has
\[(\Phi_{X_1}\mid_{U_{i,1}})^*\circ(\widetilde{\tau}\mid_{\widetilde{\sU}_{i,1}})^*(g_{ij}) = (\tau\mid_{\sU_{i,1}})^*\circ(\Phi_1\mid_{\sU_{i,1}})^*(g_{ij}),\]
by diagram~(\ref{diag:FrobLift}). This gives us the isomorphism $\Phi_{X_1}^*\circ\widetilde{\tau}^*(\tV)\overset{\sim}{\longrightarrow} \tau^*\circ\Phi_1^*(\tV)$.
\end{proof}

\begin{lem}\label{lem:deg&C^-1} Let $(V,\nabla,\Fil)$ be an object in $\MCF(\sX_{\pi,1})$. Then we have
\[\deg(\tV)=\deg(V) \text{ and } \deg(C^{-1}_{\sX_{\pi,1}}(\tV))=p\deg(\tV).\]
\end{lem}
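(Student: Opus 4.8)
The plan is to compute both degrees by restricting the relevant line bundles to the special fibre $X_1$ and comparing transition functions there, since $\deg$ of a locally free sheaf on $\sX_{\pi,1}$ (resp.\ on $\tsX_{\pi,1}$) depends only on its pullback to $X_1$ and is read off from the top exterior power.

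\textbf{The equality $\deg(\tV)=\deg(V)$.} First I would reduce to line bundles: it suffices to check that $\bigwedge^r V$ and $\bigwedge^r\tV$ restrict to the same line bundle on $X_1$. On an overlap $\sU_i\cap\sU_j$, pick filtered bases of $V$ related by a matrix $A=(a_{k\ell})\in\GL_r(\mBR)$ with $a_{k\ell}\in F^{q_\ell-q_k}\mBR$. By the construction of the tilde functor (the lemma preceding this one) the transition matrix of $\tV$ with respect to the induced bases is $QAQ^{-1}$ with $Q=\operatorname{diag}(p^{q_1},\dots,p^{q_r})$, i.e.\ its $(k,\ell)$-entry is $a_{k\ell}/p^{q_\ell-q_k}\in F^{q_\ell-q_k}\tmBR$ (this uses Lemma~\ref{lem:F^iB&F^itB}). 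Expanding the determinant term by term one finds
\[
\det(QAQ^{-1})=\sum_{\sigma}\operatorname{sgn}(\sigma)\,p^{\sum_k(q_k-q_{\sigma(k)})}\prod_k a_{k\sigma(k)}=\det A,
\]
because $\sum_k(q_k-q_{\sigma(k)})=0$ for every permutation $\sigma$. Hence $\bigwedge^rV$ and $\bigwedge^r\tV$ carry literally the same transition functions in $\mBR\subset\tmBR$, and these have the same image in $R/pR$ under the two reductions $\mBR/p\mBR\twoheadrightarrow R/pR$ and $\tmBR/p\tmBR\twoheadrightarrow R/pR$; so the restrictions to $X_1$ coincide. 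This part is pure bookkeeping.

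\textbf{Reduction of the second equality to rank one.} I would use that $C^{-1}_{\sX_{\pi,1}}$ commutes with top exterior powers, so that $\deg C^{-1}_{\sX_{\pi,1}}(\tV)=\deg\bigwedge^rC^{-1}_{\sX_{\pi,1}}(\tV)=\deg C^{-1}_{\sX_{\pi,1}}\bigl(\bigwedge^r\tV\bigr)$. On each patch $\bigwedge^r(\Phi_i^*\tV_i)=\Phi_i^*(\bigwedge^r\tV_i)$, and taking determinants of the gluing automorphisms $G_{ij}=\exp\bigl(h_{\Phi_i,\Phi_j}(\overline\Phi^*\tnabla)\bigr)$ of Lemma~\ref{gluingFlatBundle} gives $\det G_{ij}=\exp\bigl(\operatorname{tr}h_{\Phi_i,\Phi_j}(\overline\Phi^*\tnabla)\bigr)=\exp\bigl(h_{\Phi_i,\Phi_j}(\overline\Phi^*\operatorname{tr}\tnabla)\bigr)$, which is precisely the transition function defining $C^{-1}_{\sX_{\pi,1}}$ of the line bundle $\bigwedge^r\tV$ equipped with the trace $p$-connection. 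Thus it suffices to prove $\deg C^{-1}_{\sX_{\pi,1}}(\tL)=p\deg\tL$ for a single line bundle $(\tL,\tnabla)$ on $\tsX_{\pi,1}$.

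\textbf{The rank-one case and the main obstacle.} Here the idea is to restrict $C^{-1}_{\sX_{\pi,1}}(\tL)$ to $X_1$ and identify its underlying line bundle with $\Phi_{X_1}^*(\tL|_{X_1})$, where $\Phi_{X_1}$ is the absolute Frobenius of $X_1$; since $\Phi_{X_1}^*$ sends a line bundle $M$ to $M^{\otimes p}$, we then get $\deg C^{-1}_{\sX_{\pi,1}}(\tL)=\deg\Phi_{X_1}^*(\tL|_{X_1})=p\deg(\tL|_{X_1})=p\deg\tL$, as wanted. The input is Lemma~\ref{lem:IsoPullbacks}, which identifies patchwise $\tau^*\Phi_1^*\tL$ with $\Phi_{X_1}^*(\widetilde\tau^*\tL)$ via the diagram saying that $\Phi$ reduces to the $p$-th power map along $X_1$, together with Lemma~\ref{lem:FrobLift}, by which all local lifts $\Phi_i$ agree modulo $p$ with the global map $\Phi_1$, so that $C^{-1}_{\sX_{\pi,1}}(\tL)$ differs from $\Phi_1^*\tL$ only by the cocycle $(G_{ij})$ of automorphisms. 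The delicate point, and the step I expect to be the main obstacle, is to show that this correction cocycle restricts over $X_1$ to something cohomologically trivial (equivalently, that the patchwise isomorphism of Lemma~\ref{lem:IsoPullbacks} can be upgraded to a global isomorphism $C^{-1}_{\sX_{\pi,1}}(\tL)|_{X_1}\cong\Phi_{X_1}^*(\tL|_{X_1})$ of line bundles compatible with the gluing). I would handle this by tracing the $h_{\Phi_i,\Phi_j}$'s modulo the ideal of $X_1$, using that $\Phi_i-\Phi_j$ is divisible by $p$ (Lemma~\ref{lem:FrobLift}) and that $\Phi_i$ reduces to the absolute Frobenius along $X_1$; this is where the argument genuinely uses the structure of $\mBR$ versus $\tmBR$ rather than formal generalities.
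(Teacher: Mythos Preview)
Your overall strategy matches the paper's: reduce to rank one via determinants, then compare with the absolute Frobenius pullback on $X_1$ using Lemma~\ref{lem:IsoPullbacks}. Your treatment of $\deg(\tV)=\deg(V)$ via $\det(QAQ^{-1})=\det A$ is a more explicit version of what the paper does (the paper simply notes that in rank one $V$ and $\tV$ have the same transition functions, which your permutation computation confirms).

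Where you diverge is in the ``main obstacle''. You anticipate having to argue that the gluing cocycle $(G_{ij})$ becomes cohomologically trivial after restriction to $X_1$, and you plan to trace $h_{\Phi_i,\Phi_j}$ modulo the ideal of $X_1$. This is unnecessary: the paper observes that once you are in rank one (equivalently, once you pass to the determinant), the $p$-connection $\tnabla$ on $\tL$ is \emph{zero}. Indeed, from the defining formula $\tnabla([v_j]_{q_j})=\sum_i[v_i]_{q_i}\otimes\omega_{ij}/p^{\,q_j-1-q_i}$, in rank one the exponent is $-1$, so the $p$-connection form is $p\omega$, which vanishes modulo $p$. (In your determinant formulation: the trace of $pQ\omega Q^{-1}$ is $p\,\mathrm{tr}(\omega)\equiv 0$.) Hence $G_{ij}=\exp\bigl(h_{\Phi_i,\Phi_j}(\overline\Phi^*\,0)\bigr)=\mathrm{id}$, so $C^{-1}_{\sX_{\pi,1}}(\tL)=\Phi_1^*(\tL)$ on the nose, not merely up to a correction. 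Lemma~\ref{lem:IsoPullbacks} then finishes the argument immediately. So your plan would work, but you are manufacturing a difficulty that the rank-one reduction already eliminates.
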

\begin{proof}
Since the tilde functor and inverse Cartier functor preserve the wedge product and the degree of a bundle is defined to be that of its determinant, we only need to consider the rank $1$ case. 
Now let $(V,\nabla,\Fil)$ be of rank $1$. The reductions of $V$ and $\tV$ on the closed fiber $X_1$ are the same, by the proof of Lemma~\ref{lem:IsoPullbacks}. Then we have 
\[\deg(\tV)=\deg(V).\]
Since the filtration is trivial, the $p$-connection $\tnabla$ is also trivial. In this case, the transition functions $G_{ij}$ in Lemma~\ref{gluingFlatBundle} are identities. Thus 
\[C^{-1}_{\sX_{\pi,1}}(\tV)=\Phi_1^*(\tV).\]
Recall that $\deg(\Phi_1^*(\tV))=\deg(\tau^*\circ\Phi_1^*(\tV))$ and $\deg(\tV)=\deg(\widetilde{\tau}^*(\tV))$. Lemma~\ref{lem:IsoPullbacks} implies $\deg(\tau^*\circ\Phi_1^*(\tV))=\deg(\Phi_{X_1}^*\circ\widetilde{\tau}^*(\tV))$. Since $\Phi_{X_1}$ is the absolute Frobenius, one has $\deg(\Phi_{X_1}^*\circ\widetilde{\tau}^*(\tV))=p\deg(\widetilde{\tau}^*(\tV))$. Composing above equalities, we get $\deg(C^{-1}_{\sX_{\pi,1}}(\tV))=p\deg(\tV)$.
\end{proof}

\begin{thm}\label{ramified_Thm}
Let	$\mE=\left\{ (E,\theta)_{0},  
(V,\nabla,\Fil)_{0},
(E,\theta)_{1},  
(V,\nabla,\Fil)_{1},
\cdots\right\}$ be an $L$-twisted $f$-periodic Higgs-de Rham flow with endomorphism structure and log structure over $X_{1}$. Suppose that the degree and rank of the initial term $E_0$ are coprime. Then the projective representation $\bD^P\circ\IC_{\sX_{\pi,1}}(\mE)$ of $\pi^\text{\'et}_1(X_{K_0}^o)$ is still irreducible after restricting to the geometric fundamental group $\pi^\text{\'et}_1(X^o_{\overline{K}_0})$, where $K_0=W[\frac1p]$.
\end{thm}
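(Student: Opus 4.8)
The strategy is the one outlined in the introduction to Section~5: reduce the question of geometric irreducibility to an irreducibility statement over a finite, possibly very ramified, extension of $K_0$, and then re-run the machinery of twisted periodic Higgs-de Rham flows over that ramified base. Concretely, write $\rho=\bD^P\circ\IC_{\sX_{\pi,1}}(\mE)$ for the $\mathrm{PGL}_r(\F_{p^f})$-representation attached to $\mE$ over $X_1$ (here we first take $\pi=p$, i.e. the unramified case, to produce $\rho$ itself as in Theorem~\ref{Mainthm}). The image $\rho\big(\pi^\text{\'et}_1(X^o_{K_0})\big)$ is a finite subgroup of $\mathrm{PGL}_r(\F_{p^f})$. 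Fixing a $K_0$-rational point of $X^o_{K_0}$ splits the homotopy exact sequence
\[
1\to \pi^\text{\'et}_1(X^o_{\overline K_0})\to \pi^\text{\'et}_1(X^o_{K_0})\to \mathrm{Gal}(\overline K_0/K_0)\to 1,
\]
so the restriction of $\rho$ to the geometric fundamental group has the \emph{same} image as $\rho$ restricted to $\pi^\text{\'et}_1(X^o_K)$ for any finite extension $K/K_0$ large enough to kill the (finite) arithmetic part; such a $K$ exists. Since a representation and its restriction along a surjection onto the monodromy image (Proposition~\ref{surjectivity}, Corollary~\ref{Cor:compHom}) have the same lattice of subrepresentations, it suffices to prove that $\rho|_{\pi^\text{\'et}_1(X^o_K)}$ is irreducible; and $K/K_0$ is totally ramified of some degree $e$, so $W_\pi=W[\pi]$ with $\pi$ a uniformizer as in Section~5.1 applies.

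\textbf{Key steps.} First I would base-change the twisted periodic Higgs-de Rham flow $\mE$ from $X_1$ to $\sX_{\pi,1}$ using the functor $-\otimes_{\mO_{X_1}}\mO_{\sX_{\pi,1}}$ (Theorem~\ref{thm:equivalent}), obtaining a twisted periodic Higgs-de Rham flow over $\sX_{\pi,1}$ whose associated projective representation, via $\bD^P$ of Theorem~\ref{thm:functorD^P}, is exactly $\rho|_{\pi^\text{\'et}_1(X^o_K)}$ — this compatibility is the content of the commutative square in Theorem~\ref{thm:equivalent} together with the compatibility of the local $\bD$-functors with restriction to $\mU_K$. Second, I would transplant the argument of Theorem~\ref{Mainthm}: a non-trivial projective subrepresentation of $\bD^P\circ\IC_{\sX_{\pi,1}}(\mE)$ would, by the full faithfulness and the closure-under-subobjects of $\bD^P$ in Theorem~\ref{thm:functorD^P} combined with the gluing argument of Proposition~\ref{subrep-1} adapted to $\sX_{\pi,1}$, correspond to a non-trivial sub-twisted-periodic Higgs-de Rham flow $\mathscr F\subset\mE_{\sX_{\pi,1}}$, i.e. to a Higgs subbundle $(F,\theta)\subset (E,\theta)_0\otimes_{\mO_{X_1}}\mO_{\tsX_{\pi,1}}$ stable under all the operations of the flow. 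Third, I would derive a numerical contradiction: by the slope-equality for sub-twisted-periodic flows (the analogue of Corollary~\ref{slop sub THDF}, whose proof is purely a slope computation using $\Delta(C^{-1}(E))=p^2\Delta(E)$ and the periodicity isomorphism), one gets $\mu(F)=\mu(E_0)$, where degrees are taken via $-\otimes_{\mO_{\sX_{\pi,1}}}\mO_{X_1}$ as in Section~5.4; but $\deg$ on $\sX_{\pi,1}$-bundles agrees with the degree of the reduction to $X_1$ (Lemma~\ref{lem:deg&C^-1}), and $\gcd(\deg E_0,\rank E_0)=1$ forces $E_0$ to be stable, so no proper subbundle can have slope $\mu(E_0)$. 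Hence no such $\mathscr F$ exists, and $\rho|_{\pi^\text{\'et}_1(X^o_K)}$ is irreducible; therefore so is its pullback to $\pi^\text{\'et}_1(X^o_{\overline K_0})$, and $\rho$ is geometrically (absolutely — applying the same with $\F_{p^f}$ replaced by its algebraic closure, which does not affect stability of $E_0$) irreducible.

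\textbf{Main obstacle.} The delicate point is the second step: verifying that the descent/gluing of local projective representations (Theorem~\ref{Mainthm:rep}, Corollary~\ref{cor_compare_proj}, Proposition~\ref{subrep-1}) goes through verbatim over the \emph{very ramified} base $\sX_{\pi,1}$. This requires that the comparison element $a_{n,r}\in B^+(\widehat R)^\times$ of Lemma~\ref{a_n,r}, which measures the difference between two local trivializations of the twisting line bundle and which underlies the whole gluing of $\bD^P$, still exists and is a unit in the ramified setting — i.e. one needs the analogue of the Fontaine-Laffaille-Faltings functor $\bD$ over $\mB_{R_\pi}/p\mB_{R_\pi}$ (already constructed in Section~5.2) to be fully faithful with image closed under sub- and quotient objects, which is exactly the cited adaptation of Theorems~2.3 and~2.6 of~\cite{Fal89}. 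Granting that — which the excerpt does, via the statement that ``the proof of Theorem~2.6 in~\cite{Fal89} works in this context'' — the rest is a matter of checking that the slope bookkeeping in Section~5.4 (Lemmas~\ref{lem:IsoPullbacks} and~\ref{lem:deg&C^-1}) is precisely what is needed to run the stability argument of Theorem~\ref{Mainthm} unchanged. I expect no essential new difficulty beyond carefully tracking that degrees are computed on the common special fiber $X_1$ and that the twisting line bundle $L$ contributes the same correction on both sides of the periodicity isomorphism.
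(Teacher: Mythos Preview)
Your plan matches the paper's proof almost exactly: the reduction to a finite (ramified) extension $K/K_0$ via a section of the homotopy exact sequence, the base-change of the flow to $\sX_{\pi,1}$ via Theorem~\ref{thm:equivalent}, and the passage from a hypothetical projective subrepresentation to a sub twisted periodic Higgs--de Rham flow on $\sX_{\pi,1}$ via Theorem~\ref{thm:functorD^P} are precisely what the paper does.

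The one divergence is in the final numerical contradiction. You argue: $\mu(F)=\mu(E_0)$, and since $\gcd(\deg E_0,\rank E_0)=1$ the bundle $E_0$ is stable, so no proper subbundle can have slope $\mu(E_0)$. But $F=E'_0$ is a subbundle of $E_0\otimes_{\mO_{X_1}}\mO_{\sX_{\pi,1}}$ over $\sX_{\pi,1}$, not of $E_0$ over $X_1$; invoking stability of $E_0$ requires first restricting $F$ along the closed immersion $\tau:X_1\hookrightarrow\sX_{\pi,1}$ and checking that $\tau^*F\hookrightarrow E_0$ remains a proper subbundle. The paper sidesteps this by using the coprimality \emph{directly}: from the periodicity one gets $\frac{\deg E_0}{\rank E_0}=\frac{\deg L}{1-p^f}=\frac{\deg \mL}{1-p^f}=\frac{\deg E'_0}{\rank E'_0}$ (using Lemma~\ref{lem:deg&C^-1} on the $\sX_{\pi,1}$ side), hence $\deg E_0\cdot\rank E'_0=\deg E'_0\cdot\rank E_0$, and then $\gcd(\deg E_0,\rank E_0)=1$ forces $\rank E_0\mid\rank E'_0$, contradicting $1\le\rank E'_0<\rank E_0$. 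This purely arithmetic step avoids any descent of $F$ to $X_1$ and is cleaner; you should replace your stability invocation by it.
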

\begin{proof}
Let $\rho:\pi^\text{\'et}_1(X_{K_0}^o)\rightarrow \mathrm{PGL}(\bD^P\circ\IC_{\sX_{\pi,1}}(\mE))$ be the projective representation. Fix a $K_0$-point in $X_{K_0}$, which induces a section $s$ of the surjective map $\pi^\text{\'et}_1(X_{K_0}^o)\rightarrow \mathrm{Gal}(\overline{K}_0/K_0)$. We restrict $\rho$ on $\mathrm{Gal}(\overline{K}_0/K_0)$ by this section $s$. Since the module $\bD^P\circ\IC_{\sX_{\pi,1}}(\mE)$ is finite, the image of this restriction is finite. And there is a finite field extension $K/K_0$ such that the restriction of $\rho$ by $s$ on $\mathrm{Gal}(\overline{K}_0/K)$ is trivial. Thus 
\[\rho(\pi^\text{\'et}_1(X_{K}^o))=\rho(\pi^\text{\'et}_1(X^o_{\overline{K}_0})).\]
It is sufficient to show that the restriction of $\rho$ on $\pi^\text{\'et}_1(X_{K}^o)$ is irreducible. Suppose that the restriction of $\bD^P\circ\IC_{X_1}(\mE)$  on $\pi^\text{\'et}_1(X_{K}^o)$ is not irreducible.
Since the functors $\bD^P$ and $C^{-1}_{\sX_{\pi,1}}$ are compatible with those over $X_1$, the projective representation 
$\bD^P\circ \IC_{\sX_{\pi,1}}(\mE\otimes_{\mO_{X_1}}\mO_{\sX_{\pi,1}})=\bD^P\circ\IC_{X_1}(\mE)$ is also not irreducible. Thus there exists a non-trivial quotient, which is the image of some nontrivial sub $\mL=L\otimes_{\mO_{X_1}}\mO_{\sX_{\pi,1}}$-twisted $f$-periodic Higgs-de Rham flow of $\mE\otimes_{\mO_{X_1}}\mO_{\sX_{\pi,1}}$
\[\left\{ (E',\theta')_{0},  
(V',\nabla',\Fil')_{0},
(E',\theta')_{1},  
(V',\nabla',\Fil')_{1},
\cdots\right\},\] 
 under the functor $\bD^P\circ\IC_{\sX_{\pi,1}}$ according to Theorem~\ref{thm:functorD^P} and Theorem~\ref{thm:equivalent}. 
Since $E'_0$ is a sub bundle of $E_0\otimes_{\mO_{X_1}}\mO_{\sX_{\pi,1}}$, we have $1\leq \mathrm{rank}(E'_0) <\mathrm{rank}(E_0)$. 
By Theorem~4.17 in~\cite{OgVo07}, $\deg(E_{i+1})=p\deg(E_i) \text{ for }i\geq 0$
and 
$\deg(E_{0})=p\deg(E_{f-1})+\mathrm{rank}(E_{0})\times \deg(L)$.
Thus 
\begin{equation}
\frac{\deg (E_0)}{\mathrm{rank} (E_0)}=\frac{\deg (L)}{1-p^f}.
\end{equation}
Similarly, by Lemma~\ref{lem:deg&C^-1}, one gets 
\[\frac{\deg (E'_0)}{\mathrm{rank} (E'_0)}=\frac{\deg (\mL)}{1-p^f}.\]
Since $\deg(\mL)=\deg(L)$, one has $\deg(E_0)\cdot\mathrm{rank}(E_0')=\deg(E_0')\cdot\mathrm{rank}(E_0)$.
Since $\deg (E_0)$ and $\mathrm{rank} (E_0)$ are coprime, and $\mathrm{rank}(E'_0)$ is divided by $\mathrm{rank}(E_0)$. This contradicts to $1\leq \mathrm{rank}(E'_0) <\mathrm{rank}(E_0)$. Thus the projective representation $\bD^P\circ\IC_{X_1}(\mE)$ is irreducible.
\end{proof}
 
\section{Appendix: explicit formulas}
 In this appendix, we give an explicit formula of the self-map $\varphi_{\lambda,p}$ in Theorem~\ref{Thm:selfmap_formula} and an explicit formula of multiplication by $p$ map in Theorem~\ref{thm:multp_formula}. Then the Conjecture~\ref{conj-1} is equivalent to:
\begin{equation}\label{equ:main}
\frac1{a^p}\left(\frac{\det(B_0)}{\det(B_{m+1})}\right)^2=\frac{a^p}{\lambda^{p-1}}\left(\frac{\det(A_{m+1})}{\det(A_p)}\right)^2
\end{equation}
here $m=\frac{p-1}{2}$ and matrices $A_{m+1}$, $A_p$, $B_0$ and $B_{m+1}$ are given as following:
\begin{equation*}
A_{i}=\left( \begin{array}{cccccc}
\delta_{m} & \cdots & \delta_{i-2} & \delta_{i} &\cdots &\delta_{p-1} \\
\delta_{m-1} & \cdots & \delta_{i-3} & \delta_{i-1} &\cdots &\delta_{p-2} \\
\vdots & \ddots & \vdots &\vdots &\ddots &\vdots \\
\delta_{2} & \cdots & \delta_{i-m} & \delta_{i+2-m} &\cdots &\delta_{m+2} \\ 
\delta_{1} & \cdots  & \delta_{i-1-m}&\delta_{i+1-m} &\cdots &\delta_{m+1} \\
\end{array} \right)
\end{equation*}
\begin{equation*} 
 B_0=\left(\begin{array}{rrrrr}
a^p\gamma_{3m} & a^p\gamma_{3m-1} &\cdots & a^p\gamma_{2m+1} & a^p\gamma_{2m}\\
\gamma_{m} & a^p\gamma_{3m} &\cdots & a^p\gamma_{2m+2} & a^p\gamma_{2m+1}\\
\gamma_{m+1} &\gamma_{m} &\cdots & a^p\gamma_{2m+3} & a^p\gamma_{2m+2}\\
\vdots &\vdots&\ddots  &\vdots  &\vdots \\
\gamma_{2m-1} &\gamma_{2m-2} &\cdots & \gamma_{m} & a^p\gamma_{3m}\\
\end{array}\right)
\end{equation*}
\begin{equation*} 
 B_{m+1}= \left(\begin{array}{rrrrrr}
\gamma_m & a^p\gamma_{3m} & a^p\gamma_{3m-1} &\cdots & a^p\gamma_{2m+1} \\
\gamma_{m+1} & \gamma_{m} & a^p\gamma_{3m} &\cdots & a^p\gamma_{2m+2} \\
\gamma_{m+2} & \gamma_{m+1} &\gamma_{m} &\cdots & a^p\gamma_{2m+3} \\
\vdots &\vdots &\vdots&\ddots  &\vdots  \\
\gamma_{2m} & \gamma_{2m-1} &\gamma_{2m-2} &\cdots & \gamma_{m} \\
\end{array}\right).
\end{equation*} 
and 
\[\delta_n=\frac{\lambda^p(1-a^p)-(\lambda^p-a^p)\lambda^n}{n}\]
\[ \gamma_n=(-1)^{m+n}\sum_{\begin{array}{c}
		i+j=n-m\\ 0\leq i,j\leq m\\ 
		\end{array}}{m\choose i}{m\choose j}\lambda^{m-j}. \]

By Proposition~\ref{compTwoConj}, we reduce Conjecture~\ref{conj-1} to the following conjecture:
\begin{conj}\label{var_conj} The following equation holds
 \begin{equation}
\det(A_p)=c\lambda^{m^2}(\lambda-1)^{m^2}\cdot \det(B_{m+1}), 
\end{equation}
where 
\[c=(-1)^{m}\cdot \det\left(\begin{array}{cccc}
\frac{1}{m} & \frac{1}{m+1} & \cdots & \frac{1}{p-2} \\
\frac{1}{m-1} & \frac{1}{m} & \cdots & \frac{1}{p-3} \\
\vdots &\vdots &\ddots &\vdots \\
\frac{1}{1} & \frac{1}{2} & \cdots & \frac{1}{m} \\
\end{array}\right).\]
\end{conj}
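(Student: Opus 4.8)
The plan is to prove Conjecture~\ref{var_conj} as an identity in the polynomial ring $\Q[\lambda,a^p]$. Write $u:=a^p$; each entry $\delta_n$ is linear in $u$, and each strictly‑upper entry $a^p\gamma_n$ of $B_{m+1}$ carries exactly one factor of $u$, while a permutation of an $(m+1)\times(m+1)$ matrix can hit at most $m$ strictly‑upper positions, so both $\det(A_p)$ and $\det(B_{m+1})$ have $u$‑degree at most $m$. Consequently it suffices to prove the identity for $m+1$ distinct specializations of $u$, or equivalently to match the coefficient of every power $u^j$, $0\le j\le m$.

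First I would massage the left side. Factor each entry as $\delta_n=-\tfrac{\lambda^p-u}{n}\,(\lambda^n-\mu)$ with $\mu:=\tfrac{\lambda^p(1-u)}{\lambda^p-u}$, pull out $(\lambda^p-u)^m$, and reverse the order of the rows; the resulting matrix is the Hankel/Cauchy‑type matrix $\bigl(\tfrac{\lambda^{i+j-1}-\mu}{i+j-1}\bigr)_{1\le i,j\le m}$, so $\det(A_p)=\pm(\lambda^p-u)^m\det\bigl(\tfrac{\lambda^{i+j-1}-\mu}{i+j-1}\bigr)$, and the very same row reversal turns the Hilbert matrix $\bigl(\tfrac1{i+j-1}\bigr)$ into the matrix whose determinant is the constant $c$. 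For the two boundary specializations — $u=0$, where $\mu=1$, and $u=1$, where $\mu=0$ — this determinant collapses to a classical evaluation: the substitution $t\mapsto 1+(\lambda-1)s$ (resp.\ $t\mapsto\lambda s$) in Heine's integral formula $\det(\mu_{i+j-2})=\tfrac1{m!}\int\prod_{a<b}(t_a-t_b)^2\prod d\nu(t_a)$ gives $\det\bigl(\tfrac{\lambda^{i+j-1}-1}{i+j-1}\bigr)=(\lambda-1)^{m^2}H_m$ and $\det\bigl(\tfrac{\lambda^{i+j-1}}{i+j-1}\bigr)=\lambda^{m^2}H_m$, where $H_m:=\det\bigl(\tfrac1{i+j-1}\bigr)$. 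Combined with $p=2m+1$ and $\det(B_{m+1})|_{u=0}=\gamma_m^{\,m+1}=\lambda^{m(m+1)}$, this reconciles the two sides at $u=0$ (and similarly at $u=1$).

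For the right side I would exploit the generating‑function identity $\sum_n\gamma_n x^n=\bigl(x(1-x)(\lambda-x)\bigr)^m$, i.e.\ $\gamma_n=[x^n]\bigl(x^m(1-x)^m(\lambda-x)^m\bigr)$. Splitting $h(x):=\bigl((1-x)(\lambda-x)\bigr)^m=h_{\mathrm{lo}}(x)+x^{m+1}h_{\mathrm{hi}}(x)$ with $\deg h_{\mathrm{lo}}\le m$ and $\deg h_{\mathrm{hi}}\le m-1$, the matrix $B_{m+1}$ is a ``twisted'' (circulant‑type) convolution matrix built from $h_{\mathrm{lo}}$ on and below the diagonal and $u\,h_{\mathrm{hi}}$ above it, so $\det(B_{m+1})$ is a resultant‑flavored polynomial that I would expand, via Cauchy--Binet, as a sum over $m$‑element subsets $S\subseteq\{0,\dots,m\}$ (the indices where one selects the $u$‑branch), each summand a product of binomial coefficients and powers of $\lambda$. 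The analogous Cauchy--Binet / Lindström--Gessel--Viennot expansion applied to the Hankel determinant on the $A_p$‑side, organized by the splitting $\mu_j=\tfrac{\lambda^{j+1}}{j+1}-\tfrac{\mu}{j+1}$, produces a structurally parallel sum; matching the two sums term by term, and using $1-\lambda^n=(1-\lambda)(1+\cdots+\lambda^{n-1})$ to produce the factor $(\lambda-1)^{m^2}$, would finish the proof.

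The hard part will be exactly this last matching for generic $u$: the $A_p$‑side naturally produces split Selberg‑type integrals $\int_{[0,1]^j\times[0,\lambda]^{m-j}}\prod_{a<b}(t_a-t_b)^2$, which do not factor because of the cross terms, whereas the $B_{m+1}$‑side produces a binomial sum, so the equality is a genuinely new determinant/binomial identity rather than bookkeeping. If a direct bijective match resists, I would fall back on the degree‑in‑$u$ bound above: verify the identity at $m+1$ special values $u\in\{0,1,\lambda^{m+1},\dots\}$ (where either $\mu$ or $\lambda^{m+1}-u$ degenerates and both determinants reduce to Cauchy/Vandermonde evaluations), and conclude by Lagrange interpolation; alternatively, prove directly that $\det(B_{m+1})\mid\det(A_p)$ in $\Q[\lambda,u]$ by a resultant argument and then pin down the quotient from the top and bottom $\lambda$‑coefficients of each side (which are $(-1)^{m+\lfloor m/2\rfloor}H_m$ and $(-1)^m H_m\,u^m$, respectively), together with the explicit constant $c$.
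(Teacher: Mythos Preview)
The paper does not prove this statement: it is explicitly labeled a conjecture and is only checked by Maple for odd primes $p<50$ (see the remarks following Proposition~\ref{compTwoConj}). So there is no paper proof to compare your attempt against; you are proposing a proof of something the authors left open.

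Your degree count in $u=a^p$ is correct, and the $u=0$ specialization genuinely closes: your Heine/Selberg computation gives $\det\bigl(\tfrac{\lambda^{i+j-1}-1}{i+j-1}\bigr)=(\lambda-1)^{m^2}H_m$, and with $\det(B_{m+1})|_{u=0}=\gamma_m^{\,m+1}=\lambda^{m(m+1)}$ and $pm=m^2+m(m+1)$ the two sides match, signs included. That is a real (and nontrivial) consistency check that the paper does not record.

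But the proposal is a plan, not a proof, and you say so yourself. The core step---matching the Cauchy--Binet expansion of the Hankel side against the circulant/resultant expansion of $B_{m+1}$ for generic $u$---is left as ``a genuinely new determinant/binomial identity,'' which is exactly the content of the conjecture. Your fallbacks do not close the gap either: for Lagrange interpolation you need $m+1$ values of $u$ at which \emph{both} determinants admit closed evaluations, and you only exhibit $u=0$ (and sketch $u=1$, without evaluating $\det(B_{m+1})|_{u=1}$, which is no longer triangular); the unspecified ``$\lambda^{m+1},\dots$'' are not shown to degenerate either side. The divisibility/resultant route likewise needs an argument you have not supplied. As written, the proposal reduces the conjecture to another conjecture of comparable difficulty.
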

 By using Maple, Conjecture~\ref{var_conj} has been checked for odd prime $p<50$. Thus Conjecture~\ref{conj-1} holds for $p<50$.

\subsection{Self-map}\label{Calculate_Selfmap}
To compute the self-map $\varphi_{\lambda,p}$, we recall the explicit construction of the inverse Cartier functor in curve case and give some notations used in the computation. For the general case, see the appendix of \cite{LSYZ14}.

Let $k$ be a perfect field of characteristic $p\geq 3$. For simplicity, we may assume that $k$ is algebraic closed. Let $W=W(k)$ be the ring of Witt vectors and $W_n=W/p^n$ for all $n\geq1$ and $\sigma:W\rightarrow W$ be the Frobenius map on $W$. Let $X_1$ be a smooth algebraic curve over $k$ and $\overline{D}$ be a simple normal crossing divisor. We assume that $(X_1,\overline{D})$ is $W_2(k)$-liftable and fix a lifting $(X_2,D)$ \cite{EV-92}. 

For a sufficiently small open affine subset $U$ of $X_2$, the Proposition 9.7 and Proposition 9.9 in \cite{EV-92} give the existence of log Frobenius lifting over $U$, respecting the divisor $D\cap U$. We choose a covering of affine open subsets $\{U_i\}_{i\in I}$ of $X_2$ together with a log Frobenius lifting $F_i:U_i\rightarrow U_i$, respecting the divisor $D\cap U_i$ for each $i\in I$. Denote $R_i=\mathcal O_{X_2}(U_i)$,  $R_{ij}=\mathcal O_{X_2}(U_{ij})$ and
\[\Phi_i=F_i^\#: R_i\rightarrow R_i.\] 
For any object $\aleph$ (e.g. open subsets, divisors, sheaves, etc.)  over $X_2$, we denote by $\overline{\aleph}$ its reduction on $X_1$. Denote by $\Phi$ the $p$-th power map on all rings of characteristic $p$. Thus $\overline{\Phi}_i=\Phi$ on $\overline{R}_{ij}$.

Since $F_i$ is a log Frobenius lifting, $\mathrm{d}\Phi_i$ is divided by $p$ and which induces a map
\begin{equation*}
\frac{\mathrm{d}\Phi_i}{p}:\Omega_{X_1}^1(\mathrm{log} \overline{D})(\overline{U}_i)\otimes_{\Phi} \overline{R}_i \rightarrow \Omega_{X_1}^1(\mathrm{log} \overline{D})(\overline{U}_i). \eqno{(\frac{\mathrm{d}\Phi_i}{p})} 
\end{equation*}

Let $(E,\theta)$ be a logarithmic Higgs bundle with nilpotent Higgs field over $X_1$ of exponent$\leq p-1$ and rank $r$. 
Now, we give the construction of $C^{-1}_{X_1\subset X_2}(E,\theta)$. Locally we set
\begin{equation*}
\begin{split}
&V_i =E(\overline{U}_i)\otimes_\Phi \overline{R}_i,\\
&\nabla_i = \mathrm{d} + \frac{\mathrm{d}\Phi_i} {p}(\theta \otimes_\Phi1): V_i\rightarrow V_i\otimes_{\overline{R}_i} \Omega_{X_1}^1(\mathrm{log} \overline{D})(\overline{U}_i),\\
&G_{ij}= \mathrm{exp}(h_{ij}(\theta \otimes_\Phi1)): V_i\mid_{\overline{U}_{ij}}\rightarrow V_j\mid_{\overline{U}_{ij}}.\\
\end{split}  
\end{equation*}
where $h_{ij}:\Omega^1_{X_1}(\overline{U}_{ij})\otimes_\Phi \overline{R}_{ij}\rightarrow \mathcal{O}_{\overline{U}_{ij}}$ is the homomorphism given by the Deligne-Illusie's Lemma~\cite{JJIC02}. Those local data $(V_i,\nabla_i)$'s can be glued into a global sheaf $V$ with an integrable connection $\nabla$ via the transition maps $\{ G_{ij} \}$ (Theorem 3 in \cite{LSZ12a}).  The inverse Cartier functor on $(E,\theta)$ is defined by
	\[C^{-1}_1(E,\theta):=(V,\nabla).\]
Let $e_{i,\cdot}=\{e_{i,1},e_{i,2},\cdots,e_{i,r}\}$ be a basis of $E(\overline{U}_i)$. Then 
\[\Phi^*e_{i,\cdot}:=\{e_{i,1}\otimes_\Phi 1,e_{i,2}\otimes_\Phi 1,\cdots,e_{i,r}\otimes_\Phi 1\}\]
 forms a basis of $V_i$. Now under those basis, there are $r\times r$-matrices $\omega_{\theta,i}$, $\omega_{\nabla,i}$ with coefficients in $\Omega_{X_1}^1(\mathrm{log}\overline{D})(\overline{U}_i)$, and matrices $\mathcal F_{ij}$, $\mathcal{G}_{ij}$ over $\overline{R}_{ij}$, such that
\begin{equation*}
(e_{i,\cdot})=(e_{j,\cdot}) \cdot \mathcal F_{ij}  \eqno{(\mathcal F_{ij})}
\end{equation*}
\begin{equation*}
\theta(e_{i,\cdot})=(e_{i,\cdot}) \cdot \omega_{\theta,i} \eqno{(\omega_{\theta,i})}
\end{equation*}
\begin{equation*}
\nabla_i(\Phi^*e_{i,\cdot})=(\Phi^*e_{i,\cdot})\cdot \omega_{\nabla,i} \eqno{(\omega_{\nabla,i})}
\end{equation*}
\begin{equation*}
G_{ij}(\Phi^*e_{i,\cdot})=(\Phi^*e_{j,\cdot})\cdot \mathcal{G}_{ij} \eqno{(\mathcal{G}_{ij})}
\end{equation*}
By the definition of $\nabla_i$, one has
\begin{equation} \label{nablai}
\omega_{\nabla,i}=\frac{\mathrm{d}\Phi_i}{p}(\omega_{\theta,i}\otimes_\Phi 1).
\end{equation}
We choose and fix a parameter $t_{ij}$ on $U_{ij}$ for every two elements $\{i,j\}$ in I. Then $\Omega^1_{X_1}(\overline{U}_{ij})$ is a free module over $\overline{R}_{ij}=\mathcal{O}_{X_1}(\overline{U}_i\cap \overline{U}_j)$ of rank $1$ generated by $\mathrm{d}t_{ij}$, and there is a matrix $A_{\theta,ij}$ over $\overline{R}_{ij}$ with
\begin{equation*}
\omega_{\theta,i}=A_{\theta,ij}\cdot \mathrm{d} t_{ij}. \eqno{(A_{\theta,ij})}
\end{equation*}
Explicitly, the Deligne-Illusie's map $h_{ij}$ is given by 
\begin{equation}\label{h_ij}
h_{ij}(f\cdot\mathrm{d}t_{ij}\otimes_\Phi 1)=\Phi(f)\cdot\frac{\Phi_i(t_{ij})-\Phi_j(t_{ij})}{p}, 
\end{equation} 
So we have
\begin{equation}
h_{ij}(\theta\otimes_\Phi 1)(\Phi^*e_{i,\cdot}) =(\Phi^*e_{i,\cdot})\cdot \mathcal G^\Delta_{ij}
\end{equation}
and
\begin{equation}
\mathcal G_{ij}=\Phi(\mathcal{F}_{ij})\exp(\mathcal G^\Delta_{ij})
\end{equation}
where 
\begin{equation*}
 \mathcal G^\Delta_{ij}=\Phi(A_{\theta,ij}) \frac{\Phi_i(t_{ij})-\Phi_j(t_{ij})}{p}. \eqno{(\mathcal G^\Delta_{ij})}
\end{equation*}

\subsubsection*{Computation of our example:} Let $\lambda\in W_2(k)$ with $\lambda\not\equiv 0,1\pmod{p}$ and let $X_2=\mathrm{Proj}\, W_2[T_0,T_1]$. Let $D$ be the divisor of $X_2$ associated to the homogeneous ideal $(T_0T_1(T_1-T_0)(T_1-\lambda T_0))$. By using $t=T_0^{-1}T_1$ as a parameter, we can simply write $D=\{0,1,\lambda,\infty\}$. Denote $U_1=X_2 \setminus \{0,\infty\}$, $U_2=X_2 \setminus \{1,\lambda\}$, $D_1=\{1,\lambda\}$ and $D_2=\{0,\infty\}$. Then $\{U_1,U_2\}$ forms a covering of $X_2$,
\[\begin{split}
& R_1=\mathcal O(U_1)=W_2[t,\frac1t], \\
& R_2=\mathcal O(U_2)=W_2[\frac{t-\lambda}{t-1},\frac{t-1}{t-\lambda}],\\
& R_{12}=\mathcal O(U_1\cap U_2)=W_2[t,\frac1t,\frac{t-\lambda}{t-1},\frac{t-1}{t-\lambda}],\\
& \Omega_{X_2}^1(\log D)(U_1)=W_2[t,\frac1t]\cdot \mathrm{d}\log\left(\frac{t-\lambda}{t-1}\right),\\
& \Omega_{X_2}^1(\log D)(U_2)=W_2[\frac{t-\lambda}{t-1},\frac{t-1}{t-\lambda}]\cdot \mathrm{d}\log t.\\
\end{split}\] 
Over $U_{12}$,  one has 
\[\mathrm{d}\log \left(\frac{t-\lambda}{t-1}\right)=\frac{(\lambda-1)t}{(t-\lambda)(t-1)}\cdot \mathrm{d}\log t.\] 
Denote $\Phi_1(\frac{t-\lambda}{t-1})=\left(\frac{t-\lambda}{t-1}\right)^p$ and $\Phi_2(t)=t^p$, which induce two Frobenius liftings on $R_{12}$. One checks that $\Phi_i$ can be restricted on $R_i$ and forms a log Frobenius lifting respecting the divisor $D_i$. Moreover
\begin{equation}\label{equ:dF_1/p}
\frac{\mathrm{d} \Phi_1}{p}\left(\mathrm{d}\log \frac{t-\lambda}{t-1} \otimes_{\Phi} 1\right) = \mathrm{d}\log \frac{t-\lambda}{t-1},
\end{equation}
and 
\begin{equation}\label{equ:dF_2/p}
\frac{\mathrm{d} \Phi_2}{p}\left(\mathrm{d}\log t \otimes_{\Phi} 1\right) = \mathrm{d}\log t.
\end{equation}

\paragraph{\emph{Local expressions of the Higgs field and the de Rham bundle.}}
Let $(E,\theta)$ be a logarithmic graded semistable Higgs bundle over $X_1=\mathbb P^1_k$ with $E=\mathcal O\oplus \mathcal O(1)$. Then the cokernel of 
\[\theta: \mathcal O(1) \rightarrow \mathcal O\otimes \Omega_{X_1}^1(\log \overline{D})\]
 is supported at one point $a\in\mathbb P_{k}^1(\overline{k})$, which is called the zero of the Higgs field. Conversely, for any given $a\in\mathbb P_{k}^1(\overline{k})$, up to isomorphic, there is a unique graded semistable logarithmic Higgs field on $\mathcal O\oplus \mathcal O(1)$ such that its zero equals to $a$. Assume $a\neq \infty$, we may choose and fix a basis $e_{i,j}$ of $\mathcal O(j-1)$ over $U_i$ for $1\leq i,j\leq 2$ such that 
\begin{equation}
\mathcal{F}_{12}=\left(\begin{array}{cc}
1 & 0\\0 & \frac{t}{t-1}\\
\end{array}\right),
\end{equation}
\begin{equation}
\omega_{\theta,1}=\left(\begin{array}{cc}
0 & \frac{t-a}{\lambda-1}\\0 & 0\\
\end{array}\right)\cdot\mathrm{d}\log\frac{t-\lambda}{t-1},
\end{equation}
By (\ref{nablai}), we have
\begin{equation}
\omega_{\nabla,1}=
\left(\begin{array}{cc}
0 & \left(\frac{t-a}{\lambda-1}\right)^p\\
0 & 0 \\ \end{array}\right)
\cdot\mathrm{d}\log\frac{t-\lambda}{t-1}
\end{equation}
We choose $t_{12}=t$ as the parameter on $U_{12}$. Then 
\begin{equation}
A_{\theta,12}=\left(\begin{array}{cc}
0 & \frac{t-a}{(t-1)(t-\lambda)}\\0 & 0\\
\end{array}\right)
\end{equation}
\begin{equation}
\mathcal G^\Delta_{12}  =\Phi(A_{\theta,12})\cdot z_{12}
\end{equation}
\begin{equation}
\mathcal G_{12}=\left(\begin{array}{cc}
1& g \\ 0 & \frac{t^p}{(t-1)^p}\\
\end{array}\right)
\end{equation}
where
\begin{equation*}
 z_{12}= \frac{\Phi_1(t)-\Phi_2(t)}{p} \eqno{(z_{12})}
\end{equation*} 
and 
\begin{equation*}
g=\frac{(t-a)^p}{(t-\lambda)^p(t-1)^p} \cdot z_{12}. \eqno{(g)} 
\end{equation*}


%
%
%
%
\paragraph{\emph{Hodge filtration.}}
Since $X_1= \mathbb{P}^1_k$ and $(V,\nabla)$ is semi-stable of degree $p$, the bundle $V$ is isomorphic to $\mathcal{O}(m) \oplus \mathcal{O}(m+1)$ with $p=2m+1$. So the filtration on $(V,\nabla)$
\[
0\subset \mathcal{O}(m+1) \subset V
\]
is the graded semi-stable Hodge filtration on $V$. Choose a basis $e_i$ of $\mathcal{O}(m+1)$ on $U_i$ such that $e_1=\left(\frac{t}{t-1}\right)^{m+1}e_2$ on $U_{12}$. In the following, we will write down the inclusion map 
$\iota:\mathcal{O}(m+1) \rightarrow V$ explicitly via those basis.
Before this, we shall fix some notations $\mathrm{pr}$, $A$ and $\alpha_i$. The map $\mathrm{pr}$  is the quotient map of $k$-vector spaces
\begin{equation*}
\mathrm{pr}:R_{12} \twoheadrightarrow \frac{R_{12}}{R_1+\left(\frac{t}{t-1}\right)^{m+1} R_2}  \eqno{(\mathrm{pr})}
\end{equation*}
For all $n\in \{1,2,\cdots,p-2\}$, we denote
\begin{equation}\label{nota:delta}
\delta_n=\frac{\lambda^p(1-a^p)-(\lambda^p-a^p)\lambda^n}{n},
\end{equation}
and $A=A(\lambda,a)$ the matrix of size $m\times (m+1)$
\begin{equation*}
A=\left( \begin{array}{ccccc}
\delta_{m} & \delta_{m+1} & \cdots &\delta_{p-2} &\delta_{p-1} \\
\delta_{m-1} & \delta_{m} & \ddots &\vdots &\vdots \\
\vdots & \ddots & \ddots  &\delta_{m+1} &\delta_{m+2}  \\
\delta_{1} & \cdots  & \delta_{m-1}&\delta_{m} &\delta_{m+1} \\
\end{array} \right)_{m\times (m+1)} \eqno{(A)}
\end{equation*} 
For $m+1\leq i\leq p$, we denote by $A_i$ the submatrix of $A$ by removing the $(i-m)$-column
\begin{equation}\label{matrixA_i}
A_{i}=\left( \begin{array}{cccccc}
\delta_{m} & \cdots & \delta_{i-2} & \delta_{i} &\cdots &\delta_{p-1} \\
\delta_{m-1} & \cdots & \delta_{i-3} & \delta_{i-1} &\cdots &\delta_{p-2} \\
\vdots & \ddots & \vdots &\vdots &\ddots &\vdots \\
\delta_{2} & \cdots & \delta_{i-m} & \delta_{i+2-m} &\cdots &\delta_{m+2} \\ 
\delta_{1} & \cdots  & \delta_{i-1-m}&\delta_{i+1-m} &\cdots &\delta_{m+1} \\
\end{array} \right).
\end{equation}
and 
\begin{equation}\label{nota:alpha}
\alpha_{i}=(-1)^{i}\cdot\det A_i
\end{equation}
Obviously, the vector $(\alpha_{m+1},\alpha_{m+2},\cdots,\alpha_p)^T$ is a solution of $AX=0$. 
\begin{lem}\label{mainlem: f h}
$\mathrm{i)}$. Let $f,h$ be two elements in $\overline{R}_1$. Then the $\overline{R}_1$-linear map from $\overline{R}_1\cdot e_1$ to $V(\overline{U}_1)$, which maps $e_1$ to $e_{11}\otimes_\Phi h+e_{12}\otimes_\Phi f$, can be extended to a global map of vector bundles $\mathcal O(m+1)\rightarrow V$ if and only if 
\[f\in \sum_{i=0}^m k\cdot\frac1{t^i} \quad \text{ and } \quad \mathrm{pr}(fg)=0.\]
 
$\mathrm{ii)}$. Suppose $f=(1,\frac1t,\cdots,\frac{1}{t^m})X$ with $X\in k^{(m+1)\times 1}$. Then $\mathrm{pr}(fg)=0$ if and only if $AX=0$.

$\mathrm{iii)}$. The matrix $A$ is of maximal rank and the vector $(\alpha_{m+1},\alpha_{m+2},\cdots,\alpha_p)^T$ is a $k$-basis of the $1$-dimensional space of solutions of $AX=0$.
\end{lem}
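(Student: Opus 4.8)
The plan is to handle the three parts by rather different means, since they are conceptually independent.

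\textbf{Part (i).} The plan is to transport the prescribed local morphism from $\overline{U}_1$ to $\overline{U}_2$ through the transition matrix $\mathcal{G}_{12}$ of $V$ and then read off when the result is a regular section over $\overline{U}_2=\mathbb{P}^1_k\setminus\{1,\lambda\}$. Writing $e_{1,j}\otimes_\Phi c=c\,\Phi^*e_{1,j}$, the given morphism sends $e_1\mapsto h\,\Phi^*e_{1,1}+f\,\Phi^*e_{1,2}$; using $e_1=(t/(t-1))^{m+1}e_2$ together with the column relations $\Phi^*e_{1,1}=\Phi^*e_{2,1}$ and $\Phi^*e_{1,2}=g\,\Phi^*e_{2,1}+(t/(t-1))^p\,\Phi^*e_{2,2}$ coming from $\mathcal{G}_{12}$, one computes that over the overlap it sends
\[
e_2\ \longmapsto\ \Bigl(\tfrac{t-1}{t}\Bigr)^{m+1}(h+fg)\,\Phi^*e_{2,1}\ +\ \Bigl(\tfrac{t}{t-1}\Bigr)^{m}f\,\Phi^*e_{2,2},
\]
where the exponent $m$ in the second term arises from $p-m-1=m$. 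The morphism extends over $\overline{U}_2$ exactly when both coefficients lie in $\overline{R}_2$. Since $(t/(t-1))^m$ is regular and invertible at $\lambda$ and at $\infty$ and vanishes to order $m$ at $0$, while $f\in\overline{R}_1=k[t,1/t]$ has poles only at $0$ and $\infty$, regularity of the second coefficient is precisely $f\in\sum_{i=0}^m k\cdot\frac1{t^i}$. For the first coefficient, $((t-1)/t)^{m+1}(h+fg)\in\overline{R}_2$ is equivalent to $h+fg\in(t/(t-1))^{m+1}\overline{R}_2$, which forces $fg\in\overline{R}_1+(t/(t-1))^{m+1}\overline{R}_2$, i.e. $\mathrm{pr}(fg)=0$; conversely, if $\mathrm{pr}(fg)=0$ one takes $h$ to be the negative of a representative of $fg$ in $\overline{R}_1$. (One checks en route that, for a valid $f$, the valid $h$ is unique, matching $h^0(\mathbb{P}^1,\mathcal{H}om(\mathcal{O}(m+1),V))=h^0(\mathcal{O}(-1)\oplus\mathcal{O})=1$.)

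\textbf{Part (ii).} The plan is to substitute $f=\sum_{j=0}^m x_j t^{-j}$ and compute the class of $fg$ in the quotient $\overline{R}_{12}/(\overline{R}_1+(t/(t-1))^{m+1}\overline{R}_2)$ explicitly. Using the shape $g=\frac{(t-a)^p}{(t-\lambda)^p(t-1)^p}\,z_{12}$ and the explicit form of $z_{12}$ determined by the fixed local Frobenius liftings, one expands $fg$ in partial fractions; the principal parts at $0$ and $\infty$ lie in $\overline{R}_1$ and are discarded, and reducing the principal parts at $1$ and $\lambda$ modulo $(t/(t-1))^{m+1}\overline{R}_2$ identifies the quotient with a fixed $m$-dimensional $k$-space in which the coefficient of $x_j$ becomes, up to a common nonzero scalar, the $j$-th column of $A$. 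The entries $\delta_n=\frac{\lambda^p(1-a^p)-(\lambda^p-a^p)\lambda^n}{n}$ emerge from the binomial expansions of $(t-a)^p$ and of the relevant powers of $(t-1)$ and $(t-\lambda)$, the denominator $n$ being produced by the reduction of the $(t-\lambda)^{-n}$–type terms. This yields $\mathrm{pr}(fg)=0\iff AX=0$.

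\textbf{Part (iii) and the main obstacle.} That $(\alpha_{m+1},\dots,\alpha_p)^T$ lies in $\ker A$ is the standard cofactor identity for an $m\times(m+1)$ matrix: each entry of $A(\alpha_{m+1},\dots,\alpha_p)^T$ is, up to sign, the Laplace expansion of a determinant with a repeated row. It remains to see $\operatorname{rank}A=m$, equivalently that this vector is nonzero; this follows from (i)–(ii) together with the uniqueness of the Simpson graded semistable Hodge filtration on the stable de Rham bundle $V\cong\mathcal{O}(m)\oplus\mathcal{O}(m+1)$: global morphisms $\mathcal{O}(m+1)\to V$ form a $1$-dimensional space and, by (i)–(ii), are parametrized by $\{X\in k^{m+1}:AX=0\}$, which is therefore $1$-dimensional. (Alternatively one exhibits a nonvanishing $m\times m$ minor directly by a Vandermonde argument in the powers $\lambda^n$.) Parts (i) and (iii) are a clean gluing/regularity computation and elementary linear algebra; the genuine work—and the step the appendix computation is devoted to—is Part (ii), namely carrying out the partial-fraction bookkeeping through $\mathrm{pr}$, with the explicit Frobenius lift entering $z_{12}$, carefully enough to land on $A$ with exactly the stated entries $\delta_n$.
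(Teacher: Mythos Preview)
Your approach matches the paper's almost exactly. Part (i) is the same gluing/regularity computation (the paper writes the image of $e_2$ in the basis $\Phi^*e_{2,\cdot}$ and reads off the two conditions $f\in\overline{R}_1\cap(\tfrac{t-1}{t})^m\overline{R}_2=\sum_{i=0}^m k\cdot t^{-i}$ and $fg\in\overline{R}_1+(\tfrac{t}{t-1})^{m+1}\overline{R}_2$, just as you do), and Part (iii) is the same one-line argument: $\Hom(\mathcal{O}(m+1),V)\cong k$ since $V\cong\mathcal{O}(m)\oplus\mathcal{O}(m+1)$, hence $\ker A$ is one-dimensional.

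For Part (ii) both you and the paper reduce to a direct computation, but the bookkeeping differs slightly. You propose expanding $fg$ in partial fractions at $1$ and $\lambda$ and reducing the principal parts modulo $(\tfrac{t}{t-1})^{m+1}\overline{R}_2$. The paper instead writes the answer in closed form as
\[
\mathrm{pr}(fg)=\mathrm{pr}\!\left(\frac{t}{t^p-1},\frac{t^2}{t^p-1},\dots,\frac{t^{m}}{t^p-1}\right)\cdot A\cdot X,
\]
using the images $\mathrm{pr}(t^j/(t^p-1))$, $1\le j\le m$, as a basis of the quotient. Either route works; the paper's choice of basis has the advantage that the matrix $A$ with entries $\delta_n$ falls out immediately from the expansion of $g\cdot t^{-j}$ after clearing $(t^p-1)(t^p-\lambda^p)$ from the denominator, whereas your partial-fraction approach requires an extra change of basis to recognize the $\delta_n$.
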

\begin{proof} i). Over $U_{12}$, one has 
\begin{equation}
\iota(e_2)=(e_{21}\otimes_\Phi 1,e_{22}\otimes_\Phi 1)\left(
\begin{array}{c}
(h+fg)\cdot \left(\frac{t-1}{t}\right)^{m+1}\\
\\
f\cdot \left(\frac{t}{t-1}\right)^{m}\\
\end{array}
\right)
\end{equation}
Thus $\iota$ can be extended globally if and only if 
$h+fg\in (\frac{t}{t-1})^{m+1} R_2$ and $f\cdot (\frac{t}{t-1})^{m}\in R_2$. That is equivalent to $f\in R_1\cap \left(\frac{t-1}{t}\right)^m R_2$ and $fg\in R_1+(\frac{t}{t-1})^{m+1} R_2$.
The result follows from the fact that $R_1\cap \left(\frac{t-1}{t}\right)^m R_2$ is a $k$-vector space with a basis $\{1,\frac1t,\cdots,\frac{1}{t^m}\}$.

ii). By directly computation, one checks that
\[\mathrm{pr}(fg)=\mathrm{pr}\left(\frac{t}{t^p-1},\frac{t^2}{t^p-1},\cdots,\frac{t^{m}}{t^p-1}\right)\cdot A \cdot 
\left(\begin{array}{c}
a_{m+1}\\ a_{m+2}\\ \vdots \\ a_p\\
\end{array}\right). \]

iii). Since $V\simeq \mathcal O(m)\oplus\mathcal O(m+1)$, the $k$-vector space $\mathrm{Hom}(\mathcal O(m+1),V)$ is of $1$-dimensional. By i) and ii), the $k$-vector space of solutions of $AX=0$ is of $1$-dimensional.
\end{proof}

\paragraph{\emph{Two notations $[\cdot]$ and $\{\cdot\}$.}}
We have inclusion maps $\overline{R}_i\rightarrow \overline{R}_{12}$,
for $i=1,2$. Under these inclusions, we have the following direct sum decomposition as free $k$-vector spaces 
	\[\overline{R}_{12}= \overline{R}_1 \oplus \frac{t}{t-1} \overline{R}_2.\] 
We denote the projection map to the first summand by $[\cdot]$ and the projection map to the second summand by $\{\cdot\}$. Denote 
\begin{equation}
f_o=\frac{\alpha_{m+1}t^{m+1}+\alpha_{m+2}t^{m+2}+\cdots+\alpha_pt^p }{t^p}\quad \text{ and } \quad h_o=-[f_og].
\end{equation}  
By Lemma~\ref{mainlem: f h}, we have following result.
\begin{cor}\label{cor:HodgeFil}
the Hodge filtration of $(V,\nabla)$ on $U_1$ is given by 
\[0\subset \overline{R}_1\cdot v_{12} \subset V(\overline{U}_1),\]
where $v_{12}=e_{11}\otimes_\Phi h_o + e_{12}\otimes_\Phi f_o$.
\end{cor}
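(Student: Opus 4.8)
\emph{Proof proposal.} The plan is to obtain the corollary as an immediate consequence of the three parts of Lemma~\ref{mainlem: f h}, together with the fact recorded just before that lemma: that $V\cong\mathcal O(m)\oplus\mathcal O(m+1)$ and that the Simpson graded semistable Hodge filtration on $(V,\nabla)$ is the unique rank-one subbundle $\mathcal O(m+1)\subset V$.

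First I would take the column vector with entries $\alpha_{m+1},\dots,\alpha_p$ from \eqref{nota:alpha}, which by part iii) of Lemma~\ref{mainlem: f h} spans the one-dimensional solution space of $AX=0$. The element $f_o=(\alpha_{m+1}t^{m+1}+\cdots+\alpha_pt^p)/t^p$ lies in $\sum_{i=0}^m k\cdot t^{-i}$, and it is precisely the function attached to this solution vector under the identification in part ii), so that $\mathrm{pr}(f_og)=0$. Now part i) applies with $f=f_o$: taking $h=h_o:=-[f_og]\in\overline R_1$, the $\overline R_1$-linear map sending $e_1$ to $e_{11}\otimes_\Phi h_o+e_{12}\otimes_\Phi f_o=v_{12}$ extends to a global morphism $\iota\colon\mathcal O(m+1)\to V$. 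The value $h_o=-[f_og]$ is in fact forced: tracing the proof of part i), the global extension exists iff $h+f_og\in(\tfrac{t}{t-1})^{m+1}\overline R_2$; since $h\in\overline R_1$, since $\overline R_{12}=\overline R_1\oplus\tfrac{t}{t-1}\overline R_2$, and since $\mathrm{pr}(f_og)=0$, this holds exactly when $h=-[f_og]$.

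Next I would check that $\iota$ realizes the distinguished subbundle. Because $\mathrm{Hom}(\mathcal O(m+1),\mathcal O(m))=0$, the composite of $\iota$ with the projection $V\to\mathcal O(m+1)$ is a nonzero endomorphism of $\mathcal O(m+1)$, hence an isomorphism; thus $\iota$ splits that projection and $\iota(\mathcal O(m+1))$ is a subbundle of $V$ isomorphic to $\mathcal O(m+1)$. As $\mathrm{Hom}(\mathcal O(m+1),V)$ is one-dimensional, $\iota(\mathcal O(m+1))$ is the unique such rank-one subbundle, i.e.\ the Simpson graded semistable Hodge filtration. Restricting to $\overline U_1$, where $e_1$ generates $\mathcal O(m+1)$ and $\iota(e_1)=v_{12}$, the filtration becomes $0\subset\overline R_1\cdot v_{12}\subset V(\overline U_1)$, which is the assertion.

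There is essentially no obstacle here, since all the content sits inside Lemma~\ref{mainlem: f h}; the only points requiring a little care are bookkeeping ones: verifying that $f_o$ (built from the $\alpha_i$ of \eqref{nota:alpha}) is indeed the function corresponding to the solution vector of $AX=0$ in the indexing used in part ii), that $g=\dfrac{(t-a)^p}{(t-\lambda)^p(t-1)^p}\,z_{12}$ lies in $\overline R_{12}$ so that $[f_og]$ makes sense, and that $h_o=-[f_og]$ is the unique admissible choice of $h$. Each is routine given the direct-sum decomposition $\overline R_{12}=\overline R_1\oplus\tfrac{t}{t-1}\overline R_2$ and the formulas \eqref{matrixA_i}--\eqref{nota:alpha}.
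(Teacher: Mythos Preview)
Your proposal is correct and follows exactly the approach the paper intends: the paper gives no separate proof of the corollary beyond the sentence ``By Lemma~\ref{mainlem: f h}, we have following result,'' and you have spelled out precisely how the three parts of that lemma combine to yield the statement. Your additional remarks---that $h_o=-[f_og]$ is the unique admissible choice of $h$ (which is implicit in the proof of part i) but not in its statement), and that the resulting nonzero morphism $\iota$ must land in the $\mathcal O(m+1)$ summand because $\mathrm{Hom}(\mathcal O(m+1),\mathcal O(m))=0$---are the right details to check and are handled correctly.
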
  
\paragraph{\emph{The Higgs field of the graded Higgs bundle.}}
We extend $v_{12}$ (defined in Corollary~\ref{cor:HodgeFil}) to an $\overline{R}_1$-basis $\{v_{11},v_{12}\}$ of $V(\overline{U}_1)$. Assume $v_{11}=e_{11}\otimes_\Phi h_1 + e_{12}\otimes_\Phi f_1$ and denote $P=\left(\begin{array}{cc}
h_1& h_2\\ f_1 & f_2\\
\end{array}\right)$, which is an invertible matrix over $\overline{R}_1$ with determinant $d:=\det(P)\in \overline{R}_1^\times$. One has 
\begin{equation}
(v_{11},v_{12})=(e_{11}\otimes_\Phi 1,e_{12}\otimes_\Phi 1)\left(\begin{array}{cc} h_1 & h_o\\f_1 & f_o\\
\end{array}\right)
\end{equation}
and 
\begin{equation}
 \nabla(v_{11},v_{12})= 
 (v_{11},v_{12})\cdot \upsilon_{\nabla,1}
\end{equation}
where $\upsilon_{\nabla,1}=\left(P^{-1}\cdot \mathrm{d}P+ P^{-1}\cdot \omega_{\nabla,1} \cdot P\right)$ equals to
\begin{equation*}
\left(\begin{array}{cc}
\frac{f_o\mathrm{d}h_1-h_o\mathrm{d}f_1}{\mathrm{d}\log \frac{t-\lambda}{t-1}} 
+ f_1f_o \left(\frac{t-a}{\lambda-1}\right)^p  
& \frac{f_o\mathrm{d}h_o-h_o\mathrm{d}f_o}{\mathrm{d}\log \frac{t-\lambda}{t-1}}
+f_o^2 \left(\frac{t-a}{\lambda-1}\right)^p \\
\frac{-f_1\mathrm{d}h_1+h_1\mathrm{d}f_1 }{\mathrm{d}\log \frac{t-\lambda}{t-1}}-f_1^2 \left(\frac{t-a}{\lambda-1}\right)^p 
& \frac{-f_1\mathrm{d}h_o+h_1\mathrm{d}f_o}{\mathrm{d}\log \frac{t-\lambda}{t-1}}-f_1f_o \left(\frac{t-a}{\lambda-1}\right)^p \\
\end{array}\right)\cdot \frac{\mathrm{d}\log \frac{t-\lambda}{t-1}}{d}.
\end{equation*}
 Taking the associated graded Higgs bundle,  the Higgs field $\theta'$ on $\mathrm{Gr}(V,\nabla,\Fil)(\overline{U}_1)=V(\overline{U}_1)/(\overline{R}_1\cdot v_{12}) \oplus \overline{R}_1\cdot v_{12}$ is given by
\begin{equation}\label{equ:gradingHiggsField}
\theta'(e_{12}')=
\frac1d\left(\frac{f_o\mathrm{d}h_o-h_o\mathrm{d}f_o}{\mathrm{d}\log \frac{t-\lambda}{t-1}}
+f_o^2 \left(\frac{t-a}{\lambda-1}\right)^p\right)\cdot \left(e'_{11}\otimes \mathrm{d}\log \frac{t-\lambda}{t-1}\right)
\end{equation}
over $\overline{U}_1$, where $e'_{11}$ is the image of $v_{11}$ in $V(\overline{U}_1)/(\overline{R}_1\cdot v_{12})$ and $e'_{12}=v_{12}$ in $\overline{R}_1v_{12}$. 
Thus the zero of the graded Higgs bundle $\mathrm{Gr}(V,\nabla,\Fil)$ is the root of polynomial
\begin{equation}
P_{\theta'}(t)= \frac{f_o\cdot\mathrm{d}h_o-h_o\cdot \mathrm{d}f_o}{\mathrm{d}\log \frac{t-\lambda}{t-1}}
 +f_o^2\cdot \left(\frac{t-a}{\lambda-1}\right)^p.
\end{equation}

\begin{lem}\label{lem:gradHiggPoly} Define $\alpha_p$ and $\alpha_{m+1}$  as in (\ref{nota:alpha}). Then  
\[ P_{\theta'}(t)=	 \frac{\alpha_p^2}{\lambda-1}t-\frac{\alpha_{m+1}^2}{\lambda-1}\cdot\frac{a^p}{\lambda^{p-1}}.\]
\end{lem}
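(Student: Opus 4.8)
The plan is to prove the identity by a direct computation from the local data assembled in Section~\ref{Calculate_Selfmap}, the point being that the a priori complicated rational expression defining $P_{\theta'}(t)$ collapses. Write $w:=h_o+f_og=\{f_og\}$ for the $\tfrac{t}{t-1}\overline{R}_2$-component of $f_og$; substituting $h_o=w-f_og$ into the Wronskian and cancelling the $f_of_o'g$-terms gives $f_oh_o'-h_of_o'=-f_o^2g'+(f_ow'-wf_o')$, so that with $\mathrm{d}\log\tfrac{t-\lambda}{t-1}=\tfrac{\lambda-1}{(t-\lambda)(t-1)}\mathrm{d}t$,
\[
P_{\theta'}(t)=f_o^2\!\left(\frac{(t-a)^p}{(\lambda-1)^p}-\frac{(t-\lambda)(t-1)}{\lambda-1}\,g'\right)+\frac{(t-\lambda)(t-1)}{\lambda-1}\bigl(f_ow'-wf_o'\bigr).
\]
Since $g=\bigl(\tfrac{t-a}{(t-\lambda)(t-1)}\bigr)^{p}z_{12}$ and in characteristic $p$ the derivative of a $p$-th power vanishes, $g'=\bigl(\tfrac{t-a}{(t-\lambda)(t-1)}\bigr)^{p}z_{12}'$, so the first summand is a multiple of $z_{12}'$ alone.

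The heart of the matter is then to compute $z_{12}$ and $z_{12}'$ modulo $p$. Solving $\Phi_1\bigl(\tfrac{t-\lambda}{t-1}\bigr)=\bigl(\tfrac{t-\lambda}{t-1}\bigr)^p$ presents $\Phi_1(t)=\tfrac{(t-\lambda)^p-\sigma(\lambda)(t-1)^p}{(t-\lambda)^p-(t-1)^p}$ with unit denominator in $\overline{R}_1$; reducing $\tfrac{\Phi_1(t)-t^p}{p}$ modulo $p$ --- using $\tfrac1p\binom pk\equiv\tfrac{(-1)^{k-1}}{k}$, taking $\lambda$ to be the Teichm\"uller lift so that $\sigma(\lambda)=\lambda^p$ (legitimate, since the claimed formula involves only the residues), and using $\sum_{k=1}^{p-1}k^{-1}\equiv0$ --- one finds that $z_{12}$ vanishes simply at $t=1$ and $t=\lambda$ and that $z_{12}'=\tfrac{((t-\lambda)(t-1))^{p-1}}{(\lambda-1)^{p-1}}-t^{p-1}$, where the identity $\lambda^p-1=(\lambda-1)^p$ is used. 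Substituting, the first summand collapses to $f_o^2\cdot\tfrac{(t-a)^pt^{p-1}}{(\lambda-1)(t-\lambda)^{p-1}(t-1)^{p-1}}$, and $P_{\theta'}$ becomes the sum of this and $\tfrac{(t-\lambda)(t-1)}{\lambda-1}(f_ow'-wf_o')$.

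Finally one reads off the two coefficients. Since $d\in\overline{R}_1^{\times}$ and $\theta'$ is a bundle map regular on $U_1$, $P_{\theta'}(t)$ is a priori a Laurent polynomial, so it is enough to analyse it at $t=0$ and $t=\infty$. Using $f_o\sim\alpha_p$ at $\infty$ yields the coefficient $\tfrac{\alpha_p^2}{\lambda-1}$ of $t$; using $f_o\sim\alpha_{m+1}t^{-m}$ at $0$ together with $p=2m+1$ shows the first term is regular at $0$ with value $-\tfrac{\alpha_{m+1}^2}{\lambda-1}\cdot\tfrac{a^p}{\lambda^{p-1}}$, while $\tfrac{(t-\lambda)(t-1)}{\lambda-1}(f_ow'-wf_o')$ is $O(t)$ near $0$, precisely because its would-be leading coefficient there is a multiple of $p=2m+1$; these force $P_{\theta'}(t)=\tfrac{\alpha_p^2}{\lambda-1}t-\tfrac{\alpha_{m+1}^2}{\lambda-1}\cdot\tfrac{a^p}{\lambda^{p-1}}$. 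The step I expect to be the main obstacle is the modulo-$p$ reduction of $z_{12}$ and the verification that it produces the collapse above; alternatively, if one prefers not to use the geometric fact that $P_{\theta'}$ is a Laurent polynomial, the cancellation of the poles of the two summands at $t=1,\lambda$ must be checked directly, and this is where the defining relation $A\,(\alpha_{m+1},\dots,\alpha_p)^{\mathrm{T}}=0$ of Lemma~\ref{mainlem: f h}(iii) --- equivalently, the determinant identities among the $\delta_n$ encoded in the $\alpha_i$ --- is used essentially.
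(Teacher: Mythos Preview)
Your approach is essentially the same as the paper's: both substitute $h_o=\{f_og\}-f_og$, collapse the first summand using $z_{12}'$, and then argue that the Wronskian term $\tfrac{(t-\lambda)(t-1)}{\lambda-1}(f_ow'-wf_o')$ does not contribute. The paper packages this last step as a ``Claim'' showing the Wronskian term lies in $\tfrac{t}{t-1}\overline R_2$ (so its $[\cdot]$-component vanishes), then computes $[\,\cdot\,]$ of the first summand by partial fractions and degree counting; you instead read off the behaviour at $t=0$ and $t=\infty$ directly. These are equivalent in spirit.

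There is, however, a genuine gap in your justification at $t=0$. From $w=\{f_og\}\in\tfrac{t}{t-1}\overline R_2$ you only know $w=O(t)$. Writing $f_o=\alpha_{m+1}t^{-m}+\cdots$ and $w=c_kt^k+\cdots$, the leading term of $f_ow'-wf_o'$ at $0$ is $(k+m)\alpha_{m+1}c_k\,t^{k-m-1}$; for this to be a multiple of $p=2m+1$ (and hence vanish) one needs $k\equiv m+1\pmod p$, in particular $k\ge m+1$. The high-order vanishing $w=O(t^{m+1})$ is \emph{not} automatic from $w\in\tfrac{t}{t-1}\overline R_2$: it is precisely the content of $\mathrm{pr}(f_og)=0$, i.e.\ the relation $A\,(\alpha_{m+1},\dots,\alpha_p)^T=0$ from Lemma~\ref{mainlem: f h}, which says $f_og\in\overline R_1+(\tfrac{t}{t-1})^{m+1}\overline R_2$ and hence $w=\{f_og\}\in(\tfrac{t}{t-1})^{m+1}\overline R_2$. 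So the defining relation enters your main argument at $t=0$, not (as you suggest in your final sentence) only in an alternative verification of pole cancellation at $t=1,\lambda$; the latter is already automatic from $P_{\theta'}\in\overline R_1$. You also need to say why the Wronskian term is $O(1)$ at $t=\infty$ (it is, since $w\in\tfrac{t}{t-1}\overline R_2$ is bounded there and $f_o'=O(t^{-2})$, $w'=O(t^{-2})$), in order to conclude that $P_{\theta'}$, being a Laurent polynomial regular at $0$ and $O(t)$ at $\infty$, is genuinely linear.
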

\begin{proof} Since $h_o=-[f_og]=\{f_og\}-f_og$, the polynomial $P_{\theta'}(t)$ equals to 
\[\frac{(t-\lambda)(t-1)}{\lambda-1} \left(
f_o\frac{\mathrm{d}\{f_og\}}{\mathrm{d}t}
-\{f_og\}\frac{\mathrm{d}f_o}{\mathrm{d}t}
\right)+
f_o^2\left(
\left(\frac{t-a}{\lambda-1}\right)^p
-\frac{(t-\lambda)(t-1)}{\lambda-1}\cdot\frac{\mathrm{d}g}{\mathrm{d}t}
\right).\]
Recall that $\Phi_2(t)=t^p$ and $\Phi_1(t)=\frac{\left(\frac{t-\lambda}{t-1}\right)^p-\lambda^\sigma}{\left(\frac{t-\lambda}{t-1}\right)^p-1}$,
one has $\mathrm{d}\Phi_2(t)=p\cdot t^{p-1}\mathrm{d}t$ and $\mathrm{d}\Phi_1(t)=p\cdot\frac{(t-1)^{p-1}(t-\lambda)^{p-1}}{(1-\lambda)^{p-1}}\mathrm{d}t$. Since $g=\frac{(t-a)^p}{(t-\lambda)^p(t-1)^p} \cdot z_{12}$ with $z_{12}=\frac{\Phi_1(t)-\Phi_2(t)}{p}$, we have 
\[\left(\frac{t-a}{\lambda-1}\right)^p
-\frac{(t-\lambda)(t-1)}{\lambda-1}\cdot\frac{\mathrm{d}g}{\mathrm{d}t}=\frac{(t-\lambda)(t-1)}{\lambda-1}\cdot\frac{t^{p-1}(t^p-a^p)}{(t^p-\lambda^p)(t^p-1)}.\]

\[\frac{\mathrm{d}g}{\mathrm{d}t}=\frac{(t-a)^p}{(t-\lambda)^p(t-1)^p}\cdot\left((\lambda-1)^{p-1}(t-1)^{p-1}(t-\lambda)^{p-1}-t^{p-1}\right)\] 
\emph{Claim}: Suppose $G$ is some power series contained in \[
\left\{\sum\limits_{\ell=m+1}^{\infty} a_{\ell}\cdot\left(\frac{t}{t-1}\right)^\ell
	+\sum\limits_{\ell=m+1}^{\infty}b_{\ell}\cdot \left(\frac{t}{t-\lambda}\right)^\ell | a_{\ell},b_{\ell} \in k
\right\}
\] 
and $F$ belongs to $\left\{\sum\limits_{i=0}^{m}a_i\cdot \frac1{t^i} |a_i \in k\right\}$. Then
$\frac{(t-1)(t-\lambda)}{\lambda-1}\left(F\frac{\mathrm{d}G}{\mathrm{d}t}-G\frac{\mathrm{d}F}{\mathrm{d}t}\right)$ is contained in $R_2$ and divided by $\frac{t}{t-1}$ in $R_2$. 
The Claim follows from
\[\frac{(t-1)(t-\lambda)}{\lambda-1} \left( \left(\frac{t-1}{t}\right)^i  \frac{\mathrm{d}}{\mathrm{d}t} \left(\frac{t}{t-1}\right)^j-
 \left(\frac{t}{t-1}\right)^j 
 \frac{\mathrm{d}}{\mathrm{d}t}
 \left(\frac{t-1}{t}\right)^i \right)\]
\[=(i+j)\left( \left(\frac{t}{t-1}\right)^{j-i} -\frac{\lambda}{\lambda-1}\cdot \left(\frac{t}{t-1}\right)^{j-1-i}\right),\]
and
\[\frac{(t-1)(t-\lambda)}{\lambda-1} \left( \left(\frac{t-\lambda}{t}\right)^i  \frac{\mathrm{d}}{\mathrm{d}t} \left(\frac{t}{t-\lambda}\right)^j-
 \left(\frac{t}{t-\lambda}\right)^j 
 \frac{\mathrm{d}}{\mathrm{d}t}
 \left(\frac{t-\lambda}{t}\right)^i \right)\]
\[=(i+j)\left(-\left(\frac{t}{t-\lambda}\right)^{j-i} -\frac{1}{\lambda-1}\cdot \left(\frac{t}{t-\lambda}\right)^{j-1-i}\right).\]	
By the claim, $\frac{(t-\lambda)(t-1)}{\lambda-1} \left(
f_o\frac{\mathrm{d}\{f_og\}}{\mathrm{d}t}
-\{f_og\}\frac{\mathrm{d}f_o}{\mathrm{d}t}
\right)\in \frac{t}{t-1}\overline{R}_2$. i.e.
\[\left[\frac{(t-\lambda)(t-1)}{\lambda-1} \left(
f_o\frac{\mathrm{d}\{f_og\}}{\mathrm{d}t}
-\{f_og\}\frac{\mathrm{d}f_o}{\mathrm{d}t}
\right)\right]=0.\] 
On the other hand, $P_{\theta'}(t)\in R_1$, one has 
\begin{equation*}
\begin{split}
P_{\theta'}(t) & =[P_{\theta'}(t)]\\
& = \left[f_o^2\left(
 \left(\frac{t-a}{\lambda-1}\right)^p
 -\frac{(t-\lambda)(t-1)}{\lambda-1}\cdot\frac{\mathrm{d}g}{\mathrm{d}t}\right)\right]\\
 &=\left[f_o^2\left(\frac{(t-\lambda)(t-1)}{\lambda-1}\cdot\frac{t^{p-1}(t^p-a^p)}{(t^p-\lambda^p)(t^p-1)}\right)\right]\\
 &=\left[\frac{(\alpha_{m+1}+\alpha_{m+2}t+\cdots+\alpha_pt^m)^2\cdot (t-a)^p }{(\lambda-1)(t-\lambda)^{p-1}(t-1)^{p-1}}\right]\\
\end{split}
\end{equation*}
Obviously, there are polynomials $f_\infty(t)$, $f_1(t)$ and $f_\lambda(t)$, which are divided by $t$, such that
\begin{equation*}
\frac{(\alpha_{m+1}+\alpha_{m+2}t+\cdots+\alpha_pt^m)^2\cdot (t-a)^p }{(\lambda-1)(t-\lambda)^{p-1}(t-1)^{p-1}}-f_\infty(t)-f_1\left(\frac{t}{t-1}\right)-f_\lambda\left(\frac{t}{t-\lambda}\right)
\end{equation*}
is a constant. Taking value at $t=0$, this constant is just $-\frac{\alpha_{m+1}^2}{\lambda-1}\cdot\frac{a^p}{\lambda^{p-1}}$. By definition of $[\cdot]$, we know that 
\[ P_{\theta'}(t)=f_\infty(t) -\frac{\alpha_{m+1}^2}{\lambda-1}\cdot\frac{a^p}{\lambda^{p-1}}.\]
Since $\deg((\alpha_{m+1}+\alpha_{m+2}t+\cdots+\alpha_pt^m)^2\cdot (t-a)^p)-\deg((\lambda-1)(t-\lambda)^{p-1}(t-1)^{p-1})=1$,  the polynomial  $f_\infty(t)$ is of degree $1$. Comparing the coefficients of the first terms, we get
\[f_\infty(t)=\frac{\alpha_p^2}{(\lambda-1)}\cdot t.\qedhere\]
\end{proof}

\begin{thm}\label{Thm:selfmap_formula} Let $A_p$ and $A_{m+1}$ be defined as in (\ref{matrixA_i}). Then the self-map $\varphi_{\lambda,p}$ is given by  
\[\varphi_{\lambda,p}(a) = \frac{a^p}{\lambda^{p-1}}\cdot \left(\frac{\det(A_{m+1})}{\det(A_p)}\right)^2.\]
\end{thm}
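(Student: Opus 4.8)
The plan is to recognize $\varphi_{\lambda,p}(a)$ as nothing other than the zero of the graded Higgs field produced by the explicit inverse Cartier computation carried out in this subsection, and then to read off that zero from Lemma~\ref{lem:gradHiggPoly}. Concretely, under the identification of the moduli space $M(1,0)$ with $\mathbb P^1$ that sends a graded semistable logarithmic Higgs bundle to the zero of its Higgs field (Section~\ref{section DMS}), the self-map $\varphi_{\lambda,p}$ of Corollary-Definition~\ref{def:selfmap} is, by construction, the map $a\mapsto$ (zero of the Higgs field of $(\mathrm{Gr}\circ C^{-1}_1(E,\theta))\otimes\mathcal O(\tfrac{1-p}{2})$), where $(E,\theta)$ is the unique graded semistable logarithmic Higgs bundle on $\mathcal O\oplus\mathcal O(1)$ with zero at $a$. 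Since tensoring with the line bundle $\mathcal O(\tfrac{1-p}{2})$ does not move the vanishing locus of a Higgs field, $\varphi_{\lambda,p}(a)$ equals the zero of the graded Higgs bundle $\mathrm{Gr}(V,\nabla,\Fil)$ attached to $(V,\nabla)=C^{-1}_1(E,\theta)$ with $\Fil$ the Simpson graded semistable Hodge filtration $0\subset\mathcal O(m+1)\subset V$, $p=2m+1$.

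First I would invoke Corollary~\ref{cor:HodgeFil}, which pins down the filtration on the chart $U_1$ via the explicit generator $v_{12}=e_{11}\otimes_\Phi h_o+e_{12}\otimes_\Phi f_o$, and then formula~(\ref{equ:gradingHiggsField}), which exhibits the Higgs field $\theta'$ of $\mathrm{Gr}(V,\nabla,\Fil)$ on $\overline U_1$ and shows that its zero is the root of the polynomial $P_{\theta'}(t)$. Lemma~\ref{lem:gradHiggPoly} then gives
\[
P_{\theta'}(t)=\frac{\alpha_p^2}{\lambda-1}\,t-\frac{\alpha_{m+1}^2}{\lambda-1}\cdot\frac{a^p}{\lambda^{p-1}},
\]
with $\alpha_i$ as in~(\ref{nota:alpha}); for a value of $a$ with $\alpha_p\neq 0$ (so the zero stays in the affine chart) this is a degree-one polynomial whose unique root is $t=\dfrac{\alpha_{m+1}^2}{\alpha_p^2}\cdot\dfrac{a^p}{\lambda^{p-1}}$. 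Finally, substituting $\alpha_i=(-1)^i\det A_i$ from~(\ref{nota:alpha}) — the signs disappear on squaring — yields
\[
\varphi_{\lambda,p}(a)=\frac{a^p}{\lambda^{p-1}}\cdot\left(\frac{\det A_{m+1}}{\det A_p}\right)^2,
\]
which is the asserted formula. Since both sides are rational in $a$ (Lemma~\ref{lem:rational map}), the identity, once established for generic $a$, holds as an identity of maps on $\mathbb P^1$.

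Granting Lemma~\ref{lem:gradHiggPoly}, the theorem is essentially a one-line corollary, so the only points demanding genuine care at this stage are the bookkeeping around the twist by $\mathcal O(\tfrac{1-p}{2})$ (ensuring the degrees land back in $M(1,0)$ and that the zero locus is unaffected) and the choice of the affine chart $U_1$ on which the zero $a\neq\infty$ and its image are both visible, together with the observation that $\alpha_p\neq0$ generically. The real work lies upstream, in the proof of Lemma~\ref{lem:gradHiggPoly}: one must compute the projection $[\,\cdot\,]$ of $f_o^2\bigl((\tfrac{t-a}{\lambda-1})^p-\tfrac{(t-\lambda)(t-1)}{\lambda-1}\tfrac{\mathrm dg}{\mathrm dt}\bigr)$, show via the claim proved there that the $f_o\,\mathrm d\{f_og\}-\{f_og\}\,\mathrm df_o$ contribution lies in $\tfrac{t}{t-1}\overline R_2$ and hence drops out, and then evaluate the resulting degree-one polynomial at $t=0$ and at its leading term; this combinatorial reduction rests in turn on the Cramer/partial-fraction identities packaged in Lemma~\ref{mainlem: f h}. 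That is the step I would expect to be the main obstacle; the passage from Lemma~\ref{lem:gradHiggPoly} to Theorem~\ref{Thm:selfmap_formula} itself is routine.
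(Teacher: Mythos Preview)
Your proposal is correct and follows essentially the same route as the paper: the paper's proof is a one-liner stating that for $a\neq\infty$ the formula follows from Lemma~\ref{lem:gradHiggPoly}, and for $a=\infty$ one changes the parameter $t$. The only minor difference is that you handle the exceptional values of $a$ by appealing to rationality of both sides, whereas the paper explicitly reparametrizes at $a=\infty$; either is fine.
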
 
\begin{proof}
For $a\neq\infty$, this follows Lemma~\ref{lem:gradHiggPoly}. For $a=\infty$, we can change the parameter $t$ to compute the self-map at $a$. 
\end{proof}

\subsection{Multiple by $p$ map on elliptic curve}

	Let $k$ be a perfect field of characteristic $p\geq 3$.  Let $\lambda\in k$ with $\lambda\not\equiv 0,1\mod p$.  The Weierstrass function
\[y^2=x(x-1)(x-\lambda)\]
defines an elliptic curve $C_{\lambda}$ over $k$. Let $Q_1=(a,b)$ be a $k$-point on $C_\lambda$. We denote 
\[Q_n=(a_n,b_n):=\underbrace{Q_1+Q_1+\cdots+Q_1}_n.\]
Then $a_n$ is a rational function of $a$. In this appendix, we will give an explicit formula of this rational function for the case of $n=p$. Without lose of generality, we may assume that  $k$ is algebraic closed and $a\neq 0,1,\lambda,\infty$.

Since the divisor $(p+1)(\infty)-p(Q_1)$ is of degree $1$, the space of global sections of its associated line bundle is of $1$-dimension. Choosing a nontrivial global section $\alpha$, then 
\[\mathrm{div}(\alpha)=p(Q_1)+(Q_p)-(p+1)(\infty)\]
and it is also a global section of $\mathcal O_{C_\lambda}\Big((p+1)(\infty)\Big)$. On the other hand, the $k$-vector space of the global sections of $\mathcal O_{C_\lambda}\Big((p+1)(\infty)\Big)$ are of $(p+1)$-dimension with basis ($m=\frac{p-1}{2}$)
\[1,x,x^2,\cdots,x^{m+1},y,yx,\cdots,yx^{m-1}.\]
So we can write $\alpha$ in the form
\[\alpha=f-yg,\]
where $f,g\in k[x]$ with $\deg(f)\leq m+1$ and $\deg(g)\leq m-1$. Since
\begin{equation}
\begin{split}
\mathrm{div}((x-a)^p(x-a_p)) & =p(Q_1)+p(-Q_1)+(Q_p)+(-Q_p)-(2p+2)(\infty)\\
&=\mathrm{div}(\alpha\overline{\alpha})
\end{split}
\end{equation}
Here $\overline{\alpha}:=f+yg$.\\ 
By multipling suitable constant to $\alpha$, we may assume 
\begin{equation}\label{TwoWayFaction}
(x-a)^p(x-a_p)=\alpha\overline{\alpha}=f^2-x(x-1)(x-\lambda)g^2.
\end{equation} 
Comparing the degree on both side, one gets
\[\deg(f)=m+1 \text{ and } \deg(g)\leq m-1.\]
Writing $f$ in form $f=\beta_0+\beta_1x+\cdots+\beta_{m+1}x^{m+1},$ we denote
\begin{equation}
 \beta=\left(\begin{array}{c}
\beta_0\\ \beta_1\\ \cdots\\ \beta_{m+1}\\
\end{array}\right)
\end{equation}
and consider the first terms and the constant terms on both sides of (\ref{TwoWayFaction}). Then one gets
\begin{equation}
a_p=\frac{1}{a^p} \left(\frac{\beta_0}{\beta_{m+1}}\right)^2.
\end{equation} 
So in order to get the rational function we want, we need to determine the ratio $[\beta_0:\beta_{m+1}]$.
In the following, we will define a full rank matrix $B$ of size $(m+1)\times(m+2)$ such that $(\beta_0,\beta_1,\cdots,\beta_{m+1})^T$ is a non-zero solution of $BX=0$. Then the ratio $[\beta_0:\beta_{m+1}]$ can be described by the determinants of submatrices of $B$. Expand the polynomial 
\begin{equation}
 \Big(x(x-1)(x-\lambda)\Big)^{m}=\gamma_m x^m+\gamma_m x^{m+1}+\cdots \gamma_{3m} x^{3m},
\end{equation}
where
\begin{equation}\label{nota:gamma_n}
 \gamma_n=(-1)^{m+n}\sum_{\begin{array}{c}
		i+j=n-m\\ 0\leq i,j\leq m\\ 
		\end{array}}{m\choose i}{m\choose j}\lambda^{m-j}. 
\end{equation}
and denote
\begin{equation*}
B=\left(\begin{array}{rrrrrr}
	\gamma_m & a^p\gamma_{3m} & a^p\gamma_{3m-1} &\cdots & a^p\gamma_{2m+1} & a^p\gamma_{2m}\\
	\gamma_{m+1} & \gamma_{m} & a^p\gamma_{3m} &\cdots & a^p\gamma_{2m+2} & a^p\gamma_{2m+1}\\
	\gamma_{m+2} & \gamma_{m+1} &\gamma_{m} &\cdots & a^p\gamma_{2m+3} & a^p\gamma_{2m+2}\\
	\vdots &\vdots &\vdots&\ddots  &\vdots  &\vdots \\
	\gamma_{2m} & \gamma_{2m-1} &\gamma_{2m-2} &\cdots & \gamma_{m} & a^p\gamma_{3m}\\
	\end{array}\right). \eqno{(B)}
\end{equation*}

\begin{lem}  $B\cdot\beta=0$  
\end{lem}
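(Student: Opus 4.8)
The plan is to extract the stated linear relations on the coefficients $\beta_0,\dots,\beta_{m+1}$ of $f$ from the factorization identity~(\ref{TwoWayFaction}), namely
\[
(x-a)^p(x-a_p) = f^2 - x(x-1)(x-\lambda)\,g^2,
\]
by working modulo $(x-a)^p$ in $k[x]$. First I would observe that, since $\deg g \le m-1$ and $\deg f = m+1$, the polynomial $f^2 - x(x-1)(x-\lambda)g^2$ has degree $2m+2 = p+1$, which matches the left-hand side; moreover the left-hand side is divisible by $(x-a)^p$. Hence $f^2 \equiv x(x-1)(x-\lambda)\,g^2 \pmod{(x-a)^p}$. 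Over the algebraically closed field $k$ with $a \ne 0,1,\lambda$, the element $x(x-1)(x-\lambda)$ is a unit in the local ring $k[x]/(x-a)^p$, and since $p$ is odd and $\mathrm{char}\,k = p$, one can (after the normalization already made in choosing $\alpha$) extract a square root: there is a polynomial $h \in k[x]$ of degree $m$, unique modulo $(x-a)^p$, with $h^2 \equiv x(x-1)(x-\lambda) \pmod{(x-a)^p}$. Equivalently, $\big(x(x-1)(x-\lambda)\big)^{m}\big(x(x-1)(x-\lambda)\big) = \big(x(x-1)(x-\lambda)\big)^{m+1}$ is, modulo $(x-a)^p = (x-a)^{2m+1}$, a square; this is where the coefficients $\gamma_n$ of $\big(x(x-1)(x-\lambda)\big)^{m}$ defined in~(\ref{nota:gamma_n}) enter.

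The key computation is then to translate ``$f$ is divisible, modulo $(x-a)^p$, by the appropriate factor'' into the matrix equation. Concretely, from $f^2 \equiv x(x-1)(x-\lambda)g^2 \pmod{(x-a)^p}$ with $\deg f = m+1$ and $\deg(xg^2 (x-1)(x-\lambda))$ controlled, I would multiply through by $\big(x(x-1)(x-\lambda)\big)^{m}$ to get
\[
\big(x(x-1)(x-\lambda)\big)^{m} f^2 \equiv \big(x(x-1)(x-\lambda)\big)^{m+1} g^2 = \big((x(x-1)(x-\lambda))^{(m+1)/?}\big)\cdots
\]
— more carefully: $x(x-1)(x-\lambda)\cdot\big(x(x-1)(x-\lambda)\big)^{2m}$ is a perfect square $\big((x(x-1)(x-\lambda))^{m}\big)^2\cdot x(x-1)(x-\lambda)$, and one wants the factor $(x-a)^p \mid f^2 - (\cdots)$ to force the vanishing of a window of coefficients of the product $\big(x(x-1)(x-\lambda)\big)^{m}\cdot f$. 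The matrix $B$ of size $(m+1)\times(m+2)$ is precisely the Toeplitz-type convolution matrix whose rows record the coefficients of $x^{m},x^{m+1},\dots,x^{2m}$ in the product $\big(x(x-1)(x-\lambda)\big)^{m}\cdot f$, with the $a^p$-multiples coming from reducing the high-degree part $x^{2m+1},\dots$ back down using $(x-a)^p \equiv x^p - a^p \pmod p$, i.e.\ $x^{p} \equiv a^p$ in the relevant quotient (here $\mathrm{char}\,k=p$ makes $(x-a)^p = x^p - a^p$). So the relation $B\beta = 0$ says exactly that the coefficients of $x^{p}, x^{p+1},\dots$ — equivalently, after the folding $x^{p}\leftrightarrow a^p$, the coefficients of $x^{m},\dots,x^{2m}$ — of $\big(x(x-1)(x-\lambda)\big)^{m} f$ modulo $(x^p - a^p)$ vanish, which is the statement that $f$ lies in the image of multiplication by $h \bmod (x^p-a^p)$, i.e.\ that $\alpha = f - yg$ has the required divisibility by $(x-a)^p$.

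I would carry this out in three steps: (1) reduce~(\ref{TwoWayFaction}) modulo $(x-a)^p = x^p - a^p$ to the congruence $f^2 \equiv x(x-1)(x-\lambda) g^2$; (2) multiply by $\big(x(x-1)(x-\lambda)\big)^{m}$ and use that $\big(x(x-1)(x-\lambda)\big)^{m+1}$ is a square modulo $x^p - a^p$ (square root $h$ of degree $m$) to rewrite the congruence as $\big(x(x-1)(x-\lambda)\big)^{m} f \equiv h \cdot \big( \text{something of degree} \le 2m\big) \pmod{x^p-a^p}$, forcing the coefficients of $x^{m},\dots,x^{2m}$ of $\big(x(x-1)(x-\lambda)\big)^{m} f \bmod (x^p-a^p)$ to be zero; (3) expand this coefficient condition explicitly: the coefficient of $x^{m+k}$ ($0 \le k \le m$) in $\big(x(x-1)(x-\lambda)\big)^{m} f \bmod (x^p - a^p)$ is $\sum_{j} \gamma_{m+k-j}\beta_j + a^p\sum_{j}\gamma_{3m+\cdots}\beta_j$, which is precisely the $k$-th row of $B\beta$. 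The main obstacle I anticipate is step (2) — pinning down precisely why $\big(x(x-1)(x-\lambda)\big)^{m+1}$ admits a polynomial square root $h$ of degree $m$ modulo $x^p - a^p$, and tracking the degree bookkeeping so that exactly the $(m+1)$ equations encoded in $B$ (and not more or fewer) come out; this requires care because $a$ could in principle be such that $x^p - a^p$ and $x(x-1)(x-\lambda)$ are not coprime, but the hypothesis $a \ne 0,1,\lambda$ rules that out, and $a \ne \infty$ together with $Q_1$ not $2$-torsion guarantees $\beta_{m+1}\ne 0$ so the matrix $B$ genuinely has rank $m+1$. Once the square-root step is in place, the remaining verification that the coefficient windows match the rows of $B$ is a routine (if tedious) convolution identity using the definition~(\ref{nota:gamma_n}) of the $\gamma_n$.
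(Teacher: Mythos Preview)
Your overall plan is the same as the paper's: reduce \eqref{TwoWayFaction} modulo $(x-a)^p=x^p-a^p$, pass to a square-root identity, bring in $\big(x(x-1)(x-\lambda)\big)^m$ so that the $\gamma_n$ appear, and then read off the vanishing of the $x^m,\dots,x^{2m}$-coefficients after folding $x^p\mapsto a^p$. Steps~(1) and~(3) are fine and match the paper verbatim; your coefficient bookkeeping in step~(3) is exactly the computation the paper does via the projection $\mathrm{pr}'$.

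The gap is in step~(2). The congruence you write down,
\[
\big(x(x-1)(x-\lambda)\big)^m f \;\equiv\; h\cdot(\text{something of degree } \le 2m)\pmod{x^p-a^p},
\]
does \emph{not} force the coefficients of $x^m,\dots,x^{2m}$ on the left to vanish: a product of a degree-$m$ polynomial with a degree-$\le 2m$ polynomial, reduced modulo $x^p-a^p$, is just an arbitrary element of the $(2m{+}1)$-dimensional quotient. What you need, and what the paper proves, is the much sharper relation
\[
f\cdot\big(x(x-1)(x-\lambda)\big)^m \;\equiv\; \pm\big(a(a-1)(a-\lambda)\big)^{p/2}\cdot g \pmod{(x-a)^p},
\]
with a \emph{constant} multiplying $g$. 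Since $\deg g\le m-1$, the right side then genuinely lies in $\mathrm{span}\{1,x,\dots,x^{m-1}\}$, and the $x^m,\dots,x^{2m}$-coefficients of the left side really are forced to vanish, giving $B\beta=0$. The paper gets this constant cleanly: since $\big(x(x-1)(x-\lambda)\big)^p\equiv\big(a(a-1)(a-\lambda)\big)^p\pmod{(x-a)^p}$, an explicit square root of $x(x-1)(x-\lambda)$ in $k[[x-a]]/(x-a)^p$ is $\pm\big(a(a-1)(a-\lambda)\big)^{p/2}\big/\big(x(x-1)(x-\lambda)\big)^m$, and comparing this with the other square root $f/g$ (coming from $f^2\equiv x(x-1)(x-\lambda)\,g^2$ together with $(x-a)\nmid fg$) gives the displayed identity. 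Your square-root discussion circles around this but misidentifies the relevant power (you want $\big(x(x-1)(x-\lambda)\big)^{2m+1}=\big(x(x-1)(x-\lambda)\big)^p$, not $\big(x(x-1)(x-\lambda)\big)^{m+1}$) and, crucially, misses that the product $\sqrt{x(x-1)(x-\lambda)}\cdot\big(x(x-1)(x-\lambda)\big)^m$ collapses to a constant modulo $(x-a)^p$, not merely to a degree-$m$ polynomial.
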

\begin{proof} Since $a\neq 0,1,\lambda,\infty$, the function $x(x-1)(x-\lambda)$ is invertible in $k[[x-a]]$. Thus the $\frac1{\sqrt{x(x-1)(x-\lambda)}}$ is an element in $k[[x-a]]$. Since 
\[\Big(x(x-1)(x-\lambda)\Big)^{p}\equiv \Big(a(a-1)(a-\lambda)\Big)^{p} \pmod{(x-a)^p}, \]
one has 
\begin{equation}\label{equ:sqrt1}
\sqrt{x(x-1)(x-\lambda)} \equiv \pm\frac{\Big(a(a-1)(a-\lambda)\Big)^{p/2}}{\Big(x(x-1)(x-\lambda)\Big)^{m}}  \pmod{(x-a)^p}. 
\end{equation} 
Since $(x-a)^p(x-a_p)=f^2-x(x-1)(x-\lambda)g^2$ and $x-a\nmid fg$, one gets 
\begin{equation}\label{equ:sqrt2}
\sqrt{x(x-1)(x-\lambda)} \equiv \pm \frac{f}{g} \pmod{(x-a)^p}.
\end{equation} 
Now comparing (\ref{equ:sqrt1}) and (\ref{equ:sqrt2}), one gets
\begin{equation}\label{equ:mainf}
f\cdot \Big(x(x-1)(x-\lambda)\Big)^{m} \equiv \pm \Big(a(a-1)(a-\lambda)\Big)^{p/2}\cdot g \pmod{(x-a)^p}.
\end{equation}
Consider the map of $k$-vector spaces 
\begin{equation*}
\mathrm{pr': k[[x-a]]} \twoheadrightarrow \frac{k[[x-a]]}{(x-a)^p\cdot k[[x-a]]+\sum\limits_{i=0}^{m-1} k\cdot x^i  } \eqno{(\mathrm{pr'})}.
\end{equation*}
From (\ref{equ:mainf}), we have $\mathrm{pr'}\left(f\cdot \Big(x(x-1)(x-\lambda)\Big)^{m}\right)=0$. By direct computation, one checks that for all $0\leq i\leq m+1$
\[\mathrm{pr'}\left(x^i\cdot \Big(x(x-1)(x-\lambda)\Big)^{m}\right)= \sum_{j=m}^{3m} \gamma_j\cdot \mathrm{pr'}(x^{i+j}).\]
Since $x^{n+p}\equiv a^px^n \mod(x-a)^p$, one has $\mathrm{pr'}(x^{n+p})=a^p\cdot \mathrm{pr'}(x^n)$. Thus 
\[\mathrm{pr'}\left(x^i\cdot \Big(x(x-1)(x-\lambda)\Big)^{m}\right)=\sum_{j=m+i}^{p-1} \gamma_{j-i}\cdot \mathrm{pr'}(x^{j})+ \sum_{j=m}^{m+i-1} a^p\gamma_{p+j-i}\cdot \mathrm{pr'}(x^{j})  \]
Therefore 
\begin{equation}
\mathrm{pr'}\left(f\cdot \Big(x(x-1)(x-\lambda)\Big)^{m}\right)=\mathrm{pr'}(x^m,x^{m+1},\cdots,x^{2m})\cdot (B\cdot\beta)
\end{equation}
The lemma follows.
\end{proof}
Recall that $\gamma_n$ is defined in~(\ref{nota:gamma_n}), we denote $B_0$ and $B_{m+1}$ to be two submatrices of $B$ as following 
\begin{equation}\label{matrixB0}
 B_0=\left(\begin{array}{rrrrr}
a^p\gamma_{3m} & a^p\gamma_{3m-1} &\cdots & a^p\gamma_{2m+1} & a^p\gamma_{2m}\\
\gamma_{m} & a^p\gamma_{3m} &\cdots & a^p\gamma_{2m+2} & a^p\gamma_{2m+1}\\
\gamma_{m+1} &\gamma_{m} &\cdots & a^p\gamma_{2m+3} & a^p\gamma_{2m+2}\\
\vdots &\vdots&\ddots  &\vdots  &\vdots \\
\gamma_{2m-1} &\gamma_{2m-2} &\cdots & \gamma_{m} & a^p\gamma_{3m}\\
\end{array}\right)
\end{equation}
\begin{equation}\label{matrixB_m+1}
 B_{m+1}= \left(\begin{array}{rrrrrr}
\gamma_m & a^p\gamma_{3m} & a^p\gamma_{3m-1} &\cdots & a^p\gamma_{2m+1} \\
\gamma_{m+1} & \gamma_{m} & a^p\gamma_{3m} &\cdots & a^p\gamma_{2m+2} \\
\gamma_{m+2} & \gamma_{m+1} &\gamma_{m} &\cdots & a^p\gamma_{2m+3} \\
\vdots &\vdots &\vdots&\ddots  &\vdots  \\
\gamma_{2m} & \gamma_{2m-1} &\gamma_{2m-2} &\cdots & \gamma_{m} \\
\end{array}\right).
\end{equation} 
\begin{cor}
 $[\beta_0:\beta_{m+1}]=[(-1)^{m+1}\det(B_0):\det(B_{m+1})]$.
\end{cor}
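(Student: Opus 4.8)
The plan is to read off the ratio $[\beta_0:\beta_{m+1}]$ directly from the single homogeneous relation $B\cdot\beta=0$ supplied by the previous Lemma. Index the columns of the $(m+1)\times(m+2)$ matrix $B$ by $0,1,\dots,m+1$, and for each $j$ let $B_j$ denote the $(m+1)\times(m+1)$ submatrix obtained by deleting the $j$-th column; thus $B_0$ and $B_{m+1}$ are exactly the matrices displayed in (\ref{matrixB0}) and (\ref{matrixB_m+1}). First I would invoke the elementary linear-algebra fact that, for a maximal-rank $(m+1)\times(m+2)$ matrix, the kernel is one-dimensional and is spanned by the vector $w$ with $w_j=(-1)^j\det B_j$: indeed, for any row $r$ of $B$, the sum $\sum_j r_j w_j$ equals, by cofactor expansion along the first row, the determinant of the $(m+2)\times(m+2)$ matrix obtained by placing $r$ on top of $B$, and this matrix has a repeated row, so the sum vanishes; hence $Bw=0$, and $w\neq 0$ precisely because some $\det B_j\neq 0$.

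The point that needs attention is that $B$ really has maximal rank $m+1$, equivalently that $\ker B$ is only one-dimensional. I would establish this by showing every $\beta'\in\ker B$ is a scalar multiple of $\beta$. Given such a $\beta'$, put $f'=\sum_i\beta_i'x^i$; unwinding $B\beta'=0$ exactly as in the proof of the Lemma gives $\mathrm{pr}'\big(f'\cdot(x(x-1)(x-\lambda))^m\big)=0$, i.e.\ $f'\cdot(x(x-1)(x-\lambda))^m\equiv h'\pmod{(x-a)^p}$ for some polynomial $h'$ of degree $\leq m-1$. Substituting (\ref{equ:sqrt1}) then produces a polynomial $g'$ of degree $\leq m-1$ with $f'\equiv y g'\pmod{(x-a)^p}$ for the branch of $y$ at $Q_1$, so that $\alpha'=f'-yg'$ is a global section of $\mathcal O_{C_\lambda}\big((p+1)(\infty)\big)$ vanishing to order $\geq p$ at $Q_1$; equivalently $\alpha'$ is a section of the degree-$1$ line bundle $\mathcal O_{C_\lambda}\big((p+1)(\infty)-p(Q_1)\big)$. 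Since that space is one-dimensional and $\alpha'\neq0$ whenever $\beta'\neq0$, the section $\alpha'$, and hence $f'$ and $\beta'$, is proportional to $\alpha$, $f$, $\beta$; therefore $\ker B=k\cdot\beta$.

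To conclude I would combine the two steps. Comparing leading terms in (\ref{TwoWayFaction}) shows $\deg f=m+1$, so $\beta_{m+1}\neq 0$ and the ratio is well defined; and since $\beta$ spans $\ker B$ it is proportional to $w$. Reading off the $0$-th and $(m+1)$-th coordinates yields $[\beta_0:\beta_{m+1}]=[w_0:w_{m+1}]=[\det B_0:(-1)^{m+1}\det B_{m+1}]$, and multiplying both coordinates by $(-1)^{m+1}$ gives $[\beta_0:\beta_{m+1}]=[(-1)^{m+1}\det B_0:\det B_{m+1}]$, which is the assertion. The main obstacle is the maximal-rank argument in the second paragraph; the remainder is routine.
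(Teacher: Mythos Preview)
Your proof is correct and follows the same approach the paper has in mind: the corollary is stated without proof precisely because it is the standard cofactor/Cramer observation applied to $B\beta=0$, together with the fact that $\ker B$ is one-dimensional. Your added justification of the maximal rank of $B$---reversing the computation in the Lemma and invoking the one-dimensionality of $H^0\big(C_\lambda,\mathcal O_{C_\lambda}((p+1)(\infty)-p(Q_1))\big)$ already noted in the paper---is exactly the missing detail and is carried out correctly.
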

Now,  we get the self-map on $\mathbb P^1_k$ induced by the multiplication by $p$ map.
\begin{thm}\label{thm:multp_formula} $a_p=\frac{1}{a^p}\cdot \left(\frac{\det(B_{0})}{\det(B_{m+1})}\right)^2$.
\end{thm}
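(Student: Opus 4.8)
The plan is to combine the two ingredients that have already been assembled in the preceding discussion. The first is the identity $a_p=\tfrac1{a^p}\bigl(\beta_0/\beta_{m+1}\bigr)^2$, obtained by comparing the leading and the constant coefficients on the two sides of $(x-a)^p(x-a_p)=f^2-x(x-1)(x-\lambda)g^2$. The second is the ratio $[\beta_0:\beta_{m+1}]=[(-1)^{m+1}\det(B_0):\det(B_{m+1})]$ of the Corollary preceding the theorem. Granting these, the theorem is immediate: substituting and squaring makes the sign $(-1)^{m+1}$ disappear, so that
\[
a_p=\frac1{a^p}\left(\frac{\beta_0}{\beta_{m+1}}\right)^2=\frac1{a^p}\left(\frac{(-1)^{m+1}\det(B_0)}{\det(B_{m+1})}\right)^2=\frac1{a^p}\left(\frac{\det(B_0)}{\det(B_{m+1})}\right)^2 .
\]
For $a\in\{0,1,\lambda,\infty\}$ the statement is either vacuous or follows after a coordinate change on $\mathbb P^1$, exactly as in the proof of Theorem~\ref{Thm:selfmap_formula}; so the real content is the derivation of the displayed ratio, i.e. of the Corollary.

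The Corollary will be deduced from the Lemma $B\cdot\beta=0$ together with the claim that the $(m+1)\times(m+2)$ matrix $B$ has maximal rank $m+1$. Once $\operatorname{rank}(B)=m+1$, the kernel of $B$ is one-dimensional and is spanned by the vector $v$ whose $i$-th coordinate is $(-1)^i\det(B_i)$, where $B_i$ denotes $B$ with its $i$-th column deleted (the standard Cramer description of the kernel of a maximal-rank wide matrix; $v\neq 0$ precisely because some maximal minor is nonzero). Since $\beta$ is a nonzero element of this kernel, $\beta=c\,v$ for a scalar $c\neq 0$, and reading off the $0$-th and $(m+1)$-st entries gives $[\beta_0:\beta_{m+1}]=[\det(B_0):(-1)^{m+1}\det(B_{m+1})]$, which is the ratio asserted in the Corollary (the two presentations differing only by the overall factor $(-1)^{m+1}$). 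Note that $\det(B_{m+1})\neq0$: the degree comparison $\deg f=m+1$ forces $\beta_{m+1}\neq0$, hence $v_{m+1}=(-1)^{m+1}\det(B_{m+1})\neq0$.

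It therefore remains to establish $\operatorname{rank}(B)=m+1$, equivalently $\dim\ker B=1$, and this is the step I expect to be the main obstacle. The inequality $\dim\ker B\geq1$ is the Lemma. For the reverse inequality I would show that every $\beta'\in\ker B$ is the ``$x$-part'' of a global section of the degree-one line bundle $\mathcal O_{C_\lambda}\bigl((p+1)(\infty)-p(Q_1)\bigr)$, whose space of sections is one-dimensional by Riemann--Roch. Concretely, unwinding the proof of the Lemma, $B\beta'=0$ says that the associated polynomial $f'$ (of degree $\leq m+1$) satisfies $f'\cdot\bigl(x(x-1)(x-\lambda)\bigr)^m\equiv g'\pmod{(x-a)^p}$ for some $g'$ with $\deg g'\leq m-1$. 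Using the congruence $\sqrt{x(x-1)(x-\lambda)}\equiv b^p/\bigl(x(x-1)(x-\lambda)\bigr)^m\pmod{(x-a)^p}$ (with the sign pinned down by the value $b$ at $Q_1$, which requires $b\neq0$ and $a\neq0,1,\lambda$ so that $x-a$ is a uniformizer at $Q_1$), one checks that $\alpha':=f'-y\,g'/b^p$ vanishes to order $\geq p$ at $Q_1$ and has a pole of order $\leq p+1$ at $\infty$, hence lies in $H^0\bigl(\mathcal O_{C_\lambda}((p+1)(\infty)-p(Q_1))\bigr)$. Since that space is spanned by the section $\alpha=f-yg$ already chosen, $\alpha'=c\alpha$ and therefore $f'=cf$, i.e. $\beta'=c\beta$. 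This forces $\dim\ker B=1$ and closes the argument. The only delicate points are the careful bookkeeping of vanishing orders at $Q_1$ and pole orders at $\infty$ on $C_\lambda$, and keeping track of the standing genericity hypotheses ($a\neq0,1,\lambda,\infty$ and $pQ_1\neq\infty$, the latter guaranteeing that $\deg f=m+1$ indeed holds).
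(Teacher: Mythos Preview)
Your proof is correct and follows the same route as the paper: the theorem is immediate from the identity $a_p=\tfrac{1}{a^p}(\beta_0/\beta_{m+1})^2$ together with the preceding Corollary, and you then supply the full-rank verification for $B$ (via the one-dimensionality of $H^0(\mathcal O_{C_\lambda}((p+1)(\infty)-p(Q_1)))$) that the paper asserts but does not spell out. Your added care with the genericity hypotheses and the nonvanishing of $\det(B_{m+1})$ is a welcome clarification.
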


\begin{prop}\label{compTwoConj}
 The Conjecture~\ref{var_conj} implies the equation (\ref{equ:main}) and the Conjecture~\ref{conj-1}.
\end{prop}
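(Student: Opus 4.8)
The plan is to deduce the polynomial identity $(\ref{equ:main})$ from Conjecture~\ref{var_conj}; the equivalence of $(\ref{equ:main})$ with Conjecture~\ref{conj-1} is then immediate from the formulas already established. Indeed, by Theorem~\ref{Thm:selfmap_formula}, $\varphi_{\lambda,p}(a)=\frac{a^p}{\lambda^{p-1}}\bigl(\det(A_{m+1})/\det(A_p)\bigr)^2$, and by Theorem~\ref{thm:multp_formula}, $a_p=\frac1{a^p}\bigl(\det(B_0)/\det(B_{m+1})\bigr)^2$, with all four matrices polynomial in $a^p$ and $\lambda$; since $\pi\circ[p]$ and $\varphi_{\lambda,p}\circ\pi$ are the rational maps $(a,b)\mapsto a_p$ and $(a,b)\mapsto\varphi_{\lambda,p}(a)$, the square in Conjecture~\ref{conj-1} commutes exactly when $(\ref{equ:main})$ holds. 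Clearing denominators (and using $p-1=2m$), it thus suffices to prove, granting Conjecture~\ref{var_conj}, the polynomial identity $\lambda^{2m}\det(B_0)^2\det(A_p)^2=a^{2p}\det(B_{m+1})^2\det(A_{m+1})^2$.

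First I would substitute Conjecture~\ref{var_conj}, namely $\det(A_p)=c\,\lambda^{m^2}(\lambda-1)^{m^2}\det(B_{m+1})$, and cancel the common nonzero polynomial factor $\det(B_{m+1})^2$; the target identity becomes $c^2\lambda^{2m^2+2m}(\lambda-1)^{2m^2}\det(B_0)^2=a^{2p}\det(A_{m+1})^2$, so (as $(\ref{equ:main})$ only involves squares) it is enough to establish the companion relation
\begin{equation}\tag{$\star$}
a^p\det(A_{m+1})=(-1)^m c\,\lambda^{m^2+m}(\lambda-1)^{m^2}\det(B_0).
\end{equation}
The point is that $(\star)$ is Conjecture~\ref{var_conj} read off at $\lambda/a$ in place of $a$. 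Over $k$ one has $p-n\equiv-n$, and a direct computation from $\delta_n=\frac{\lambda^p(1-\lambda^n)-a^p(\lambda^p-\lambda^n)}{n}$ gives $\delta_n(\lambda/a)=-\frac{\lambda^n}{a^p}\,\delta_{p-n}(a)$, while the symmetry of $\bigl(x(x-1)(x-\lambda)\bigr)^m$ under $x\mapsto\lambda/x$ gives $\gamma_n=\lambda^{2m-n}\gamma_{4m-n}$. Since $A_p$ (resp.\ $A_{m+1}$) is the Toeplitz matrix on the window $\delta_1,\dots,\delta_{2m-1}$ (resp.\ $\delta_2,\dots,\delta_{2m}$), the first relation yields $A_p(\lambda/a)=-\frac1{a^p}D_1\,A_{m+1}(a)^{\mathrm T}D_2$ with $D_1=\mathrm{diag}(\lambda^{-1},\dots,\lambda^{-m})$, $D_2=\mathrm{diag}(\lambda^{m+1},\dots,\lambda^{2m})$; a parallel bookkeeping in the $\gamma_n$ (under $a\mapsto\lambda/a$ each $a^p\gamma_n$ becomes $\frac{\lambda^p}{a^p}\gamma_n$, and $\gamma_n$ is unchanged) yields $B_{m+1}(\lambda/a)=\frac1{a^p}D_3\,B_0(a)^{\mathrm T}D_4$ with $D_3=\mathrm{diag}(\lambda^{-1},\dots,\lambda^{-(m+1)})$, $D_4=\mathrm{diag}(\lambda^{m+1},\dots,\lambda^{2m+1})$. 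Taking determinants ($\det D_1\det D_2=\lambda^{m^2}$, $\det D_3\det D_4=\lambda^{m(m+1)}$), Conjecture~\ref{var_conj} at $\lambda/a$ becomes $\frac{(-1)^m\lambda^{m^2}}{a^{pm}}\det A_{m+1}(a)=c\lambda^{m^2}(\lambda-1)^{m^2}\frac{\lambda^{m(m+1)}}{a^{p(m+1)}}\det B_0(a)$, which is exactly $(\star)$ (and in passing one sees that $\det B_0$ is divisible by $a^p$, matching both sides of $(\star)$). Combining $(\star)$ with Conjecture~\ref{var_conj} at $a$ then proves $(\ref{equ:main})$, hence Conjecture~\ref{conj-1}.

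The hard part is the index bookkeeping in the two matrix identities above: one must verify that the reversal $n\mapsto p-n$ carries the Toeplitz window of $A_p$ precisely onto that of $A_{m+1}$, up to transposition and a diagonal $\lambda$-twist, that the reversal $n\mapsto 4m-n$ together with the $a^p$-weighting of the ``wrap-around'' entries carries $B_{m+1}$ onto $B_0$, and finally that all the powers of $\lambda$ from $\det D_i$ together with the sign $(-1)^m$ assemble into exactly the constant appearing in $(\star)$. I expect only routine accounting, not a new idea, here. If this $a\mapsto\lambda/a$ route proves awkward, an alternative is to prove $(\star)$ head-on by factoring $a^p$ out of the first row of $B_0$ and reducing to a Toeplitz determinant in the $\delta_n$, but that essentially re-derives a near-twin of Conjecture~\ref{var_conj} and is unlikely to be simpler.
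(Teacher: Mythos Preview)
Your argument is correct and follows the same overall strategy as the paper: deduce a companion identity relating $\det A_{m+1}$ to $\det B_0$ by evaluating Conjecture~\ref{var_conj} at a transformed point, then combine it with the original statement to obtain~(\ref{equ:main}). The difference is the choice of transformation. You substitute $a\mapsto\lambda/a$ while keeping $\lambda$ fixed, which forces you to track the diagonal twists $D_1,\dots,D_4$ and to invoke the palindromic relation $\gamma_n=\lambda^{2m-n}\gamma_{4m-n}$. The paper instead substitutes $(\lambda,a)\mapsto(1/\lambda,1/a)$ and checks the cleaner scalar identities
\[
A_{m+1}^{T}(\lambda,a)=\lambda^{2p}a^{p}\,A_p(1/\lambda,1/a),\qquad
B_0^{T}(\lambda,a)=\lambda^{m}a^{p}\,B_{m+1}(1/\lambda,1/a),
\]
which avoids the diagonal matrices entirely and makes the bookkeeping shorter. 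Your route has the mild advantage of only moving one variable, so Conjecture~\ref{var_conj} is used at the same $\lambda$; the paper's route trades this for simpler matrix relations. Either way the combination with Conjecture~\ref{var_conj} at the original point yields $a^{p}\det A_{m+1}=\pm c\,\lambda^{m^{2}+m}(\lambda-1)^{m^{2}}\det B_0$, and squaring kills the sign to give~(\ref{equ:main}).
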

\begin{proof} Regard every terms of  $A_{m+1}$, $A_p$, $B_0$ and $B_{m+1}$ as polynomials of $a,\lambda$. One checks directly that 
\begin{equation*}  A^T_{m+1}(\lambda,a)=\lambda^{2p}a^pA_p(\frac1\lambda,\frac{1}{a})
\end{equation*} 
\begin{equation*} B^T_{0}(\lambda,a)=\lambda^{m}a^pB_{m+1}(\frac1\lambda,\frac{1}{a}) 
\end{equation*} 
On the other hand, by Conjecture~\ref{var_conj}, 
\[\det(A_p(\frac1\lambda,\frac1a))=c\lambda^{-2m^2}(1-\lambda)^{m^2}\cdot \det(B_{m+1}(\frac1\lambda,\frac1a)).\]
Thus 
\[\det(A_{m+1})=c\lambda^{m^2+m}(1-\lambda)^{m^2}a^{-p}\cdot \det(B_0).\]
The Proposition follows. 
\end{proof}

\begin{rmk}
The Conjecture~\ref{var_conj} holds for odd prime $p<50$. This was checked directly by using Maple. By Proposition~\ref{compTwoConj}, our main conjecture holds for $p<50$.
\end{rmk}

\section{Appendix: the torsor maps induced by inverse Cartier functor and Grading functor}
In this section, we will describe the torsor map induced by inverse Cartier functor (Proposition~\ref{prop:torsor IC}) and Grading functor (Proposition~\ref{prop:torsor_Grading}) via maps between cohomology groups. 

\subsection*{Some notations}
Let $T$ be a vector space and assume $L$ is $T$-torsor space. Then for a given point $\ell\in L$, the torsor structure gives a bijection $\iota_\ell: L\rightarrow T$. We call $\iota_\ell(\ell')$ the difference between $\ell'$ and $\ell$, or we say that $\ell'$ differs from $\ell$ by $\iota_\ell(\ell')$. Denote $c(\ell',\ell):=\iota_\ell(\ell')$.

Denote $W= W(k)$ and $W_n=W/p^nW$. 
Let $\mathcal{X}$ be a proper smooth $W$-scheme with normal crossing divisor $\mD$, $X_n := \mathcal{X} \times_W W_n$ and $D_n:=\mD\otimes_WW_n$. Let $\mX = \bigcup \mU_i$ be a covering of small affine open subsets and let $\Phi_i$ be a Frobenius lifting on $\mU_i$ which preserves the log divisor, i.e. $\Phi_i^*(\mD\cap \mU_i)=p(\mD\cap \mU_i)$. Denote $\mU_{ij}=\mU_i\cap\mU_j$. For a vector bundle $L$ over $X_n$, write $L(\mU_i):=L(\mU_i\times_WW_n)$ and $L(\mU_{ij}):=L((\mU_i\cap\mU_j)\times_WW_n)$ for short.

In this section, all Higgs bundles and de Rham bundles are logarithmic with respect to the divisor $\mD$. 

\subsection{Lifting space of Higgs Bundles and de Rham bundles}\label{section_LSHBFB}
Let $(\overline{E},\overline{\theta})$ be a Higgs bundle over $X_{n-1}$. Denote its reduction modulo $p$ by $(E , \theta)$. Now we want to study the space of $W_n$-liftings of $(\overline{E},\overline{\theta})/X_{n-1}$.\\

\begin{lem}\label{lem:HiggsTorsor}
The space of $W_n$-liftings of $(\overline{E},\overline{\theta})/X_{n-1}$ is an $H^1_{Hig}(X_1 , \mEnd (E , \theta))$-torsor.  
\end{lem}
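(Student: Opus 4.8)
The statement is a standard deformation-theoretic fact, and the plan is to prove it by an explicit \v{C}ech cocycle computation adapted to the Higgs complex. First I would fix the covering $\mX=\bigcup\mU_i$ by small affine open subsets (as already set up in the section), and note that since each $\mU_i$ is affine and smooth over $W$, the restriction $(\overline E,\overline\theta)|_{\mU_i}$ lifts to a Higgs bundle $(\widetilde E_i,\widetilde\theta_i)$ over $\mU_i\times_WW_n$; this is the local unobstructedness that makes the global lifting space a torsor rather than merely a set with an obstruction class. I would then show the lifting space is non-empty only after the torsor structure is identified, or alternatively phrase the lemma as: \emph{if} a lifting exists, the set of isomorphism classes of liftings is a torsor under $H^1_{\mathrm{Hig}}(X_1,\mEnd(E,\theta))$ — which is the reading consistent with how it is used later (in Proposition~\ref{Lifting_PHDF} and the four-marked-point analysis, where unobstructedness is checked separately).

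The key steps, in order: (1) Given two local liftings $(\widetilde E_i,\widetilde\theta_i)$ and $(\widetilde E_i',\widetilde\theta_i')$ on $\mU_i$ of the same $(\overline E,\overline\theta)|_{\mU_{i}}$, choose isomorphisms between them reducing to the identity mod $p^{n-1}$; the ambiguity is measured by $\mathrm{Hom}(E,E)|_{\mU_i}\otimes p^{n-1}$, i.e. by sections of $\mEnd(E)$ over $\mU_{i,1}$. (2) Given a global lifting $(\widetilde E,\widetilde\theta)$, another global lifting is obtained by re-gluing the local pieces $(\widetilde E_i,\widetilde\theta_i)$ via modified transition functions $g_{ij}(1+p^{n-1}a_{ij})$ where $a_{ij}\in\mEnd(E)(\mU_{ij,1})$; the cocycle condition for the new gluing forces $\{a_{ij}\}$ to be a \v{C}ech $1$-cocycle for $\mEnd(E)$. (3) Compatibility with the Higgs fields: the condition that the re-glued $\widetilde\theta$ is well-defined and reduces correctly forces $\theta^{\mathrm{End}}(a_{ij})=0$ in the appropriate sense, i.e. $\{a_{ij}\}$ is a cocycle for the \emph{Higgs complex} $\mEnd(E)\xrightarrow{\theta^{\mathrm{End}}}\mEnd(E)\otimes\Omega^1\to\cdots$, not merely for the sheaf $\mEnd(E)$. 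More precisely one gets a degree-one hypercohomology class, pairing the $1$-cocycle $\{a_{ij}\}$ of $\mEnd(E)$ with a possible $0$-cochain correction to $\widetilde\theta$. (4) Conversely, every class in $H^1_{\mathrm{Hig}}(X_1,\mEnd(E,\theta))=\mathbb H^1(\mEnd(E)\xrightarrow{\theta^{\mathrm{End}}}\cdots)$ arises this way, and two cocycles give isomorphic liftings iff they differ by a coboundary — this is checked by the same bookkeeping as (1)–(2). (5) Finally verify the torsor axioms (free and transitive action, compatibility of the difference class $c(\cdot,\cdot)$ with addition in the cohomology group), which is formal once (1)–(4) are in place.

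I would present this by writing $\mEnd(E,\theta)$ for the Higgs complex $[\mEnd(E)\xrightarrow{[\theta,-]}\mEnd(E)\otimes\Omega^1_{X_1}(\log D)\to\cdots]$ and recalling that $H^1_{\mathrm{Hig}}$ denotes its first hypercohomology, computed by the total complex of the \v{C}ech bicomplex $\check{C}^\bullet(\{\mU_{i,1}\},\mEnd(E)\otimes\Omega^\bullet)$. The lifting data $(\widetilde E,\widetilde\theta)$ unwinds into: local bundles glued by $\{g_{ij}\}$ with $g_{ij}g_{jk}=g_{ik}$, plus local Higgs fields $\{\widetilde\theta_i\}$ agreeing on overlaps after conjugation by $g_{ij}$. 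A deformation of this datum is precisely a Čech $1$-cochain $(\{a_{ij}\},\{b_i\})$ with $a_{ij}\in\mEnd(E)(\mU_{ij,1})$, $b_i\in(\mEnd(E)\otimes\Omega^1)(\mU_{i,1})$, satisfying the total-complex cocycle condition $\delta a=0$, $\theta^{\mathrm{End}}(a_{ij})=b_i-b_j$; modifications of the chosen local trivializations change this by a total coboundary. The class in $\mathbb H^1$ is the desired difference.

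\textbf{Expected main obstacle.} The genuinely delicate point is step (3): making sure the Higgs-field part of the deformation is tracked correctly, so that what appears is the \emph{hyper}cohomology of the two-(or more-)term Higgs complex and not just $H^1(X_1,\mEnd(E))$. One must be careful that (a) the Higgs field $\theta$ being nilpotent (indeed graded) does not trivialize this, and (b) the interaction between re-gluing the bundle (which contributes the $\{a_{ij}\}$ part) and deforming $\theta$ locally (which contributes the $\{b_i\}$ part) is exactly the differential of the total complex — including getting the signs and the mod-$p^{n-1}$ versus mod-$p$ reductions right, since the obstruction/torsor group is a characteristic-$p$ object ($X_1$, $E$ mod $p$) even though the liftings live over $W_n$. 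A secondary, more bookkeeping-level obstacle is verifying independence of all choices (local liftings, local trivializations, the base point $\ell$ used to define $c(\ell',\ell)$), which is routine but must be done to legitimately call the structure a torsor.
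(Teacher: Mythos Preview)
Your proposal is correct and follows essentially the same approach as the paper: both compute the difference of two liftings as a \v{C}ech hypercohomology $1$-cocycle $(\overline f_{ij},\overline\omega_i)$ for the Higgs complex $\mEnd(E,\theta)$, and conversely act by such a cocycle to produce new liftings. The paper phrases the comparison by choosing local isomorphisms $\gamma_i:\wt E_i\to\wh E_i$ lifting the identity and setting $f_{ij}=\gamma_j^{-1}\gamma_i-\id$, $\omega_i=\gamma_i^{-1}\wh\theta_i\gamma_i-\wt\theta_i$, then dividing by $p^{n-1}$; your re-gluing formulation is the same computation read in the other direction.
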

\begin{proof}
Consider two $W_n$-lifting $(\wt E,\wt \theta)/X_n$ and $(\wh E,\wh\theta)$ of $(\overline{E},\overline{\theta})/X_{n-1}$. Denote by $(\wt E_i , \wt\theta_i)$ (resp. $(\wh E_i , \wh\theta_i)$) the restriction of $(\wt E,\wt \theta)$ (resp. $(\wh E, \wh\theta)$) on $\mU_i\times_WW_n$. Locally we can always find isomorphisms $\gamma_i: \wt E_i \overset{\sim}{\longrightarrow} \wh E_i$ over $\mU_i\times_WW_n$ which lifts $\mathrm{id}_{\overline{E}_i}$. Set
\begin{equation*}
f_{ij}:= \gamma^{-1}_j|_{\mU_{ij}}\circ \gamma_i|_{\mU_{ij}} - \mathrm{id} \in \mEnd(\wt E)(\mU_{ij})
\end{equation*} 
\begin{equation*}
 \omega_i := \gamma_i^{-1} \circ \wh\theta_i \circ \gamma_i - \wt\theta_i \in \mEnd(\wt E)(\mU_i) \otimes \Omega^1_{\mX/W}(\log \mD)(\mU_i)
\end{equation*}    
Since $(\wt E, \wt \theta)$ and $(\wh E,\wh \theta)$ are both $W_n$-liftings of $(\overline{E},\overline{\theta})$, we have
\[ f_{ij} \equiv 0\pmod{p^{n-1}}  \qquad \text{ and } \qquad \omega_i  \equiv 0 \pmod{p^{n-1}}.\]
Thus $\overline{f}_{ij}:=\frac{f_{ij}}{p^{n-1}}\pmod{p}$ is a well defined element in $\mEnd(E)(\mU_{ij})$ and $\overline\omega_i:=\frac{\omega_i}{p^{n-1}}\pmod{p}$ is a well defined element in $\mEnd(E)(\mU_i) \otimes \Omega^1_{\mX/W}(\log \mD)(\mU_i)$.
These local datum give us a $\cech$ representative 
\[(\overline{f}_{ij} ,  \overline\omega_i) \in H^1_{Hig}(X_1 , \mEnd (E,\theta))\]
of the difference of the two liftings.

Conversely, we can construct a Higgs bundle from datum $(\wt E,\wt\theta)$ and $(\overline f_{ij}, \overline\omega_i)\in H^1_{Hig}(X_1 , \mEnd (E,\theta))$. Locally over $\mU_i\times_WW_n$, the new Higgs bundle is given by
\[(\wt E_i,\wt \theta_i + p^{n-1}\overline\omega_i),\]
and the gluing transform is 
\[\mathrm{id}+p^{n-1}\overline f_{ij}:{\wh E\mid_{(\mU_i\cap \mU_j)\times_WW_n}}\rightarrow {\wh E\mid_{(\mU_i\cap \mU_j)\times_WW_n}}.\]
Since $(\overline f_{ij}, \overline\omega_i)$ is a $1$-cocycle,the local datum are glued into a new Higgs bundle. Moreover the following diagram commutes
\begin{equation}
\xymatrix@C=3cm{
\left(\wt E_i,\wt \theta_i+p^{n-1}\overline\omega_i\right)  \ar[r]^{\mathrm{id}+p^{n-1}\overline f_{ij}} \ar[d]^{\gamma_i}
&  \left(\wt E_j,\wt \theta_j+p^{n-1}\overline\omega_j\right)\ar[d]^{\gamma_j}\\
\left(\wh E_i,\wh \theta_i\right) \ar[r]^{\mathrm{id}}
&  \left(\wh E_j,\wh \theta_j\right)\\
}
\end{equation}
 and this new Higgs bundle is isomorphic to $(\wh E,\wh\theta)$ via local isomorphisms $\gamma_i$.
\end{proof}
Similarly, for de Rham bundle $(\overline{V},\overline{\nabla})$ over $X_{n-1}$, denote its reduction modulo $p$ by $(V,\nabla)$.
\begin{lem}
The space of $W_n$-liftings of $(\overline{V},\overline{\nabla})$ is an $H^1_{dR}(X_1 , \mEnd (V,\nabla))$-torsor.  
\end{lem}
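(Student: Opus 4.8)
The plan is to mirror, almost verbatim, the proof of Lemma~\ref{lem:HiggsTorsor}, substituting the connection $\nabla$ for the Higgs field $\theta$ and the de Rham complex of $\mEnd(V,\nabla)$ for the Higgs complex of $\mEnd(E,\theta)$. Concretely, given two $W_n$-liftings $(\wt V,\wt\nabla)$ and $(\wh V,\wh\nabla)$ of $(\overline V,\overline\nabla)/X_{n-1}$, I would first choose, over each small affine $\mU_i\times_W W_n$, an $\mO$-linear isomorphism $\gamma_i\colon \wt V_i\xrightarrow{\ \sim\ }\wh V_i$ lifting $\mathrm{id}_{\overline V_i}$, and then form the two local discrepancies: the gluing defect $f_{ij}:=\gamma_j^{-1}|_{\mU_{ij}}\circ\gamma_i|_{\mU_{ij}}-\mathrm{id}\in\mEnd(\wt V)(\mU_{ij})$, and the connection defect $\omega_i:=\gamma_i^{-1}\circ\wh\nabla_i\circ\gamma_i-\wt\nabla_i$. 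The latter, being a difference of two connections on the same bundle $\wt V_i$, is automatically $\mO$-linear, hence an element of $\mEnd(\wt V)(\mU_i)\otimes\Omega^1_{\mX/W}(\log\mD)(\mU_i)$. As in the Higgs case, both $f_{ij}$ and $\omega_i$ are divisible by $p^{n-1}$ because the two bundles agree modulo $p^{n-1}$, so setting $\overline f_{ij}:=f_{ij}/p^{n-1}\bmod p$ and $\overline\omega_i:=\omega_i/p^{n-1}\bmod p$ produces a pair living in the \v{C}ech--de Rham bicomplex of $\mEnd(V,\nabla)$ associated to $\{\overline\mU_i\}$.

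The second step is to check that $(\overline f_{ij},\overline\omega_i)$ is a $1$-cocycle, hence defines a class in $H^1_{dR}(X_1,\mEnd(V,\nabla))$, and that this class does not depend on the choices $\gamma_i$. The \v{C}ech relation $\overline f_{ij}+\overline f_{jk}=\overline f_{ik}$ on triple overlaps is identical to the Higgs situation. The mixed relation $\overline\omega_i-\overline\omega_j=\nabla_{\mEnd}(\overline f_{ij})$, where $\nabla_{\mEnd}$ denotes the induced connection on $\mEnd(V)$, is the one genuinely new point: using $\gamma_i=\gamma_j\circ(\mathrm{id}+f_{ij})$ on $\mU_{ij}$ one computes $\gamma_i^{-1}\wh\nabla\gamma_i-\gamma_j^{-1}\wh\nabla\gamma_j=(\mathrm{id}+f_{ij})^{-1}\circ\nabla_{\mEnd}(f_{ij})$, and since $f_{ij}\equiv 0\pmod{p^{n-1}}$ with $2(n-1)\ge n$ for $n\ge 2$, all higher-order terms vanish modulo $p^n$, leaving precisely $p^{n-1}\nabla_{\mEnd}(\overline f_{ij})$. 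Replacing $\gamma_i$ by $\gamma_i\circ(\mathrm{id}+p^{n-1}\overline g_i)$ with $\overline g_i\in\mEnd(V)(\mU_i)$ — the general way two lifts of $\mathrm{id}_{\overline V_i}$ differ — alters the cocycle by the \v{C}ech--de Rham coboundary of the $0$-cochain $(\overline g_i)$, which gives well-definedness of the class $c\big((\wh V,\wh\nabla),(\wt V,\wt\nabla)\big)\in H^1_{dR}(X_1,\mEnd(V,\nabla))$; it visibly vanishes iff the two liftings are isomorphic.

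Finally I would run the construction in reverse to see that the action is simply transitive. Starting from one lifting $(\wt V,\wt\nabla)$ and a cocycle $(\overline f_{ij},\overline\omega_i)$, build a new de Rham bundle with local pieces $(\wt V_i,\wt\nabla_i+p^{n-1}\overline\omega_i)$ glued by $\mathrm{id}+p^{n-1}\overline f_{ij}$; the \v{C}ech cocycle identity (again using $2(n-1)\ge n$ to kill $p^{2(n-1)}$-terms) makes these transition maps compose correctly, and the mixed identity $\overline\omega_i-\overline\omega_j=\nabla_{\mEnd}(\overline f_{ij})$ makes the transition maps horizontal for the glued connection. Reducing modulo $p^{n-1}$ recovers $(\overline V,\overline\nabla)$, so this is again a $W_n$-lifting, and by construction it differs from $(\wt V,\wt\nabla)$ by the prescribed class; cohomologous cocycles yield isomorphic liftings. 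I expect the only real content beyond a transcription of Lemma~\ref{lem:HiggsTorsor} to be the Leibniz bookkeeping in the second step — namely that conjugating a connection by $\mathrm{id}+f_{ij}$ contributes $\nabla_{\mEnd}(f_{ij})$ to first order and that the resulting mixed term is exactly the covariant derivative of the \v{C}ech cochain; everything else is routine and parallels the Higgs case line by line.
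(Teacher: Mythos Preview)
Your proposal is correct and follows exactly the approach the paper intends: the paper gives no separate proof of this lemma, merely introducing it with ``Similarly, for de Rham bundle $(\overline{V},\overline{\nabla})$ over $X_{n-1}$\dots'', so your line-by-line transcription of Lemma~\ref{lem:HiggsTorsor} with $\nabla$ in place of $\theta$ and $\nabla_{\mEnd}$ in place of $\theta_{\mEnd}$ is precisely what is meant. In fact you supply more detail than the paper does, in particular the Leibniz bookkeeping for the mixed cocycle relation $\overline\omega_i-\overline\omega_j=\nabla_{\mEnd}(\overline f_{ij})$, which the paper leaves entirely implicit.
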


\subsection{lifting space of graded Higgs bundles and filtered de Rham bundles with Griffith transversality}

Let $(\overline{V},\overline{\nabla},\overline{\Fil})$ be a filtered de Rham bundle over $X_{n-1}$ satisfying Griffith transversality. Denote its modulo $p$ reduction by $(V,\nabla,\Fil)$. Denote by $(\overline{E},\overline{\theta})$ the graded Higgs bundle $\mathrm{Gr}(\overline{V},\overline{\nabla},\overline{\Fil})$ over $X_n$ and denote by $(E,\theta)$ the graded Higgs bundle $\mathrm{Gr}(V,\nabla,\Fil)$ over $X_1$.

The filtration $\overline{\Fil}$ on $\overline{V}$ induced sub complex of the de Rham complex $\left(\mEnd(V)\otimes \Omega_{\mX/W}^\bullet(\log \mD),\nabla^{\mEnd}\right)$ 
\begin{equation*}
\xymatrix {
  0 \ar[r] 
  &   \Fil^p\mEnd(V) \ar[r] \ar@{^(->}[d]
  &  \Fil^{p-1}\mEnd(V)\otimes \Omega_{\mX/W}^1(\log \mD) \ar[r] \ar@{^(->}[d]
  & \Fil^{p-2}\mEnd(V)\otimes \Omega_{\mX/W}^2(\log \mD) \ar[r] \ar@{^(->}[d]
  &  \cdots\\
0 \ar[r] \ar[r]
  &  \mEnd(V) \ar[r]^(0.4){\nabla^{\mEnd}}
  &  \mEnd(V)\otimes \Omega_{\mX/W}^1(\log \mD) \ar[r]^{\nabla^{\mEnd}}
  &  \mEnd(V)\otimes \Omega_{\mX/W}^2(\log \mD) \ar[r]^(0.7){\nabla^{\mEnd}}
  &  \cdots\\}
\end{equation*}
here $\Fil^p\mEnd(V)= \sum\limits_{j-i=p}\left(\Fil^iV\right)^\vee\otimes \Fil^jV$. We denote this sub complex by $\Fil^p\left(\mEnd(V)\otimes \Omega_{\mX/W}^\bullet(\log \mD),\nabla^{\mEnd}\right)$. When $p$ runs all integers, these sub complexes give an exhaustive and decreasing Filtration on $\left(\mEnd(V)\otimes \Omega_{\mX/W}^\bullet(\log \mD),\nabla^{\mEnd}\right)$. Taking the associated graded object, one gets an isomorphism of Higgs complexes
\[\mathrm{Gr}\left(\mEnd(V)\otimes \Omega_{\mX/W}^\bullet(\log \mD),\nabla^{\mEnd}\right) \simeq \left(\mEnd(E)\otimes \Omega_{\mX/W}^\bullet(\log \mD), \theta^{\mEnd}\right).\]

Denote by $\overline{E}^p=\mathrm{Gr}^p(\overline{V})$ the $p$-th grading piece of $\overline{V}$. Then Higgs bundle $(\overline{E},\overline{\theta})$ is graded with $\overline{E}=\oplus \overline{E}^p$ and $\overline{\theta}:\overline{E}^p\rightarrow \overline{E}^{p-1}\otimes\Omega_{\mX/W}^1(\log \mD)$. The decomposition of $\overline{E}$ induces a decomposition of its reduction $E=\oplus E^p$ and a decomposition $\mEnd(E)=\oplus \mEnd(E)^p$ with
\[\mEnd(E)^p:= \bigoplus_{j-i=p} (E^i)^\vee\otimes E^j.\]
Thus the complex
\begin{equation}\label{Higgs_direct_summand}
0\rightarrow \mEnd(E)^0 \rightarrow \mEnd(E)^{-1}\otimes \Omega_{\mX/W}^1(\log \mD) \rightarrow \mEnd(E)^{-2}\otimes \Omega_{\mX/W}^2(\log \mD)\rightarrow\cdots, 
\end{equation}
is a direct summand of the Higgs complex, which is just the $0$-th grading piece of the de Rham complex $\left(\mEnd(V)\otimes \Omega_{\mX/W}^\bullet(\log \mD),\nabla^{\mEnd}\right)$.
Taking the second hypercohomology of this complex one gets a direct summand of $H^1_{Hig}(X_1 , \mEnd (E ,\theta))$, we denote it by $\mathrm{Gr^0}H^1_{Hig}(X_1 , \mEnd (E ,\theta))$.
\begin{lem}
The space of $W_n$-liftings of the graded Higgs bundle $(\overline{E},\overline{\theta})/X_{n-1}$ is a $\mathrm{Gr}^0H^1_{Hig}(X_1 , \mEnd (E ,\theta))$-torsor.
\end{lem}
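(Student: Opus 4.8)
The plan is to run the same $\cech$-theoretic argument as in the proof of Lemma~\ref{lem:HiggsTorsor}, but to keep track of the grading so that the difference cocycle of two liftings lands in the degree-zero subcomplex (\ref{Higgs_direct_summand}), whose second hypercohomology is by definition $\mathrm{Gr}^0H^1_{Hig}(X_1,\mEnd(E,\theta))$.

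First I would fix two $W_n$-liftings $(\wt E,\wt\theta)$ and $(\wh E,\wh\theta)$ of the graded Higgs bundle $(\overline E,\overline\theta)/X_{n-1}$; by definition of a graded lifting these carry direct sum decompositions $\wt E=\bigoplus_p\wt E^p$, $\wh E=\bigoplus_p\wh E^p$ reducing to that of $\overline E$, with $\wt\theta(\wt E^p)\subset\wt E^{p-1}\otimes\Omega^1_{\mX/W}(\log\mD)$ and likewise for $\wh\theta$. Over a small affine $\mU_i$ each graded piece $\wt E_i^p$, $\wh E_i^p$ is free of rank $\rank(\overline E^p)$, so one may choose an isomorphism $\gamma_i^p:\wt E_i^p\xrightarrow{\sim}\wh E_i^p$ lifting $\mathrm{id}_{\overline E_i^p}$ and set $\gamma_i=\bigoplus_p\gamma_i^p$; this local triviality is the only place affineness is used. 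Defining $f_{ij}=\gamma_j^{-1}|_{\mU_{ij}}\circ\gamma_i|_{\mU_{ij}}-\mathrm{id}$ and $\omega_i=\gamma_i^{-1}\circ\wh\theta_i\circ\gamma_i-\wt\theta_i$ exactly as before, and using that $\gamma_i$ is graded of degree $0$ while $\wt\theta,\wh\theta$ shift the grading by $-1$, one finds that $f_{ij}$ is homogeneous of degree $0$ and $\omega_i$ homogeneous of degree $-1$; hence $\overline f_{ij}=\frac{f_{ij}}{p^{n-1}}\bmod p\in\mEnd(E)^0(\mU_{ij})$ and $\overline\omega_i=\frac{\omega_i}{p^{n-1}}\bmod p\in\bigl(\mEnd(E)^{-1}\otimes\Omega^1_{\mX/W}(\log\mD)\bigr)(\mU_i)$ are cochains of the subcomplex (\ref{Higgs_direct_summand}). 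The $1$-cocycle identity for $(\overline f_{ij},\overline\omega_i)$ is checked verbatim as in Lemma~\ref{lem:HiggsTorsor}, producing a class in $\mathrm{Gr}^0H^1_{Hig}(X_1,\mEnd(E,\theta))$; replacing each $\gamma_i$ by $\gamma_i\circ(\mathrm{id}+p^{n-1}\overline h_i)$ with $\overline h_i\in\mEnd(E)^0(\mU_i)$ changes $(\overline f_{ij},\overline\omega_i)$ by the $\cech$ coboundary of $(\overline h_i)$ inside the subcomplex, so the class depends only on the pair of liftings.

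For the converse, starting from a base lifting $(\wt E,\wt\theta)$ and a $\cech$ $1$-cocycle $(\overline f_{ij},\overline\omega_i)$ of the subcomplex (\ref{Higgs_direct_summand}), I would glue the local graded Higgs bundles $(\wt E_i,\wt\theta_i+p^{n-1}\overline\omega_i)$ along $\mathrm{id}+p^{n-1}\overline f_{ij}$. Since $\overline f_{ij}$ has degree $0$ the transition maps respect the decomposition of $\wt E$, and since $\overline\omega_i$ has degree $-1$ the perturbed field still satisfies $(\wt\theta_i+p^{n-1}\overline\omega_i)(\wt E_i^p)\subset\wt E_i^{p-1}\otimes\Omega^1_{\mX/W}(\log\mD)$, so the result is again a graded $W_n$-lifting of $(\overline E,\overline\theta)$; the computation of Lemma~\ref{lem:HiggsTorsor} shows this construction is inverse to taking differences. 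This yields simple transitivity, whence the lifting space is a $\mathrm{Gr}^0H^1_{Hig}(X_1,\mEnd(E,\theta))$-torsor (nonemptiness being either assumed or supplied by the unobstructedness hypotheses at the points where this lemma is invoked).

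I expect the only genuine point to verify is the bookkeeping behind the definition of $\mathrm{Gr}^0H^1_{Hig}$: namely that under the canonical isomorphism $\mathrm{Gr}\bigl(\mEnd(V)\otimes\Omega^\bullet_{\mX/W}(\log\mD),\nabla^{\mEnd}\bigr)\simeq\bigl(\mEnd(E)\otimes\Omega^\bullet_{\mX/W}(\log\mD),\theta^{\mEnd}\bigr)$ the total-degree-zero graded summand is precisely the subcomplex (\ref{Higgs_direct_summand}), and that "graded $W_n$-lifting" is the right infinitesimal notion for which the degree constraints on $f_{ij}$ and $\omega_i$ above are forced. Once this set-up is in place, every cocycle and coboundary computation is formally identical to the ungraded case already carried out in Lemma~\ref{lem:HiggsTorsor}.
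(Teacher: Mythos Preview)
Your proposal is correct and follows exactly the paper's approach: the paper's proof is a one-liner stating that the argument of Lemma~\ref{lem:HiggsTorsor} goes through once one chooses the local isomorphisms $\gamma_i$ to preserve the grading, which forces $\overline f_{ij}\in\mEnd(E)^0(\mU_{ij})$ and $\overline\omega_i\in\mEnd(E)^{-1}(\mU_i)\otimes\Omega^1_{\mX/W}(\log\mD)(\mU_i)$. You have simply spelled out in full the details the paper leaves implicit.
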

\begin{proof}
The proof is the same as that of Lemma~\ref{lem:HiggsTorsor}, except that one may choose $\gamma_i$ which preserves the grading structures on both sides. Thus
\begin{equation*}
\overline f_{ij} \in \mEnd(E)^0(\mU_{ij}) 
\qquad \text{ and } \qquad  
\overline\omega_i \in \mEnd(E)^{-1}(\mU_i) \otimes \Omega^1_{\mX/W}(\log \mD)(\mU_i). 
\end{equation*} 
Hence one has $(\overline{f}_{ij},  \overline\omega_i) \in \mathrm{Gr}^0H^1_{Hig}(X_1, \mEnd (E,\theta))$.
\end{proof}

Let $(\wt V,\wt \nabla,\wt \Fil)$ and $(\wh V,\wh \nabla,\wh \Fil)$ be two filtered de Rham bundles satisfying Griffith transversality over $X_{n}$, which are liftings of $(\overline{V},\overline{\nabla},\overline{\Fil})$.
\begin{lem}\label{lem:Hodge_Torsor}
	The difference between $(\wh V,\wh \nabla)$ and $(\wt V,\wt\nabla)$ is contained in the hyper cohomology $\bH^1\left(\Fil^0\left(\mEnd(V)\otimes \Omega_{\mX/W}^\bullet(\log \mD),\nabla^{\mEnd}\right)\right)$.
\end{lem}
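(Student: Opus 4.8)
The plan is to mimic the proof of Lemma~\ref{lem:HiggsTorsor} (and its graded refinement above) but now at the level of filtered de Rham bundles, keeping track of the Griffiths transversality constraint through the subcomplex filtration $\Fil^\bullet$ on $\left(\mEnd(V)\otimes \Omega_{\mX/W}^\bullet(\log \mD),\nabla^{\mEnd}\right)$ that has just been introduced. First I would fix a covering $\mX=\bigcup\mU_i$ by small affine opens on which the two liftings $(\wt V,\wt\nabla,\wt\Fil)$ and $(\wh V,\wh\nabla,\wh\Fil)$ of $(\overline V,\overline\nabla,\overline\Fil)$ over $X_n$ become isomorphic; crucially, on each $\mU_i$ one can choose a local isomorphism $\gamma_i\colon \wt V_i \overset{\sim}{\to}\wh V_i$ lifting $\mathrm{id}_{\overline V_i}$ \emph{which is strict for the filtrations}, since a locally split filtered free module admits filtered local bases and any two liftings of a given filtered bundle are filtered-locally isomorphic.

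Next I would set, exactly as before,
\[
f_{ij}:=\gamma_j^{-1}|_{\mU_{ij}}\circ\gamma_i|_{\mU_{ij}}-\mathrm{id}\in\mEnd(\wt V)(\mU_{ij}),\qquad
\omega_i:=\gamma_i^{-1}\circ\wh\nabla_i\circ\gamma_i-\wt\nabla_i\in \mEnd(\wt V)(\mU_i)\otimes\Omega^1_{\mX/W}(\log\mD)(\mU_i),
\]
and observe that both vanish modulo $p^{n-1}$, so that $\overline f_{ij}:=f_{ij}/p^{n-1}\bmod p$ and $\overline\omega_i:=\omega_i/p^{n-1}\bmod p$ are well defined over $X_1$. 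Because $\gamma_i$ is filtration-strict, $\overline f_{ij}$ lands in $\Fil^0\mEnd(V)(\mU_{ij})=\sum_{j-i=0}(\Fil^iV)^\vee\otimes\Fil^jV$ restricted to that open set; and because $\wt\Fil,\wh\Fil$ both satisfy Griffiths transversality, the difference $\overline\omega_i$ of the two connections lowers the filtration by exactly one step, i.e. $\overline\omega_i\in \Fil^{-1}\mEnd(V)(\mU_i)\otimes\Omega^1_{\mX/W}(\log\mD)(\mU_i)$. A direct cocycle computation (identical to the one in Lemma~\ref{lem:HiggsTorsor}, using $\gamma_k^{-1}\gamma_j\cdot\gamma_j^{-1}\gamma_i=\gamma_k^{-1}\gamma_i$ and the Leibniz rule for $\nabla^{\mEnd}$) shows $(\overline f_{ij},\overline\omega_i)$ is a $1$-cocycle for the filtered subcomplex $\Fil^0\left(\mEnd(V)\otimes\Omega^\bullet_{\mX/W}(\log\mD),\nabla^{\mEnd}\right)$, and that a different choice of the filtered $\gamma_i$'s alters it by a coboundary; hence it defines a well-defined class in $\bH^1$ of that subcomplex, which I will take as $c\bigl((\wh V,\wh\nabla),(\wt V,\wt\nabla)\bigr)$.

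Finally I would check the torsor axioms: the construction is additive in the obvious sense (composing the shifts), the class is zero iff the two liftings are globally filtered-isomorphic, and conversely, given $(\wt V,\wt\nabla,\wt\Fil)$ and any $1$-cocycle $(\overline f_{ij},\overline\omega_i)$ valued in $\Fil^0$, the local data $(\wt V_i,\wt\nabla_i+p^{n-1}\overline\omega_i,\wt\Fil_i)$ glued by $\mathrm{id}+p^{n-1}\overline f_{ij}$ produce a new filtered de Rham bundle over $X_n$ lifting $(\overline V,\overline\nabla,\overline\Fil)$ — here the fact that $\overline f_{ij}$ respects $\Fil^0$ guarantees the glued filtration is well defined, and the fact that $\overline\omega_i$ sits in $\Fil^{-1}\otimes\Omega^1$ guarantees the perturbed connection still satisfies Griffiths transversality. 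I expect the main obstacle to be the bookkeeping in the last point: verifying that the perturbation $p^{n-1}\overline\omega_i$ preserves Griffiths transversality on the nose (not just modulo $p^{n-1}$) and that the glued object is genuinely filtered-locally free with the prescribed jumps, rather than merely a de Rham bundle carrying a not-necessarily-split filtration. This is where the precise definition of ``filtered free'' and the strictness of the $\gamma_i$ have to be used carefully; everything else is a routine transcription of the unfiltered argument.
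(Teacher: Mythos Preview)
Your proposal is correct and follows essentially the same approach as the paper: choose local filtration-strict isomorphisms $\gamma_i$ lifting the identity, observe that strictness forces $\overline f_{ij}\in\Fil^0\mEnd(V)$ while Griffiths transversality of both connections forces $\overline\omega_i\in\Fil^{-1}\mEnd(V)\otimes\Omega^1$, and conclude that the \v{C}ech representative lies in $\bH^1$ of the $\Fil^0$-subcomplex. You actually go further than the paper (which stops after producing the class), additionally verifying the full torsor axioms and the converse construction; this is fine but not required by the lemma as stated.
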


\begin{proof} The proof the similar as that of Lemma~\ref{lem:HiggsTorsor}.  Denote by $(\wt V_i , \wt\nabla_i)$ (resp. $(\wh V_i , \wh\nabla_i)$) the restriction of $(\wt V,\wt \nabla)$ (resp. $(\wh V, \wh\nabla)$) on $\mU_i\times_WW_n$. Locally we can always find isomorphisms  $\gamma_i: \wt V_i \overset{\sim}{\longrightarrow} \wh V_i$ over $\mU_i\times_WW_n$ that is strict under the Hodge filtrations on both sides and lifts the $\mathrm{id}_{\overline{V}_i}$. Then
\begin{equation*}
f_{ij}:= \gamma^{-1}_j \circ \gamma_i|_{U_{ij}} - Id_{\wt V_{ij}} \in \Fil^0\mEnd(\wt V)((\mU_i\cap\mU_j)\times_WW_n)
\end{equation*}
Since the connections satisfy Griffith transversality,  
\begin{equation*}
\omega_i := \gamma_i^{-1} \circ \wh \nabla_i \circ \gamma_i - \wt\nabla_i \in \Fil^{-1}\mEnd(\wt V)(\mU_i\times_WW_n) \otimes \Omega^1_{\mX/W}(\log \mD)(\mU_i) 
\end{equation*}    
Similarly as in the proof of Lemma~\ref{lem:HiggsTorsor}, these local data give us a $\cech$ representative 
\[(\overline f_{ij} ,  \overline\omega_i) \in \bH^1\left(\Fil^0\left(\mEnd(V)\otimes \Omega_{\mX/W}^\bullet(\log \mD),\nabla^{\mEnd}\right)\right).\]
of the difference of the two liftings.	
\end{proof}

\subsection{The torsor map induced by grading}
In this subsection, we will describe the difference between the grading objects of two filtered de Rham bundles satisfying Griffith transversality over $X_{n}$.
The morphism of complexes
\[\Fil^0 \left(\mEnd(V)\otimes \Omega_{\mX/W}^\bullet(\log \mD),\nabla^{\mEnd}\right) 
\rightarrow \mathrm{Gr}^0\left(\mEnd(V)\otimes \Omega_{\mX/W}^\bullet(\log \mD),\nabla^{\mEnd}\right)\]
induces a $k$-linear map of the hyper cohomology groups
\begin{equation}\label{Grading_Torsor_map}
\xymatrix{
\bH^1\left(\Fil^0\left(\mEnd(V)\otimes \Omega_{\mX/W}^\bullet(\log \mD),\nabla^{\mEnd}\right)\right)  \ar[r]\ar[dr]
 &\bH^1\left(\mathrm{Gr}^0\left(\mEnd(V)\otimes \Omega_{\mX/W}^\bullet(\log \mD),\nabla^{\mEnd}\right)\right) \ar@{=}[d]\\
 &\mathrm{Gr}^0H^1_{Hig}(X_1 , \mEnd (E ,\theta))\\
}
\end{equation}

Suppose $(\wt V,\wt \nabla,\wt \Fil)$ and $(\wh V,\wh \nabla,\wh \Fil)$ are two filtered de Rham bundles satisfying Griffith transversality over $X_{n}$, which are liftings of $(\overline{V},\overline{\nabla},\overline{\Fil})$.
\begin{prop}\label{prop:torsor_Grading}
	The difference between $\mathrm{Gr}(\wh V,\wh \nabla,\wh \Fil)$ and $\mathrm{Gr}(\wt V,\wt\nabla,\wt\Fil)$ is just the image of the difference between $(\wh V,\wh \nabla)$ and $(\wt V,\wt\nabla)$ under the morphism in~\emph{(\ref{Grading_Torsor_map})}.
\end{prop}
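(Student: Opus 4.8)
The plan is to compute both ``difference'' classes using \emph{the same} system of local trivializations and then observe that the map in~(\ref{Grading_Torsor_map}) is, at the level of \v{C}ech cochains, nothing more than the degreewise passage to the associated graded. First I would fix the data as in the proof of Lemma~\ref{lem:Hodge_Torsor}: choose over each $\mU_i\times_W W_n$ an isomorphism $\gamma_i\colon \wt V_i\overset{\sim}{\longrightarrow}\wh V_i$ lifting $\id_{\overline V_i}$ and \emph{strict} for the Hodge filtrations $\wt\Fil$ and $\wh\Fil$; such $\gamma_i$ exist locally since $(\wt V_i,\wt\Fil)$ and $(\wh V_i,\wh\Fil)$ are filtered-free liftings of the same filtered bundle. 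As recorded in Lemma~\ref{lem:Hodge_Torsor}, the \v{C}ech $1$-cocycle
\[
\overline f_{ij}:=\tfrac{1}{p^{n-1}}\bigl(\gamma_j^{-1}\circ\gamma_i-\id\bigr)\bmod p,\qquad \overline\omega_i:=\tfrac{1}{p^{n-1}}\bigl(\gamma_i^{-1}\circ\wh\nabla_i\circ\gamma_i-\wt\nabla_i\bigr)\bmod p
\]
represents the difference between $(\wh V,\wh\nabla)$ and $(\wt V,\wt\nabla)$, with $\overline f_{ij}\in\Fil^0\mEnd(V)(\mU_{ij})$ and $\overline\omega_i\in\Fil^{-1}\mEnd(V)(\mU_i)\otimes\Omega^1_{\mX/W}(\log\mD)(\mU_i)$ thanks to strictness and Griffith transversality.

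Next I would apply $\mathrm{Gr}$ to all of the local data. Because each $\gamma_i$ is strict, it induces a grading-preserving isomorphism $\mathrm{Gr}(\gamma_i)\colon \mathrm{Gr}(\wt V_i,\wt\Fil)\overset{\sim}{\longrightarrow}\mathrm{Gr}(\wh V_i,\wh\Fil)$ lifting $\id_{\overline E_i}$, and $\mathrm{Gr}(\wh\nabla)$, $\mathrm{Gr}(\wt\nabla)$ are the associated Higgs fields. Thus $\{\mathrm{Gr}(\gamma_i)\}$ is precisely a system of the kind used in the proof of the graded analogue of Lemma~\ref{lem:HiggsTorsor} (the torsor statement for $\mathrm{Gr}^0H^1_{Hig}$), so the difference between $\mathrm{Gr}(\wh V,\wh\nabla,\wh\Fil)$ and $\mathrm{Gr}(\wt V,\wt\nabla,\wt\Fil)$ in $\mathrm{Gr}^0 H^1_{Hig}(X_1,\mEnd(E,\theta))$ is represented by the \v{C}ech cocycle obtained by reducing $\overline f_{ij}$ and $\overline\omega_i$ modulo $\Fil^1$ and $\Fil^0$ respectively, i.e.\ by their images under the graded-quotient maps $\Fil^0\mEnd(V)\twoheadrightarrow\mathrm{Gr}^0\mEnd(V)=\mEnd(E)^0$ and $\Fil^{-1}\mEnd(V)\twoheadrightarrow\mathrm{Gr}^{-1}\mEnd(V)=\mEnd(E)^{-1}$. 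Here one needs the routine compatibility that reduction mod $p^{n-1}$ commutes with taking $\mathrm{Gr}$ of the local connection matrices, so that $\overline\omega_i$ indeed maps to the local representative of the difference of the graded Higgs fields.

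Finally I would identify this with~(\ref{Grading_Torsor_map}). That map is by definition induced on hypercohomology by the morphism of complexes $\Fil^0(\mEnd(V)\otimes\Omega^\bullet_{\mX/W}(\log\mD),\nabla^{\mEnd})\to\mathrm{Gr}^0(\mEnd(V)\otimes\Omega^\bullet_{\mX/W}(\log\mD),\nabla^{\mEnd})$, whose term in each degree is exactly the graded-quotient map appearing above; acting on a \v{C}ech representative of a hypercohomology class, it therefore applies those quotient maps componentwise. Comparing with the previous paragraph yields the assertion. I do not expect any genuine obstacle here: the only point requiring care is (a) that a single compatible choice of $\{\gamma_i\}$ serves simultaneously in the two torsor computations --- which is precisely what strictness of $\gamma_i$ provides --- and (b) the term-by-term matching of the two cocycle constructions after reduction; both are bookkeeping once the trivializations are fixed.
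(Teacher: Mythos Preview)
Your proposal is correct and follows essentially the same approach as the paper: choose the strict local isomorphisms $\gamma_i$ as in Lemma~\ref{lem:Hodge_Torsor}, apply $\mathrm{Gr}$ to obtain grading-preserving local isomorphisms, and observe that the resulting \v{C}ech cocycle is precisely the image of $(\overline f_{ij},\overline\omega_i)$ under the componentwise graded-quotient map that induces~(\ref{Grading_Torsor_map}). The paper's own proof is the two-line version of exactly this argument.
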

\begin{proof}Choosing local isomorphisms $\gamma_i:\wt V_i\rightarrow \wh V_i$ as in the proof of Lemma~\ref{lem:Hodge_Torsor} and taking the associated graded object, one gets isomorphism $\mathrm{Gr}(\wt V_i,\wt\Fil_i)\rightarrow \mathrm{Gr}(\wh V_i,\wh\Fil_i)$.  Then this proposition can be checked directly.
\end{proof}

\begin{rmk} Let $\mL$ be a line bundle over $\mX$. And $\mEnd((E,\theta)\otimes \mL)$ is canonical isomorphic to $\mEnd((E,\theta))$. Let $(\wt E,\wt\theta)$ and $(\wh E,\wh\theta)$ be two liftings of $(\overline E,\overline \theta)$. Then the difference between $(\wt E,\wt\theta)$ and $(\wh E,\wh\theta)$ is the same as 
the difference between $(\wt E,\wt\theta)\otimes \mL$ and $(\wh E,\wh\theta)\otimes \mL$.
Now, the proposition still holds for replacing the grading functor by the composition functor of grading functor and twisting by a line bundle.  
\end{rmk}

\subsection{Torsor map induced by inverse Cartier functor} Assume $(\overline{E},\overline\theta)$ is a graded Higgs bundle and assume it is isomorphic to  $\mathrm{Gr}((\overline{V},\overline{\nabla},\overline{\Fil})_{-1})$ for some filtered de Rham bundle $(\overline{V},\overline{\nabla},\overline{\Fil})_{-1}$ over $X_{n-1}$ satisfying Griffith transversality. Denote $(\overline{V},\overline{\nabla})=C^{-1}_{n-1}((\overline{E},\overline{\theta}),(\overline{V},\overline{\nabla},\overline{\Fil})_{-1}\pmod{p^{n-2}})$. Let $(\wt E,\wt \theta)$ be a graded Higgs bundle which lifts $(\overline{E},\overline\theta)$. The inverse Cartier transform $C^{-1}_n$ on $((\wt E,\wt \theta),(\overline{V},\overline{\nabla},\overline{\Fil})_{-1})$ define a de Rham bundle $(\wt V,\wt \nabla)$ over $X_n$ which lifts $(\overline{V},\overline{\nabla})$.
\begin{equation*}
\xymatrix@C=2cm@R=0cm{
& & (\wt V,\wt \nabla)\ar@{..}[dd]\\
&(\wt E,\wt \theta) \ar@{..}[dd] \ar[ur]^{C^{-1}_n} & \\
(\overline{V},\overline{\nabla},\overline{\Fil})_{-1} \ar[dr]^{\mathrm{Gr}} && (\overline{V},\overline{\nabla})\\
&(\overline{E},\overline{\theta})  \ar[ur]^{C^{-1}_{n-1}} &\\
}
\end{equation*}
Let $(\wh E,\wh \theta)$ be another graded Higgs bundle which lifts $(\overline{E},\overline\theta)$. Denote 
\[(\wh V,\wh \nabla):=C^{-1}_n((\wh E,\wh \theta),(\overline{V},\overline{\nabla},\overline{\Fil})_{-1}).\] 

Assume $(\wh E,\wh \theta)$ differs from $(\wt E,\wt \theta)$ by an element $(\overline f_{ij} ,  \overline\omega_i) \in H^1_{Hig}(X_1 , \mEnd (E ,\theta))$. 

Let $t_1,t_2,\cdots,t_d$ be local coordinate of the small affine open subset $\mU_i\cap\mU_j$ of $\mX$. There are two Frobenius liftings $\Phi_i$ and $\Phi_j$ on the overlap subset. Denote $z^J=\prod\limits_\ell z_\ell^{j_\ell}$ and $z_\ell=\frac{\Phi^*_i(t_\ell)-\Phi^*_j(t_\ell)}{p}$. For a field $\theta$ on a bundle over $X_1$, we denote 
\[h_{ij}(\theta)=\sum_{\ell} \theta(\partial_\ell)\otimes_\Phi z_\ell\]
which is obviously Frobenius semilinear. Now, we want compute the difference between $(\wh V,\wh \nabla)$ and $(\wt V,\wt \nabla)$.

\begin{prop}\label{prop:torsor IC}
The de Rham bundle $(\wh V,\wh \nabla)$ differs from $(\wt V,\wt \nabla)$ by the element 
\[\left(\Phi^*(\overline f_{ij})+h_{ij}(\omega_j),\frac{\Phi^*}{p}(\overline\omega_i)\right) \in H^1_{dR}(X_1 , \mEnd (V ,\nabla)).\]
\end{prop}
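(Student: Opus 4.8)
The plan is to trace through the explicit local construction of the inverse Cartier transform and compare the two de Rham bundles $(\wt V,\wt\nabla)$ and $(\wh V,\wh\nabla)$ term by term, reusing the bookkeeping already set up in Lemma~\ref{lem:HiggsTorsor}. First I would fix local isomorphisms $\gamma_i:(\wt E_i,\wt\theta_i)\xrightarrow{\sim}(\wh E_i,\wh\theta_i)$ lifting $\mathrm{id}$ so that, by definition, $\overline f_{ij}=\tfrac{1}{p^{n-1}}(\gamma_j^{-1}\gamma_i-\mathrm{id})\bmod p$ and $\overline\omega_i=\tfrac{1}{p^{n-1}}(\gamma_i^{-1}\wh\theta_i\gamma_i-\wt\theta_i)\bmod p$. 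Applying $\Phi_i^*$ to $\gamma_i$ gives local isomorphisms of the underlying vector bundles $\Phi_i^*\gamma_i:\Phi_i^*\wt E_i\to\Phi_i^*\wh E_i$ lifting $\mathrm{id}$; I then need to compute the discrepancy of these with respect to (a) the connections $\rmd+\tfrac{\rmd\Phi_i}{p}(\Phi_i^*\theta)$ and (b) the gluing maps $G_{ij}=\exp(h_{ij}(\Phi_i^*\theta))$ used to assemble the global $(\wt V,\wt\nabla)$ and $(\wh V,\wh\nabla)$.

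The connection discrepancy should come out to $\tfrac{\rmd\Phi_i}{p}$ applied to $\overline\omega_i$, which because $\rmd\Phi_i$ is divisible by $p$ and we work modulo $p$ reduces to the Frobenius-semilinear operator denoted $\tfrac{\Phi^*}{p}(\overline\omega_i)$; this is exactly the second component of the claimed cocycle. The gluing discrepancy is the more delicate one: the cocycle $G_{ij}$ depends both on the transition function of $\wt E$ (contributing $\Phi^*\overline f_{ij}$ after reduction) and on the Higgs field via the Deligne--Illusie term $h_{ij}$, so the difference between the two $G_{ij}$'s picks up $\Phi^*\overline f_{ij}+h_{ij}(\overline\omega_j)$. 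I would carry this out by writing $\wh\theta_j=\gamma_j(\wt\theta_j+p^{n-1}\overline\omega_j)\gamma_j^{-1}$ on $\mU_j$, substituting into $\exp(h_{ij}(\Phi_j^*\wh\theta_j))$, and linearizing modulo $p^n$; the exponential and the $h_{ij}$ are both additive enough that only the first-order term survives, giving precisely $h_{ij}(\overline\omega_j)$ as the extra contribution beyond $\Phi^*\overline f_{ij}$. A small point to check is that one should use $\overline\omega_j$ (not $\overline\omega_i$) here, consistent with the fact that $G_{ij}$ is built from data on $\mU_j$ after transport, and that the choice is forced by requiring the resulting pair to be a genuine $1$-cocycle.

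The final step is to verify that $\big(\Phi^*\overline f_{ij}+h_{ij}(\overline\omega_j),\ \tfrac{\Phi^*}{p}\overline\omega_i\big)$ is indeed a $\cech$--de Rham $1$-cocycle representing a class in $H^1_{dR}(X_1,\mEnd(V,\nabla))$, and that the class is independent of the auxiliary choices ($\gamma_i$, the local coordinates $t_\ell$, and the Frobenius liftings $\Phi_i$). Independence of $\gamma_i$ follows because a different choice changes $(\overline f_{ij},\overline\omega_i)$ by a Higgs coboundary, whose image under the (already constructed, in Proposition~\ref{prop:torsor_Grading}-style arguments) map on cohomology is a de Rham coboundary; independence of the Frobenius liftings is handled by the Taylor-formula comparison isomorphisms $\alpha_{\Phi_i,\Phi_j}$ recalled in Section~\ref{section FFM}. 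I expect the main obstacle to be the careful linearization of the exponential gluing term $\exp(h_{ij}(\Phi^*\wh\theta))$ modulo $p^n$ and confirming that cross terms between $\overline f_{ij}$ and $\overline\omega_j$ genuinely vanish at the relevant order, rather than contributing a correction; once that computation is pinned down the cocycle and well-definedness checks are routine.
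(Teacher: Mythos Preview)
Your proposal is correct and follows essentially the same approach as the paper's proof: choose local lifts $\gamma_i$ of the identity, push them forward by $\Phi_i^*$, and compute separately the connection discrepancy (yielding $\tfrac{\Phi^*}{p}\overline\omega_i$) and the gluing discrepancy (yielding $\Phi^*\overline f_{ij}+h_{ij}(\overline\omega_j)$) by linearizing modulo $p^n$. The paper organizes the computation slightly differently by first passing through the intermediate functor $\mT_n$ to bundles with $p$-connection $(\widetilde{\wt V},\widetilde{\wt\nabla})$, $(\widetilde{\wh V},\widetilde{\wh\nabla})$ and using the full Taylor-formula expression $G_{ij}(\widetilde\nabla)(e\otimes 1)=\sum_J\tfrac{\widetilde\nabla(\partial)^J}{J!}(e)\otimes z^J$ rather than the mod-$p$ exponential $\exp(h_{ij}(\Phi^*\theta))$; this makes the linearization step $G_{ij}(\widetilde{\wt\nabla})^{-1}\circ G_{ij}(\widetilde{\wt\nabla}+p^{n-1}\overline\omega_j)=\mathrm{id}+p^{n-1}h_{ij}(\overline\omega_j)$ transparent for arbitrary $n$, whereas your exponential formula is literally the $n=1$ gluing and needs the Taylor form underneath it to be rigorous over $W_n$. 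The paper also skips your final cocycle and well-definedness verifications, since the computation is framed as a difference in an already-established torsor; your extra checks are harmless but not needed.
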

\begin{proof} Recall the diagram~\ref{diag:C^{-1}} and denote $(\widetilde{\wt V},\widetilde{\wt \nabla})=\mT_n((\wt E,\wt \theta),(\overline{V},\overline{\nabla},\overline{\Fil})_{-1})$ and $(\widetilde{\wh V},\widetilde{\wh \nabla})=\mT_n((\wh E,\wh \theta),(\overline{V},\overline{\nabla},\overline{\Fil})_{-1})$. From the the definition of functor $\mT_n$, both are bundles with $p$-connections and both are lifting of a bundle with $p$-connection over $X_{n-1}$. Their reductions modulo $p$ are the same Higgs bundle $(E,\theta)$. From the definition of the functor $\mT_n$, the difference between $(\widetilde{\wh V},\widetilde{\wh \nabla})$ and $(\widetilde{\wt V},\widetilde{\wt \nabla})$ is also equal to $(\overline f_{ij} ,  \overline\omega_i) \in H^1_{Hig}(X_1 , \mEnd (E ,\theta))$.

Now, let $\widetilde{\gamma}_i: \widetilde{\wt V}\mid_{\mU_i} \rightarrow \widetilde{\wh V}\mid_{\mU_i} $ be a local isomorphism and $\Phi_i$ be the local lifting of the absolute Frobenius on $\mU_i$. The following diagram (not commutative in general)
\begin{equation}
\xymatrix{  \Phi_i^*(\widetilde{\wt V}\mid_{\mU_{ij}}) \ar[r]^{G_{ij}(\widetilde{\wt \nabla})} \ar[d]_{\Phi_i^*(\widetilde{\gamma}_i)} 
&  \Phi_j^*(\widetilde{\wt V}\mid_{\mU_{ij}}) \ar[d]^{\Phi_j^*(\widetilde{\gamma}_j)}\\
 \Phi_i^*(\widetilde{\wh V}\mid_{\mU_{ij}}) \ar[r]^{G_{ij}(\widetilde{\wh \nabla})}  &  \Phi_j^*(\widetilde{\wh V}\mid_{\mU_{ij}}) \\
 }
\end{equation} 
Recall the de Rham bundle $(\wt V,\wt \nabla)$ (resp. $(\wh V,\wh \nabla)$) is defined by gluing $\{\Phi_i^*(\widetilde{\wt V}\mid_{\mU_i})\}$ (resp. $\{\Phi_i^*(\widetilde{\wh V}\mid_{\mU_i})\}$) via isomorphisms $G_{ij}(\widetilde{\wt \nabla})$ (resp. $G_{ij}(\widetilde{\wh \nabla})$). The $G_{ij}$'s are given by Taylor formula
\begin{equation}
 G_{ij}(\widetilde{\wt\nabla})(e\otimes 1)=\sum_J\frac{\widetilde{\wt\nabla}(\partial)^J}{J!}(e)\otimes z^J, \quad e\in \widetilde{\wt V}(\mU_i).
\end{equation}
 Denote
\[ g_{ij}=G_{ij}(\widetilde{\wt \nabla})^{-1}\circ \Phi_j^*(\widetilde{\gamma}_j)^{-1}\circ G_{ij}(\widetilde{\wh \nabla}) \circ \Phi_i^*(\widetilde{\gamma}_i)-\mathrm{id}_{\Phi_i^*(\widetilde{\wt V}\mid_{U_i\cap U_j})},\]
and 
\[\omega_{\nabla,i} = \Phi_i^*(\widetilde{\gamma}_i)^{-1}\circ \wt\nabla_i\circ \Phi_i^*(\widetilde{\gamma}_i)-\wh\nabla_i\]
which are trivial modulo $p^{n-1}$. Denote $\overline g_{ij}=\frac{g_{ij}}{p^{n-1}}\pmod{p}\in \mEnd(V)(\mU_{ij})$ and $\overline\omega_{\nabla,i}=\frac{\omega_{\nabla,i}}{p^{n-1}}\in \mEnd(V(\mU_i))\otimes\Omega_{\mX/W}(\log \mD)(\mU_i)$ . Then the de Rham bundle $(\wh V,\wh \nabla)$ differs from $(\wt V,\wt \nabla)$ by the element $(\overline g_{ij},\omega_{\nabla,i})\in H^1_{dR}(X_1,\mEnd(V,\nabla))$.

Now let's express $\overline g_{ij}$ and $\overline \omega_{\nabla,i}$ with $\overline f_{ij}$ and $\overline{\omega}_i$. 
Since 
\[\widetilde{\gamma}_j: (\widetilde{\wt V}\mid_{\mU_j},\widetilde\gamma_j^{-1}\circ\widetilde{\wh\nabla}\circ\widetilde\gamma_j) \rightarrow (\widetilde{\wh V}\mid_{\mU_j}, \widetilde{\wh \nabla})\] 
is an isomorphism of twisted de Rham bundle, one has following commutative diagram
\begin{equation}
\xymatrix@C=3cm{
  \Phi_i^*(\widetilde{\wt V}\mid_{\mU_{ij}}) \ar[r]^{G_{ij}(\widetilde\gamma_j^{-1}\circ\widetilde{\wh\nabla}\circ\widetilde\gamma_j)} \ar[d]_{\Phi_i^*(\widetilde{\gamma}_j)} 
&  \Phi_j^*(\widetilde{\wt V}\mid_{\mU_{ij}}) \ar[d]^{\Phi_j^*(\widetilde{\gamma}_j)}\\
 \Phi_i^*(\widetilde{\wh V}\mid_{\mU_{ij}}) \ar[r]^{G_{ij}(\widetilde{\wh \nabla})}  &  \Phi_j^*(\widetilde{\wh V}\mid_{\mU_{ij}})\\
}
\end{equation}
Hence 
\begin{equation}
\begin{split}
 g_{ij} & =G_{ij}(\widetilde{\wt \nabla})^{-1}
\circ G_{ij}(\widetilde\gamma_j^{-1}\circ\widetilde{\wh\nabla}\circ\widetilde\gamma_j)
\circ \Phi_i^*(\widetilde{\gamma}_j)^{-1} 
\circ \Phi_i^*(\widetilde{\gamma}_i)-\mathrm{id}\\
& =G_{ij}(\widetilde{\wt \nabla})^{-1}
\circ G_{ij}(\widetilde{\wt \nabla}+p^{n-1}\overline\omega_j)
\circ \Phi_i^*(\widetilde{\gamma}_j^{-1} \circ\widetilde{\gamma}_i)
-\mathrm{id}\\
& =\left(\mathrm{id}+p^{n-1}\sum_{\ell}\overline{\omega}_j(\partial_\ell) \otimes_{\Phi_j} z_\ell\right)
\circ \Phi_i^*(\mathrm{id}+p^{n-1}\overline f_{ij})
-\mathrm{id}\\
&=p^{n-1}\left(\sum_{\ell}\overline{\omega}_j(\partial_\ell) \otimes_{\Phi_j} z_\ell + \Phi^*(\overline f_{ij}) \right)\\
&=p^{n-1}\left( h_{ij}(\overline\omega_j)+ \Phi^*(\overline f_{ij}) \right)\\
\end{split}
\end{equation}
and $\overline{g}_{ij}=\sum\limits_{\ell}\overline{\omega}(\partial_\ell) \otimes_{\Phi} z_\ell + \Phi(\overline f_{ij})$. Since $\wt\nabla_i=\frac{\Phi_i^*}{p}\left(\widetilde{\wt\nabla}\right)$ and $\wh\nabla_i=\frac{\Phi_i^*}{p}\left(\widetilde{\wh\nabla}\right)$ 
\begin{equation*}
\begin{split}
\omega_{\nabla,i} & = \frac{\Phi_i^*}{p}\left( \widetilde{\gamma}_i^{-1} \circ \widetilde{\wh \nabla}_i \circ \widetilde{\gamma}_i -\widetilde{\wt \nabla}_i \right)
\end{split}
\end{equation*}
Thus $\overline{\omega}_{\nabla,i}=\frac{\omega_{\nabla,i}}{p^{n-1}}=\frac{\Phi^*}{p}\left(\overline\omega_i\right)$.
\end{proof}

\begin{cor}\label{InvCar_Torsor_map}
The map from $\mathrm{Gr}^0H^1_{Hig}(X_1 , \mEnd (E ,\theta))$ to $H^1_{dR}(X_1 , \mEnd (V ,\nabla))$ mapping 
$(\overline f_{ij} ,  \overline\omega_i)$ to $\left(\Phi^*(\overline f_{ij})+h_{ij}(\overline\omega_j),\frac{\Phi^*}{p}(\overline\omega_i)\right)$ is a Frobenius semilinear map between $k$-vector spaces.
\end{cor}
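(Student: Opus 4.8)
The plan is to obtain the statement as a formal consequence of Proposition~\ref{prop:torsor IC}, which already carries all the geometric content; what remains is to check well-definedness on cohomology, additivity, and Frobenius semilinearity, each by inspecting the three operations $\Phi^{*}$, $h_{ij}(\cdot)=\sum_{\ell}(\cdot)(\partial_{\ell})\otimes_{\Phi}z_{\ell}$ and $\tfrac1p\Phi^{*}$ occurring in the formula. Recall that a lifting $(\wt E,\wt\theta)$ of $(\overline E,\overline\theta)$ is given, so the lifting torsor of $(\overline E,\overline\theta)$ is non-empty and the bijections $\iota_{\ell}$ identify it with $\mathrm{Gr}^{0}H^{1}_{Hig}(X_{1},\mEnd(E,\theta))$; similarly the lifting torsor of $(\overline V,\overline\nabla)$ is identified with $H^{1}_{dR}(X_{1},\mEnd(V,\nabla))$. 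Proposition~\ref{prop:torsor IC} says that the inverse Cartier functor, regarded as a map between these two torsors (it sends $(\wt E,\wt\theta)$ to $(\wt V,\wt\nabla)$), is, in these coordinates, exactly $(\overline f_{ij},\overline\omega_{i})\mapsto\bigl(\Phi^{*}(\overline f_{ij})+h_{ij}(\overline\omega_{j}),\tfrac{\Phi^{*}}{p}(\overline\omega_{i})\bigr)$. In particular the right-hand expression is automatically a de Rham $1$-cocycle, and its cohomology class depends only on the class $(\overline f_{ij},\overline\omega_{i})$ and not on the auxiliary choices (local coordinates $t_{\ell}$, Frobenius liftings $\Phi_{i}$, trivialisations $\gamma_{i}$, nor on the chosen $\cech$ representative), since the torsor difference appearing on the left-hand side of Proposition~\ref{prop:torsor IC} manifestly does not. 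Thus the formula gives a well-defined map $\mathrm{Gr}^{0}H^{1}_{Hig}(X_{1},\mEnd(E,\theta))\to H^{1}_{dR}(X_{1},\mEnd(V,\nabla))$.

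Next I would verify additivity. The three ingredients $\Phi^{*}$, $h_{ij}(\cdot)$ and $\tfrac1p\Phi^{*}$ are each additive on $\cech$ cochains, so the assignment $(\overline f_{ij},\overline\omega_{i})\mapsto\bigl(\Phi^{*}(\overline f_{ij})+h_{ij}(\overline\omega_{j}),\tfrac{\Phi^{*}}{p}(\overline\omega_{i})\bigr)$ is additive at the level of cochains; combined with Proposition~\ref{prop:torsor IC} it therefore descends to a group homomorphism on cohomology. (Abstractly this is the standard fact that a morphism of torsors over abelian groups induces a homomorphism of the model groups via $c(\ell',\ell)\mapsto c(\phi(\ell'),\phi(\ell))$, using the cocycle-additivity $c(\ell'',\ell)=c(\ell'',\ell')+c(\ell',\ell)$.)

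Finally comes the Frobenius semilinearity. For $\lambda\in k$ one unwinds each term: $\Phi^{*}(\lambda\overline f_{ij})=\Phi(\lambda)\,\Phi^{*}(\overline f_{ij})$ because pullback along a Frobenius lifting is $\Phi$-semilinear; $h_{ij}(\lambda\overline\omega_{j})=\sum_{\ell}\lambda\,\overline\omega_{j}(\partial_{\ell})\otimes_{\Phi}z_{\ell}=\sum_{\ell}\overline\omega_{j}(\partial_{\ell})\otimes_{\Phi}\Phi(\lambda)z_{\ell}=\Phi(\lambda)\,h_{ij}(\overline\omega_{j})$ by the defining relation $ex\otimes_{\Phi}r=e\otimes_{\Phi}\Phi(x)r$ of the twisted tensor product; and likewise $\tfrac{\Phi^{*}}{p}(\lambda\overline\omega_{i})=\Phi(\lambda)\,\tfrac{\Phi^{*}}{p}(\overline\omega_{i})$. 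On the residue field $k$ the map $\Phi$ is the $p$-th power Frobenius, so every component, hence the whole map, scales $\lambda$ to $\lambda^{p}$, which is precisely Frobenius semilinearity. I do not expect any real obstacle: all the substance sits in Proposition~\ref{prop:torsor IC}, and the only things to handle with a little care are the bookkeeping of scalars across $\otimes_{\Phi}$ in the semilinearity of $h_{ij}$ and the routine remark that cochain-level additivity and semilinearity pass to cohomology.
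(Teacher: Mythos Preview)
Your proposal is correct and takes essentially the same approach as the paper: the paper's proof is the single sentence ``This follows that $\Phi^*$, $h$ and $\frac{\Phi^*}{p}$ are all Frobenius semilinear,'' and your argument is an expanded version of exactly this, with the well-definedness and additivity made explicit via Proposition~\ref{prop:torsor IC}.
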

\begin{proof}This follows that $\Phi^*$, $h$ and $\frac{\Phi^*}{p}$ are all Frobenius semilinear.
\end{proof}

\textbf{Acknowledgement}.  We would like to thank Adrian Langer particularly for pointing out an incomplete proof of the local freeness Higgs sheaves in Section 3.3. Fortunately, the proof can be now fixed by applying for Theorem 2.1 and Corollary 2.8 in his very recent paper~\cite{Langer19}. We thank Mao Sheng and Carlos Simpson for their interests in this paper. We thank warmly Christian Pauly for pointing out a paper of de Jong and Osserman. We are grateful to Xiaotao Sun, Deqi Zhang, Shouwu Zhang and Junyi Xie for the discussion on dynamic properties of self-maps in characteristic $p$. We also thank Duco van Straten for telling us the paper of Kontsevich about the $\ell$-adic representations. The discussion with Ariyan Javanpeykar about the self-map on the moduli space of Higgs bundles over the projective line with logarithmic structure on marked points is very helpful for us, and he also read the preliminary version of this paper carefully and suggested a lot for the improvement. We thank him heavily.

 \end{document}